\documentclass{./Article-eng}

\usepackage{hyperref}
\hypersetup{colorlinks,allcolors=blue}
\usepackage{url}
\urlstyle{rm}

\usepackage{csquotes}

\DeclareMathOperator{\ff}{f} 
\DeclareMathOperator{\plat}{flat}

\DeclareMathOperator{\rat}{rat}
\newcommand{\RefSym}{\hat{\text{S}}}
\DeclareMathOperator{\Chow}{Chow}

\DeclareMathOperator{\free}{f} 
\DeclareMathOperator{\base}{b} 
\DeclareMathOperator{\nilpotent}{nilp}

\title{Structure of projective varieties with nef anticanonical divisor: the case of log terminal singularities}
\author{Juanyong Wang}
\date{}

\begin{document}
\maketitle
\paragraph{Abstract:} 
In this article we study the structure of klt projective varieties with nef anticanonical divisor (and more generally, varieties of semi-Fano type), especially the canonical fibrations associated to them. We show that 
\begin{itemize}
\item The Albanese map for such variety is a locally constant fibration (that is, an analytic fibre bundle with connected fibres that $X$ is equal to the product of the universal cover of the Albanese torus by the fibre of the Albanese map quotient by a diagonal action of the fundamenatl group of the Albanese torus).   
\item If the smooth locus is simply connected, the MRC fibration of such variety is an everywhere defined morphism and induces a decomposition into a product of a rationally connected variety and of a projective variety with trivial canonical divisor. 
\end{itemize}
These generalize the corresponding results for smooth projective varieties with nef anticanonical bundle in Cao (2019) and Cao-H\"oring (2019) to the klt case, and can be also regarded as a partial extension of the singular Beauville-Bogomolov decomposition theorem proved by successive works of Greb-Kebekus-Peternell (2016), Druel (2018), Guenencia-Greb-Kebekus (2019) and H\"oring-Peternell (2019). 
\tableofcontents

\section*{Introduction}
\label{sec_intro}
\addcontentsline{toc}{section}{Introduction}
In this article we work over $\CC$. By the philosophy of the Minimal Model Program, 
(hopefully, if the non-vanishing conjecture holds) projective varieties (or more generally, complex varieties in the Fujiki class $\mathscr{C}$) are divided into two classes: the ones with positive Kodaira dimension and the uniruled ones. The study of the first class relies on studying the Iitaka-Kodaira fibrations, which is predicted by the Abundance conjecture; while for the second class (for which the Itaka-Kodaira fibration do not provide any information), one studies the Albanese maps and the MRC fibrations. This article is among the works which intend to understand the structure of uniruled varieties. 

A general philosophy in the study of uniruled varieties is that a variety whose anticanonical bundle or the tangent bundle admits certain positivity, should exhibit certain birational rigidity, e.g. the aforementioned canonical fibrations should have some rigid structure (e.g. being locally constant fibration). This is inspired by the fundamental works of S.Mori \cite{Mori79} and of Siu-Yau \cite{SY80}, proving the conjecture of Hartshorne-Frankel; their works characterize the projective spaces in terms of the amplitude of the tangent bundle (also true in positive characteristics), or equivalently, the positivity of the bisectional curvature (also true for compact K\"ahler manifolds). An analytic generalization of Mori-Siu-Yau's result is obtained by N.Mok in \cite{Mok88} for compact K\"ahler manifolds with nonnegative holomorphic bisectional curvature based on the previous work \cite{HSW81}: he proved that the universal covers of these manifolds are decomposed into products of $\CC^q$, of projective spaces and of (irreducible) compact Hermitian symmetric spaces of rank $\geqslant 2$; recently the result of Mok is generalized by S.Matsumura to the case of semipositive holomorphic sectional curvature in \cite{Mat18a,Mat18b,Mat18c}. In order to establish the algebro-geometric counterpart of the main result of \cite{Mok88}, considerations are given to compact Kähler manifolds with nef tangent bundles, whose structures are settled by \cite{DPS94}, modulo the Campana-Peternell conjecture (it conjectures that smooth Fano varieties with nef tangent bundle are rationally homogeneous), by showing that the Albanese map is a locally constant fibration with Fano fibres. Then attention are further paid to smooth projective varieties (or more generally, compact K\"ahler manifolds) with nef anticanonical bundle. By MMP methods, the $3$-dimensional case is extensively studied by Th.Peternell and his collaborators in \cite{PS98,BP04}. Recently the structure theorem for these varieties is fully established in \cite{Cao19,CH19} by applying the method of positivity of direct images and by the previous works \cite{Zhang96,Paun97,Paun01,Zhang05,LTZZ10}; moreover, the result is extended to klt pairs by \cite{CCM19} when the variety is smooth projective. According to the general philosophy of MMP, it is then natural to extend this structure theorem to the klt singular case, i.e. to prove the following conjecture:
\begin{conj}
\label{conj_klt-anti-nef}
Let $X$ be a projective varieties with klt singularities. Suppose that the anticanonical divisor $-K_X$ of $X$ is nef, then up to replacing $X$ by a finite quasi-\'etale cover, the universal cover $\tilde X$ of $X$ can be decomposed into a product
\[
\tilde X\simeq \CC^q\times Z\times F\,,
\]
where $q$ is the augmented irregularity of $X$, $Z$ is a klt projective variety with trivial canonical divisor and $F$ is a rationally connected variety. 
\end{conj}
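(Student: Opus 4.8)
The plan is to reduce the conjecture, via the standard d\'evissage of the Minimal Model Program, to the two structure theorems announced in the abstract. Concretely: normalise the augmented irregularity, split off a factor $\CC^q$ using the locally constant Albanese fibration, reduce to the case where the smooth locus is simply connected, and then split off the factor with numerically trivial canonical class using the MRC decomposition — the third of these being the step that is not yet fully established, and where the main difficulty lies.

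First I would replace $X$ by a finite quasi-\'etale cover on which the irregularity is maximal, so that $q(X)=\tilde q(X)=q$; this is possible by definition of the augmented irregularity, and klt-ness, the nefness of $-K_X$ and this maximality are all preserved under further finite quasi-\'etale covers. Next I would apply the first main theorem: the Albanese morphism $X\to A$ is a locally constant fibration onto an abelian variety of dimension $q$, so $X=(\CC^q\times F)/\pi_1(A)$ for the connected Albanese fibre $F$, and on universal covers this gives $\tilde X\simeq\CC^q\times\tilde F$. Since the fibration is an analytic fibre bundle and $K_A\equiv 0$, adjunction yields $K_X|_F=K_F$, so $F$ is klt with $-K_F$ nef; and a standard argument gives $\tilde q(F)=0$, the Albanese of $X$ already accounting for all the abelian quotients. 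It then remains to split $\tilde F$, after one more finite quasi-\'etale cover, as $Z\times F_0$ with $Z$ klt and $K_Z\sim 0$ and $F_0$ rationally connected.

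Assume for the moment that one can reduce to the case where the smooth locus of $F$ is simply connected. Then the second main theorem applies: the MRC fibration of $F$ is an everywhere-defined morphism realising $F$ as a product $Z\times F_0$ with $F_0$ rationally connected and $Z$ klt and non-uniruled. For such $Z$ the class $K_Z$ is pseudo-effective (by non-uniruledness, passing to a resolution), while $-K_Z$ is nef — it is the restriction of $-K_F$ to the $Z$-factor — so $K_Z\equiv 0$, and after one further cyclic quasi-\'etale cover (the index-one cover) $K_Z\sim 0$; moreover $\tilde q(Z)=0$. Combining the covers of the three steps into a single finite quasi-\'etale cover of the original $X$, one obtains $\tilde X\simeq\CC^q\times Z\times F_0$, which is the desired decomposition, with $F_0$ playing the role of the rationally connected factor.

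The outstanding point — and the reason the statement is phrased as a conjecture — is the reduction assumed above: a klt projective variety $F$ with $-K_F$ nef and $\tilde q(F)=0$ should admit a finite quasi-\'etale cover whose smooth locus is simply connected, equivalently $\pi_1(X_{\mathrm{sm}})$ should be virtually abelian for klt $X$ with $-K_X$ nef. The natural approach is to pass to the tower of quasi-\'etale covers of $F$ dominated by the universal cover of its smooth locus — extending covers across the (codimension $\geqslant 2$) singular set by Zariski--Nagata purity — and to prove that this tower terminates, i.e. that $\pi_1(F_{\mathrm{sm}})$ is finite. I expect this to be the main obstacle: it is the common generalisation of the finiteness of local fundamental groups of klt singularities and of the fundamental groups of rationally connected varieties, and a proof would have to interlace these inputs with a Shafarevich-type analysis — running the MMP on the semi-Fano variety, applying the Albanese theorem of this paper to \emph{every} quasi-\'etale cover of $F$ to kill the abelian quotients (which vanish as $\tilde q(F)=0$), and using the MRC decomposition to control the remainder through the rationally connected fibres.
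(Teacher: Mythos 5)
The statement you are addressing is a \emph{conjecture} in the paper and remains open there; what the paper actually establishes ({\hyperref[ss_fundamental-group_decomposition]{\S \ref*{ss_fundamental-group_decomposition}}}, Theorem \ref{thm_conj-pi1-klt-anti-nef}) is precisely the conditional reduction you outline: Conjecture \ref{conj_pi1-anti-nef} implies Conjecture \ref{conj_klt-anti-nef}, via Theorem \ref{mainthm_Albanese} and Theorem \ref{mainthm_MRC}. Your d\'evissage --- normalise the augmented irregularity, split off $\CC^q$ using the locally constant Albanese fibration, then apply the MRC splitting to the fibre once its smooth locus is simply connected --- is essentially the paper's, and you correctly isolate the fundamental-group statement as the genuine and still-open gap.

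One point where your bookkeeping differs from the paper's, and where your version is slightly looser: you propose to achieve simple connectedness of the fibre's smooth locus by a further finite quasi-\'etale cover of the fibre $F$ itself, and then to ``combine the covers of the three steps into a single quasi-\'etale cover of $X$''; but a quasi-\'etale cover of the fibre does not automatically glue to a cover of $X$ compatible with the diagonal $\pi_1(\Alb_X)$-action, so this step would need an equivariance argument. The paper sidesteps this entirely: assuming $\pi_1(X_{\reg})$ has polynomial growth, Gromov's theorem allows one to pass to a finite \'etale cover on which $\pi_1(X_{\reg})$ is torsion-free nilpotent, and then Theorem \ref{thm_q-Alb-pi1-anti-nef} (the isomorphism $\pi_1(X_{\reg})^{\nilpotent}\simeq\pi_1(\Alb_X)$) together with the homotopy exact sequence of the locally trivial fibration $X_{\reg}\to\Alb_X$ forces $\pi_1(V_{\reg})=\{1\}$ for the Albanese fibre $V$ outright, with no further cover of the fibre. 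Your proposed missing input (finiteness of $\pi_1$ of the smooth locus of the fibre, which has vanishing augmented irregularity) and the paper's (polynomial growth of $\pi_1(X_{\reg})$) are related by Proposition \ref{prop_conj-pi1-anti-nef-Fano-CY}; neither is proved, and both your sketch and the paper leave the conjecture conditional on exactly this point.
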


Moreover, by the klt Beauville-Bogomolov decomposition theorem, established by the successive works \cite{GKP16a,Druel18a,GGK19,HP19}, the factor $Z$ in the decomposition above can be further decomposed as a product of Calabi-Yau varieties and of irreducible symplectic varieties. However, different from the case of varieties with numerically trivial canonical divisor, even in the smooth case one cannot in general get a product structure up to finite (quasi-)\'etale cover for varieties with nef anticanonical divisor due to the appearance of the rationally connected factor, e.g. there are ruled surfaces over an elliptic curve which cannot split into a product of the elliptic curve and $\PP^1$ up to finite étale cover (c.f. \cite[Example 9.1, Example 11.2]{Druel18b} and \cite[Example 6.2]{EIM20}). In order to prove {\hyperref[conj_klt-anti-nef]{Conjecture \ref*{conj_klt-anti-nef}}} we follow the idea of \cite{Cao19,CH19} and intend to show:

\begin{enumerate}
\item The Albanese map $\alb_X: X\dashrightarrow \Alb_X$ is a(n) (everywhere defined) locally constant fibration, that is, an (locally trivial) analytic fibre bundle with connected fibres such that $X$ is equal to the product of the universal cover of $\Alb_X$ by the fibre of $\alb_X$ quotient by a diagonal action of $\pi_1(\Alb_X)$ (c.f. {\hyperref[defn_local-const-fibration]{Definition \ref*{defn_local-const-fibration}}});
\item The fundamental group of $X_{\reg}$ is of polynomial growth, equivalently (by \cite[Main Theorem]{Gro81}), $\pi_1(X_{\reg})$ is virtually nilpotent (i.e. admits a nilpotent subgroup of finite index); 
\item If $\pi_1(X_{\reg})=\{1\}$ then the maximal rationally connected (MRC) fibration of $X$ is an everywhere defined morphism and induces a decomposition of $X$ into a product of a rationally connected variety and of a projective variety with trivial canonical divisor.
\end{enumerate}

The Points 1 and 3 above will be shown in the present work while the Point 2 seems quite difficult, at least the method in \cite{Paun97} does not seem to apply to this case. Apart from trying to prove the Point 2, there is also hope that one can directly prove the {\hyperref[conj_klt-anti-nef]{Conjecture \ref*{conj_klt-anti-nef}}} without studying the fundamental group (or at least by proving something much weaker on the fundamental group), c.f. \cite{CCM19} and 
{\hyperref[ss_MRC_decomp-thm]{\S \ref*{ss_MRC_decomp-thm}}}. In this article, we will prove the following generalized version of point 1 and 3. Recall that a normal projective variety $X$ is called of {\it Fano type} (resp. {\it semi-Fano type}), if there is an effective $\QQ$-divisor $\Delta$ on $X$ such that $(X,\Delta)$ is a klt pair and that the twisted anticanonical divisor $-(K_X+\Delta)$ is ample (resp. nef), c.f. \cite[Definition 2.5, Lemma-Definition 2.6]{PS09}.

\begin{mainthm}
\label{mainthm_Albanese}
Let $X$ be a normal projective variety of semi-Fano type. Then the Albanese map $\alb_X:X\dashrightarrow \Alb_X$ is an everywhere defined locally constant fibration, i.e. $\alb_X$ is an (isotrivial) analytic fibre bundle with fibre $V$ connected such that $X$ is equal to the product of the universal cover of $\Alb_X$ by $F$ quotient by a diagonal action of $\pi_1(\Alb_X)$.
\end{mainthm}

\begin{mainthm}
\label{mainthm_MRC}
Let $X$ be a normal projective variety of semi-Fano type with simply connected smooth locus $X_{\reg}$. Then the MRC fibration of $X$ induces a decomposition of $X$ into a product $F\times Z$ with $F$ rationally connected and $K_Z\sim 0$. 
\end{mainthm}

Let us remark that the local triviality (also known as the "isotriviality", especially in algebraic geometry) of the Albanese map of $X$ is obtained in the work of Z.Patakafalvi and M.Zdanowicz \cite[Corollary 1.17 (Corollary A.14)]{PZ19} under the additional assumption that $X$ is $\QQ$-factorial. The strategy in their paper is to show that every (closed) fibre is isomorphic by proving the numerical flatness of the direct images on every complete intersection curve. In this article, we can use analytic methods to prove more generally the global numerical flatness of the direct images, and thus can obtain the stronger result that the Albanese map is not only locally isotrivial but also a locally constant fibration. 

As a consequence of {\hyperref[mainthm_Albanese]{Theorem \ref*{mainthm_Albanese}}} and {\hyperref[mainthm_MRC]{Theorem \ref*{mainthm_MRC}}} we can reduce {\hyperref[conj_klt-anti-nef]{Conjecture \ref*{conj_klt-anti-nef}}} to the following {\hyperref[conj_pi1-anti-nef]{Conjecture \ref*{conj_pi1-anti-nef}}}. The detailed proof of this reduction will be given in {\hyperref[ss_fundamental-group_decomposition]{\S \ref*{ss_fundamental-group_decomposition}}}.

\begin{conj}
\label{conj_pi1-anti-nef}
Let $X$ be a normal projective variety of semi-Fano type. Then the fundamental group of $X_{\reg}$ is of polynomial growth.
\end{conj}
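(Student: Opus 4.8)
The plan is to deduce the statement from the two Main Theorems by splitting it into two claims: (A) if moreover the augmented irregularity $\tilde q(X)$ vanishes, then $\pi_1(X_{\reg})$ is in fact \emph{finite}; and (B) the general case follows from (A). For (B): by Main Theorem~\ref{mainthm_Albanese} the Albanese map realizes $X$ as a locally constant fibration $X=(\CC^q\times V)/\pi_1(\Alb_X)$ with $q=q(X)$ and connected fibre $V$; the singular locus is horizontal, so $X_{\reg}=(\CC^q\times V_{\reg})/\pi_1(\Alb_X)$ and we obtain
\[
1\longrightarrow \pi_1(V_{\reg})\longrightarrow \pi_1(X_{\reg})\longrightarrow \pi_1(\Alb_X)\longrightarrow 1.
\]
The fibre $V$ is again of semi-Fano type (adjunction along the smooth morphism $X\to\Alb_X$) and satisfies $\tilde q(V)=0$ — a quasi-\'etale cover of $V$ with positive irregularity would, by the locally constant structure, produce one of $X$ — so $\pi_1(V_{\reg})$ is finite by (A); as $\pi_1(\Alb_X)\simeq\mathbf{Z}^{2q}$, the group $\pi_1(X_{\reg})$ is then virtually $\mathbf{Z}^{2q}$, hence of polynomial growth. (Alternatively one allows $\tilde q(V)>0$ and iterates, the dimension dropping each time.)

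For (A), assume $\tilde q(X)=0$ and run the MRC fibration: after flattening and a log-resolution it becomes a morphism $X'\to Y'$ with orbifold base $(Y',\Delta_{Y'})$ recording the multiple fibres, general fibre $F'$ rationally connected and klt. The base $Y'$ is not uniruled; as in the reduction to the Beauville--Bogomolov decomposition one shows that the twisted anticanonical class of $(Y',\Delta_{Y'})$ stays pseudo-effective, so $K_{Y'}+\Delta_{Y'}\equiv 0$, i.e.\ $(Y',\Delta_{Y'})$ is a klt log-Calabi--Yau pair. Its orbifold augmented irregularity is $\le\tilde q(X)=0$ (an orbifold-\'etale cover with $q>0$ would pull back to a quasi-\'etale cover of $X$ with $q>0$), so by the (log/orbifold) Beauville--Bogomolov decomposition \cite{GKP16a,Druel18a,GGK19,HP19} it is, up to quasi-\'etale cover, a product of log-Calabi--Yau and irreducible-symplectic pairs, all with finite orbifold fundamental group; hence $\pi_1^{\mathrm{orb}}(Y',\Delta_{Y'})$ is finite. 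Feeding this together with the finiteness of $\pi_1(F'_{\reg})$ into the orbifold homotopy sequence of $X'\to(Y',\Delta_{Y'})$, and then comparing $\pi_1(X'_{\reg})$ with $\pi_1(X_{\reg})$ via the birational-invariance results for the smooth locus (the discrepancy being governed by the finite local fundamental groups of klt singularities, cf.\ \cite{GKP16a}), one concludes that $\pi_1(X_{\reg})$ is finite.

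The plan thus rests on two inputs that are, respectively, partly known and apparently hard. First, that a rationally connected klt variety $F'$ has finite $\pi_1(F'_{\reg})$: this holds when $F'$ is of Fano type by the work of Xu and of Braun, and one would need either to show that MRC fibres of semi-Fano type varieties are of Fano type, or to establish the finiteness for rationally connected klt varieties directly. Second, a Beauville--Bogomolov-type decomposition for the klt log-Calabi--Yau pair $(Y',\Delta_{Y'})$ arising as the orbifold MRC base, strong enough to pin down its orbifold fundamental group — a log/orbifold version of \cite{GKP16a,Druel18a,GGK19,HP19} that is not available in the generality needed. These two points, together with the $\pi_1$-bookkeeping across the singular, non-proper, multiple-fibre MRC fibration, are where I expect the real difficulty to lie; they constitute the main obstacle.

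For completeness I note the alternative, analytic route modelled on P\u{a}un \cite{Paun97,Paun01}: equip $X$ with approximate K\"ahler--Einstein metrics $\omega_\varepsilon$ with $\mathrm{Ric}(\omega_\varepsilon)\ge-\varepsilon\,\omega_\varepsilon$ and uniformly bounded diameter (available for klt pairs of semi-Fano type from the theory of singular K\"ahler--Einstein metrics), and bound the growth of $\pi_1(X_{\reg})$ by Bishop--Gromov volume comparison on the universal cover as $\varepsilon\to 0$. As remarked in the introduction this does not apply directly: $X_{\reg}$ is non-compact, there is no compact fundamental domain for the $\pi_1(X_{\reg})$-action, the metrics degenerate along the klt locus, and one must pass to the metric completion and show the singularities contribute only boundedly to the growth — which appears essentially as hard as the structural route.
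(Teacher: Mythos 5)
This statement is stated in the paper as an open conjecture, not a theorem: the introduction explicitly says that this point ``seems quite difficult, at least the method in \cite{Paun97} does not seem to apply to this case,'' and the paper never proves it. What the paper does prove are the two \emph{converse}-direction implications: that the conjecture implies the decomposition of {\hyperref[conj_klt-anti-nef]{Conjecture \ref*{conj_klt-anti-nef}}} (via {\hyperref[thm_conj-pi1-klt-anti-nef]{Theorem \ref*{thm_conj-pi1-klt-anti-nef}}}), and that it implies {\hyperref[conj_pi1-Fano-CY]{Conjecture \ref*{conj_pi1-Fano-CY}}} (via {\hyperref[prop_conj-pi1-anti-nef-Fano-CY]{Proposition \ref*{prop_conj-pi1-anti-nef-Fano-CY}}}). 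So there is no proof in the paper to compare yours against, and your proposal should be judged on its own as an attempted proof of an open problem.

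As such, it is not a proof but a reduction to two inputs that are themselves open, and you say so yourself. The more serious of the two is your step (A): for a semi-Fano type variety with vanishing augmented irregularity you need (i) finiteness of $\pi_1(F'_{\reg})$ for the rationally connected klt fibre $F'$, which is known only in the Fano-type case (Braun, Xu), and (ii) an orbifold/log Beauville--Bogomolov decomposition of the MRC base strong enough to bound its orbifold fundamental group. Input (ii) is at least as strong as {\hyperref[conj_pi1-Fano-CY]{Conjecture \ref*{conj_pi1-Fano-CY}}}, which the paper lists as a folklore \emph{open} conjecture and which it derives as a \emph{consequence} of the very statement you are trying to prove; your argument is therefore essentially circular at this point, or at best trades one open problem for a harder orbifold version of another. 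Two further technical gaps: {\hyperref[mainthm_MRC]{Theorem \ref*{mainthm_MRC}}} requires $\pi_1(X_{\reg})=\{1\}$, so the product/fibration structure of the MRC map that you invoke in (A) is not available in your setting (you only have the birational statements of {\hyperref[sec_positivity]{\S \ref*{sec_positivity}}}); and in step (B) the claim $\tilde q(V)=0$ for the Albanese fibre needs an argument that a quasi-\'etale cover of $V$ with positive irregularity can be made equivariant for the monodromy representation so as to descend to a quasi-\'etale cover of $X$ of larger irregularity --- this is plausible but not automatic. Step (B) itself (the homotopy sequence of the locally constant fibration, $\pi_2(\Alb_X)=0$, and the conclusion that an extension of $\ZZ^{2q}$ by a finite group has polynomial growth) is sound and is in fact the same mechanism the paper uses in the opposite direction in {\hyperref[ss_fundamental-group_decomposition]{\S \ref*{ss_fundamental-group_decomposition}}}; the obstruction is entirely concentrated in (A).
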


As to be shown in {\hyperref[sec_fundamental-group]{\S \ref*{sec_fundamental-group}}}, this conjecture extends the Gurjar-Zhang conjecture on the finiteness of the fundamental group of the smooth locus of varieties of Fano type (c.f. \cite{GZ94,GZ95,Zhang95,Sch07,Xu14,GKP16a,TX17}), which is recently settled by L.Braun in \cite{Braun20}. It can also be regarded as a natural generalization of the following folklore conjecture (c.f. \cite{GGK19}):

\begin{conj}
\label{conj_pi1-Fano-CY}
Let $X$ be a klt projective variety with trivial canonical divisor and vanishing augmented irregularity. Then the fundamental group of $X_{\reg}$ is finite.
\end{conj}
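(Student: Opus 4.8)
Note first that $\tilde q(X)=0$ forces $q(X)=0$, so the Albanese of $X$ is trivial and Theorem~\ref{mainthm_Albanese} is vacuous here, while Theorem~\ref{mainthm_MRC} only yields the desired splitting once $\pi_1(X_{\reg})=\{1\}$ is already known --- so a genuinely new ingredient is needed. The plan is to reduce to the two ``building blocks'' of the singular Beauville-Bogomolov decomposition and then to dispose of them. Since $X$ is klt with $K_X\equiv 0$, after replacing it by a suitable finite quasi-\'etale cover one may write, by \cite{GKP16a,Druel18a,GGK19,HP19}, $X\simeq A\times\prod_i Y_i\times\prod_j S_j$ with $A$ an abelian variety, the $Y_i$ Calabi-Yau and the $S_j$ irreducible symplectic; the hypothesis $\tilde q(X)=0$ forces $\dim A=0$. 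Because the singular locus of a product is the union of the two evident pieces, the smooth locus of a product is the product of the smooth loci, so (for this cover) $\pi_1(X_{\reg})$ is the direct product of the groups $\pi_1((Y_i)_{\reg})$ and $\pi_1((S_j)_{\reg})$; moreover passing to a finite quasi-\'etale cover changes $\pi_1$ of the smooth locus only by a subgroup of finite index, since the preimage of $X_{\reg}$ has complement of codimension $\geqslant 2$ in the smooth locus of the cover. Hence it suffices to prove that $\pi_1(X_{\reg})$ is finite when $X$ is a klt Calabi-Yau variety or a klt irreducible symplectic variety.

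The \emph{profinite} version of the statement --- finiteness of $\hat\pi_1(X_{\reg})$ --- should then follow essentially as in \cite{GKP16a}: by their extension theorem for \'etale covers of klt spaces one may, after one further finite quasi-\'etale cover, assume $\hat\pi_1(X_{\reg})\simeq\hat\pi_1(X)$, so that it is enough to bound the degrees of the connected quasi-\'etale covers $X^{(m)}\to X$. Each $X^{(m)}$ is again klt with $K_{X^{(m)}}\equiv 0$ and $\tilde q(X^{(m)})=0$, hence is again Calabi-Yau, resp.\ irreducible symplectic, of the same dimension; the theory of numerically flat reflexive sheaves developed in \cite{GKP16a} then shows that an infinite tower of such covers would eventually produce a cover of positive irregularity, contradicting $\tilde q=0$. (In the irreducible symplectic case, and in the even-dimensional Calabi-Yau case, the bound is already forced by $\chi(\mathcal{O}_{X^{(m)}})=\deg(X^{(m)}/X)\cdot\chi(\mathcal{O}_X)$ together with the fact that $\chi(\mathcal{O})$, computed from reflexive differentials, is a nonzero constant in those cases; the odd-dimensional Calabi-Yau case is the one for which the flat-sheaf input is really needed.)

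What remains --- and is the genuine difficulty --- is to upgrade ``$\hat\pi_1(X_{\reg})$ finite'' to ``$\pi_1(X_{\reg})$ finite'', i.e.\ to prove that $\pi_1(X_{\reg})$ is residually finite; this fails for the fundamental group of a general smooth quasi-projective variety, and it is precisely here that the conjecture remains open. The most promising route seems to be analytic: $X_{\reg}$ carries a canonical Ricci-flat K\"ahler metric, coming from the complex Monge-Amp\`ere equation on klt varieties, whose holonomy lies in $\mathrm{SL}$ (resp.\ $\mathrm{Sp}$); lifting it to the universal cover $\widetilde{X_{\reg}}$, one would like to run a singular de Rham / Cheeger-Gromoll splitting together with a Bogomolov-type holonomy decomposition, so that the absence of a flat factor --- which is where the hypothesis $\tilde q=0$ re-enters --- forces $\widetilde{X_{\reg}}$ to behave like a compact space, and hence $\pi_1(X_{\reg})$ to be finite. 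A purely algebraic approach, modelled on Braun's proof for varieties of Fano type in \cite{Braun20}, is also conceivable, but it would seem to require some effective (non-)vanishing statement not presently available in the Calabi-Yau setting. I expect the main obstacle to be the incompleteness of the Ricci-flat metric along the excised singular set: both the geometric estimates on $\widetilde{X_{\reg}}$ and the propagation of the holonomy decomposition across the singularities require a control that is not currently available in the needed generality.
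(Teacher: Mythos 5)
The statement you are addressing is not proved in the paper: it is stated as {\hyperref[conj_pi1-Fano-CY]{Conjecture \ref*{conj_pi1-Fano-CY}}}, a folklore conjecture attributed to \cite{GGK19}, and it remains open. The paper's only contribution to it is {\hyperref[prop_conj-pi1-anti-nef-Fano-CY]{Proposition \ref*{prop_conj-pi1-anti-nef-Fano-CY}}}, which derives it \emph{conditionally} from {\hyperref[conj_pi1-anti-nef]{Conjecture \ref*{conj_pi1-anti-nef}}} (polynomial growth of $\pi_1(X_{\reg})$) by combining Gromov's theorem, {\hyperref[mainthm_Albanese]{Theorem \ref*{mainthm_Albanese}}} and the isomorphism $\pi_1(X_{\reg})^{\nilpotent}\simeq\pi_1(\Alb_X)$ of {\hyperref[thm_q-Alb-pi1-anti-nef]{Theorem \ref*{thm_q-Alb-pi1-anti-nef}}}. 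So there is no proof in the paper to compare yours against, and your text — candidly titled ``Towards a proof'' — is likewise not a proof.

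As a survey of the state of the art your reductions are mostly sound (the Beauville--Bogomolov splitting after a quasi-\'etale cover, the product structure of the smooth locus, the finite-index behaviour of $\pi_1$ of the smooth locus under quasi-\'etale covers), but two points deserve flagging. First, the profinite statement is \emph{not} ``essentially as in \cite{GKP16a}'' in all cases: as the paper's {\hyperref[rmk_conj_pi1-Fano-CY]{Remark \ref*{rmk_conj_pi1-Fano-CY}}} records, finiteness of $\hat\pi_1(X_{\reg})$ is established in \cite{GGK19} only for irreducible symplectic and \emph{even}-dimensional Calabi--Yau varieties; your parenthetical concession that the odd-dimensional Calabi--Yau case ``really needs'' the flat-sheaf input is an understatement — no argument closing that case is available in the cited literature, so even the profinite version of the conjecture is open there. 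Second, and more importantly, the passage from $\hat\pi_1(X_{\reg})$ finite to $\pi_1(X_{\reg})$ finite, which you correctly identify as the crux, is exactly the open problem; the analytic programme you sketch (singular Ricci-flat metrics, holonomy splitting on the universal cover of $X_{\reg}$) is plausible but unrealized for the reasons you yourself give. In short: no gap is hidden, because you have named the gaps, but nothing here constitutes a proof, and nothing in the paper does either.
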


We will see in {\hyperref[sec_fundamental-group]{\S \ref*{sec_fundamental-group}}} that {\hyperref[conj_pi1-anti-nef]{Conjecture \ref*{conj_pi1-anti-nef}}} implies {\hyperref[conj_pi1-Fano-CY]{Conjecture \ref*{conj_pi1-Fano-CY}}}. In the sequel let us briefly explain the ideas of the proof of {\hyperref[mainthm_Albanese]{Theorem \ref*{mainthm_Albanese}}} and {\hyperref[mainthm_MRC]{Theorem \ref*{mainthm_MRC}}}:
\begin{itemize}
\item First, an easy observation shows that \cite[2.8.Proposition]{Cao19} is still valid even the total space is singular (c.f. {\hyperref[prop_num-flat--local-const]{Proposition \ref*{prop_num-flat--local-const}}}), hence the problem of proving that a fibration is locally constant can be reduced to proving that the direct images of the powers of a relative ample line bundle are numerically flat.
\item By \cite[Proposition 2.9]{CH19} (c.f. {\hyperref[prop_criterion-num-flat]{Proposition \ref*{prop_criterion-num-flat}}}) the proof of the numerical flatness of a reflexive sheaf can be divided into two parts: first, prove that the direct image admits weakly semipositive singular Hermitian metrics; second, prove that the determinant bundle of the direct image sheaf is numerically trivial. The first part can be deduced from the general positivity result of direct image sheaves (c.f. \cite[Theorem 2.2]{CCM19} or {\hyperref[cor_positivity-direct-image]{Corollary \ref*{cor_positivity-direct-image}}}) by using the fact that $-K_X$ is nef, c.f. \cite[Lemma 3.4]{CCM19} or  {\hyperref[prop_positivity-anti-nef]{Proposition \ref*{prop_positivity-anti-nef}}}; while the second part can be established, at least birationally, with the help of the main result of \cite{Zhang05} ({\hyperref[prop_bir-geometry-psi]{Proposition \ref*{prop_bir-geometry-psi}}}), c.f. {\hyperref[prop_det-direct-image-power]{Proposition \ref*{prop_det-direct-image-power}}}.
\item By 
using the method of \cite{LTZZ10} we can prove that the Albanese map of $X$ is flat, then we can further improve the aforementioned birational version of the numerical flatness result and show that the direct image of powers of some relatively very ample line bundle is numerically flat; by {\hyperref[prop_num-flat--local-const]{Proposition \ref*{prop_num-flat--local-const}}} this proves {\hyperref[mainthm_Albanese]{Theorem \ref{mainthm_Albanese}}} .
\item As for {\hyperref[mainthm_MRC]{Theorem \ref*{mainthm_MRC}}}, a similar yet much more subtle argument as that in \cite[\S 3.C ]{CH19} applied to the MRC fibration of $X$ shows that birationally $X$ can be decomposed into a product, which gives rise to a splitting of $T_X$ into direct sum of two algebraically integrable foliations, one having rationally connected Zariski closures of leaves, the other having trivial canonical class. However, $X$ being singular and these foliations being singular, one cannot directly apply \cite[2.11.Corollary]{Hor07}. To overcome this difficulty, we observe that the decomposition implies that the two foliations are weakly regular, then we can use the related results in \cite{Druel17,Druel18b} to show that, up to a $\QQ$-factorial terminal model, the MRC fibration is everywhere defined. In this situation, we can use a similar argument as the one in the proof of {\hyperref[mainthm_Albanese]{Theorem \ref*{mainthm_Albanese}}} to show the numerical flatness of the direct images up to a base change, and finally 
\cite[Lemma 4.6]{Druel18a} permits us to conclude.
\end{itemize}


The structure of the article is as follows: In {\hyperref[sec_preliminary]{\S \ref*{sec_preliminary}}} we recall some basic results on weakly semipositively curved and numerically flat vector bundles as well as their relation to the local constancy of fibre spaces. In {\hyperref[sec_positivity]{\S \ref*{sec_positivity}}} we set up the general setting of the problem, which permits us to treat the two theorems uniformly, then we give the proof of a result of \cite{Zhang05} on the birational geometry of any dominant map from $X$ to a smooth non-uniruled projective variety. Based on this result, we recall some results on the positivity of direct images shown in \cite{CH19,CCM19}; for sake of clarity we reprove some of these results. In {\hyperref[sec_Albanese]{\S \ref*{sec_Albanese}}} we prove the flatness of the Albanese map and deduce the {\hyperref[mainthm_Albanese]{Theorem \ref*{mainthm_Albanese}}}. The {\hyperref[sec_MRC]{\S \ref*{sec_MRC}}} is dedicated to the proof of the splitting theorem of the tangent sheaf and of {\hyperref[mainthm_MRC]{Theorem \ref*{mainthm_MRC}}}. In {\hyperref[sec_fundamental-group]{\S \ref*{sec_fundamental-group}}} we study the Albanese map of the smooth locus $X_{\reg}$ of $X$, and deduce related results on the fundamental groups of $X_{\reg}$, especially prove that the {\hyperref[conj_klt-anti-nef]{Conjecture \ref*{conj_klt-anti-nef}}} can be reduced to the {\hyperref[conj_pi1-anti-nef]{Conjecture \ref*{conj_pi1-anti-nef}}}. The {\hyperref[sec_foliation-num-trivial]{\S \ref*{sec_foliation-num-trivial}}} is added after all the other parts of the article has been finished, where we discuss the foliations with numerically trivial canonical class by following the suggestions of St\'ephane Druel and give an alternative proof of {\hyperref[mainthm_MRC]{Theorem \ref*{mainthm_MRC}}}.

\paragraph{Acknowledgement}
The author would like to thank his thesis advisors Sébastien Boucksom and Junyan Cao for their enormous help to accomplish this work as well as to polish the presentation of the article. He would like to express his gratitude to St\'ephane Druel for helpful discussions on the important work \cite{Druel18b}. He also owes a lot to Chen Jiang and to Andreas H\"oring for helpful suggestions. After the first version of this article is put online, the author has received many helpful comments, and the author would like to thank the the commentators for their precious help: he would like to thank Jie Liu and Maciej Zdanowicz for pointing out the article \cite{PZ19}, to thank Shin-ichi Matsumura for pointing out an error in the previous version of the article and for bringing to the author's attention his work with S.Ejiri and M.Iwai \cite{EIM20}, to thank De-Qi Zhang for pointing out his work with R.V.Gurjar \cite{GZ94,GZ95} and to thank Beno\^it Claudon for pointing out an error in the previous version of the article and for helping the author to correct it. The author would like to take this opportunity to acknowledge the support he has benefited from the ANR project "GRACK" during the preparation of the present article.
\section{Preliminary results}
\label{sec_preliminary}

In this section, let us recall some preliminary results, which are surely well known to experts. 

\subsection{Weakly semipositively curved vector bundles}
\label{ss_preliminary_direct-image}
In this subsection we recall some definitions about (semipositive) singular Hermitian metrics on vector bundles. Throughout this subsection let $W$ be a complex manifold. For a (holomorphic) vector bundle $E$ over $W$, a singular Hermitian metric $h$ on $E$ is given by a measurable family of semipositive definite Hermitian functions on each fibre of $E$ which is non-singular almost everywhere. 
Singular metrics on line bundles have been introduced by J.-P. Demailly in \cite{Dem92}; the vector bundle case first appears explicitly in \cite{Raufi15}, while it is indicated in \cite{Raufi15} that the notion has already been implicitly mentioned in the previous works of B. Berndtsson.
Let $\theta$ be a smooth $(1,1)$-form on $W$, then $h$ is called $\theta$-semipositive if for every open subset $U$ of $W$ and for every local holomorphic section $s\in\Coh^0(W,E^\ast)$ of the dual bundle of $E$, the function $\log|s|_{h^\ast}^2$ is $(-\theta)$-plurisubharmonic ($(-\theta)$-psh), i.e. $\text{dd}^c\log|s|^2_{h^\ast}-\theta$ is a positive current on $U$. 
\begin{itemize}
\item $h$ is called {\it semipositively curved} if it is $\theta$-semipositive for $\theta=0$. When $h$ is a smooth Hermitian metric, then the notion coincide with the classical notion of Griffiths semipositivity. 
\item Suppose that $(W,\omega)$ is a compact K\"ahler manifold with K\"ahler metric $\omega$. Then the vector bundle $E$ is called $\theta$-{\it weakly semipositively curved} if for every $\epsilon>0$ small there exists a singular Hermitian metric $h_\epsilon$ on $E$ which is $(-\epsilon\omega+\theta)$-semipositive. If $\theta$ is a smooth form in the first Chern class of some ($\QQ$-)line bundle $L$, then a $\theta$-semipositive metric is also called {\it $L$-semipositive}. In particular, $E$ is called {\it weakly semipositively curved} if it is $\theta$-weakly semipositively curved for $\theta=0$. 
\end{itemize}
Let us remark that if $E$ is a line bundle on $W$ projective, then being (weakly) semipositively curved is equivalent to being pseudoeffective.

The $\theta$-semipositivity of singular Hermitian metrics is preserved by tensor products (up to multiplying $\theta$), pullback by proper surjective morphisms (up to pulling back $\theta$), and by generically surjective morphisms of vector bundles (thus by symmetric and exterior products, up to multiplying $\theta$). Moreover $\theta$-semipositive singular Hermitian metrics extend (and remaining $\theta$-semipositive) across closed analytic subsets of codimension at least $2$
and across closed analytic subsets of codimension $1$ under the condition that the metric is locally bounded (c.f. \cite[Proposition 2.4]{CH19}). In virtue of the aforementioned extension theorem and of \cite[Corollary 5.5.15, p.~147]{Kob87} one can naturally extend of the notion of $\theta$-semipositive singular Hermitian metrics to torsion free sheaves. 

\begin{rmq}[Comparison with the algebro-geometric notion of weak positivity]
\label{rmq_metric-weak-positivity}
Suppose that $W$ is projective. For a torsion free sheaf $\scrF$ on $W$ projective, being (weakly) semipositively curved implies the weak positivity (in the sense of Nakayama \cite{Nak04,Fuj17}), c.f. \cite{PT18,Pau16}; the reciprocal implication is also expected to be true (but still open), and can be regarded as a singular version of Griffiths's conjecture.
\end{rmq}

The principal aim of introducing the notion of $\theta$-semipositive singular Hermitian metrics is to provide a powerful tool for the study of (the canonical $L^2$ metrics on) the direct images of the twisted relative pluricanonical bundles. In fact, as a result of the Ohsawa-Takegoshi type extension theorem with optimal estimate \cite[Theorem 1.1]{Cao17}, we have the following 
\begin{thm}
\label{thm_positivity-direct-image}
Let $f:V\to W$ be a proper surjective morphism between K\"ahler manifolds with connected general fibre $V_w$ (a fibre space, c.f. {\hyperref[defn_local-const-fibration]{Definition \ref*{defn_local-const-fibration}}}) and let $(L,h_L)$ be a line bundle on $V$ equipped with a singular Hermitian metric $h_L$ such that the curvature current $\Theta_{h_L}(L)\geqslant f^\ast\theta$ for some smooth closed $(1,1)$-form on $Y$. Suppose that there is an $m\in \ZZ_{>0}$ such that $\scrJ(h_L^{1/m}|_{V_w})\simeq\scrO_{V_w}$ for general $w\in W$. Then the canonical $L^2$ metric $g_{V/W\!,L}^{(m)}$ on $f_\ast\scrO_V(mK_{V/W}+L)$ is $\theta$-semipositive on $Y$.  
\end{thm}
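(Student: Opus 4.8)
The plan is to reduce the statement, first to the absolute case $\theta=0$, then to the case where $f$ is a submersion, where it becomes the positivity theorem of Berndtsson for direct images of relative (pluri)canonical bundles; the discriminant locus will then be dealt with by the extension properties of $\theta$-semipositive singular Hermitian metrics recalled above. Since $\theta$-semipositivity is local on $W$, I would first shrink $W$ to a coordinate ball and write $\theta=\text{dd}^c\psi$ for some smooth function $\psi$ on $W$, by the local $\partial\bar\partial$-lemma. Replacing $(L,h_L)$ by $(L,e^{f^\ast\psi}h_L)$ turns the curvature current into $\Theta_{h_L}(L)-f^\ast\theta\geqslant 0$; moreover, $f^\ast\psi$ being constant along the fibres of $f$, this twist multiplies the fibre integrals $\int_{V_w}|u|^{2/m}_{h_L^{1/m}}$ by $e^{\psi(w)/m}$, hence multiplies the canonical $L^2$ metric by $e^{\psi}$, so that, writing $\widetilde g^{(m)}$ for the canonical $L^2$ metric attached to the twisted data, one has $\log|s|^2_{(g_{V/W\!,L}^{(m)})^\ast}=\psi+\log|s|^2_{(\widetilde g^{(m)})^\ast}$ for every local section $s$ of the dual sheaf; thus the semipositive curvature of $\widetilde g^{(m)}$ is precisely the $\theta$-semipositivity of $g_{V/W\!,L}^{(m)}$, and I may assume henceforth that $\theta=0$ and $\Theta_{h_L}(L)\geqslant 0$.

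Next I would restrict everything to the Zariski-dense open subset $W^\circ\subseteq W$ over which $f$ is a submersion, $\scrF:=f_\ast\scrO_V(mK_{V/W}+L)$ is locally free and compatible with base change, and $\scrJ(h_L^{1/m}|_{V_w})\simeq\scrO_{V_w}$; over $W^\circ$ the canonical $L^2$ metric is a genuine, possibly singular, Hermitian metric on the vector bundle $\scrF|_{W^\circ}$, and the induced map $f^\circ\colon f^{-1}(W^\circ)\to W^\circ$ is a proper submersion between K\"ahler manifolds (an open subset of $V$ being again K\"ahler). I would then invoke the positivity theorem of Berndtsson for direct images: for $m=1$ this is the Berndtsson--P\u{a}un theorem for $f^\circ_\ast\scrO_V(K_{V/W}+L)$, while for $m\geqslant 2$ one runs the same argument after rewriting $mK_{V/W}+L=K_{V/W}+\bigl((m-1)K_{V/W}+L\bigr)$ and endowing $(m-1)K_{V/W}+L$ with a suitable singular metric built from $h_L^{1/m}$ and the relative $m$-canonical $L^{2/m}$-data (the device of P\u{a}un--Takayama), whose multiplier ideal is trivial on a general fibre precisely because $\scrJ(h_L^{1/m}|_{V_w})\simeq\scrO_{V_w}$. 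In either case the semipositive curvature of the resulting $L^{2/m}$ metric is obtained from the Ohsawa--Takegoshi extension theorem with optimal estimate \cite[Theorem 1.1]{Cao17}, applied fibrewise: every element of $\scrF_{w_0}$, $w_0\in W^\circ$, extends to a holomorphic section over a neighbourhood of $w_0$ whose $L^{2/m}$-norm is controlled by the fibrewise one with the sharp constant, and this optimal $L^2$-extension property along the fibres of $f^\circ$ is equivalent to $\scrF|_{W^\circ}$ being semipositively curved.

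Finally I would extend the metric, still semipositively curved, across $W\setminus W^\circ$: the extension across the components of codimension $\geqslant 2$ is automatic by \cite[Proposition 2.4]{CH19}, while for the codimension-one components one needs the metric to be locally bounded there, which I would get from a uniform lower bound on the fibre integrals $w\mapsto\int_{V_w}|u|^{2/m}_{h_L^{1/m}}$ near such components, deduced by transporting --- once more via \cite[Theorem 1.1]{Cao17} --- a fixed nonzero $m$-canonical section from a nearby smooth fibre. I expect the main obstacle to lie precisely in this passage through the discriminant, compounded, for $m\geqslant 2$, by the fact that the $L^{2/m}$ metrics on the direct images are not induced by Hermitian inner products; both difficulties are exactly what the optimal-estimate Ohsawa--Takegoshi theorem of \cite{Cao17} is designed to overcome, and combining the three steps yields the $\theta$-semipositivity of $g_{V/W\!,L}^{(m)}$ on all of $W$.
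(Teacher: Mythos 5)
Your proposal is correct and follows essentially the same route as the paper's proof: the local reduction to $\theta=0$ via the $\text{dd}^c$-lemma and a twist of $h_L$ by $e^{f^\ast\psi}$, the P\u{a}un--Takayama rewriting $mK_{V/W}+L=K_{V/W}+\bigl((m-1)K_{V/W}+L\bigr)$ with the relative $m$-Bergman kernel metric, the use of the triviality of $\scrJ(h_L^{1/m}|_{V_w})$ on general fibres, and the optimal Ohsawa--Takegoshi theorem of \cite{Cao17} as the engine. The only cosmetic difference is that you unwind into a Berndtsson-positivity-plus-extension argument what the paper delegates to the cited results of \cite{CP17}, \cite{DWZZ18} and \cite{JWang19}.
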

\begin{proof}
This is essentially proved in \cite[Lemma 5.4]{CP17}, see also \cite[2.8.Proposition]{CH19} and \cite[Theorem 2.2(1)]{CCM19}. For the convenience of the readers, we briefly recall the proof. As in \cite[Proof of Theorem B]{JWang19}, we construct the $m$-Bergman kernel metric $h_{V/W\!,h_L}^{(m)}$ on the twisted relative canonical bundle $mK_{V/W}+L$, and equip the line bundle $L_{m-1}:=(m-1)K_{V/W}+L$ with the metric
\[
h_{L_{m-1}}:=\left(h_{V/W\!,h_L}^{(m)}\right)\ptensor[\frac{m-1}{m}]\otimes h_{L}\ptensor[\frac{1}{m}];
\]
then the metric $g_{V/W\!,L}^{(m)}$ is constructed as the canonical $L^2$ metric on the direct image 
\[
\scrG_{m,L}:=f_\ast\scrO_V(K_{V/W}+L_{m-1})=f_\ast\scrO_V(mK_{V/W}+L).
\]
Since the construction of the $m$-Bergman kernel metric and of the $L^2$ metric is local over $W$ (c.f. \cite{BP08,Pau16,HPS18}), we can assume (by the $dd^c$-lemma) that $\theta$ is given by a weight function $\rho$, i.e. $\theta=dd^c\rho$. Then $h_{1,L}:=h_L\cdot e^{\;\rho\circ f}$ defines a new singular Hermitian metric on $L$ whose curvature current is positive:
\[
\Theta_{h_{1,L}}(L)=\Theta_{h_L}(L)-dd^c(\rho\circ f)=\Theta_{h_L}(L)-f^\ast\theta\geqslant 0.
\]
Now by \cite[Theorem 1.2]{Cao17} the $m$-Bergman kernel metric $h_{V/W\!,h_{1,L}}^{(m)}$ on the twisted relative canonical bundle $mK_{V/W}+L$ is semipositively curved. Now equip the line bundle $L_{m-1}$ with the singular Hermitian metric
\[
h_{1,L_{m-1}}:=\left(h_{V/W\!,h_{1,L}}^{(m)}\right)\ptensor[\frac{m-1}{m}]\otimes h_{L}\ptensor[\frac{1}{m}],
\]
since $\scrJ(h_{1,L}^{1/m}|_{V_w})\simeq\scrJ(h_L^{1/m}|_{V_w})\simeq\scrO_{V_w}$ for general $w\in W$, by \cite[Remark 2.12]{JWang19}, the natural inclusion
\[
f_\ast(\scrO_V(mK_{V/W}+L)\otimes\scrJ(h_{1,L_{m-1}}))\hookrightarrow\scrG_{m,L}
\]
is a generic isomorphism. Thus by \cite[Theorem 9.3]{DWZZ18} or \cite[Theorem 2.6]{JWang19} the canonical $L^2$ metric 
\[
g_{V/W\!,h_{1,L_{m-1}}}=g_{V/W\!,L}^{(m)}\cdot e^{-\rho}
\]
is semipositively curved. In other word, for every local section $s$ of the dual sheaf of $\scrG_{m,L}$\,, we have
\[
0\leqslant dd^c\log|s|_{g_{V/W\!,h_{1,L_{m-1}}}^\ast}^2=dd^c\log|s|^2_{g_{V/W\!,L}^{(m)\ast}}-dd^c\rho\,,
\]
which means that the metric $g_{V/W\!,L}^{(m)}$ is $\theta$-semipositive.
\end{proof}

As a result of the above {\hyperref[thm_positivity-direct-image]{Theorem \ref*{thm_positivity-direct-image}}}, we have:

\begin{cor}[{\cite[Theorem 2.2(2)]{CCM19}}]
\label{cor_positivity-direct-image}
Let $f:V\to W$, $(L,h_L)$ and $m$ as in the {\hyperref[thm_positivity-direct-image]{Theorem \ref*{thm_positivity-direct-image}}}. Assume further that $f$ is projective, $V$ and $W$ are compact and $L$ is $f$-big. Then for any nef line bundle $N$ on $V$, the direct image sheaf $f_\ast\scrO_V(mK_{V/W}+N+L)$ is $\theta$-weakly positively curved. 
\end{cor}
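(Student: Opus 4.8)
The plan is to deduce the corollary from {\hyperref[thm_positivity-direct-image]{Theorem \ref*{thm_positivity-direct-image}}} applied to the line bundle $N+L$: I would construct, for each small $\epsilon>0$, a singular Hermitian metric $h_\epsilon$ on $N+L$ with $\Theta_{h_\epsilon}(N+L)\geqslant f^\ast\theta_\epsilon$ for some smooth closed $(1,1)$-form $\theta_\epsilon$ on $W$ satisfying $\theta_\epsilon\geqslant\theta-\epsilon\omega$ (here $\omega$ is the fixed K\"ahler metric on $W$), and with $\scrJ(h_\epsilon^{1/m}|_{V_w})\simeq\scrO_{V_w}$ for general $w\in W$. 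Granting this, {\hyperref[thm_positivity-direct-image]{Theorem \ref*{thm_positivity-direct-image}}} applied to $(N+L,h_\epsilon)$ and the same $m$ shows that the canonical $L^2$ metric on $f_\ast\scrO_V(mK_{V/W}+N+L)$ is $\theta_\epsilon$-semipositive, hence $(-\epsilon\omega+\theta)$-semipositive; letting $\epsilon\to 0$ gives precisely that this direct image is $\theta$-weakly positively curved.

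To build $h_\epsilon$ I would first fix a K\"ahler metric $\omega_V$ on $V$ as well. Since $f$ is projective and $L$ is $f$-big, the relative Kodaira lemma provides a decomposition $L\sim_{\QQ}A+E+f^\ast B$ with $A$ an $f$-ample $\QQ$-divisor on $V$, $E$ an effective $\QQ$-divisor on $V$, and $B$ a $\QQ$-line bundle on $W$; compactness of $V$ together with $f$-ampleness of $A$ gives a smooth metric $h_A$ on $A$ and a constant $C>0$ with $\Theta_{h_A}(A)\geqslant\omega_V-Cf^\ast\omega$. For $\delta\in(0,1)$, use the nefness of $N$ to pick a smooth metric $h_{N,\delta}$ on $N$ with $\Theta_{h_{N,\delta}}(N)\geqslant-\delta\omega_V$, and equip $(1-\delta)L$, $\delta A$, $\delta E$ and $\delta B$ respectively with $h_L^{\otimes(1-\delta)}$, $h_A^{\otimes\delta}$, the canonical singular metric of $\delta E$ (curvature $\delta[E]\geqslant 0$), and a fixed smooth metric $h_B^{\otimes\delta}$. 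Transporting the resulting product metric on the $\QQ$-divisor $N+(1-\delta)L+\delta A+\delta E+\delta f^\ast B$, which is $\QQ$-linearly equivalent to $N+L$, along that equivalence yields a singular Hermitian metric $h_\delta$ on $N+L$ with
\[
\Theta_{h_\delta}(N+L)\;\geqslant\;f^\ast\big((1-\delta)\theta-\delta C\omega+\delta\,\Theta_{h_B}(B)\big)\;=:\;f^\ast\theta_\delta .
\]
Since $\theta$ and $\Theta_{h_B}(B)$ are fixed smooth forms on the compact $W$, one has $\theta_\delta\geqslant\theta-\epsilon\omega$ whenever $\delta$ is small enough relative to $\epsilon$.

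The one point that needs real care — and the step I expect to be the main obstacle — is the persistence of the multiplier ideal condition for $h_\delta$. For general $w$ the fibre $V_w$ is not contained in the support of $E$, and the factors $h_{N,\delta}$, $h_A^{\otimes\delta}$, $h_B^{\otimes\delta}$ being smooth contribute nothing, so $\scrJ(h_\delta^{1/m}|_{V_w})$ is the multiplier ideal on $V_w$ of the weight $\tfrac{1-\delta}{m}\varphi_L+\tfrac{\delta}{m}\log|s_E|^2$ restricted to $V_w$, where $\varphi_L$ is a local weight of $h_L$ (psh along $V_w$ because $f^\ast\theta|_{V_w}=0$) and $s_E$ a local equation of $E$. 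From $\scrJ(\tfrac1m\varphi_L|_{V_w})\simeq\scrO_{V_w}$ the strong openness theorem gives $e^{-\frac1m\varphi_L|_{V_w}}\in L^p_{\mathrm{loc}}$ for some $p>1$, which I would arrange to be uniform for $w$ in a dense open subset of $W$; since $\varphi_L|_{V_w}$ is locally bounded above this forces $e^{-\frac{1-\delta}{m}\varphi_L|_{V_w}}\in L^p_{\mathrm{loc}}$ as well, and a H\"older estimate against $|s_E|^{-2\delta/m}\in L^q_{\mathrm{loc}}$ ($\tfrac1p+\tfrac1q=1$) — legitimate as soon as $\delta$ is so small that $\tfrac{\delta q}{m}E|_{V_w}$ has trivial multiplier ideal — shows the product is locally integrable, i.e. $\scrJ(h_\delta^{1/m}|_{V_w})\simeq\scrO_{V_w}$. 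Since $p$, and hence $q$, depends only on $h_L$ and $m$ and not on $\delta$, for each $\epsilon>0$ one may take $\delta$ small enough to meet simultaneously $\theta_\delta\geqslant\theta-\epsilon\omega$ and this integrability requirement, and set $h_\epsilon:=h_\delta$; the delicate part is exactly guaranteeing that the auxiliary exponent from strong openness is uniform and independent of the approximation parameter, whereas everything else is a routine interplay of the nefness of $N$, the $f$-bigness of $L$ and {\hyperref[thm_positivity-direct-image]{Theorem \ref*{thm_positivity-direct-image}}}.
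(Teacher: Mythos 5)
Your proposal is correct and follows essentially the same route as the paper: the paper likewise interpolates $h_\epsilon:=h_L^{1-\epsilon}\otimes h^\epsilon\otimes g_\epsilon$, where $h$ is a singular metric on $L$ with $\Theta_h(L)+f^\ast(\omega_W-\theta)\geqslant\omega_V$ (exactly what your relative Kodaira-lemma decomposition $L\sim_{\QQ}A+E+f^\ast B$ produces) and $g_\epsilon$ is an almost-positive smooth metric on the nef bundle $N$, then checks $\Theta_{h_\epsilon}\geqslant f^\ast(-\epsilon\omega_W+\theta)$ and the triviality of $\scrJ(h_\epsilon^{1/m}|_{V_w})$ for $\epsilon$ small before invoking {\hyperref[thm_positivity-direct-image]{Theorem \ref*{thm_positivity-direct-image}}}. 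The multiplier-ideal persistence you flag as delicate is handled in the paper by the same openness principle (stated as ``for $\epsilon$ sufficiently small with respect to $h$''), which your strong-openness-plus-H\"older argument simply makes explicit.
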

\begin{proof}
For the convenience of the readers, we briefly recall the proof. Since $L$ is $f$-big, there is a singular Hermitian metric $h$ on $L$ such that $\Theta_h(L)+f^\ast(\omega_W-\theta)\geqslant\omega_V$ as current for some K\"ahler form $\omega_W$ on $W$ (such that $\omega_W$ is still a K\"ahler form) and for some K\"ahler form $\omega_V$ on $V$. Since $N$ is nef, there are smooth Hermitian metrics $(g_\delta)_{\delta>0}$ on $N$ such that $\Theta_{g_\delta}(N)+\delta\omega_V\geqslant 0$.  Now consider the singular Hermitian metric
\[
h_\epsilon:=h_L^{1-\epsilon}\otimes h^\epsilon\otimes g_\epsilon
\]
on the line bundle $L\otimes N$. Then for $\epsilon$ sufficiently small (with respect to $h$) we have 
\[
\scrJ(h_\epsilon^{1/m}|_{V_w})=\scrJ(h_L^{(1-\epsilon)/m}\otimes h^{\epsilon/m}|_{V_w})\simeq\scrO_{V_w}
\]
for general $w\in W$. And by a direct computation we have
\[
\Theta_{h_\epsilon}(L\otimes N)\geqslant f^\ast(-\epsilon\omega_W+\theta). 
\]
Then {\hyperref[thm_positivity-direct-image]{Theorem \ref*{thm_positivity-direct-image}}} implies that $f_\ast\scrO_V(mK_{V/W}+N+L)$ is $\theta$-weakly semipositively curved. 
\end{proof}

\subsection{Numerical flatness and locally constant fibrations}
\label{ss_preliminary_local-constancy}

In this subsection we recall the notion of numerically flat vector bundles as well as its relation to the local constancy of fibre spaces; then we recall a fundamental criterion for numerical flatness.
First let us define the numerical flatness for vector bundles on compact K\"ahler manifolds (c.f. \cite[Definition 1.9 \& Definition 1.17]{DPS94}):
\begin{defn}
\label{defn_num-flat}
Let $W$ be a compact complex manifold. A holomorphic vector bundle $E$ on $W$ is said nef if the line bundle $\scrO_{\PP\!E}(1)$ on $\PP\!E$ is nef (c.f. \cite[Definition 1.2]{DPS94} for the more general definition of nefness of holomorphic line bundles on (non-necessarily algebraic) compact complex manifolds). The vector bundle $E$ is said to be numerically flat if both $E$ and its dual $E^\ast$ are nef. 
\end{defn}

As shown in \cite[Proposition 1.14 \& Proposition 1.15]{DPS94}, nefness of vector bundles is preserved by tensor products, by surjection of vector bundles, by pullbacks via surjective morphisms, and thus by symmetric and exterior products. Moreover, by \cite[Theorem 1.18]{DPS94}, \cite[Corollary 3.10]{Sim92} (c.f. also \cite[Ch.6, Theorem V]{Deng17b}) and \cite[Lemma 4.3.3]{Cao13}, we have the following structure result on numerically flat vector bundles:
\begin{thm}
\label{thm_DPS-Simpson_num-flat}
Let $W$ be a compact K\"ahler manifold and let $E$ be a numerically flat vector bundle on $W$. Then we have:
\begin{itemize}
\item[\rm(a)] $E$ admits a filtration 
\[
\{0\}=E_0\subsetneqq E_1\subsetneqq E_1\subsetneqq\cdots\subsetneqq E_k=E.
\]
where the $E_i$ are vector bundles and the quotients $E_{i+1}/E_i$ are irreducible Hermitian flat vector bundles, that is, induced by irreducible unitary representations $\pi_1(X)\to \Unitary(r_i)$.
\item[\rm(b)] $E$ is isomorphic to the underlying holomorphic vector bundle of a local system $L$, such that the natural Gauss-Manin connection $\nabla$ on $L$ is compatible with the filtration in {\rm(a)} and induces flat connections on the quotients $E_{i+1}/E_i$. In particular, every section of $\Coh^0(X,E)$ is parallel with respect to $\nabla$. 
\end{itemize}
\end{thm}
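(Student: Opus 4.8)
The plan is to first draw out the numerical consequences of numerical flatness, then obtain the filtration~(a) essentially as in \cite[Theorem~1.18]{DPS94}, and finally upgrade this to the local system statement~(b) by a Hodge-theoretic argument on extension classes, using \cite{Sim92,Deng17b,Cao13}. For the numerical step, fix any Kähler class $[\omega]$ on the $n$-dimensional manifold $W$; since both $E$ and $E^\ast$ are nef one has $c_1(E)\cdot[\omega]^{n-1}\geqslant 0$ and $-c_1(E)\cdot[\omega]^{n-1}=c_1(E^\ast)\cdot[\omega]^{n-1}\geqslant 0$, so $c_1(E)\cdot[\omega]^{n-1}=0$ for every Kähler class, which forces $c_1(E)=0$ in $\Coh^2(W,\RR)$. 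Moreover every torsion-free quotient sheaf of $E$ has nonnegative slope (being, off a set of codimension $\geqslant 2$, a quotient bundle of the nef bundle $E$), so $E$ is $\omega$-semistable of slope $0$; and the computation on $\PP\!E$ in \cite{DPS94} gives in addition $c_2(E)\cdot[\omega]^{n-2}=0$.

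For step two (part~(a)) I would induct on the rank of $E$, following \cite[Theorem~1.18]{DPS94}. If $E$ is $\omega$-stable, then being semistable of slope $0$ with $c_1(E)=0$ and $c_2(E)\cdot[\omega]^{n-2}=0$ it admits, by the Uhlenbeck--Yau theorem on compact Kähler manifolds, a Hermitian--Einstein metric $h$ with vanishing Einstein constant; the Lübke-type identity $\int_W\bigl(2\,c_2(E)-\tfrac{r-1}{r}c_1(E)^2\bigr)\wedge\omega^{n-2}=0$ then forces the trace-free part of the curvature of $(E,h)$ to vanish, so $(E,h)$ is projectively flat, and since $c_1(E)=0$ (so that $\det E\in\mathrm{Pic}^0(W)$ carries a flat unitary metric) the bundle $E$ is Hermitian flat, hence induced by an irreducible unitary representation $\pi_1(W)\to\Unitary(r)$. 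If $E$ is not stable, \cite{DPS94} produces a saturated subsheaf of slope $0$ which is in fact a subbundle $E_1\subset E$ with $E_1$ and $E/E_1$ again numerically flat; arranging the first graded piece to be irreducible and applying the induction hypothesis to $E/E_1$ yields the filtration with irreducible Hermitian flat quotients.

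For step three (part~(b)) it remains to see that $E$ itself --- not only $\mathrm{gr}\,E$ --- is the holomorphic bundle underlying a local system $\mathcal{L}$ whose flat connection $\nabla$ is compatible with the filtration of step two. I would argue by induction on the length $k$ of the filtration, the case $k=1$ being step two. Writing $0\to E_{k-1}\to E\to G\to 0$ with $G$ Hermitian flat and $E_{k-1}$, by induction, underlying a local system, the extension is classified by a class in $\Coh^1\!\bigl(W,\scrO_W(G^\ast\!\otimes E_{k-1})\bigr)$, and $G^\ast\otimes E_{k-1}$ is again numerically flat, hence underlies a local system $\mathcal{M}$. The key input --- which is precisely the content of \cite[Corollary~3.10]{Sim92} together with \cite[Ch.~6, Theorem~V]{Deng17b} and \cite[Lemma~4.3.3]{Cao13} --- is that for such an $\mathcal{M}$ the natural map $\Coh^1\!\bigl(W,\mathcal{M}\bigr)\to\Coh^1\!\bigl(W,\scrO_W(\mathcal{M})\bigr)$ from flat (Betti) to Dolbeault cohomology is surjective; this is Hodge theory for unitary local systems, propagated through the filtration. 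Lifting the extension class along this map produces an extension of local systems $0\to E_{k-1}\to\mathcal{L}\to G\to 0$ whose underlying holomorphic bundle is $E$ and whose connection $\nabla$ restricts to the given connection on $E_{k-1}$ and induces the flat connection of $G$ on the quotient; iterating gives compatibility with the whole filtration. Finally, for a holomorphic section $s\in\Coh^0(W,E)$ one has $\nabla^{0,1}s=\bar\partial_E s=0$, and using the filtration together with the classical fact that a holomorphic section of a Hermitian flat bundle has constant pointwise norm (by plurisubharmonicity of $\log|s|^2$) and is therefore parallel, a descending induction along the filtration shows $\nabla s=0$.

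The main obstacle is step three: passing from the flatness of $\mathrm{gr}\,E$ to that of $E$ itself, i.e. the surjectivity of Betti onto Dolbeault cohomology for an iterated extension of unitary local systems. This is exactly where the harmonic-bundle theory of Simpson --- packaged in \cite{Deng17b,Cao13} --- is genuinely needed; everything else is bookkeeping on top of \cite{DPS94} and the Uhlenbeck--Yau theorem.
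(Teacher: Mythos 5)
The paper does not actually prove this statement: it is quoted as a combination of \cite[Theorem 1.18]{DPS94}, \cite[Corollary 3.10]{Sim92}, \cite[Ch.~6, Theorem V]{Deng17b} and \cite[Lemma 4.3.3]{Cao13}, which are exactly the sources you invoke. Your reconstruction --- the Uhlenbeck--Yau/L\"ubke argument of \cite{DPS94} for the filtration in (a), and the Simpson-type Hodge-theoretic lifting of holomorphic extension classes to flat ones for (b), with the surjectivity of de Rham onto Dolbeault $\Coh^1$ for iterated extensions of unitary local systems taken from the cited references --- is the standard argument behind those citations and is correct at the same level of detail as the paper's own treatment.
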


Next let us define:
\begin{defn}
\label{defn_local-const-fibration}
Let $f: V\to W$ be {\it a(n) (analytic) fibre space}, that is, a proper surjective morphism between complex analytic varieties with connected fibres. We call $f$ a {\it locally constant fibration} if $f$ is a locally trivial fibre bundle with fibre $F$ and there is a representation $\rho:\pi_1(W)\to\Aut(F)$ such that $V$ isomorphic to the quotient of $\widetilde W\times F$ by the action of $\pi_1(W)$ given by $\gamma\cdot(w,z)=(\gamma\cdot w,\rho(\gamma)z)$ where $\widetilde W$ denotes the universal cover of $W$.
\end{defn}

\begin{rmq}
\label{rmk_defn_lcf}
In the definition above, we see that $\pi_1(W)$ acts diagonally on $\widetilde W\times F$. Hence if we suppose in addition that $V$ is normal, then the natural decomposition $T_{\widetilde W\times F}\simeq\pr_1^\ast T_{\widetilde W}\oplus\pr_2^\ast T_F$ induces a splitting of the tangent sheaf of $V$ into foliations. 
\end{rmq}

As a corollary of {\hyperref[thm_DPS-Simpson_num-flat]{Theorem \ref*{thm_DPS-Simpson_num-flat}}} we have the following proposition which reveals the relation between local constancy of fibre spaces and numerical flatness of direct images (c.f. \cite[2.8.Proposition]{Cao19} and \cite[Proposition 2.8]{CCM19}; c.f. also \cite[4.1.Proposition]{CH17} and \cite[Proposition 4.3.6]{Cao13}):

\begin{prop}
\label{prop_num-flat--local-const}
Let $W$ be a compact K\"ahler manifold and let $f:V\to W$ be a flat projective morphism with connected fibres ($V$ is not necessarily smooth). Suppose that there is a $f$-very ample line bundle $L$ on $V$ such that for every $m>1$ the direct image $E_m:=f_\ast(mL)$ is a numerically flat vector bundle. Then $f$ is a locally constant fibration. 
\end{prop}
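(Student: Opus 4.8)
The plan is to reconstruct the fibration $V\to W$ from the family of numerically flat bundles $E_m = f_*(mL)$ together with their multiplication maps, exploiting the local-system structure provided by Theorem \ref{thm_DPS-Simpson_num-flat}. First I would observe that, since $L$ is $f$-very ample and $f$ is flat with connected fibres, for $m\gg 0$ the natural morphism $V\hookrightarrow \mathbb{P}(E_m)$ realizes $V$ as a closed subvariety of the projective bundle $\mathbb{P}(E_m)$ over $W$ (here I use the convention that $\mathbb{P}(E_m)$ parametrizes rank-one quotients, so that $f_*\mathcal{O}_{\mathbb{P}(E_m)}(1)=E_m$), and the fibre of $V$ over $w$ is the image of the fibrewise embedding $V_w\hookrightarrow \mathbb{P}((E_m)_w)$ by $|mL|_{V_w}|$. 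Flatness guarantees that $E_m$ is locally free and that base change holds, so $\dim (E_m)_w = h^0(V_w, mL|_{V_w})$ is constant and $(E_m)_w = H^0(V_w, mL|_{V_w})$.

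Next I would package all the $E_m$ together. Consider the graded $\mathcal{O}_W$-algebra $\mathcal{A} := \bigoplus_{m\ge 0} E_m$ (with $E_0 = \mathcal{O}_W$), whose relative Proj is $V$ for $m$ large enough along the Veronese. By Theorem \ref{thm_DPS-Simpson_num-flat}(b), each $E_m$ is the holomorphic bundle underlying a local system $\mathbb{L}_m$ with flat Gauss-Manin connection $\nabla_m$, and $H^0(W,E_m)$ consists of $\nabla_m$-parallel sections. The key point is that the multiplication maps $E_a \otimes E_b \to E_{a+b}$ are morphisms of numerically flat bundles, hence (being $\mathcal{O}_W$-linear and compatible with the filtrations by Theorem \ref{thm_DPS-Simpson_num-flat}) they are flat with respect to the Gauss-Manin connections: $\nabla_{a+b}\circ \mu = \mu \circ (\nabla_a \otimes 1 + 1 \otimes \nabla_b)$. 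This is because a numerically flat bundle carries a \emph{canonical} flat connection (unique compatible one coming from the filtration into Hermitian-flat pieces, equivalently the one making it a subquotient of a unitary local system), and morphisms of numerically flat bundles are automatically horizontal for these canonical connections — this follows from rigidity of the underlying representations and is already implicit in the Simpson-type statement quoted. Consequently $\mathcal{A}$ is itself a "local system of graded algebras": on the universal cover $\widetilde{W}$, the pullback $\widetilde{\mathcal{A}}$ is the constant algebra $\mathcal{A}_0 \otimes_{\mathbb{C}} \mathcal{O}_{\widetilde{W}}$ where $\mathcal{A}_0 = \bigoplus_m H^0(V_{w_0}, mL|_{V_{w_0}})$ is the fixed fibre over a basepoint, and $\pi_1(W)$ acts $\mathbb{C}$-linearly and compatibly with the grading and multiplication, i.e. by graded algebra automorphisms of $\mathcal{A}_0$.

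Taking relative Proj then yields $\widetilde{W}\times_W V \simeq \widetilde{W}\times F$ where $F = \operatorname{Proj}\mathcal{A}_0 \simeq V_{w_0}$, and the $\pi_1(W)$-action on the left corresponds on the right to the diagonal action $\gamma\cdot(w,z) = (\gamma\cdot w, \rho(\gamma)z)$, where $\rho\colon \pi_1(W)\to \operatorname{Aut}(F)$ is induced by the graded-algebra action on $\mathcal{A}_0$; since every graded automorphism of $\mathcal{A}_0$ comes from an automorphism of $F=\operatorname{Proj}\mathcal{A}_0$, this $\rho$ is well-defined. Quotienting by $\pi_1(W)$ recovers $V$ as $(\widetilde W\times F)/\pi_1(W)$, which is exactly the definition of a locally constant fibration (Definition \ref{defn_local-const-fibration}); local triviality is immediate since the pullback to $\widetilde W$ is a product. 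The main obstacle I anticipate is justifying rigorously that the multiplication maps are horizontal for the canonical flat connections — i.e. that a morphism of numerically flat vector bundles on a compact Kähler manifold is automatically a morphism of the associated local systems. This requires a careful use of Theorem \ref{thm_DPS-Simpson_num-flat}: one reduces to the graded pieces, where the bundles are Hermitian-flat (unitary local systems), and invokes that any holomorphic bundle map between Hermitian-flat bundles is parallel (its $\bar\partial$- and $\partial$-closedness forces the matrix entries, in flat unitary frames, to be holomorphic and of bounded norm on a compact manifold, hence locally constant), then lifts this through the filtration by a standard extension/induction argument. I would isolate this as a lemma before assembling the Proj construction above.
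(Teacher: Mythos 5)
Your argument is correct and lands in the same place as the paper's proof, but it packages the descent data dually. The paper embeds $V$ into $\PP E_1$ and shows that the \emph{relations} are constant: it proves that $F_m:=p_\ast(\scrI_V\otimes\scrO_{\PP E_1}(m))$ is a numerically flat subbundle of $\Sym^m\!E_1$ via the exact sequence \eqref{eq_ses-direct-image} and \cite[Proposition 1.15]{DPS94}, pulls everything back to the universal cover, and reads off that the defining polynomials of $\widetilde V\subset\widetilde W\times\PP^r$ have constant coefficients in the parallel monomial frame $e_0^{\alpha_0}\cdots e_r^{\alpha_r}$. You instead show that the \emph{multiplication} in the section algebra $\bigoplus_m E_m$ is constant and take relative Proj. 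The two are equivalent, and both rest on the single input that you flag as your main obstacle — which is in fact already supplied by the theorem as quoted: a holomorphic morphism between numerically flat bundles is a global section of the Hom bundle $E_a^\ast\otimes E_b^\ast\otimes E_{a+b}$, which is again numerically flat (duals and tensor products preserve numerical flatness), and {\hyperref[thm_DPS-Simpson_num-flat]{Theorem \ref*{thm_DPS-Simpson_num-flat}}}(b) states precisely that global sections of numerically flat bundles are parallel. No filtration-by-Hermitian-flat-pieces argument is needed; the paper uses the identical fact when it asserts that the images $\eta(f_i)$ of the parallel generators of $\gamma^\ast F_m$ are parallel in $\gamma^\ast\Sym^m\!E_1$. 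The minor trade-offs: the paper's route needs only $E_1$ and a single $F_m$ for $m\gg0$ (relative Serre vanishing), while yours nominally invokes the whole graded algebra and its finite generation; conversely, in your formulation the $\pi_1(W)$-action on $F=\mathrm{Proj}\,\mathcal{A}_0$ is automatically by automorphisms, whereas the paper must argue separately that the monodromy $\rho_1(\gamma)\in\PGL(r+1)$ of $E_1$ preserves the constant fibre $F\subset\PP^r$.
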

\begin{proof}
We will follow the main line of the argument in the proof of \cite[Proposition 2.8]{CCM19}. We nevertheless give some details in order to illustrate how the proof works for $V$ singular. Since $L$ is $f$-very ample, we have an embedding $i:V\to \PP\!E_1$ over $W$ with $i^\ast\scrO_{\PP\!E_1}\!(1)=L$. Let $\scrI_V$ be the ideal of $V$ in $\PP E_1$, we will show that (up to twisting with some power of $\scrO_{\PP E_1}\!(1)$) the generating polynomials of $\scrI_V$ have coefficients being constant functions over $W$.  

By relative Serre vanishing, for $m$ large enough we have a short exact sequence
\begin{equation}
\label{eq_ses-direct-image}
0\to p_\ast(\scrI_V\otimes\scrO_{\PP\!E_1}(m))\to p_\ast(\scrO_{\PP\!E_1}(m))\to E_m=f_\ast(mL)\to 0, 
\end{equation}
where $p$ denotes the natural morphism $\PP\!E_1\to W$. By hypothesis $E_1$ is a numerically flat vector bundle, then by {\hyperref[thm_DPS-Simpson_num-flat]{Theorem \ref*{thm_DPS-Simpson_num-flat}}} it is a local system, equipped with the Gauss-Manin connection $\nabla_{\!E_1}$. Take $\gamma:\widetilde W\to W$ the universal covering of $W$, then $\gamma^\ast E_1$ is trivial. And there are $r+1$ global sections $e_0,\cdots,e_r$ in $\Coh^0(\widetilde W,\gamma^\ast E_1)$ which are parallel with respect to $\nabla_{\!E_1}$ and generate $\gamma^\ast E_1$, where $r:=\rank\!E_1-1$. 

Now set $F_m:=p_\ast(\scrI_V\otimes\scrO_{\PP\!E_1}\!(m))$. The morphism $f$ being flat, $\scrI_V$ is flat over $W$, thus by the same argument as that in \cite[\S I\!I\!I.9, Proof of Theorem 9.9, pp.~261-262]{Har77}, $F_m$ is a vector bundle for $m$ sufficiently large. Then by the short exact sequence \eqref{eq_ses-direct-image} and by \cite[Proposition 1.15]{DPS94}, $F_m$ is numerically flat. Then again by {\hyperref[thm_DPS-Simpson_num-flat]{Theorem \ref*{thm_DPS-Simpson_num-flat}}} $F_m$ is a local system, equipped with the Gauss-Manin connection $\nabla_{\!F_m}$. By the same argument as above, $\gamma^\ast F_m$ is a trivial vector bundle and admits generating global sections $f_1,\cdots,f_{s_m}$ which are parallel with respect to $\nabla_{\!F_m}$, where $s_m:=\rank\!F_m$. 

Consider the inclusion 
\[
\eta:\gamma^\ast F_m\hookrightarrow\gamma^\ast p_\ast\scrO_{\PP\!E_1}\!(m)=\gamma^\ast \Sym^m \!E_1.
\]
By {\hyperref[thm_DPS-Simpson_num-flat]{Theorem \ref*{thm_DPS-Simpson_num-flat}}}, the sections $\eta(f_i)$ are all parallel with respect to the connection $\nabla_{\!\Sym^m\!E_1}$ induce by $\nabla_{\!E_1} $on $\Sym^m\!E_1$. Since $\Sym^m\!E_1$ is generated by the flat global sections 
\[
\left(e_0^{\alpha_0}\cdots e_r^{\alpha_r}\right)_{\alpha_j\in\ZZ_{>0},\alpha_0+\cdots+\alpha_r=m}
\]
we can write, for every $i=1,\cdots,s_m$ 
\[
\eta(f_i)=\sum_{\substack{\alpha=(\alpha_0,\cdots,\alpha_r)\in\ZZ_{>0}^{r+1} \\ |\alpha|=m}}c_{i,\alpha}\cdot e_0^{\alpha_0}\cdots e_r^{\alpha_r}\,,
\]
for some constants $c_{i,\alpha}\in\CC$. This then implies that the embedding of $\widetilde V:=V\underset{W}{\times}\widetilde W$ into $\widetilde W\times\PP^r$ over $\widetilde W$ is defined by polynomials whose coefficients are independent of $w\in \widetilde W$. Hence $\widetilde V$ splits into a product $\widetilde W\times F$. Since $E_1$ is a flat bundle, it is induced by a representation $\rho_1:\pi_1(W)\to\PGL(r+1)$. Let $\gamma\in\pi_1(W)$, then $\rho_1(\gamma)$ sends $V_{w}$ to $V_{\gamma(w)}$ viewed as subvarieties of $\PP^r$. But as seen before, the defining polynomial of $V_w$ in $\PP^r$ is independent of $w$ hence $\rho_1(\gamma)$ can be seen as an element of $\Aut(F)$, and hence a representation $\rho:\pi_1(W)\to\Aut(F)$. By construction $V$ is isomorphic to the quotient of $\widetilde V$ by the action of $\pi_1(W)$, and hence $f$ is locally constant fibration.  
\end{proof}

To finish this subsection let us recall the following numerical flatness criterion, which is proved in \cite[Proposition 2.7]{CCM19} when $W$ is projective and is extended to K\"ahler case by \cite[\S 1, Corollary of Main Theorem]{Wu20}:
\begin{prop}
\label{prop_criterion-num-flat}
Let $W$ be a compact K\"ahler manifold and let $\scrF$ be a reflexive sheaf on $W$. Suppose that $\scrF$ is weakly positively curved and  that $\det\!\scrF$ is numerically trivial. Then $\scrF$ is a numerically flat vector bundle on $W$.  
\end{prop}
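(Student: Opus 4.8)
The plan is to reduce the statement to the known algebraic case proved in \cite[Proposition 2.7]{CCM19} (when $W$ is projective) plus the K\"ahler extension \cite[\S 1, Corollary of Main Theorem]{Wu20}, by carefully unwinding the two hypotheses. The first reduction is to pass from a reflexive sheaf to a vector bundle: since $W$ is smooth, $\scrF$ is locally free away from a closed analytic subset $Z\subseteq W$ of codimension $\geqslant 2$. One shows that the weakly positively curved singular Hermitian metric on $\scrF|_{W\setminus Z}$, together with the numerical triviality of $\det\scrF$, forces $\scrF$ to be already locally free on all of $W$; this is where one invokes the extension results for $\theta$-semipositive singular Hermitian metrics across codimension-$\geqslant 2$ subsets (recalled in \S\ref{ss_preliminary_direct-image}, following \cite[Proposition 2.4]{CH19} and \cite[Corollary 5.5.15]{Kob87}), so that the metric extends over $Z$ and one can run the curvature-positivity argument on a genuine vector bundle. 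Once $\scrF$ is a vector bundle, the heart of the matter is to produce the two nefness statements of Definition \ref{defn_num-flat}: nefness of $\scrF$ and nefness of $\scrF^\ast$.

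The second step is the key one. Nefness of $\scrF$: since $\scrF$ carries a (weak) semipositive singular Hermitian metric, the tautological bundle $\scrO_{\PP\scrF}(1)$ carries an induced weakly semipositive (hence, on a projective or K\"ahler base, pseudoeffective in the appropriate sense) singular metric; combining this with the fact that $\det\scrF\equiv 0$ pins down $c_1(\scrF)=0$, and one concludes $\scrO_{\PP\scrF}(1)$ is nef by the standard argument of \cite{DPS94} adapted to the singular-metric setting (this is precisely the content of \cite[Proposition 2.7]{CCM19} in the projective case; in the K\"ahler case one substitutes the main theorem of \cite{Wu20}, whose corollary is exactly the statement we want). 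Nefness of $\scrF^\ast$: here one uses that $\det\scrF$ being numerically trivial gives $\scrF^\ast\simeq \bigwedge^{r-1}\scrF\otimes(\det\scrF)^{-1}$ with $r=\rank\scrF$, so that $\scrF^\ast$ is a quotient (in fact an exterior power, twisted by a numerically trivial line bundle) of a bundle built from $\scrF$; since nefness is preserved under exterior products, quotients, and tensoring by numerically trivial line bundles (\cite[Proposition 1.14 \& Proposition 1.15]{DPS94}), nefness of $\scrF$ propagates to nefness of $\scrF^\ast$. Having both, $\scrF$ is numerically flat by definition.

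The main obstacle I expect is the first reduction, namely upgrading reflexivity to local freeness: one must be sure the weakly positive singular metric genuinely extends across the codimension-$\geqslant 2$ singular locus of $\scrF$ and that this extension, combined with $c_1=0$, is enough to conclude local freeness rather than merely controlling the bundle in codimension one. In the projective case this is handled in \cite[Proposition 2.7]{CCM19}; for the K\"ahler case one leans on \cite{Wu20}, where the analogue of the Bando--Siu / Demailly--Peternell--Schneider regularity theory has been developed. Modulo citing these two references for the respective cases, the remaining steps — the cohomological identity $\scrF^\ast\simeq\bigwedge^{r-1}\scrF\otimes(\det\scrF)^{-1}$, and the permanence properties of nefness — are formal. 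One should also take care that all the positivity notions ($\theta$-semipositive metrics, weak positivity, pseudoeffectivity of $\scrO_{\PP\scrF}(1)$) are being used with the compatible sign conventions of \S\ref{ss_preliminary_direct-image}, but this is bookkeeping rather than a genuine difficulty.
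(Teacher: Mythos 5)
The paper does not actually prove Proposition \ref{prop_criterion-num-flat}: it recalls it as a known result, proved in \cite[Proposition 2.7]{CCM19} for $W$ projective and extended to the K\"ahler case in \cite{Wu20} --- exactly the two references your argument ultimately leans on, so at that level you and the paper coincide. However, the scaffolding you build around the citation has two soft spots. First, your ``first reduction'' to local freeness is circular: local freeness of $\scrF$ is part of the \emph{conclusion} of the cited results (obtained there via semistability plus Bando--Siu/Uhlenbeck--Yau-type regularity once $c_1$ and $c_2$ are shown to vanish), and it does not follow from merely extending the $\theta$-semipositive singular metric across the codimension-$\geqslant 2$ locus where $\scrF$ fails to be locally free --- a reflexive, non-locally-free sheaf can perfectly well carry such an extended metric, so the extension theorem of \cite[Proposition 2.4]{CH19} gives you nothing toward local freeness. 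Second, the deduction ``$\scrO_{\PP\scrF}(1)$ pseudoeffective together with $c_1(\scrF)=0$ implies $\scrO_{\PP\scrF}(1)$ nef'' is not a formal step: it is precisely the analytic core of \cite[Proposition 2.7]{CCM19} and of \cite{Wu20}, so it cannot be used as an intermediate lemma on the way to citing them. The one genuinely formal and correct piece of your write-up is the passage from nefness of $\scrF$ (with $\det\!\scrF$ numerically trivial) to nefness of $\scrF^\ast$ via $\scrF^\ast\simeq\bigwedge^{r-1}\scrF\otimes(\det\!\scrF)^{-1}$ and \cite[Propositions 1.14--1.15]{DPS94}; that is indeed the standard way to extract numerical flatness once nefness of $\scrF$ is established. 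In short: either cite the two references for the full statement, as the paper does, or be prepared to reproduce their semistability-and-regularity argument; the intermediate reductions you propose do not constitute an independent proof.
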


\section{Positivity and numerical flatness of the direct images}
\label{sec_positivity}
Let $X$ be a klt projective variety with nef anticanonical divisor. In order to give a uniform treatment of the Albanese map and of the MRC fibration of $X$, we prove in this section some general results on the dominant rational mapping from $X$ to any smooth non-uniruled variety $Y$; in particular, by virtue of {\hyperref[prop_num-flat--local-const]{Proposition \ref*{prop_num-flat--local-const}}} we study the direct images of powers of a relatively very ample line bundle on $X$. Before stating these results, let us set up some general notations (see also \cite[Setting 3.1]{CCM19}): 
\begin{setting}
\label{general-setting}
Let $\psi: X\dashrightarrow Y$ be a dominant rational map between projective varieties with $Y$ smooth. Suppose that $X$ is of semi-Fano type, that is, there is an effective divisor $\Delta$ on $X$ such that the pair $(X,\Delta)$ is klt and $-(K_X+\Delta)$ is nef. Let $\phi:M\to Y$ be an elimination of indeterminacy of $\psi$ with $M$ smooth and let $\pi: M\to X$ be the induced (birational) morphism. For convenience, we further assume that the branch locus of $\phi$ is a SNC divisor on $Y$ and that its inverse image on $M$ has SNC support. Let $Y_0$ be the maximal Zariski open of $Y$ such that $\phi$ is flat over $Y_0$ and that for every prime divisor $D$ on $Y_0$ the pullback $\phi^\ast D$ is not contained in the exceptional locus of $\pi$. 
\end{setting}

\begin{center}
\begin{tikzpicture}[scale=2.5]
\node (A) at (0,0) {$Y$.};
\node (B) at (0,1) {$M$};
\node (C) at (1,1) {$X$};
\path[->,font=\scriptsize,>=angle 90]
(B) edge node[left]{$\phi$} (A)
(B) edge node[above]{$\pi$} (C);
\path[dashed, ->,font=\scriptsize,>=angle 90]
(C) edge node[below right]{$\psi$} (A);
\end{tikzpicture}
\end{center}
Write $\Exceptional(\pi)=\sum_{i\in I}E_i=:E$. Since $(X,\Delta)$ is klt, $K_X+\Delta$ is $\QQ$-Cartier and we can write:
\begin{equation}
\label{eq_canonical-pi}
K_{M}+\pi\inv_\ast\!\Delta\sim_{\QQ} \pi^\ast(K_X+\Delta)+\sum_i a_i E_i
\end{equation}
with $a_i>-1$, where $\pi\inv_\ast\Delta$ denotes the strict transform of $\Delta$ via $\pi$. We rewrite the formula above by:
\begin{equation}
\label{eq_canonical-pi-exc}
K_M+\Delta_M\sim_{\QQ}\pi^\ast(K_X+\Delta)+\sum_{i\in I_{>0}}a_iE_i
\end{equation}
where $I_{>0}$ (resp. $I_{>0}$) is the set of indices $i$ such that $a_i>0$ (resp. $a_i<0$) and  
\[
\Delta_M:=\pi\inv_\ast\!\Delta+\sum_{i\in I_{<0}}(-a_i)E_i.
\]
By the klt condition we see that the coefficients of the components in $\Delta_M$ are all $<1$ thus $(M,\Delta_M)$ is klt.

\subsection{Birational geometry of \texorpdfstring{$\psi$}{text}}
Let everything be as in the {\hyperref[general-setting]{General Setting \ref*{general-setting}}}. In this subsection we recall some general results on the birational geometry of $\psi$. They are essentially proved by Qi Zhang in \cite[Main Theorem]{Zhang05}. The following result is explicitly formulated in \cite[Theorem 3.2]{CCM19} for $X$ is smooth.
\begin{prop}
\label{prop_bir-geometry-psi}
Let everything be as in the {\hyperref[general-setting]{General Setting \ref*{general-setting}}} except that we only assume that the pair $(X,\Delta)$ is log canonical (abbr. lc). Suppose further that $Y$ is not uniruled. Then we have:   
\begin{itemize}
\item[\rm(a)] $\kappa(Y)=0$. Moreover, if $N_Y$ is an effective $\QQ$-divisor $\QQ$-linearly equivalent to $K_Y$, then $\phi^\ast N_Y$ is $\pi$-exceptional; in particular, $N_Y$ is contained in $Y\backslash Y_0$.
\item[\rm(b)]  $\Delta$ is horizontal with respect to $\psi$.
\item[\rm(c)] $\pi(\phi\inv(Y\backslash Y_0))$ is of codimension $\geqslant 2$ in $X$. In particular, every $\phi$-exceptional divisor on $M$ is also  $\pi$-exceptional.
\item[\rm(d)] $Y_0$ has the following Liouville property: every global holomorphic function on $Y_0$ is constant. 
\item[\rm(e)] $\psi$ is semistable in codimension $1$ (c.f. {\rm \cite[Definition 1]{Zhang05}}), i.e. for every prime divisor $P$ on $Y_0$\,, write $\phi^\ast P=\sum_{i}c_i P_i$ with $P_i$ being prime divisor
on $\phi\inv(Y_0)$ for every $i$, then $c_i>1$ implies that $P_i$ is $\pi$-exceptional.
\end{itemize}
\end{prop}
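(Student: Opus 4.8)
The plan is to deduce everything from Qi Zhang's main theorem in \cite{Zhang05} by passing to a log resolution of the pair $(X,\Delta)$ and reducing to the situation treated there. Concretely, I would first pass from $\psi$ to the morphism $\phi:M\to Y$, and write the ramification formula \eqref{eq_canonical-pi-exc} together with the analogous relative formula $K_M+\Delta_M\sim_\QQ \phi^\ast K_Y+R$, where $R$ records the relative ramification over $Y$. The point is that $-(K_M+\Delta_M)$ is $\pi$-nef (indeed $\pi$-trivial up to effective exceptional correction, since $-(K_X+\Delta)$ is nef on $X$ and $\pi$ is birational), so $M$ with the boundary $\Delta_M$ is, over $Y$, in exactly the setting needed to run the argument of \cite{Zhang05}: one has a fibration (after Stein factorisation) whose generic fibre carries a log canonical structure with relatively nef anti-log-canonical divisor, mapping to the non-uniruled $Y$.

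For part (a), I would argue that $\kappa(Y)\le 0$ by a standard weak-positivity / semipositivity argument: if $\kappa(Y)\ge 1$ one could produce, via the positivity of $\phi_\ast\scrO_M(m(K_{M/Y}+\Delta_M))$ and the non-negativity forced by nefness of $-(K_X+\Delta)$, a contradiction with $Y$ not being uniruled, pulling $K_Y$-positivity back to positivity of $K_X+\Delta$ on a covering family of curves. Since $Y$ is not uniruled, $\kappa(Y)\ge 0$ by the non-uniruledness (here one uses that a non-uniruled smooth projective variety has pseudoeffective canonical class, by \cite{BDPP13}-type results, or directly that $\kappa(Y)\ge 0$ is not needed and one only needs $K_Y$ pseudoeffective). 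Hence $\kappa(Y)=0$, and then for an effective $N_Y\sim_\QQ K_Y$ one pulls back: $\phi^\ast N_Y$ appears in the ramification formula, and comparing with the fact that $-(K_X+\Delta)$ is nef forces $\pi_\ast\phi^\ast N_Y=0$, i.e. $\phi^\ast N_Y$ is $\pi$-exceptional; since by definition of $Y_0$ no prime divisor over $Y_0$ is $\pi$-exceptional when it dominates a divisor of $Y_0$, this gives $N_Y\subset Y\setminus Y_0$.

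Parts (b)--(e) then follow the template of \cite[Main Theorem]{Zhang05} essentially verbatim, with $\Delta$ playing the role of an extra horizontal boundary. For (b): a vertical component of $\Delta$ would, via the ramification/base-change formula, contribute to making $K_Y$ "more positive" in a way incompatible with $\kappa(Y)=0$ together with nefness of $-(K_X+\Delta)$; so $\Delta$ is horizontal. For (c): the image $\pi(\phi^{-1}(Y\setminus Y_0))$ cannot contain a divisor $D$ of $X$, for then $\phi$ would be, near the generic point of $D$, already flat and $\phi^\ast(\phi(D))$ non-exceptional, contradicting the maximality in the definition of $Y_0$; the in-particular statement is then immediate since a $\phi$-exceptional divisor maps into $Y\setminus Y_0$. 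Part (d), the Liouville property of $Y_0$, is exactly \cite[Main Theorem]{Zhang05}: a non-constant function on $Y_0$ pulls back to $M$, and semistability in codimension one plus $\kappa(Y)=0$ forces it to extend to $Y$, hence to be constant since $Y$ is projective. Part (e) is the semistability-in-codimension-one statement, which is the heart of \cite{Zhang05} and is proved there by the positivity of the relative pluricanonical sheaf combined with a local analysis of the discriminant; the nefness of $-(K_X+\Delta)$ is used, via $-(K_M+\Delta_M)$ being $\pi$-nef, precisely to run that positivity argument.

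I expect the main obstacle to be the careful bookkeeping in the ramification/base-change formulas once the boundary $\Delta$ is present and $X$ is merely klt (not $\QQ$-factorial), in particular making sure that the various exceptional corrections in \eqref{eq_canonical-pi-exc} do not interfere with the positivity input, and checking that the definition of $Y_0$ chosen in the {\hyperref[general-setting]{General Setting \ref*{general-setting}}} is exactly the locus over which Zhang's argument applies. Once the setup is correctly aligned with \cite[Main Theorem]{Zhang05} and \cite[Theorem 3.2]{CCM19}, the individual deductions (a)--(e) are short; the real work is the translation, which is why I would present the proof as a reduction to the cited results with the ramification formula made explicit, rather than reproving the semistability statement from scratch.
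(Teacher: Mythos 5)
Your outlines of (a) and (b) are in the spirit of the paper's proof (weak positivity of $\phi_\ast\scrO_M(k(K_{M/Y}+\Delta_{M,\epsilon}))$ for an auxiliary lc boundary $\Delta_{M,\epsilon}$, intersection with a general complete-intersection movable curve, and \cite{BDPP13} to pass from $K_Y\cdot C_Y=0$ to $\kappa(Y)=0$), but your argument for (c) has a genuine gap, and it sits exactly where the singular case departs from \cite{Zhang05} and \cite{CH19}. A prime divisor $V\subset M$ with $\phi(V)\subseteq Y\backslash Y_0$ falls into two cases: either $\phi(V)$ is a divisor $P$ of $Y$, in which case $\phi^{\ast}P\subseteq\Exceptional(\pi)$ by the definition of $Y_0$ (your "maximality" argument, which works here because the non-flat locus of $\phi$ has codimension $\geqslant 2$ in $Y$); or $V$ is $\phi$-exceptional, i.e.\ $\dim\phi(V)\leqslant\dim Y-2$, so that $V$ lies over the non-flat locus and maximality of $Y_0$ gives you nothing. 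Your proposal only treats the first case, and then declares the "in particular" statement (every $\phi$-exceptional divisor is $\pi$-exceptional) to be "immediate since a $\phi$-exceptional divisor maps into $Y\backslash Y_0$" --- but that is circular: mapping into $Y\backslash Y_0$ is the hypothesis, and $\pi$-exceptionality of such $V$ is precisely what must be proved. The paper handles this case by blowing up $Y$ along $\phi(V)$ to get $\beta_Y:Y_1\to Y$ with $K_{Y_1}\sim K_Y+F_Y$, $\Supp(F_Y)=\Exceptional(\beta_Y)\supseteq\phi_1(V_1)$, and then reapplying part (a) to the composite dominant rational map $X\dashrightarrow Y_1$ (still with non-uniruled target) to conclude that $\phi_1^{\ast}(\beta_Y^{\ast}N_Y+F_Y)$, hence $V$, is $\pi$-exceptional. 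Some such additional input is unavoidable; this is the step the paper explicitly flags as subtle when $X$ is singular.

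The gap propagates to (d) and (e). For (d) you claim a holomorphic function on $Y_0$ "extends to $Y$" via semistability and $\kappa(Y)=0$; there is no such extension across the divisorial part of $Y\backslash Y_0$. The correct route goes through $X$: by (c) the pullback lives on $\pi(\phi^{-1}(Y_0)\backslash E)$, whose complement in the normal compact variety $X$ has codimension $\geqslant 2$, so it extends to $X$ and is constant by Liouville. For (e), deferring to \cite{Zhang05} "essentially verbatim" does not suffice in this setting: the paper must build a Kawamata cover $p_Y:Y'\to Y$, compare $K_{M'/Y'}$ with $p_M^{\ast}K_{M/Y}$ on a resolution $M'$ of the fibre product (tracking divisors exceptional for $M'\to M\times_Y Y'$ and divisors over the non-flat locus), and it invokes (c) at a key moment to kill the $\phi'$-exceptional term; moreover, since $X$ need not be $\QQ$-factorial, pushforwards of divisors to $X$ are not $\QQ$-Cartier and one must argue with effectivity of Weil divisors rather than pseudoeffectivity. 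So the logical order matters: (c) must be established first, by the blow-up argument above, before (d) and (e) can be run.
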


\begin{proof}
When $X$ is smooth, the proposition is established in \cite[Lemma 3.1, Proposition 3.2]{CH19}. In the singular case, the proof becomes a little subtle. For the convenience of the readers, we will briefly present the proof below following ideas from \cite{Zhang05} and \cite{CH19}. The same ideas are also used in the proof of {\hyperref[lemme_anticanonique-nef-plat]{Lemma \ref*{lemme_anticanonique-nef-plat}}} below. 

Up to further blowing-up $M$ and $Y$, we can assume that $\phi$ is smooth outside a SNC divisor $D_Y:=\sum_{j}D_{Y\!,j}$ (called the {\it branching divisor} of $\phi$) and that $\Supp(\phi^\ast D_Y+E)$ is SNC. In addition, let us fix a very ample line bundle $L$ on $X$.

Now take $A_M$ an ample divisor on $M$, then for any $\epsilon\in\QQ_{>0}$ the $\QQ$-divisor $-\pi^\ast(K_X+\Delta)+\epsilon A_M$ is ample since $-(K_X+\Delta)$ is nef; choose an ample $\QQ$-divisor $H_\epsilon$ on $Y$ such that $-\pi^\ast(K_X+\Delta)+\epsilon A_M-\phi^\ast H_\epsilon$ remains ample. Take
\[
\Delta_{M,\epsilon}:=\Delta_M+\frac{1}{k}\cdot\text{general member of the linear series }\left|k\left(-\pi^\ast(K_X+\Delta)+\epsilon A_M-\phi^\ast\!H_\epsilon\right)\right|,
\]
for $k$ sufficiently large and divisible. Then $\Delta_{M,\epsilon}$ is a $\QQ$-divisor with coefficients $\leqslant 1$ and has SNC support. By \cite[Corollary 2.31, pp.~53-55]{KM98} the pair $(M,\Delta_{M,\epsilon})$ is lc, thus by the weak positivity result \cite[Theorem 1.1]{Fuj17}, the direct image $\phi_\ast\scrO_M(k(K_{M/Y}+\Delta_{M,\epsilon}))$ is weakly positive; moreover, since $K_{M/Y}+\Delta_{M,\epsilon}$ is linearly equivalent to $\epsilon A+\sum_{a_i>0}a_i E_i$ over the general fibre of $\phi$, hence $K_{M/Y}+\Delta_{M,\epsilon}$ is relatively big, in particular we have
\[
\phi_\ast\scrO_M(k(K_{M/Y}+\Delta_{M,\epsilon}))\neq 0.
\]
In consequence, the $\QQ$-divisor
\[
K_{M/Y}+\Delta_{M,\epsilon}+\phi^\ast H_\epsilon\sim_{\QQ}-\phi^\ast K_Y+\sum_{i\in I_{>0}}a_i E_i +\epsilon A_M
\]
is $\QQ$-linearly equivalent to an effective $\QQ$-divisor (for details, see the proof of {\hyperref[lemme_anticanonique-nef-plat]{Lemma \ref*{lemme_anticanonique-nef-plat}}}); by letting $\epsilon\to 0$, we see that $-\phi^\ast K_Y+\sum_{a_i>0}a_i E_i$ is pseudoeffective.  

Finally take $H_1\,,\cdots, H_{\dim X-1}$ be general members of the linear series $\left|\pi^\ast L\right|$, and let
\[
C:= H_1\cap\cdots\cap H_{\dim X-1}\,,
\]
then $C$ is a movable curve on $M$, thus 
\[
(-\phi^\ast K_Y+\sum_{i\in I_{>0}}a_i E_i)\cdot C\geqslant 0.
\]
When $a_i>0$ the divisor $E_i$ is $\pi$-exceptional, then the projection formula implies that $E_i\cdot C=0$ for every $i$. Hence we have $\phi^\ast K_Y\cdot C\leqslant 0$. By our hypothesis $Y$ is not uniruled, then by \cite[Corollary 0.3]{BDPP13} $K_Y$ is pseudoeffective, since $C_Y:=\phi_\ast C$ moves in a strongly connecting family (c.f. \cite[\S 0]{BDPP13}), in particular it is movable, thus by \cite[Theorem 0.2]{BDPP13} $K_Y\cdot C_Y\geqslant 0$. But on the other hand, we have seen that $K_Y\cdot C_Y=\phi^\ast K_Y\cdot C\leqslant 0$, hence $K_Y\cdot C_Y=0$; then by \cite[9.8 Theorem]{BDPP13} we have $\kappa(Y)=0$. If $N_Y$ is an effective $\QQ$-divisor $\QQ$-linear equivalent to $K_Y$, then by the projection formula we have 
\[
\pi_\ast\phi^\ast N_Y\cdot L^{\dim X-1}=\phi^\ast N_Y\cdot C=0;
\]
but $L$ being very ample, a fortiori $\pi_\ast\phi^\ast N_Y=0$, meaning that $\phi^\ast\!N_Y$ is $\pi$-exceptional. This proves (a).

For the point (b), note that in the proof of (a), if we set 
\[
\Delta_{M,\epsilon}:=\pi\inv_\ast\!\Delta^{\horizontal}+\sum_{i\in I_{<0}}a_iE_i+\frac{1}{k}\cdot\text{general member of }\left|k\left(-\pi^\ast(K_X+\Delta)+\epsilon A_M-\phi^\ast\!H_\epsilon\right)\right|,\]
with $k$ sufficiently large and divisible, then the same argument as in (a) plus the equality $\phi^\ast\!K_Y\cdot C=0$ shows that $\Delta^{\vertical}\cdot C\leqslant 0$, but $\Delta^{\vertical}$ is effective, then a fortiori $\Delta^{\vertical}=0$, which implies that $\Delta$ is horizontal. Thus we proved (b). 

Now let us prove (c). Take a prime divisor $V$ on $M$ such that $\phi(V)\subseteq Y\backslash Y_0$. By definition of $Y_0$, if $\phi(V)$ is of codimension $1$, then $V$ is automatically $\pi$-exceptional; hence we can suppose that $\phi(V)$ is of codimension $\geqslant 2$, i.e. $V$ is $\phi$-exceptional. Let $\beta_Y: Y_1\to Y$ be a desingularization of the blow-up of $Y$ at $\phi(V)$, then $\phi(V)\subseteq\beta_Y(\Exceptional(\beta_Y))$. Since $Y$ is smooth, we have $K_{Y_1}\sim K_Y+F_Y$ with $F_Y$ effective and $\beta_Y$-exceptional, moreover we have  $\Supp(F_Y)=\Exceptional(\beta_Y)$. Take $M_1$ be a desingularization of the fibre product $M\underset{Y}{\times}Y_1$, with the induced morphisms $\beta_M:M_1\to M$ and $\phi_1:M_1\to Y_1$. And let $V_1$ be the strict transform of $V$ in $M_1$. Then $\phi_1(V_1)\subseteq\Exceptional(\beta_Y)$.

\begin{center}
\begin{tikzpicture}[scale=2.5]
\node (A) at (0,0) {$Y$.};
\node (B) at (0,1) {$M$};
\node (C) at (1,1) {$X$};
\node (A1) at (-1,0) {$Y_1$};
\node (B1) at (-1,1) {$M_1$};
\path[->,font=\scriptsize,>=angle 90]
(B) edge node[left]{$\phi$} (A)
(B) edge node[above]{$\pi$} (C)
(B1) edge node[left]{$\phi_1$} (A1)
(B1) edge node[above]{$\beta_M$} (B)
(A1) edge node[below]{$\beta_Y$} (A);
\path[dashed, ->,font=\scriptsize,>=angle 90]
(C) edge node[below right]{$\psi$} (A);
\end{tikzpicture}
\end{center}

By (b) there exists an effective $\QQ$-divisor $N_Y$ which is $\QQ$-linearly equivalent to $K_Y$. Then $\beta_Y^\ast N_Y+F_Y$ is an effective $\QQ$-divisor $\QQ$-linearly equivalent to $K_{Y_1}$. Apply (a) to the dominant rational map $X\dashrightarrow Y_1$ one sees that $\phi_1^\ast(\beta_Y^\ast N_Y+F_Y)$ is $(\pi\circ\beta_M)$-exceptional. But 
\[
\phi_1(V_1)\subseteq\Exceptional(\beta_Y)=\Supp(F_Y)\subseteq\Supp(\beta_Y^\ast N_Y+F_Y),
\]
therefore $V_1\subseteq \Supp(\phi^\ast(\beta_Y^\ast N_Y+F_Y))$ and thus $V_1$ is also $(\pi\circ\beta_Y)$-exceptional. This implies that $V=\beta_M(V_1)$ is $\pi$-exceptional. Thus we proved (c). 

Point (d) is a simple consequence of (c) by the same argument as \cite[\S 3.A, Remark 3]{CH19}. For convenience of the readers let us briefly recall the proof: let $h:Y_0\to \CC$ is a holomorphic function, then its pullback $\phi^\ast h$ induces a holomorphic function $h_1$ on $\pi(\phi\inv(Y_0)\backslash E)$. By (c) the complement of $\pi(\phi\inv(Y_0)\backslash E)$ in $X$ has codimension $\geqslant 2$. Then $h_1$ extends to a holomorphic function on $X$, which is constant by Liouville's Theorem. Hence $h$ is constant. 

It remains to prove (e). To this end it suffices to show the following statement: for every $j$ write $\phi^\ast D_{Y\!,j}=\sum_{l}m_{j,l}D_{j,l}$, if $m_{j,l}>1$ then $D_{j,l}$ is $\pi$-exceptional. By Kawamata's covering techniques (a Block-Gieseker cover followed by cyclic cover, c.f. \cite[Proposition 4.1.6, Theorem 4.1.10, Theorem 4.1.12, pp.~243-247]{Laz04}) we can construct a flat finite cover $p_Y: Y'\to Y$ such that $p_Y^\ast D_{Y\!,j}=m_{j,l}D_{Y'\!,j}$ for some smooth prime divisor $D_{Y'\!,j}$ on $Y'$ and that $Y'$ is smooth with $\sum_i p_Y^\ast E_i+\sum_{k\neq j}p_Y^\ast D_{Y\!,k}+D_{Y'\!,j}$ being a reduced SNC divisor. By \cite[Proposition 4.1.6]{Laz04} the fibre product $M\underset{Y}{\times}Y'$ is singular along the singular locus of the divisor $\phi^\ast D_{Y\!,j}$, in particular, it is singular along the preimage of $D_{j,l}$ since $m_{j,l}>1$. Take $M'$ a strong desingularization of $M\underset{Y}{\times}Y'$ with induced morphisms $p_M:M'\to M$ and $\phi':M'\to Y'$.   


\begin{center}
\begin{tikzpicture}[scale=2.5]
\node (A) at (0,0) {$Y$.};
\node (B) at (0,1) {$M$};
\node (C) at (1,1) {$X$};
\node (A1) at (-1,0) {$Y'$};
\node (B1) at (-1,1) {$M'$};
\path[->,font=\scriptsize,>=angle 90]
(B) edge node[left]{$\phi$} (A)
(B) edge node[above]{$\pi$} (C)
(B1) edge node[left]{$\phi'$} (A1)
(B1) edge node[above]{$p_M$} (B)
(A1) edge node[below]{$p_Y$} (A);
\path[dashed, ->,font=\scriptsize,>=angle 90]
(C) edge node[below right]{$\psi$} (A);
\end{tikzpicture}
\end{center}

By \cite[Proposition (9), Remark (26)(v\!i\!i)]{Kle80}, $M\underset{Y}{\times}Y'$ is Gorenstein and 
\[
K_{M\underset{Y}{\times}Y'/Y'}\sim \text{pullback of }K_{M/Y}\text{ to }M\underset{Y}{\times}Y'.
\]
over $p_Y\inv(Y_{\plat})$ where $Y_{\plat}\subseteq Y$ denotes the flat locus of $\phi$. By generic flatness and \cite[Example A.5.4, p.~416]{Ful84}, $Y\backslash Y_{\plat}$ is of codimension $\geqslant 2$, then so is $Y'\backslash p_Y\inv(Y_{\plat})$. By \cite[2.3 Proposition]{Reid94} we can write (for details, see the proof of {\hyperref[lemme_anticanonique-nef-plat]{Lemma \ref*{lemme_anticanonique-nef-plat}}})
\[
K_{M'/Y'}\sim_{\QQ} p_M^\ast\!K_{M/Y}+E_{M\underset{Y}{\times}Y'}+E_M-G
\]
where $E_{M\underset{Y}{\times}Y'}$ is a (non-necessarily effective) divisor which exceptional for $M'\to M\underset{Y}{\times}Y'$, $E_M$ is a (non-necessarily effective) divisor such that $\phi'(E_M)\subseteq Y'\backslash p_Y\inv(Y_{\plat})$ (in particular $E_M$ is $\phi'$-exceptional), and $G$ is an effective divisor supported on the preimage of the prime divisors with multiplicity $>1$ in $\phi^\ast D_{Y\!,j}$. In particular, $p_M(G)$ contains $D_{j,l}$. Combine this with the formula \eqref{eq_canonical-pi} we get
\[
K_{M'/Y'}\sim_{\QQ} p_M^\ast\pi^\ast(K_X+\Delta)-p_M^\ast\phi^\ast K_Y+\sum_{\lambda\in\Lambda} b_\lambda E'_\lambda+ E'_{M\underset{Y}{\times}Y'}+E_M-G.
\]
where 
$E'_{M\underset{Y}{\times}Y'}$ is exceptional for $M'\to M\underset{Y}{\times}Y'$ and, for every $\lambda\in\Lambda$, $E'_\lambda$ is prime divisor on $M'$ supported on the strict transform via $M'\to M\underset{Y}{\times}Y'$ of the pullback of $\sum_i E_i$ on $M\underset{Y}{\times}Y'$ with $b_\lambda:=a_{i_\lambda}\cdot\multiplicity_\lambda$ where $i_\lambda$ is the index such that $E_{i_\lambda}=p_M(E_\lambda)$ and 
\[
\multiplicity_\lambda:=\text{multiplicity of the image of }E_\lambda\text{ in the pullback of }E_i\text{ on }M\underset{Y}{\times}Y'.
\]
 By construction of $p_Y$ we see that $\multiplicity_\lambda >1$ if and only if $E_{i_\lambda}$ coincide with a divisor contained in the non-reduced part of $\phi^\ast D_{Y\!,j}$. In particular, for $\lambda\in\Lambda$ such that $\phi(E_{i_\lambda})\not\subset\Supp(D_Y)$ we have $\multiplicity_\lambda=1$ and thus $b_\lambda=a_{i_\lambda}\geqslant-1$. 

Now take $A'$ an ample divisor on $M'$. Since $-K_X$ is nef, for any $\epsilon\in\QQ_{>0}$ the $\QQ$-divisor $-p_M^\ast\pi^\ast K_X+\epsilon A'$ is ample; then choose an ample $\QQ$-divisor $H'_\epsilon$ on $Y'$ such that $-p_M^\ast\pi^\ast K_X+\epsilon A-(\phi')^\ast H_\epsilon$ remains ample. Take
\begin{align*}
\Delta_{M',\epsilon} &:=\sum_{\substack{\phi(E_{i_\lambda})\not\subset\Supp(D_Y) \\ b_\lambda<0}}(-b_\lambda)E'_\lambda \\
&+\frac{1}{k}\cdot\text{general member of the linear series }\left|k\left(-p_M^\ast\pi^\ast K_X+\epsilon A-(\phi')^\ast H_\epsilon\right)\right|,
\end{align*}
for $k$ sufficiently large and divisible. Then $(M',\Delta_{M',\epsilon})$ is a lc pair. Moreover, since the general fibre of $\phi$ and thus of $\phi'$ is smooth, $E'_{M\underset{Y}{\times}Y'}$ is $\phi'$-vertical; $E_M$ and $G$ are $\phi'$-vertical by construction. Therefore $K_{M'/Y'}+\Delta_{M',\epsilon}$ is big on the general fibre of $\phi'$. Hence by the same argument as in the proof of (a) we obtain that the $\QQ$-divisor 
\[
K_{M'/Y'}+\Delta_{M',\epsilon}+(\phi')^\ast H'_\epsilon\sim_{\QQ}-p_M^\ast\phi^\ast K_Y+\sum_{b_\lambda>0}b_\lambda E'_\lambda+E'_{M\underset{Y}{\times}Y'}+E_M-G-\sum_{\substack{b_\lambda\leqslant 0\\ \phi(E_{i_\lambda})\subset\Supp(D_Y)}} (-b_\lambda)E'_\lambda+\epsilon A_{M'}
\]
is $\QQ$-linearly equivalent to an effective $\QQ$-divisor; by letting $\epsilon\to 0$, we see that 
\[
-p_M^\ast\phi^\ast K_Y+E'_{M\underset{Y}{\times}Y'}+\sum_{b_\lambda>0}b_\lambda E'_\lambda+E_M-G-\sum_{\substack{b_\lambda\leqslant 0\\ \phi(E_{i_\lambda})\subset\Supp(D_Y)}}(-b_\lambda)E'_\lambda
\]
is pseudoeffective.  

Finally take $H'_1\,,\cdots, H'_{\dim X-1}$ be general members of the linear series $\left|p_M^\ast\pi^\ast L\right|$, and let
\[
C':= H'_1\cap\cdots\cap H'_{\dim X-1}\,,
\]
then $C'$ is a movable curve on $M'$, thus 
\[
\left(-p_M^\ast\phi^\ast K_Y+\sum_{b_\lambda>0}b_\lambda E'_\lambda+E'_{M\underset{Y}{\times}Y'}+E_M-G-\sum_{\substack{b_\lambda\leqslant 0\\ \phi(E_{i_\lambda})\subset\Supp(D_Y)}}-(b_\lambda)E'_\lambda\right)\cdot C'\geqslant 0.
\]
By construction, $E'_\lambda$ is $(\pi\circ p_M)$-exceptional for $\lambda$ such that $b_\lambda>0$, so is $E'_{M\underset{Y}{\times}Y'}$, hence $E_\lambda\cdot C'=E'_{M\underset{Y}{\times}Y'}\cdot C'=0$ for $\lambda$ such that $b_\lambda>0$. Furthermore, by construction $E_M$ is $\phi'$-exceptional, hence $p_{M\ast}E_M$ is $\phi$-exceptional, then by (c) $E_M$ is $\pi$-exceptional and $E_M\cdot C'=0$. Therefore we have 
\[
\left(\sum_{\substack{b_\lambda\leqslant 0\\ \phi(E_{i_\lambda})\subset\Supp(D_Y)}}(-b_\lambda)E'_\lambda+G\right)\cdot C'\leqslant -p_M^\ast\phi^\ast K_Y\cdot C'=-K_Y\cdot(\phi\circ p_M)_\ast C'.
\]
Since $K_Y$ is pseudoeffective (by assumption $Y$ is not uniruled), $(\phi\circ p_M)_\ast C'$ is movable, we have
\[
\left(\sum_{\substack{b_\lambda\leqslant 0\\ \phi(E_{i_\lambda})\subset\Supp(D_Y)}}(-b_\lambda)E'_\lambda+G\right)\cdot C'\leqslant -K_Y\cdot(\phi\circ p_M)_\ast C'\leqslant 0.
\]
But $G$ is effective, a fortiori $G\cdot C'=0$ and $b_\lambda=0$ or $E'_\lambda\cdot C'=0$ for every $\lambda\in\Lambda_{\vertical}$ such that $\phi(E_{i\lambda})\subset\Supp(D_Y)$. By the projection formula this implies that 
\[
(\pi\circ p_M)_\ast G\cdot L^{\dim X-1}=0.
\]
Since $L$ is very ample, we have $(\pi\circ p_M)_\ast G=0$. In particular, since $p_{M\ast}G\supseteq D_{j,l}$, this implies that $D_{j,l}$ is $\pi$-exceptional, which proves (d). By the way, the same argument shows that $b_\lambda=0$ for every $\lambda\in\Lambda_{\vertical}$ such that $\phi(E_{i_\lambda})\subset\Supp(D_Y)$.  
\end{proof}


\subsection{Positivity and numerical flatness of the direct images}
Throughout this subsection, let everything be as in the {\hyperref[general-setting]{General Setting \ref*{general-setting}}}, and suppose further that $Y$ is not uniruled. The main purpose of this subsection is to study the positivity of the $\phi$-direct images of a sufficiently ample line bundle on $M$. Before stating these results, let us fix some notations: by \eqref{eq_canonical-pi-exc} the $\QQ$-divisor
\begin{equation}
\label{eq_rel-canonical-pi-exc}
-(K_{M/Y}+\Delta_M)+E'\sim_{\QQ} -\pi^\ast(K_X+\Delta)
\end{equation}
is nef, where $E':=\sum_{i\in I_{>0}}a_iE_i-\phi^\ast N_Y$ with $N_Y$ being an effective $\QQ$-divisor $\QQ$-linearly equivalent to $K_Y$ (by {\hyperref[prop_bir-geometry-psi]{Proposition \ref*{prop_bir-geometry-psi}(b)}} such an $N_Y$ exists). By {\hyperref[prop_bir-geometry-psi]{Proposition \ref*{prop_bir-geometry-psi}(a)}} $E'$ is $\pi$-exceptional and the restriction of $E'$ to a general fibre of $\phi$ is effective. The basic result in this subsection is the following (c.f. \cite[Lemma 3.4]{CCM19}):

\begin{prop}
\label{prop_positivity-anti-nef}
Let everything be as in the {\hyperref[general-setting]{General Setting \ref*{general-setting}}} with $Y$ non-uniruled and $E'$ as above. Let $\theta$ be a smooth $(1,1)$-form on $Y$ and let $G$ be a $\phi$-big divisor on $M$ such that $\scrO_M(G)$ admits a singular Hermitian metric $h_G$ such that $\Theta_{h_G}(\scrO_M(G))\geqslant\phi^\ast\theta$. Then for any $q\in\ZZ$ the direct image sheaf $\phi_\ast\scrO_M(q(K_{M/Y}+\Delta_M)+G+pE')$ is $\theta$-weakly semipositively curved for any $p\in\ZZ_{>0}$ sufficiently large with respect to $q$.     
\end{prop}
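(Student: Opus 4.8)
The plan is to reduce the statement to an application of Corollary \ref*{cor_positivity-direct-image} by absorbing the correction divisor $pE'$ into the relative canonical bundle and exploiting the nefness coming from \eqref{eq_rel-canonical-pi-exc}. First I would rewrite the bundle whose direct image we want to control. Using \eqref{eq_rel-canonical-pi-exc}, for any $p$ we have
\[
q(K_{M/Y}+\Delta_M)+G+pE'\sim_{\QQ} q(K_{M/Y}+\Delta_M)+G+p\bigl((K_{M/Y}+\Delta_M)+\pi^\ast(K_X+\Delta)\bigr),
\]
so that, up to choosing $p$ divisible enough, the line bundle splits as $(q+p)(K_{M/Y}+\Delta_M)$ twisted by $G$ and by $p\pi^\ast(-(K_X+\Delta))^{\vee}$; the point is that $-p\pi^\ast(K_X+\Delta)$ is nef for every $p>0$ since $-(K_X+\Delta)$ is nef, and it is the pullback of something on $X$, hence $\phi$-trivial. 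The idea is therefore to play the role of the nef bundle $N$ in Corollary \ref*{cor_positivity-direct-image} with (a multiple of) $-\pi^\ast(K_X+\Delta)$, and the role of the $f$-big bundle $L$ with $G$ plus a small ample perturbation.

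Concretely, the key steps in order: (1) Since $G$ is $\phi$-big and carries a metric $h_G$ with $\Theta_{h_G}(\scrO_M(G))\geqslant\phi^\ast\theta$, I want to show that for $p$ large $q(K_{M/Y}+\Delta_M)+G+pE'$ can be written as $m(K_{M/Y}+\Delta')+N+L'$ where $(M,\Delta')$ is klt, $N$ is nef, $L'$ is $\phi$-big and carries a singular metric with curvature $\geqslant\phi^\ast\theta$ (up to an $\epsilon\omega_Y$ loss) and with $\scrJ(h_{L'}^{1/m}|_{M_y})\simeq\scrO_{M_y}$ for general $y$. The natural choice is $m=p+q$, $N=p\cdot(-\pi^\ast(K_X+\Delta))$ (nef, pullback from $X$), and $L'$ obtained from $G$ together with the $\pi$-exceptional divisor $\sum_{i\in I_{>0}}a_iE_i - \phi^\ast N_Y$ that appears in $E'$; one uses that $E'$ restricted to a general fibre of $\phi$ is effective (Proposition \ref*{prop_bir-geometry-psi}) so that adding it does not destroy the generic triviality of the multiplier ideal, and that $N_Y$ lies in $Y\setminus Y_0$ so the $\phi^\ast N_Y$ term is a pullback that is harmless over the generic point. (2) Apply Corollary \ref*{cor_positivity-direct-image} (equivalently Theorem \ref*{thm_positivity-direct-image} with the $\epsilon\omega_Y$ trick from its proof) to conclude that $\phi_\ast\scrO_M(m(K_{M/Y}+\Delta')+N+L')$ is $\theta$-weakly semipositively curved. (3) Unwind the $\QQ$-linear equivalences to identify this direct image with $\phi_\ast\scrO_M(q(K_{M/Y}+\Delta_M)+G+pE')$; here one must take care that the $E'$-twist (which differs from an honest $\QQ$-Cartier integral divisor) is handled by choosing $p$ and the auxiliary integers divisible enough, and that passing between $\Delta_M$ and the perturbed $\Delta'$ only changes things by a $\phi$-big, generically-trivial-multiplier term.

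The main obstacle I expect is step (1): making the bookkeeping of the $\pi$-exceptional divisors precise so that the multiplier ideal $\scrJ(h_{L'}^{1/m}|_{M_y})$ is genuinely $\scrO_{M_y}$ for general $y$, not merely generically trivial. One must split $E' = \sum_{i\in I_{>0}} a_i E_i - \phi^\ast N_Y$ into its $\phi$-vertical and $\phi$-horizontal parts, use that the horizontal part of $E'$ restricted to a general fibre is effective with klt coefficients (so it can be folded into $\Delta'$ keeping the pair klt), and push the vertical part — together with $\phi^\ast N_Y$ — into the "pullback from $Y$" bucket where it only perturbs $\theta$ by something cohomologically irrelevant after we let the auxiliary $\epsilon\to 0$. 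The nefness input $-(K_X+\Delta)$ nef is used twice: once to guarantee $-p\pi^\ast(K_X+\Delta)$ is a legitimate nef twist, and once (implicitly, via \eqref{eq_rel-canonical-pi-exc}) to know that the relative part $-(K_{M/Y}+\Delta_M)+E'$ is nef, which is what lets the $pE'$ correction be absorbed for large $p$ without losing positivity. Everything else is the standard Ohsawa–Takegoshi/direct-image positivity machinery already recorded in Theorem \ref*{thm_positivity-direct-image} and Corollary \ref*{cor_positivity-direct-image}.
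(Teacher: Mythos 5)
Your proposal follows essentially the same route as the paper: write $q(K_{M/Y}+\Delta_M)+G+pE'=(p+q)(K_{M/Y}+\Delta_M)+G+p\bigl(-(K_{M/Y}+\Delta_M)+E'\bigr)$, observe that the last summand is nef because it is $\QQ$-linearly equivalent to $-p\pi^\ast(K_X+\Delta)$ by \eqref{eq_rel-canonical-pi-exc}, and apply Corollary \ref{cor_positivity-direct-image} with $m=p+q$, $N$ the nef part and $L=G+(p+q)\Delta_M$. The bookkeeping you fear in step (1) is unnecessary, since all of $pE'$ is absorbed into the nef twist $N$ and never enters the multiplier-ideal condition: one only needs $\scrJ\bigl((h_G^{1/(p+q)}\otimes h_{\Delta_M})|_{M_y}\bigr)\simeq\scrO_{M_y}$ for general $y$, which holds for $p$ large because $(M,\Delta_M)$ is klt (also note the sign slip in your first display: $E'\sim_{\QQ}(K_{M/Y}+\Delta_M)-\pi^\ast(K_X+\Delta)$).
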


\begin{proof}
This can be deduced immediately from the {\hyperref[cor_positivity-direct-image]{Corollary \ref*{cor_positivity-direct-image}}}. Let us briefly recall how the proof goes. Let $p\in\ZZ_{>0}$ such that $pE'$ is an integral divisor on $M$, and write
\[
q(K_{M/Y}+\Delta_M)+G+pE'=(p+q)(K_{M/Y}+\Delta_M)+G+\underbrace{(-p(K_{M/Y}+\Delta_M)+pE')}_{\text{nef}}.
\]
Let $h_{\Delta_M}$ be the canonical metric on $\Delta_M$. Since $(M,\Delta_M)$ is still klt, $\scrJ(h_{\Delta_M})\simeq\scrO_M$, thus by \cite[\S 9.5.D, Theorem 9.5.35, pp.~210-211]{Laz04} $\scrJ(h_{\Delta_M}|_{M_y})\simeq\scrO_{M_y}$ for general $y$. Hence
for $p$ sufficiently large  
\[
\scrJ((h_G^{1/(p+q)}\otimes h_{\Delta_M})|_{M_y})\simeq\scrO_{M_y}
\]
for general $y\in Y$. For such a $p$ {\hyperref[cor_positivity-direct-image]{Corollary \ref*{cor_positivity-direct-image}}} implies that $\phi_\ast\scrO_M(q(K_{M/Y}+\Delta_M)+G+pE')$ is $\theta$-weakly semipositively curved.  
\end{proof}

Then let us recall in the sequel some results in \cite{CH19}. We first remark that:
\begin{rmq}
\label{rmk_CCM19}
Let us remark that most of the results below have been essentially contained in \cite{CCM19}. We carefully state and prove them for the following reason: since $X$ is not necessarily smooth (nor $\QQ$-factorial), the pushforward of a (Cartier) $\QQ$-divisor on $M$ via $\pi$ is not necessarily $\QQ$-Cartier, thus in general it does not make sense to talk about pseudoeffectivity of them (\cite{CCM19} does not take care of this point). However, since the effectivity of a Weil divisor still makes sense, we will use this to overcome this difficulty.
\end{rmq}

\begin{prop}[{\cite[Lemma 3.5]{CCM19}}]
\label{prop_more-psef-det}
Let everything be as in the {\hyperref[general-setting]{General Settinng \ref*{general-setting}}} with $Y$ non-uniruled and let $G$ be a $\phi$-big divisor on $M$, then for any ample divisor $A_Y$ on $Y$ and for any integers $c,s\in\ZZ_{>0}$ the $\QQ$-divisor
\[
\pi_\ast\left(G-\phi^\ast\!D_{G,c,1}+\frac{1}{s}\phi^\ast A_Y\right)
\]
is $\QQ$-linearly equivalent to an effective divisor on $X$, where $D_{G,c,1}$ is the $\QQ$-divisor on $Y$ defined by 
\[
D_{G,c,1}:=\frac{1}{r}\cdot\text{the Cartier divisor on }Y\text{ associated to the line bundle }\det\!\phi_\ast\scrO_M(G+cE)
\]
with $r=\rank\!\phi_\ast\scrO_M(G+cE)$. If moreover $\pi_\ast(G-\phi^\ast\!D_{G,c,1})$ is $\QQ$-Cartier on $X$, then it is pseudoeffective; in particular, for $k\in\ZZ_{>0}$ sufficiently large $G-\phi^\ast\!D_{G,c,1}+kE$ is pseudoeffective on $M$. 
\end{prop}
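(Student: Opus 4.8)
The plan is to carry out the argument on the smooth model $M$ and then push forward along $\pi$, which is precisely what circumvents the failure of $\QQ$-factoriality of $X$ flagged in Remark~\ref{rmk_CCM19}. Set $\scrF:=\phi_\ast\scrO_M(G+cE)$, a nonzero torsion free sheaf on the smooth projective variety $Y$, and $r:=\rank\scrF$, so that $D_{G,c,1}=\tfrac1r[\det\scrF]$. Since pushforward of Weil divisors along $\pi$ preserves effectivity and $\QQ$-linear equivalence, and since $cE$ as well as any $\phi$-vertical divisor supported over $Y\setminus Y_0$ or over the branch locus of $\phi$ is $\pi$-exceptional and hence annihilated by $\pi_\ast$, it will suffice to exhibit on $M$ an effective $\QQ$-divisor $\QQ$-linearly equivalent to
\[
(G+cE)-\phi^\ast D_{G,c,1}+\tfrac1s\phi^\ast A_Y+\tfrac1r V
\]
for some effective $\phi$-vertical divisor $V$ supported over $Y\setminus Y_0$ together with the non-reduced fibres of $\phi$ over $Y_0$.

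The core is a determinant comparison of Viehweg type. First I would record that $\scrF$ is weakly positive in the sense of Nakayama--Viehweg; this is the only place the hypothesis is used, and it follows from Proposition~\ref{prop_positivity-anti-nef} (essentially \cite[Lemma 3.4]{CCM19}): the nefness of $-\pi^\ast(K_X+\Delta)\sim_{\QQ}-(K_{M/Y}+\Delta_M)+E'$ makes the direct image of the relevant twist of the relative log-canonical bundle weakly positively curved, and $G+cE$ differs from such a twist by $\phi$-vertical and effective $\pi$-exceptional corrections, which are harmless for weak positivity. Then, for the fixed ample $A_Y$ and every $s$, a sufficiently divisible symmetric power $S^{N}\scrF\otimes\scrO_Y(A_Y)$ ($rs\mid N$) is generically globally generated; passing to the $N$-fold fibre product $M^{(N)}:=M\times_Y\cdots\times_Y M$ with projection $\phi^{(N)}$ and small diagonal $\delta\colon M\hookrightarrow M^{(N)}$, along which $\textstyle\sum_{j}p_j^\ast(G+cE)$ restricts to $N(G+cE)$, and composing the evaluation morphism of the direct image with restriction along $\delta$, one extracts a nonzero section of $\scrO_M\big(N(G+cE)+V_N\big)\otimes\phi^\ast[\det\scrF]^{\otimes(-N/r)}\otimes\phi^\ast\scrO_Y(-A_Y)$, where $V_N\ge 0$ is $\phi$-vertical and records the ramification of $\phi^{(N)}$ along $\delta$; equivalently $N(G+cE)+V_N-\tfrac{N}{r}\phi^\ast[\det\scrF]-\phi^\ast A_Y$ is $\QQ$-linearly equivalent to an effective divisor. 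Dividing by $N$ and setting $s:=N/r$ gives the effective representative required above, with $\tfrac1N V_N\le V$ for a fixed effective $\phi$-vertical divisor $V$, the normalised coefficients $\tfrac1N V_N$ being uniformly bounded in $N$.

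It remains to verify that this $V$ is $\pi$-exceptional, and this is where Proposition~\ref{prop_bir-geometry-psi} enters: the support of $V$ consists of divisors lying over $Y\setminus Y_0$, which are $\pi$-exceptional by Proposition~\ref{prop_bir-geometry-psi}(c), together with the prime divisors occurring with multiplicity $>1$ in the pullback $\phi^\ast P$ of a prime divisor $P$ on $Y_0$, which are $\pi$-exceptional by Proposition~\ref{prop_bir-geometry-psi}(e), i.e.\ by semistability of $\psi$ in codimension $1$. Pushing the effective representative forward by $\pi$ and using $\pi_\ast(cE)=\pi_\ast V=0$ then yields $\pi_\ast\big(G-\phi^\ast D_{G,c,1}+\tfrac1s\phi^\ast A_Y\big)\sim_{\QQ}(\text{effective})$, which is the first assertion.

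For the last assertion, observe that for every $s$ the divisor $(G+cE)+V-\phi^\ast D_{G,c,1}+\tfrac1s\phi^\ast A_Y$ on $M$ is $\QQ$-linearly equivalent to an effective, a fortiori pseudoeffective, divisor; since $\tfrac1s\phi^\ast A_Y$ tends to $0$ and the pseudoeffective cone of $M$ is closed, $(G+cE)+V-\phi^\ast D_{G,c,1}$ is pseudoeffective on $M$, and as $cE,V\ge 0$ are $\pi$-exceptional, choosing $k\in\ZZ_{>0}$ with $kE\ge cE+V$ shows $G-\phi^\ast D_{G,c,1}+kE$ is pseudoeffective on $M$. If moreover $\pi_\ast(G-\phi^\ast D_{G,c,1})$ is $\QQ$-Cartier, write $\pi^\ast\pi_\ast(G-\phi^\ast D_{G,c,1})=(G-\phi^\ast D_{G,c,1})+F$ with $F$ $\pi$-exceptional; then for $k$ large $\pi^\ast\pi_\ast(G-\phi^\ast D_{G,c,1})+(kE-F)=G-\phi^\ast D_{G,c,1}+kE$ is pseudoeffective on $M$, and applying $\pi_\ast$ (which preserves pseudoeffectivity and annihilates $kE-F$) gives that $\pi_\ast(G-\phi^\ast D_{G,c,1})$ is pseudoeffective on $X$. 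The main obstacle is the determinant comparison of the second step — producing a genuine nonzero morphism into a line bundle on $M$, with an explicit $\phi$-vertical error whose normalised coefficients stay bounded, so that Proposition~\ref{prop_bir-geometry-psi}(c),(e) applies and the error disappears under $\pi_\ast$; the weak positivity input and the subsequent bookkeeping with $\pi$-exceptional divisors and the closed pseudoeffective cone are comparatively routine.
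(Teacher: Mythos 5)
There is a genuine gap at the very first step, and it is fatal to the route you chose. The weak positivity of $\scrF=\phi_\ast\scrO_M(G+cE)$ does \emph{not} follow from Proposition \ref{prop_positivity-anti-nef}: that proposition requires $\scrO_M(G)$ to carry a singular metric with curvature $\geqslant\phi^\ast\theta$ and only returns $\theta$-weak positivity, whereas here $G$ is merely $\phi$-big, so the best available curvature bound is $\Theta\geqslant -C\phi^\ast\omega_Y$ for some large $C$, which yields nothing. In fact the weak positivity you assert is simply false in general: replacing $G$ by $G-\phi^\ast H$ for $H$ very ample keeps $G$ $\phi$-big but replaces $\scrF$ by $\scrF\otimes\scrO_Y(-H)$, destroying any weak positivity; note that the conclusion of the proposition is invariant under this twist (since $D_{G,c,1}$ changes by $-H$ as well), so no proof resting on weak positivity of $\scrF$ itself can be correct. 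The correct statement is the ``normalized'' one the proposition encodes, and establishing it is precisely the point — indeed the genuine positivity of the direct images (Corollary \ref{cor_iso-direct-image-Y0}(b)) is deduced \emph{from} this proposition later, so your argument is circular in spirit. Relatedly, you never explain why the section you extract on the fibre product restricts to a nonzero section on the small diagonal: the tautological section coming from $\det\scrF\hookrightarrow\bigotimes^r\scrF$ may well vanish identically there.

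The paper's proof repairs exactly these two points. It uses the determinant inclusion only to produce an auxiliary effective, $\phi^{(r)}$-big divisor $\Delta_0=G^{(r)}+cE^{(r)}+B_1-r(\phi^{(r)})^\ast D_{G,c,1}$ on the desingularized fibre product; it then takes a section $u$ of $\scrO_{M_y}(s(G+cE))$ on a single general fibre, forms $u^{(r)}$ on $M^r_y$, and extends it to all of $M^{(r)}$ by the Ohsawa--Takegoshi-type theorem \cite[Theorem 2.10]{Cao19}. The ampleness needed for that extension is supplied by $st(-K_{M^{(r)}/Y}-\Delta_{M^{(r)}}+B_2)+H_{s,\epsilon}$ — this is the only place the nefness of $-(K_X+\Delta)$ enters, and it is also the source of the unavoidable error term $\frac{1}{s}\phi^\ast A_Y$ in the statement (it is the price of the extension, not an artifact of weak positivity). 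The restriction of the extended section to the diagonal is then nonzero because its value on $M_y$ is $u^{\otimes r}$. Your remaining bookkeeping — $\pi$-exceptionality of the vertical error via Proposition \ref{prop_bir-geometry-psi}(c),(e), the closedness of the pseudoeffective cone for the last assertion — does match the paper and is fine, but it sits on top of an engine that does not run.
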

\begin{proof}
The proof is essentially the same as that of \cite[Theorem 3.4]{CP17} (c.f. \cite[Theorem 3.4]{JWang19} for more details), see also \cite[Proposition 3.15]{Cao19}, \cite[Lemma 3.3]{CH19} and \cite[Lemma 3.5]{CCM19}. We give the detailed proof in order to clarify the problems pointed out in {\hyperref[rmk_CCM19]{Remark \ref*{rmk_CCM19}}}

\paragraph{(A) Construction of the fibre product and of the canonical section.}
Let $Y_{\ff}$ be the Zariski open subset of $Y$ over which $\phi$ is flat and $\phi_\ast\scrO_M(G+cE)$ is locally free. Then $\codim_Y(Y\backslash Y_{\ff})\geqslant 2$ and for every $y\in Y_{\ff}$ the fibre $M_y$ is Gorenstein (c.f. \cite[\S 23, Theorem 23.4, p.~181]{Mat89}). Over $Y_{\ff}$ we have a natural inclusion
\begin{equation}
\label{eq_incl-det-tensor}
\det\!\phi_\ast\scrO_M(G+cE)|_{Y_{\ff}}\simeq\scrO_{Y_{\ff}}(rD_{G,c,1}|_{Y_{\ff}})\hookrightarrow\bigotimes^r\phi_\ast\scrO_M(G+cE)|_{Y_{\ff}}.
\end{equation}
Now we take the $r$-fold fibre product
\[
M^r:=\underbrace{M\underset{Y}{\times}M\underset{Y}{\times}\cdots\underset{Y}{\times}M}_{r\text{ times}},
\]
equipped with natural projections $\pr_i:M^r\to M$ and the natural morphism $\phi^r:M^r\to Y$ such that $\phi\circ\pr_i=\phi^r$ for every $i$. Set
\begin{align*}
G^r &  :=\sum_{i=1}^r\pr_i^\ast G,\\
E^r &:=\sum_{i=1}^r\pr_i^\ast E,\\
\Delta_{M^r} &:=\sum_{i=1}^r\pr_i^\ast\Delta_M. 
\end{align*}
Let $\mu:M^{(r)}\to M^r$ be a strong desingularization of $M^r$ such that $\mu|_{\mu\inv(M^r_{\reg})}$ is an isomorphism, and set $p_i:=\pr_i\circ\mu$, $\phi^{(r)}:=\phi^r\circ\mu$, $G^{(r)}:=\mu^\ast G^r$, $E^{(r)}:=\mu^\ast E^r$, $\Delta_{M^{(r)}}:=\mu^\ast\Delta_{M^r}$. By the projection formula and by induction we have 
\[
\phi^{(r)}_\ast\scrO_{M^{(r)}}(G^{(r)}+cE^{(r)})|_{Y_{\ff}}\simeq\phi^r_\ast\scrO_{M^r}(G^r+cE^r)|_{Y_{\ff}}\simeq\bigotimes^r\phi_\ast\scrO_M(G+cE)|_{Y_{\ff}}\,, 
\]
Then \eqref{eq_incl-det-tensor} induces a non-zero section
\[
s_0\in\Coh^0(Y_{\ff}\,,\phi^{(r)}_\ast\scrO_{M^{(r)}}(G^{(r)}+cE^{(r)})\otimes(\det\!\phi_\ast\scrO_M(G+cE))\inv),
\]
By \cite[\S I\!I\!I.5, 5.10.Lemma, pp.~107-108]{Nak04} (c.f. \cite[Theorem 1.13]{JWang19} for more details) there is an effective divisor $B_1$ supported in $M^{(r)}\backslash(\phi^{(r)})\inv(Y_{\ff})$ such that $s_0$ extends to a non-zero section 
\[
\bar s_0\in\Coh^0(M^{(r)},\scrO_{M^{(r)}}(G^{(r)}+cE^{(r)}+B_1-r(\phi^{(r)})^\ast D_{G,c,1})),
\]
in particular $\Delta_0:=G^{(r)}+cE^{(r)}+B_1-r(\phi^{(r)})^\ast D_{G,c,1}$ is (linearly equivalent to) an effective divisor on $M^{(r)}$.   
\paragraph{(B) Comparison of the relative canonical divisors.}
By induction and the base change formula of the relative canonical sheaf \cite[Proposition (9)]{Kle80} we see that $M^r_{\ff}$ is Gorenstein and the relative dualizing sheaf
\[
\omega_{M^r_{\ff}/Y}\simeq\scrO_{M^r_{\ff}}(\sum_{i=1}^r\pr_i^\ast K_{M/Y}).
\] 
The natural morphism $\omega_{M^r_{\ff}/Y}\to \mu_\ast\scrO_{M^{(r)}}(K_{M^{(r)}/Y})|_{M^r_{\ff}}$ (from \cite[\S I\!I.8, Proposition 8.3, p.~176]{Har77}) is an isomorphism over $M_{\rat}^r$, the rational singularities locus of $M^r$. By assumption (c.f. {\hyperref[general-setting]{General Setting \ref*{general-setting}}}), the branch locus $\Branch(\phi)$ of $\phi$ is a SNC divisor on $Y$ and $f^\ast\Branch(\phi)$ has SNC support. Write
\[
f^\ast\Branch(\phi):=\sum_\lambda W_\lambda+\sum_\mu a_\mu V_\mu
\]
with $a_\mu>1$ for every $\mu$ and set 
\[
W:=\sum_\lambda W_\lambda\,,\quad\quad V:=\sum_\mu V_\mu\,,
\]
then by \cite[3.13.Lemma]{Hor10} $M^r_{\ff}$ has rational singularities along 
\[
\underbrace{(M_{\ff}\backslash (V\cup \phi\inv\Sing(\Branch(\phi))))\underset{Y_{\ff}\backslash\Sing(\Branch(\phi))}{\times}\cdots\underset{Y_{\ff}\backslash\Sing(\Branch(\phi))}{\times}(M_{\ff}\backslash(V\cup \phi\inv\Sing(\Branch(\phi)))).}_{r\text{ times}}
\]
Hence there is a divisor $B_2$ on $M^{(r)}$ supported on 
\[
E^{(r)}\cup(M^{(r)}\backslash \mu\inv(M^r_{\ff}))\cup\Supp(\sum_{i=1}^r\pr_i^\ast V)
\]
such that 
\[
-(K_{M^{(r)}/Y}+\Delta_{M^{(r)}})+B_2\sim \sum_{i=1}^r\pr_i^\ast(-(K_{M/Y}+\Delta_M)+E').
\]

\paragraph{(C) Ohsawa-Takegoshi type extension.}
For $y\in Y$ general, the general fibre
\[ 
M_y^r:=\underbrace{M_y\times\cdots\times M_y}_{r\text{ times}}
\]
of $\phi^r$ is smooth; since $\mu$ is an isomorphism over $M^r_{\reg}$, $M^r_y$ is also the general fibre of $\phi^{(r)}$. Now fix a sufficiently ample divisor $A_Y$ on $Y$ divisible by $2$, such that $\frac{1}{2}A_Y-K_Y$ separates all the $(2\dim\!Y)$-jets. For $s\in\ZZ_{>0}$, since $\Delta_0=G^{(r)}+cE^{(r)}+B_1-r(\phi^{(r)})^\ast D_{G,c,1}$ is $\phi^{(r)}$-big, there is $\epsilon\in\QQ_{>0}$ sufficiently small such that $\epsilon s\Delta_0+A_Y$ is big. Then we can write 
\[
\epsilon s\Delta_0+\frac{1}{2}A_Y\sim_{\QQ} H_{s,\epsilon}+\Delta_{s,\epsilon}.
\]
with $H_{p,\epsilon}$ an ample $\QQ$-divisor and $\Delta_{p,\epsilon}$ an effective $\QQ$-divisor. Now let $t\in\ZZ_{>0}$ sufficiently large such that 
\[
\left(\Delta_{M^{(r)}}+\frac{1}{st}\Delta_{s,\epsilon}+\frac{1-\epsilon}{t}\Delta_0\right)\big|_{M_y^r}=\left(\Delta_{M^r}+\frac{1}{st}\Delta_{s,\epsilon}+\frac{1-\epsilon}{t}\Delta_0\right)\big|_{M^r_y} 
\]
is klt. Since $st(-K_{M^{(r)}/Y}-\Delta_{M^{(r)}}+B_2)+H_{s,\epsilon}$ is ample, we can apply \cite[Theorem 2.10]{Cao19} (c.f. also \cite[Theorem 2.11]{Deng17a}) to the divisor
\begin{align*}
L: &=st(-K_{M^{(r)}/Y}+B_2)+s\Delta_0+\frac{1}{2}A_Y \\
&\sim_{\QQ}[st(-K_{M^{(r)}/Y}-\Delta_{M ^{(r)}}+B_2)+H_{s,\epsilon}]+(st\Delta_{M^{(r)}}+\Delta_{s,\epsilon}+(1-\epsilon)s\Delta_0)
\end{align*}
to obtain the surjectivity of the restriction morphism
\[
\Coh^0(M^{(r)},\scrO_{M^{(r)}}(stK_{M^{(r)}/Y}+L+\frac{1}{2}A_Y))\to\Coh^0(M^r_y, \scrO_{M_y^r}(stK_{M^{(r)}/Y}+L+\frac{1}{2}A_Y)), 
\]
which can be rewritten as
\[
\Coh^0(M^{(r)},\scrO_{M^{(r)}}(sG^{(r)}+scE^{(r)}+sB_1+stB_2+(\phi^{(r)})^\ast (A_Y-srD_{G,c,1})))\twoheadrightarrow\Coh^0(M_y^r,\scrO_{M^r_y}(sG^r+scE^r))
\]

\paragraph{(D) Restriction to the diagonal and conclusion.}
Now take a non-zero section (since $G+cE$ is $\phi$-big, such a section exists)
\[
u\in\Coh^0(M_y\,,\scrO_{M_y}(pG+pcE))
\]
for $y\in Y$ general, then
\[
u^{(r)}:=\sum_{i=1}^r \pr_i^\ast u\in\Coh^0(M_y^r\,,\scrO_{M_y^r}(sG^r+scE^r)).
\]
By {\bf Step (C)} we get a section 
\[
\sigma^{(r)}\in\Coh^0(M^{(r)}, \scrO_{M^{(r)}}(sG^{(r)}+scE^{(r)}+sB_1+stB_2+(\phi^{(r)})^\ast (A_Y-prD_{G,c,1}))))\]
such that $\sigma^{(r)}|_{M_y^r}=u^{(r)}$. Since $\mu$ is an isomorphism over $(M\backslash\Supp(V+W))^r\subseteq(M^r)_{\reg}$, then $\sigma^{(r)}|_{(M\backslash\Supp(V+W))^r}$ can be restricted to the diagonal and gives rise to a section 
\[
\sigma'\in\Coh^0(M\backslash\Supp(V+W), \scrO_M(srG+srcE+F'_{s,t}+\phi^\ast(A_Y-srD_{G,c,1})))
\]
for some $F'_{s,t}$ supported in $\Supp(E)$ (by {\hyperref[prop_bir-geometry-psi]{Proposition \ref*{prop_bir-geometry-psi}(c)}} any $\phi$-exceptional divisor is also $\pi$-exceptional thus contained in $\Supp(E)$). By construction of $B_1$ and $B_2$ we know that $\sigma'$ is bounded around a general point of $W$; moreover, by {\hyperref[prop_bir-geometry-psi]{Proposition \ref*{prop_bir-geometry-psi}(d)}} $V$ is contained in $\Supp(E)$, hence there is a $\pi$-exceptional divisor $F_{s,t}$ such that $\sigma'$ extends to a section 
\[
\sigma\in\Coh^0(M, \scrO_M(srG+srcE+F_{s,t}+\phi^\ast(A_Y-srD_{G,c,1}))).
\]
By construction $\sigma|_{M_y}=u\ptensor[r]$, hence $\sigma\neq 0$, which implies that $srG-+srcE+F_{s,t}+\phi^\ast(A_Y-srD_{G,c,1})$ is linearly equivalent to an effective divisor on $M$. But $E$ and $F_{s,t}$ are $\pi$-exceptional, hence
\[
\pi_\ast\left(G-\phi^\ast\!D_{G,c,1}+ \frac{1}{s}\phi^\ast\!A_Y\right)
\]
is $\QQ$-linearly equivalent to an effective (Weil) $\QQ$-divisor on $X$. Since this holds for any $s\in\ZZ_{>0}$\,, we can take $A_Y$ to be any ample divisor on $Y$.

If we assume moreover that $\pi_\ast(G-\phi^\ast D_{G,c,1})$ is $\QQ$-Cartier, then by taking a sufficiently ample divisor $A$ on $X$ containing $\pi_\ast\phi^\ast\!A_Y$, we see that $\pi_\ast(G-\phi^\ast\!D_{G,c,1})+\frac{1}{s}A$ is $\QQ$-linearly equivalent to an effective (Cartier) $\QQ$-divisor, hence $\pi_\ast(G-\phi^\ast\!D_{G,c,1})$ is pseudoeffective. In particular, for $k\in\ZZ_{>0}$ sufficiently large $G-\phi^\ast D_{G,c,1}+kE$ is pseudoeffective on $M$.  
\end{proof}

\begin{prop}[{\cite[Lemma 3.5]{CH19},\cite[Proposition 3.6]{CCM19}}]
\label{prop_det-stablize}
Let everything be as in the {\hyperref[general-setting]{General Settinng \ref*{general-setting}}} with $Y$ non-uniruled and let $G$ be a $\phi$-big divisor. Then there is an $c_0\in\ZZ_{>0}$ such that for every $c\geqslant c_0$ the natural inclusion $\det\!\phi_\ast\scrO_M(G+cE)\to \det\!\phi_\ast\scrO_M(G+(c+1)E)$ is an isomorphism over $Y_0$.  
\end{prop}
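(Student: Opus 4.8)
The plan is to split the assertion into two parts. As a preliminary one must check that the ranks $r_c:=\rank\phi_\ast\scrO_M(G+cE)$ stabilize for $c$ large, which is tacitly part of the statement, since the natural morphism $\det\phi_\ast\scrO_M(G+cE)\to\det\phi_\ast\scrO_M(G+(c+1)E)$ is only defined once $r_c=r_{c+1}$. Granting this, and recalling that an inclusion $\mathcal F\hookrightarrow\mathcal G$ of torsion-free sheaves of the same rank induces an isomorphism of determinants over a point exactly when its torsion cokernel has no codimension-$1$ component through that point, it will be enough to prove that, for $c$ large, the cokernel $\mathcal Q_c$ of $\phi_\ast\scrO_M(G+cE)\hookrightarrow\phi_\ast\scrO_M(G+(c+1)E)$ has no codimension-$1$ component meeting $Y_0$.

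\emph{Rank stabilization.} The sheaves $\phi_\ast\scrO_M(G+cE)$ form an increasing chain, so $(r_c)_c$ is a non-decreasing sequence of integers and it is enough to bound it. Fix $y\in Y$ general, so that $r_c=h^0(M_y,(G+cE)|_{M_y})$, the morphism $\pi_y:=\pi|_{M_y}\colon M_y\to X_y:=\pi(M_y)$ is birational, and every $\phi$-vertical component of $E$ is disjoint from $M_y$. Each $\phi$-horizontal component $E_i$ of $E$ has $E_i\cap M_y$ $\pi_y$-exceptional: $\pi$ is an isomorphism in codimension $1$ ($X$ being normal), so $\dim\pi(E_i)\leq\dim X-2$; and since $E_i$ is $\phi$-horizontal, $\pi(E_i)\cap X_y$ is, for general $y$, a general fibre of $\psi|_{\pi(E_i)}$, hence of dimension $\dim\pi(E_i)-\dim Y\leq\dim X_y-2$, while $E_i\cap M_y$ is a divisor of $M_y$ mapping into it. Thus $E|_{M_y}$ is effective and $\pi_y$-exceptional, so $\pi_{y\ast}\scrO_{M_y}((G+cE)|_{M_y})$ is, for every $c$, contained in the reflexive hull of $\scrO_{X_y}(\pi_{y\ast}(G|_{M_y}))$ — a pole along the codimension-$\geq 2$ set $\pi_y(E|_{M_y})$ extends across by normality. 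Hence $r_c=h^0(M_y,(G+cE)|_{M_y})=h^0\bigl(X_y,\pi_{y\ast}\scrO_{M_y}((G+cE)|_{M_y})\bigr)$ is bounded independently of $c$, so $(r_c)_c$ stabilizes, say for $c\geq c_1$, to a value $r$.

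\emph{The core estimate.} Fix $c\geq c_1$. Taking determinants of the inclusion and using the notation of {\hyperref[prop_more-psef-det]{Proposition \ref*{prop_more-psef-det}}} one gets $D_{G,c+1,1}-D_{G,c,1}=\tfrac1r[\mathcal Q_c]\geq 0$, where $[\mathcal Q_c]$ denotes the effective codimension-$1$ part of $\mathcal Q_c$; in particular $c\mapsto D_{G,c,1}$ is non-decreasing. Now fix a very ample line bundle $L$ on $X$ and an ample divisor $A_Y$ on $Y$, let $C_M$ be a general complete intersection of members of $|\pi^\ast L|$ — a movable curve on $M$ to which every $\pi$-exceptional divisor is orthogonal — and set $C_X:=\pi_\ast C_M$ and $C_Y:=\phi_\ast C_M$. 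By {\hyperref[prop_more-psef-det]{Proposition \ref*{prop_more-psef-det}}} (with $s=1$) the Weil $\QQ$-divisor $\pi_\ast(G+\phi^\ast A_Y-\phi^\ast D_{G,c,1})$ is $\QQ$-linearly equivalent to an effective one, hence meets the movable curve $C_X$ non-negatively; since $\pi$-exceptional divisors are orthogonal to $C_M$, the projection formula then gives $D_{G,c,1}\cdot C_Y=\pi_\ast\phi^\ast D_{G,c,1}\cdot C_X\leq\pi_\ast(G+\phi^\ast A_Y)\cdot C_X=:B$, a bound independent of $c$. By telescoping, $\sum_{c_1\leq c'<c}[\mathcal Q_{c'}]\cdot C_Y=r(D_{G,c,1}-D_{G,c_1,1})\cdot C_Y\leq r(B-D_{G,c_1,1}\cdot C_Y)$ stays bounded as $c\to\infty$; as each $[\mathcal Q_{c'}]\cdot C_Y$ is a non-negative integer this forces $[\mathcal Q_c]\cdot C_Y=0$ for all $c$ large.

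\emph{Conclusion and the main difficulty.} It remains to deduce from $[\mathcal Q_c]\cdot C_Y=0$ that $[\mathcal Q_c]$ has no prime component inside $Y_0$; this is the delicate step, and it is where the definition of $Y_0$ together with the semistability in codimension $1$ of {\hyperref[prop_bir-geometry-psi]{Proposition \ref*{prop_bir-geometry-psi}}} enter. Let $P\subset Y_0$ be a prime divisor and write $\phi^\ast P=\sum_i m_iP_i$; by the definition of $Y_0$ some $P_{i_0}$ is not $\pi$-exceptional, so $\pi_\ast P_{i_0}$ is a nonzero effective divisor on $X$, and $\pi_\ast P_{i_0}\cdot C_X>0$ because $C_X$ is a general complete intersection of very ample divisors. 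By the projection formula, $P\cdot C_Y=\phi^\ast P\cdot C_M=\sum_i m_i(\pi_\ast P_i\cdot C_X)\geq m_{i_0}(\pi_\ast P_{i_0}\cdot C_X)>0$. Hence an effective divisor $[\mathcal Q_c]$ with $[\mathcal Q_c]\cdot C_Y=0$ contains no prime divisor of $Y_0$, so $\mathcal Q_c|_{Y_0}$ is supported in codimension $\geq 2$ and $\det\phi_\ast\scrO_M(G+cE)\to\det\phi_\ast\scrO_M(G+(c+1)E)$ is an isomorphism over $Y_0$ for all $c\geq c_0$, for a suitable $c_0$. I expect the crux to be exactly this last translation: one must make the movable curve $C_Y$, built from $X$, detect codimension-$1$ components of the cokernel that lie in $Y_0$, and it is precisely the defining property of $Y_0$ — that no fibre component of $\phi^\ast P$ over a divisor $P\subset Y_0$ is entirely $\pi$-exceptional — that makes this possible.
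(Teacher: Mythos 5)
Your "core estimate" and "conclusion" are correct and are essentially the paper's Step 2 in different clothing: bounding $D_{G,c,1}\cdot C_Y$ uniformly in $c$ via {\hyperref[prop_more-psef-det]{Proposition \ref*{prop_more-psef-det}}} and telescoping is the same mechanism as the paper's contradiction with the infinite sum of nonzero effective divisors $\pi_\ast\phi^\ast B_k$, and your final translation through the defining property of $Y_0$ (some component of $\phi^\ast P$ is not $\pi$-exceptional, hence $P\cdot C_Y>0$) is exactly the one the paper uses.

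The gap is in your rank stabilization. You assert that for a $\phi$-horizontal component $E_i$ the divisor $E_i\cap M_y$ is $\pi_y$-exceptional because ``$\pi(E_i)\cap X_y$ is, for general $y$, a general fibre of $\psi|_{\pi(E_i)}$, hence of dimension $\dim\pi(E_i)-\dim Y$''. This presupposes that $\psi$ restricts to a dominant map on the codimension-$\geqslant 2$ set $\pi(E_i)$; but $\pi(E_i)$ may lie in the indeterminacy locus of $\psi$, and then it can meet the closure $X_y=\pi(M_y)$ of \emph{every} fibre, in codimension one. Model case: $\psi:\PP^2\dashrightarrow\PP^1$ the projection from a point $p$, $M$ the blow-up of $\PP^2$ at $p$, $E_1$ the exceptional curve (a section of $\phi$, hence $\phi$-horizontal); then $\pi(E_1)=\{p\}$ is a divisor in every line $X_y$, the point $E_1\cap M_y$ is not contracted by $\pi_y$ at all, and $h^0(M_y,(G+cE)|_{M_y})$ grows linearly in $c$. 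That example has $Y$ uniruled, and one can likely exclude the bad configuration under the actual hypotheses (if $\pi((E_i)_y)$ were a divisor in $X_y$, a dimension count on $\pi|_{E_i}$ shows that the positive-dimensional fibres of $\pi$ over general points of $\pi(E_i)$ would have positive-dimensional, rationally chain connected images in $Y$ --- using Hacon--McKernan for the klt pair $(X,\Delta)$ --- forcing $Y$ to be uniruled); but that argument is absent from your proof and is not a routine fibre-dimension computation. The paper's Step 1 is structured precisely to sidestep this point: it lifts the fibrewise $h^0$ to $\Coh^0(M,\scrO_M(G+cE+pE'+\phi^\ast A_Y))$ by the Ohsawa--Takegoshi-type extension theorem and then applies Nakayama's lemma to $\pi_\ast$ on $M$ itself, where the whole of $E$ genuinely is $\pi$-exceptional. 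Either supply the non-uniruledness argument (and, while you are at it, justify the normality of $X_y$ or pass to its normalization before invoking reflexive extension) or switch to the paper's global route for this step.
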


\begin{proof}
If $X$ is smooth, then $E$ cannot dominate $Y$ and the proposition is proved in \cite[Lemma 3.5]{CH19}. In our case, $X$ is not necessarily smooth and it takes more effort to prove the proposition. We will follow the same argument of \cite[Proposition 3.6]{CCM19} with some clarifications (c.f. {\hyperref[rmk_CCM19]{Remark \ref{rmk_CCM19}}}). The proof can be divided into two steps:
\paragraph{Step 1: Constancy of the rank of the direct images with respect to $c$.}  
Since $\rank\!\phi_\ast\scrO_M(G+cE)=\dimcoh^0(M_y,\scrO_{M_y}(G+cE))$ for $y\in Y$ general, and since $E$ is effective and $\pi$-exceptional, it suffices to prove that $\dimcoh^0(M_y, \scrO_{M_y}(G+cE))$ is bounded by a constant for all $c\in\ZZ_{>0}$. By \cite[Theorem 2.10]{Cao19} and by the argument as in {\bf Step (C)} of the proof of {\hyperref[prop_more-psef-det]{Proposition \ref*{prop_more-psef-det}}}, for $p$ sufficiently large and for $A_Y$ sufficiently ample on $Y$ divisible by $2$ and such that $\frac{1}{2}A_Y-K_Y$ separates all the $(2\dim\!Y)$-jets,  we have a surjection 
\begin{align*}
\Coh^0(M,\scrO_M(G+cE+pE'+\phi^\ast A_Y)) &= \Coh^0(M,\scrO_M(p(K_M+\Delta_M)+p(-K_M-\Delta_M+E')+G+cE+\phi^\ast\!A_Y)) \\
&\twoheadrightarrow \Coh^0(M_y, \scrO_{M_y}(G+cE+pE')),
\end{align*}
for $y\in Y$ general. Since $E'|_{M_y}$ is effective, we have 
\[
\dimcoh^0(M_y,\scrO_{M_y}(G+cE))\leqslant\dimcoh^0(M_y, \scrO_{M_y}(G+cE+pE'))\leqslant\dimcoh^0(M, \scrO_M(G+cE+pE'+\phi^\ast\!A_Y)).
\]
It remains to see the boundedness of $\dimcoh^0(M, \scrO_M(G+cE+pE'+\phi^\ast\!A_Y))$. By \cite[\S I\!I\!I.5, 5.10.Lemma, pp.~107-108]{Nak04}, for $c$ and $p$ sufficiently large, 
\[
\pi_\ast\scrO_M(G+(pk+c)E+\phi^\ast\!A_Y)\simeq\left(\pi_\ast\scrO_M(G+\phi^\ast\!A_Y)\right)^{\ast\ast}
\]
for any $k\in\ZZ_{>0}$. Hence for sufficiently large $c$ and for $p$ sufficiently large with respect to $c$ and $G$ we have
\[
\dimcoh^0(M,\scrO_M(G+cE+pE'+\phi^\ast\!A_Y))\leqslant\dimcoh^0(M,\scrO_M(G+(c+pk)E+\phi^\ast\!A_Y))=\dimcoh^0(X,\left(\phi_\ast\scrO_M(G+\phi^\ast\!A_Y)\right)^{\ast\ast}),
\]
where $k$ is a positive integer such that $E'\leqslant kE$. In consequence $\dimcoh^0(M,\scrO_M(G+cE+pE'+\phi^\ast\!A_Y))$ is bounded by a constant independent of $c$ and $p$, and so is $\rank\!\phi_\ast\scrO_M(G+cE)$. In other word, there is $c_0\in\ZZ_{>0}$ such that for any $c\geqslant c_0$, the rank of $\phi_\ast\scrO_M(G+cE)$ is independent of $c$.  

\paragraph{Step 2: Stability of the determinant sheaf over $Y_0$.}
By contradiction, let us assume that there is an increasing sequence $(c_k)_{k\in\ZZ_{>0}}$ such that $c_1\geqslant c_0$, $c_k\!\nearrow\!+\infty$ and that there is some effective divisor $B_k$ on $Y$ such that $B_k\cap Y_0\neq\varnothing$ (in particular $B_k\neq0$) and
\[
rD_{G,c_{k+1},1}-(rD_{G,c_k,1}+B_k)
\]
is linearly equivalent to an effective divisor on $M$ for every $k$, where $r:=\rank\!\phi_\ast\scrO_M(G+c_0E)$ and
\[
D_{G,c,1}:=\frac{1}{r}\cdot\text{ the Cartier divisor on }Y\text{ associated to }\det\!\phi_\ast\scrO_M(G+cE).
\]
By {\bf Step 1} for any $c\geqslant c_0$, $\rank\!\phi_\ast\scrO_M(G+cE)=r$. Then by {\hyperref[prop_more-psef-det]{Proposition \ref*{prop_more-psef-det}}} for any ample divisor $A_Y$ on $Y$ and for any $p\in\ZZ_{>0}$ the $\QQ$-divisor 
\[
\pi_\ast\left(G-\phi^\ast\!D_{G,c_k,1}+\frac{1}{s}\phi^\ast\!A_Y\right)
\]
is $\QQ$-linearly equivalent to an effective divisor. In particular, take $s=r$, we see that for every $N>0$ we have
\[
r\pi_\ast G+\pi_\ast\phi^\ast\!A_Y-\sum_{k=1}^{N}\pi_\ast\phi^\ast\!B_k
\] 
is linear equivalent to a Weil divisor on $X$. But since $B_k\cap Y_0\neq\varnothing$, $\phi^\ast\!B_k$ is not $\pi$-exceptional, hence $\pi_\ast\phi^\ast\!B_k$ is non-zero effective for every $k$. By letting $N\to+\infty$ we see that this is impossible.
\end{proof}

As an immediate corollary of {\hyperref[prop_det-stablize]{Proposition \ref*{prop_det-stablize}}} we have 
\begin{cor}
\label{cor_iso-direct-image-Y0}
Let everything be as in the {\hyperref[general-setting]{General Settinng \ref*{general-setting}}} with $Y$ non-uniruled and let $G$ be a $\phi$-big divisor. Let $c_0$ be the integer given by the {\hyperref[prop_det-stablize]{Proposition \ref*{prop_det-stablize}}} and let $c\geqslant c_0$. For every $a\in\ZZ_{>0}$ set 
\[
D_{G,c,a}:=\frac{1}{r_a}\cdot\text{the Cartier divisor on }Y\text{ associated to the line bundle }\det\!\phi_\ast\scrO_M(aG+acE)
\]
where $r_a:=\rank\!\phi_\ast\scrO_M(aG+acE)$. Then 
\begin{itemize}
\item[\rm(a)] $\phi_\ast\scrO_M(G+cE)$ is isomorphic to $\phi_\ast\scrO_M(G+kE+pE')$ over $Y_0$ for any $k\geqslant c$ and for any $p\in\ZZ_{\geqslant 0}$ rendering $pE'$ integral;
\item[\rm(b)] Suppose that $\pi_\ast G$ and $\pi_\ast\phi^\ast\!D_{G,c,b}$ are $\QQ$-Cartier on $X$ for some $b\in\ZZ_{>0}$. Then $\phi_\ast\scrO_M(G+cE)$ is $\frac{1}{b}D_{G,c,b}$-weakly semipositively curved over $Y_0$.
\end{itemize}
\end{cor}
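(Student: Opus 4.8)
The plan is to read both statements off the proof of {\hyperref[prop_det-stablize]{Proposition \ref*{prop_det-stablize}}}, combined with {\hyperref[prop_more-psef-det]{Proposition \ref*{prop_more-psef-det}}} and {\hyperref[prop_positivity-anti-nef]{Proposition \ref*{prop_positivity-anti-nef}}}. The observation underlying everything is that \emph{over $Y_0$ the divisor $E'$ is effective}: by {\hyperref[prop_bir-geometry-psi]{Proposition \ref*{prop_bir-geometry-psi}(a)}} the $\QQ$-divisor $N_Y$, hence $\phi^\ast N_Y$, is contained in $Y\setminus Y_0$, so $E'|_{\phi\inv(Y_0)}=\sum_{i\in I_{>0}}a_iE_i|_{\phi\inv(Y_0)}$ is effective. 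Consequently, for $k\geqslant c$ and $p\in\ZZ_{\geqslant 0}$ rendering $pE'$ integral there is $N\in\ZZ_{>0}$ (depending on $k$ and $p$) with $cE\leqslant kE+pE'\leqslant NE$ over $\phi\inv(Y_0)$.

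For (a): this chain of inequalities yields, over $Y_0$, natural inclusions of torsion-free sheaves
\[
\phi_\ast\scrO_M(G+cE)\big|_{Y_0}\hookrightarrow\phi_\ast\scrO_M(G+kE+pE')\big|_{Y_0}\hookrightarrow\phi_\ast\scrO_M(G+NE)\big|_{Y_0}.
\]
By Step~1 of the proof of {\hyperref[prop_det-stablize]{Proposition \ref*{prop_det-stablize}}} the two outer sheaves have the same rank $r$ once $c\geqslant c_0$ (so does the middle one, being sandwiched), and by {\hyperref[prop_det-stablize]{Proposition \ref*{prop_det-stablize}}} the induced map $\det\phi_\ast\scrO_M(G+cE)\to\det\phi_\ast\scrO_M(G+NE)$ is an isomorphism over $Y_0$; hence the composite inclusion is an isomorphism in codimension one on $Y_0$. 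To promote this to an isomorphism over all of $Y_0$, I would use that $\phi_\ast\scrO_M(G+cE)$ and $\phi_\ast\scrO_M(G+NE)$ are reflexive over $Y_0$: since $Y$ is smooth and $\phi$ is flat over $Y_0$ with local complete intersection, hence Cohen--Macaulay, fibres, the direct images of the line bundles $\scrO_M(G+c'E)$ are $S_2$ over $Y_0$, and two nested reflexive sheaves on the smooth variety $Y_0$ that agree in codimension one coincide. Thus the three sheaves above agree over $Y_0$, which is (a).

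For (b): apply {\hyperref[prop_more-psef-det]{Proposition \ref*{prop_more-psef-det}}} with $bG$, $bc$ in place of $G$, $c$. In the notation there one has $D_{bG,bc,1}=D_{G,c,b}$, and $\pi_\ast(bG-\phi^\ast D_{G,c,b})=b\,\pi_\ast G-\pi_\ast\phi^\ast D_{G,c,b}$ is $\QQ$-Cartier by hypothesis, so the proposition gives that $bG-\phi^\ast D_{G,c,b}+k_0E$ is pseudoeffective on $M$ for $k_0$ sufficiently large. Fix a smooth closed $(1,1)$-form $\theta$ on $Y$ with $[\theta]=c_1(\frac{1}{b}D_{G,c,b})$. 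Then for $k$ large enough ($k\geqslant c$ and $bk\geqslant k_0$) the divisor $G+kE$ is $\phi$-big (as $G$ is $\phi$-big and $kE$ effective), and $\scrO_M(G+kE)$ carries a singular Hermitian metric $h$ with $\Theta_h(\scrO_M(G+kE))\geqslant\phi^\ast\theta$: indeed $b(G+kE)-\phi^\ast D_{G,c,b}=(bG-\phi^\ast D_{G,c,b}+k_0E)+(bk-k_0)E$ is pseudoeffective, so $(G+kE)-\phi^\ast(\frac{1}{b}D_{G,c,b})$ is pseudoeffective, and one twists by a smooth metric in $\phi^\ast\theta$. Now apply {\hyperref[prop_positivity-anti-nef]{Proposition \ref*{prop_positivity-anti-nef}}} with $q=0$, taking $G+kE$ for the $\phi$-big divisor and $h$ for $h_G$: for $p$ large enough (and $pE'$ integral) the sheaf $\phi_\ast\scrO_M(G+kE+pE')$ is $\theta$-weakly semipositively curved. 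By (a) this sheaf is isomorphic to $\phi_\ast\scrO_M(G+cE)$ over $Y_0$, so $\phi_\ast\scrO_M(G+cE)$ is $\frac{1}{b}D_{G,c,b}$-weakly semipositively curved over $Y_0$, which is (b).

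The parameter bookkeeping in (b) — keeping divisors integral while passing between $G$, $bG$ and the fractional twist $\frac{1}{b}D_{G,c,b}$, and the small ample perturbations concealed in the word ``weakly'' — is routine and is already absorbed by {\hyperref[prop_positivity-anti-nef]{Proposition \ref*{prop_positivity-anti-nef}}} and {\hyperref[cor_positivity-direct-image]{Corollary \ref*{cor_positivity-direct-image}}}. The one genuinely delicate point is the last step of (a): promoting the codimension-one isomorphism to an honest isomorphism over $Y_0$, which rests on the reflexivity of the direct images $\phi_\ast\scrO_M(G+c'E)$ over $Y_0$. This is exactly where the flatness of $\phi$ over $Y_0$ (part of the definition of $Y_0$) is used, and it is the reason the conclusion is stated over $Y_0$ rather than over the whole flat locus of $\phi$.
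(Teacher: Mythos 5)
Your proposal is correct and follows essentially the same route as the paper: for (a) the effectivity of $E'$ over $\phi\inv(Y_0)$ plus {\hyperref[prop_det-stablize]{Proposition \ref*{prop_det-stablize}}} give an injection of torsion-free sheaves of equal rank whose determinant map is an isomorphism over $Y_0$, which is then upgraded from the locally free locus to all of $Y_0$ by reflexivity of the direct images over the flat locus; for (b) the paper likewise combines {\hyperref[prop_more-psef-det]{Proposition \ref*{prop_more-psef-det}}} (applied so that $bG-\phi^\ast D_{G,c,b}+kE$ becomes pseudoeffective), {\hyperref[prop_positivity-anti-nef]{Proposition \ref*{prop_positivity-anti-nef}}}, and part (a). Your explicit sandwiching $cE\leqslant kE+pE'\leqslant NE$ merely spells out what the paper leaves implicit when it invokes Proposition \ref{prop_det-stablize} for the divisor $G+kE+pE'$.
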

\begin{proof}
By construction $E'=\sum_{i\in I^+}a_iE_i-\phi^\ast\!N_Y$ with $N_Y$ an effective $\QQ$-divisor on $Y$ supported out of $Y_0$, hence {\hyperref[prop_det-stablize]{Proposition \ref*{prop_det-stablize}}} implies that $\rank\!\phi_\ast\scrO_M(G+kE+pE')=r_1$ and that the natural injection
\[
\scrO_Y(rD_{G,c,1})\simeq\det\!\phi_\ast\scrO_M(G+cE)\hookrightarrow\det\!\phi_\ast\scrO_M(G+kE+pE')
\]
is an isomorphism over $Y_0$. By \cite[Lemma 1.20]{DPS94} this means that the natural inclusion 
\begin{equation}
\label{eq_incl-direct-image}
\phi_\ast\scrO_M(G+cE)\hookrightarrow\phi_\ast\scrO_M(G+kE+pE')
\end{equation}
is an isomorphism over the locally free locus of $\phi_\ast\scrO_M(G+kE+pE')\big|_{Y_0}$. Since $\phi$ is flat over $Y_0$, both $\phi_\ast\scrO_M(G+cE)$ and $\phi_\ast\scrO_M(G+kE+pE')$ are reflexive over $Y_0$, hence \eqref{eq_incl-direct-image} must be an isomorphism over $Y_0$. Thus (a) is proved.

As for (b), since by hypothesis $\pi_\ast G$ and $\pi_\ast\phi^\ast\!D_{G,c,b}$ are $\QQ$-Cartier on $X$, then by {\hyperref[prop_more-psef-det]{Proposition \ref*{prop_more-psef-det}}} we see that $b\pi_\ast G-\pi_\ast\phi^\ast\!D_{G,c,b}$ is a pseudoeffective ($\QQ$-Cartier) $\QQ$-divisor on $X$. In consequence there is an integer $k\in\ZZ_{>0}$ such that $\pi_\ast G+kE-\frac{1}{b}\phi^\ast\!D_{G,c,b}$ is pseudoeffective on $M$. Then by {\hyperref[prop_positivity-anti-nef]{Proposition \ref*{prop_positivity-anti-nef}}}, for $p_{k}$ sufficiently large $\phi_\ast\scrO_M(G+kE+p_kE')$ is $\frac{1}{b}D_{G,c,b}$-weakly semipositively curved. Combine this with (a) we see that $\phi_\ast\scrO_M(G+cE)$ is $\frac{1}{b}D_{G,c,b}$-weakly semipositively curved over $Y_0$, which proves (b).
\end{proof}

\begin{prop}[{\cite[Proposition 3.6]{CH19}}] 
\label{prop_det-direct-image-power}
Let everything be as in the {\hyperref[general-setting]{General Settinng \ref*{general-setting}}} with $Y$ non-uniruled. Suppose that $\psi$ is almost holomorphic and let $A$ be a sufficiently ample divisor on $X$ such that for general $y\in Y$ the natural morphism (noting that $\psi$ is almost holomorphic)
\begin{equation}
\label{eq_condition-surjectivity-sym}
\Sym^k\!\Coh^0(X_y\,,\scrO_{X_y}(A))\to\Coh^0(X_y\,,\scrO_{X_y}(kA))
\end{equation}
is surjective for every $k\in\ZZ_{>0}$. Let $c_0$ be the positive integer given by {\hyperref[prop_det-stablize]{Proposition \ref*{prop_det-stablize}}} and let $c$ be any integer $\geqslant c_0$. For every $a\in\ZZ_{>0}$ set 
\[
D_{A,c,a}:=\frac{1}{r_a}\cdot\text{the Cartier divisor on} Y \text{associated to }\det\!\phi_\ast\scrO_M(a\pi^\ast\!A+acE)
\]
where $r_a:=\rank\!\phi_\ast\scrO_M(a\pi^\ast\!A+acE)$, and suppose that $\pi_\ast\phi^\ast\!D_{A,c,1}$ is $\QQ$-Cartier on $X$ (e.g. when $X$ is $\QQ$-factorial). Then for any $m\in\ZZ_{>0}$ divisible by $r:=r_1$ such that $\pi_\ast\phi^\ast\!D_{A,c,m}$ is $\QQ$-Cartier, we have 
\[
\pi_\ast\phi^\ast\!D_{A,c,m}
\equiv m\pi_\ast\phi^\ast\!D_{A,c,1}
\]
where $\equiv$ denotes the numerical equivalence.
\end{prop}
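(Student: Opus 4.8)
The plan is to deduce the numerical equivalence from two pseudoeffectivity statements on $X$, using the duality between pseudoeffective divisors and movable curves. Put $L:=\pi_\ast\phi^\ast D_{A,c,m}-m\,\pi_\ast\phi^\ast D_{A,c,1}$, which is $\QQ$-Cartier thanks to the hypotheses on $\pi_\ast\phi^\ast D_{A,c,1}$ and $\pi_\ast\phi^\ast D_{A,c,m}$. I would show that $L$ and $-L$ are both pseudoeffective on $X$; then, by \cite[Theorem~0.2]{BDPP13}, $L$ pairs both non-negatively and non-positively with every movable curve class, hence trivially with all of them, and since movable curve classes (e.g. complete intersections of ample divisors) span the space of numerical curve classes on $X$, this forces $L\equiv 0$, which is the assertion. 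Throughout I will use the following reduction: if $\scrG$ is a torsion free sheaf on $Y$ which is $\theta$-weakly semipositively curved over $Y_0$ for a $\QQ$-divisor class $\theta$, and if $\pi_\ast\phi^\ast\det\scrG$ and $\pi_\ast\phi^\ast\theta$ are $\QQ$-Cartier, then $\det\scrG-(\rank\scrG)\theta$ is weakly semipositively curved over $Y_0$, hence (exactly as in the proof of Proposition~\ref{prop_more-psef-det}, using that $X\setminus\pi(\phi^{-1}(Y_0))$ has codimension $\geqslant 2$ by Proposition~\ref{prop_bir-geometry-psi}(c)) the class $\pi_\ast\phi^\ast(\det\scrG-(\rank\scrG)\theta)$ is pseudoeffective on $X$.

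For the inequality $\pi_\ast\phi^\ast D_{A,c,m}\leqslant m\,\pi_\ast\phi^\ast D_{A,c,1}$, the input is the nefness of $-(K_X+\Delta)$. I would apply Corollary~\ref{cor_iso-direct-image-Y0}(b) to $G=\pi^\ast A$ with $b=m$; its hypotheses hold since $\pi_\ast\pi^\ast A=A$ is Cartier and $\pi_\ast\phi^\ast D_{A,c,m}$ is $\QQ$-Cartier by assumption. It yields that $E_1:=\phi_\ast\scrO_M(\pi^\ast A+cE)$ is $\tfrac1m D_{A,c,m}$-weakly semipositively curved over $Y_0$. Since $\det E_1\sim rD_{A,c,1}$ with $r:=r_1=\rank E_1$, the reduction above applied to $\scrG=E_1$ and $\theta=\tfrac1m D_{A,c,m}$ shows that $\pi_\ast\phi^\ast(rD_{A,c,1}-\tfrac rm D_{A,c,m})=\tfrac rm\,\pi_\ast\phi^\ast(mD_{A,c,1}-D_{A,c,m})$ is pseudoeffective on $X$, as claimed.

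For the reverse inequality $\pi_\ast\phi^\ast D_{A,c,m}\geqslant m\,\pi_\ast\phi^\ast D_{A,c,1}$, the input is the surjectivity hypothesis \eqref{eq_condition-surjectivity-sym}. I would apply Corollary~\ref{cor_iso-direct-image-Y0}(b) once more, now to $G=\pi^\ast A$ with $b=1$ (legitimate since $\pi_\ast\phi^\ast D_{A,c,1}$ is $\QQ$-Cartier by assumption), obtaining that $E_1$ is $D_{A,c,1}$-weakly semipositively curved over $Y_0$; equivalently the $\QQ$-twist $E_1':=E_1\otimes\scrO_Y(-D_{A,c,1})$ is weakly semipositively curved over $Y_0$ with $\det E_1'\equiv0$. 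As $r\mid m$, the sheaf $\Sym^m E_1'=\Sym^m E_1\otimes\scrO_Y(-mD_{A,c,1})$ is an honest torsion free sheaf, again weakly semipositively curved over $Y_0$, with $\det\Sym^m E_1'\equiv0$. On the other hand the tautological multiplication map $\Sym^m E_1\to E_m:=\phi_\ast\scrO_M(m\pi^\ast A+mcE)$ is generically surjective over $Y_0$: over a general $y\in Y$ it is identified, using that $\pi|_{M_y}\colon M_y\to X_y$ is birational and $cE|_{M_y}$ is effective and $\pi$-exceptional, with the surjection $\Sym^m\Coh^0(X_y,\scrO_{X_y}(A))\twoheadrightarrow\Coh^0(X_y,\scrO_{X_y}(mA))$ of \eqref{eq_condition-surjectivity-sym}. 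Twisting by $\scrO_Y(-mD_{A,c,1})$, the image of $\Sym^m E_1'\to E_m\otimes\scrO_Y(-mD_{A,c,1})$ is a torsion free quotient of $\Sym^m E_1'$ of full rank $r_m$; being a quotient of a weakly semipositively curved sheaf, it is weakly semipositively curved, so its determinant is pseudoeffective over $Y_0$, and since it is a full rank subsheaf of $E_m\otimes\scrO_Y(-mD_{A,c,1})$ this forces $\det(E_m\otimes\scrO_Y(-mD_{A,c,1}))=r_m(D_{A,c,m}-mD_{A,c,1})$ to be pseudoeffective over $Y_0$. The reduction of the first paragraph then gives $\pi_\ast\phi^\ast(D_{A,c,m}-mD_{A,c,1})\geqslant0$ on $X$, and combined with the previous paragraph this finishes the argument.

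The part I expect to be delicate is not this chain of implications but the codimension-one bookkeeping over $Y_0$ underlying the determinantal identities: one must check that over $Y_0$ — where $\phi$ is flat and, for $A$ sufficiently ample, the higher $\phi$-direct images vanish so that cohomology and base change applies — the sheaves $E_1$, $E_m$, $\Sym^m E_1$ and the image of the multiplication map are well-behaved in codimension $\leqslant1$, so that the equalities of determinants hold over all of $Y_0$ and not merely at its generic point. This is precisely where, as in Propositions~\ref{prop_more-psef-det} and \ref{prop_det-stablize}, the singularity of $X$ (and the possibility that $E$ has $\phi$-horizontal components) makes the analysis subtler than in the smooth case; it is also where the hypotheses that $\psi$ is almost holomorphic and $A$ is sufficiently ample are used.
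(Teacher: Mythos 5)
Your two degree inequalities come from exactly the same sources as the paper's own proof: Corollary \ref{cor_iso-direct-image-Y0}(b) with $b=m$ for the direction using nefness of $-(K_X+\Delta)$, and \eqref{eq_condition-surjectivity-sym} combined with Corollary \ref{cor_iso-direct-image-Y0}(b) with $b=1$ (this is precisely Lemma \ref{lemma_positivity-Um-Vm}) for the other. The genuine gap is the ``reduction'' you use to turn ``weakly semipositively curved over $Y_0$'' into ``pseudoeffective on $X$''. First, the proof of Proposition \ref{prop_more-psef-det}, which you invoke for this step, establishes no such principle: it is an Ohsawa--Takegoshi section-extension argument producing effective members of specific linear series, not a descent of metrics. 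Second, the metric descent you would actually need fails twice over: (i) the metrics $h_\epsilon$ are only defined over $Y_0$, and $Y\backslash Y_0$ may contain divisors, so nothing prevents the local weights from tending to $+\infty$ there (think of $-\log|z|$ on the punctured disc) and the codimension-one extension criterion of \cite[Proposition 2.4]{CH19} is unavailable; (ii) even after pulling back to $\phi\inv(Y_0)\backslash E\simeq X_0\subseteq X$, the curvature lower bound is $\phi^\ast\theta-\epsilon\,\phi^\ast\omega_Y$, and since $\psi$ is only almost holomorphic the form $(\psi|_{X_0})^\ast\omega_Y$ is not dominated by any smooth form on $X$ near the indeterminacy locus, so the weights are not quasi-psh with respect to a form defined across the codimension-$2$ set $X\backslash X_0$ and the extension theorem for quasi-psh functions does not apply.

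The paper sidesteps all of this by never asserting pseudoeffectivity of $\pm L$ on $X$: by Kleiman's criterion (\cite[Lemma 4.1]{GKP16a}) it suffices to show $(\pi_\ast\phi^\ast D_{A,c,m}-m\pi_\ast\phi^\ast D_{A,c,1})\cdot C=0$ for $C$ a general complete intersection of ample divisors on $X$. Such a $C$ avoids $\pi(\Exceptional(\pi))$, so $C_Y:=\phi(\pi\inv C)$ lies in $Y_0$ and avoids the singular loci of the $-\epsilon\omega$-semipositive metrics; the two inequalities are then read off directly on the normalization $\bar C_Y$ from the restrictions of $\scrV_{c,m}$ and of $\phi_\ast\scrO_M(\pi^\ast A+cE)$, with no extension or descent of metrics needed. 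Replacing your global pseudoeffectivity claims by this curve-restriction step repairs the argument; the determinantal bookkeeping over $Y_0$ that you flag at the end is the lesser issue and is already handled by Proposition \ref{prop_det-stablize} together with the reflexivity of the direct images over the flat locus $Y_0$.
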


Before proving the proposition, let us first prove the following auxiliary lemma:
\begin{lemme}
\label{lemma_positivity-Um-Vm}
 Let everything be as in the {\hyperref[general-setting]{General Settinng \ref*{general-setting}}} with $Y$ non-uniruled. Suppose that $\psi$ is almost holomorphic and let $A$ as in {\hyperref[prop_det-direct-image-power]{Proposition \ref*{prop_det-direct-image-power}}}. For every $m$ divisible by $r$ set
\begin{align*}
\scrU_{c,m} &:=\Sym^m\!\phi_\ast\scrO_M(\pi^\ast\!A+cE)\otimes\scrO_Y(-mD_{A,c,1}), \\
\scrV_{c,m} &:=\phi_\ast\scrO_M(m\pi^\ast\!A+mcE)\otimes\scrO_Y(-mD_{A,c,1}),
\end{align*} 
then $\scrU_{c,m}$ and $\scrV_{c,m}$ are both weakly semipositively curved on $Y_0$.
\end{lemme}
\begin{proof}
By hypothesis $\pi_\ast\phi^\ast\!D_{A,c,1}$ is $\QQ$-Cartier on $X$, hence by {\hyperref[cor_iso-direct-image-Y0]{Corollary \ref*{cor_iso-direct-image-Y0}(b)}} we see that $\phi_\ast\scrO_M(\pi^\ast\!A+cE)$ is $D_{A,c,1}$-weakly semipositively curved on $Y_0$, which implies that $\scrU_{c,m}$ is weakly semipositively curved on $Y_0$. 

By \eqref{eq_condition-surjectivity-sym} and by \cite[Lemma 7.11]{Deb01} we have a surjection
\[
\Sym^m\!\Coh^0(M_y,\scrO_{M_y}(\pi^\ast\!A+cE))\twoheadrightarrow\Coh^0(M_y,\scrO_{M_y}(m\pi^\ast\!A+mcE))
\]
for $y\in Y$ general, from which we see that the natural morphism $\scrU_{c,m}\to\scrV_{c,m}$ is generically surjective. Hence $\scrV_{c,m}$ is also weakly semipositively curved on $Y_0$. 
\end{proof}

Moreover, in the statement of {\hyperref[prop_det-direct-image-power]{Proposition \ref*{prop_det-direct-image-power}}} we presume the existence of a very ample divisor $A$ on $X$ satisfying the condition \eqref{eq_condition-surjectivity-sym}. We will next show that such divisor really exists. More generally we have:
\begin{lemme}
Let $V$ be a normal projective variety and let $H$ be a semiample divisor. Then up to multiplying $H$ the natural morphism 
\[
\Sym^k\!\Coh^0(V,\scrO_V(H))\to\Coh^0(V,\scrO_V(kH))
\]
is surjective for every $k\in\ZZ_{>0}$. 
\end{lemme}
\begin{proof}
The proof is quite similar to that of \cite[\S 7.1, Proposition 7.6, pp.~172-173]{Deb01}. First by  \cite[\S 7.1, Proposition 7.6(b), p.~172]{Deb01}, up to replacing $V$ by the image of $|rH|$ for some $r$ sufficiently large and replacing $H$ by the hyperplane divisor, we can assume that $H$ is a very ample and $V$ is embedded into $\PP\!E$ so that $\scrO_V(H)$ is equal to the pullback of $\scrO(1)$ where $E:=\Coh^0(V,\scrO_V(H))$. Then by the Serre vanishing, for $s$ sufficiently large, we have 
\[
\Coh^1(\PP E,\scrI_X(ks))=0.
\]
for any $k\in\ZZ_{>0}, $where $\scrI_X$ is the ideal of $X$ in $\PP\!E$. Hence the surjectivity of 
\[
\Coh^0(\PP\!E, \scrO(ks))\to\Coh^0(V,\scrO_V(ksH)).
\]
But
\[
\Coh^0(\PP\!E,\scrO(ks))\simeq\Sym^{ks}\!\Coh^0(\PP\!E, \scrO(1))\simeq\Sym^{ks}\!\Coh^0(V,\scrO_V(H)),
\]
thus
\[
\Sym^{ks}\!\Coh^0(V,\scrO_V(H))\twoheadrightarrow\Coh^0(V,\scrO_V).
\]
Now this map factorizes through 
\[
\Sym^{ks}\!\Coh^0(V,\scrO_V(H))\to\Sym^k\!\Coh^0(V,\scrO_V(sH)),
\]
hence $sH$ satisfies the condition, and the lemma is proved. 
\end{proof}

Now let us turn to the proof of {\hyperref[prop_det-direct-image-power]{Proposition \ref*{prop_det-direct-image-power}}}:

\begin{proof}[Proof of {\hyperref[prop_det-direct-image-power]{Proposition \ref*{prop_det-direct-image-power}}}]
By {\hyperref[prop_det-stablize]{Proposition \ref*{prop_det-stablize}}}, as soon as $c\geqslant c_0$, for any $a\in\ZZ_{>0}$ the divisor $\pi_\ast\phi^\ast\!D_{A,c,a}$ over $X$ is independent of $c$. Hence it suffices to prove the proposition for a particular choice of $c\geqslant c_0$. By Kleiman's criterion for numerical triviality \cite[Lemma 4.1]{GKP16a}, it suffices to show that for any $(\dim\!X-1)$-tuple of ample line bundles $L_1\,,\cdots,L_{\dim\!X-1}$ on $X$ the intersection number 
\[
L_1\cdot\cdots\cdot L_{\dim\!X-1}\cdot(\pi_\ast\phi^\ast\!D_{A,c,m}-m\pi_\ast\phi^\ast\!D_{A,c,1})=0.
\]
To this end, let $H_i$ be a general member of $\left|L_i\ptensor[k]\right|$ for $k$ sufficiently large and set $C=H_1\cap\cdots\cap H_{\dim\!X-1}$. By the projection formula it suffices to show that 
\[
(\phi^\ast\!D_{A,c,m}-m\phi^\ast\!D_{A,c,1})\cdot(\pi\inv\!C)=0.
\]
Since $\pi(\Exceptional(\pi))$ is of codimension $2$ in $X$, $C$ is disjoint from $\pi(\Exceptional(\pi))$, then $\pi\inv\!C$ is disjoint from $E$ and thus $C_Y:=\phi(\pi\inv C)$ is contained in $Y_0$. 
Let $\bar C_Y$ be the normalization of $C_Y$ and let $\bar i_{C_Y}:\bar C_Y\to Y$ be the natural morphism. Again by the projection formula, we are reduced to show that \[
\bar i_{C_Y}^\ast(D_{A,c,m}-mD_{A,c,1})=0.
\]
As in {\hyperref[lemma_positivity-Um-Vm]{Lemma \ref*{lemma_positivity-Um-Vm}}}, we set for any $m$ divisible by $r$
\begin{align*}
\scrU_{c,m} &:=\Sym^m\!\phi_\ast\scrO_M(\pi^\ast\!A+cE)\otimes\scrO_Y(-mD_{A,c,1}), \\
\scrV_{c,m} &:=\phi_\ast\scrO_M(m\pi^\ast\!A+mcE)\otimes\scrO_Y(-mD_{A,c,1}),
\end{align*}
Since $\scrU_{c,m}$ and $\scrV_{c,m}$ are torsion free, we can assume that $C_Y$ is contained in the locally free locus of them. 

By {\hyperref[lemma_positivity-Um-Vm]{Lemma \ref*{lemma_positivity-Um-Vm}}} $\scrV_{c,m}$ is weakly semipositively curved, since $C_Y$ is a general complete intersection curve, 
$C_Y$ is not contained in the singular locus of the $-\epsilon\omega$-semipositive metric of $\scrV_{c,m}$, $\scrV_{c,m}|_{Y_0}$ is semipositively curved on $C_Y$, in particular $\bar i_{C_Y}^\ast\det\scrV_{c,m}\geqslant 0$. But $\det\scrV_{c,m}\simeq\scrO_Y(r_mD_{A,c,m}-mr_mD_{A,c,1})$, hence we have
\[
\bar i_{C_Y}^\ast(D_{A,c,m}-mD_{A,c,1})\geqslant 0.
\]

On the other hand, $\pi_\ast\phi^\ast\!D_{A,c,m}$ is $\QQ$-Cartier on $X$, then by {\hyperref[cor_iso-direct-image-Y0]{Corollary \ref*{cor_iso-direct-image-Y0}(b)}} we see that $\phi_\ast\scrO_M(\pi^\ast\!A+cE)$ is $\frac{1}{m}D_{A,c,m}$-weakly semipositively curved over $Y_0$, in consequence 
\[
\bar i_{C_Y}^\ast\phi_\ast\scrO_M(\pi^\ast\!A+cE)
\]
is $\frac{1}{m}\bar i_{C_Y}^\ast\!D_{A,c,m}$weakly semipositively curved. Hence
\[
\bar i_{C_Y}^\ast\det\!\phi_\ast\scrO_M(\pi^\ast\!A+cE)-\frac{r}{m}
\bar i_{C_Y}^\ast\!D_{A,c,m}\geqslant 0,
\]
implying that
\[
\bar i_{C_Y}^\ast(D_{A,c,m}-mD_{A,c,1})\leqslant 0.
\]
\end{proof}

\section{Albanese map of \texorpdfstring{$X$}{text}}
\label{sec_Albanese}

In this section, we take $\psi$ in the \hyperref[general-setting]{General Setting \ref*{general-setting}} to be the Albanese map $\alb_X$ of $X$. Recall that $X$ is of semi-Fano type, i.e. $X$ admits an effective $\QQ$-divisor $\Delta$ such that $(X,\Delta)$ is klt and that the twisted anticanonical divisor $-(K_X+\Delta)$ is nef. In this situation, we can take $M$ to be any smooth model of $X$, $\phi=\alb_M$ and $Y=\Image(\alb_M)\subseteq\Alb_M$\,, by \cite[Proposition 9.12, pp.~107-108]{Ueno75} $\psi$ is independent of the choice of the smooth model $M$.

First recall the basic properties of the Albanese map (c.f. \cite[\S 9, pp.~94-115]{Ueno75}):
\begin{prop}
\label{prop_app-Alb}
Let $V$ a compact K\"ahler manifold and let $\alb_V:V\to\Alb_V$ be its Albanese map. Then we have:
\begin{itemize}
\item[\rm (a)] $\alb_V$ satisfies the following universal property: every morphism $V\to T$ with $T$ a complex torus factorizes via $\alb_V: V\to \Alb_V$; in addition $\Alb_V\to T$ is a morphism of analytic Lie groups up to a translation. C.f. {\rm \cite[Defintion 9.6, pp.~101-102]{Ueno75}}.
\item[\rm (b)] $W:=\Image(\alb_V)$ generates $\Alb_V$, i.e. there is an integer $k>0$ such that the morphism 
\begin{align*}
\underbrace{W\times \cdots \times W}_{k\text{ times}}  & \longrightarrow \Alb_V\,, \\
(w_1,\cdots,w_k) &\longmapsto w_1+\cdots +w_k\,,
\end{align*}
is surjective. C.f. {\rm \cite[Lemma 9.14, pp.~108-110]{Ueno75}}.
\end{itemize}
\end{prop}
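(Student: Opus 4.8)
The plan is to treat the two assertions separately; both are classical facts about the Albanese variety (as in \cite[\S 9]{Ueno75}), so I would recall the arguments rather than reproduce every detail.

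For \emph{(a)}, I would work with the concrete model $\Alb_V=\Coh^0(V,\Omega_V^1)^\ast/\mathrm{im}\,H_1(V,\ZZ)$, the map $\alb_V$ sending $v$ to the class of the functional $\omega\mapsto\int_{v_0}^v\omega$. Given a morphism $g\colon V\to T$ to a complex torus $T=W_T/\Lambda_T$, pullback of translation-invariant holomorphic $1$-forms gives a linear map $g^\ast\colon\Coh^0(T,\Omega_T^1)=W_T^\ast\to\Coh^0(V,\Omega_V^1)$; its transpose $\Coh^0(V,\Omega_V^1)^\ast\to W_T$ is compatible with the period lattices via $g_\ast\colon H_1(V,\ZZ)\to H_1(T,\ZZ)=\Lambda_T$, and hence descends to a homomorphism of complex tori $a\colon\Alb_V\to T$. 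To see that $a\circ\alb_V=g$ up to translation, I would consider the difference $v\mapsto a(\alb_V(v))-g(v)$: the target being abelian this map kills $\pi_1$, so it lifts to a holomorphic map $V\to W_T$ to the universal cover, which is constant because $V$ is compact. The "morphism of analytic Lie groups up to translation" clause is then part of the construction, and uniqueness of $a$ follows from \emph{(b)}, since $\alb_V(V)$ generates $\Alb_V$ and a homomorphism is determined by its restriction to a generating set.

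For \emph{(b)}, write $W=\alb_V(V)$, a compact analytic subvariety of $\Alb_V$ (image of the compact $V$); fix $w_0\in W$ and let $S_k\subseteq\Alb_V$ be the image of the sum map $W^{\times k}\to\Alb_V$. By Remmert's proper mapping theorem each $S_k$ is an irreducible compact analytic subvariety. The translates $T_k:=S_k-kw_0$ form an increasing chain of irreducible subvarieties through $0$ that is closed under addition, so $A':=\overline{\bigcup_kT_k}$ is a closed connected sub-semigroup of the compact group $\Alb_V$ containing $0$, hence a sub-torus; it is clearly the smallest sub-torus with $W\subseteq w_0+A'$. If $A'\neq\Alb_V$, the quotient $q\colon\Alb_V\to B:=\Alb_V/A'$ is a nonzero torus with $q\circ\alb_V$ constant, so $q^\ast\colon\Coh^0(B,\Omega_B^1)\hookrightarrow\Coh^0(\Alb_V,\Omega^1)$ followed by the isomorphism $\alb_V^\ast\colon\Coh^0(\Alb_V,\Omega^1)\cong\Coh^0(V,\Omega_V^1)$ (the defining property of the Albanese) is zero, forcing $\Coh^0(B,\Omega_B^1)=0$, i.e. $B=0$, a contradiction. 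Hence $\overline{\bigcup_kT_k}=\Alb_V$. Finally the dimensions $d_k:=\dim T_k$ are non-decreasing and bounded by $\dim\Alb_V$; were $d_k<\dim\Alb_V$ for all $k$, this sequence would stabilize, and an inclusion of irreducible varieties of equal dimension being an equality, $\bigcup_kT_k$ would already be a proper closed sub-torus, contradicting the previous sentence. Thus $T_k=\Alb_V$ for some $k$, i.e. the sum map $W^{\times k}\to\Alb_V$ is surjective.

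The only genuinely delicate points are the appeal to Remmert's theorem to know that the iterated sums $S_k$ are honest irreducible analytic subvarieties (so that "equal dimension forces equality" applies) and the identification $A'=\Alb_V$ via the defining isomorphism $\alb_V^\ast\colon\Coh^0(\Alb_V,\Omega^1)\cong\Coh^0(V,\Omega_V^1)$; everything else is bookkeeping. Since the statements are precisely those of \cite[\S 9]{Ueno75}, in the article itself one may alternatively simply cite that reference.
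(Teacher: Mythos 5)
Your argument is correct and is exactly the classical one: the paper itself offers no proof of this proposition but simply cites \cite[\S 9]{Ueno75}, and your sketch reproduces the standard arguments behind those citations (the period-lattice construction of the induced homomorphism for (a), and the stabilizing chain of iterated sum images via Remmert's proper mapping theorem for (b)). The only point worth making fully explicit is that the stabilized $T_N$, being a compact irreducible analytic subset containing $0$ and closed under addition, is closed under inversion and hence genuinely a subtorus, but this is standard.
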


More generally, for $V$ a compact complex variety in the Fujiki class $\mathscr{C}$ (not necessarily smooth), the Albanese map $\alb_V$ of $V$ is defined to be the meromorphic map induced by the Albanese map of a smooth model of $V$ (this definition is independent of the choice of the smooth model by \cite[Proposition 9.12, pp.~107-108]{Ueno75}). In this case, $\alb_V$ has the universal property that every meromorphic map from $V$ to a complex torus factorizes via $\alb_V$ (analogous to {\hyperref[prop_app-Alb]{Proposition \ref*{prop_app-Alb}(a)}}), c.f. \cite[Theorem-Definition 2.1]{YWang16}.

\subsection{Everywhere-definedness, surjectivity and connectedness of fibres of \texorpdfstring{$\alb_X$}{text}}
\label{ss_Albanese_surjective}

In this subsection, we briefly recall how one proves that $\psi=\alb_X$ is everywhere defined, surjective and with connected fibres:
\begin{itemize}
\item Since $(X,\Delta)$ is a klt pair, in particular $X$ has rational singularities (c.f. \cite[Theorem 5.22, pp.~161-162]{KM98}) hence by \cite[Lemma 8.1]{Kaw85}, $\psi$ is a(n) (everywhere defined) morphism $X\to\Alb_X$. 
\item By {\hyperref[prop_bir-geometry-psi]{Proposition \ref*{prop_bir-geometry-psi}(b)}}, the Kodaira dimension of $\Image(\psi)$ is equal to $0$, then \cite[Theorem 10.3]{Ueno75} implies that $\Image(\psi)$ is a translate of a subtorus of $Y$; in virtue of {\hyperref[prop_app-Alb]{Proposition \ref*{prop_app-Alb}(b)}}, a fortiori $\Image(\psi)=Y$, i.e. $\psi$ is surjective. 
\item To see that $\psi$ has connected fibres, let us take $\pi: Y'\to Y$ a Stein factorization of $\psi$ with $Y'$ a normal projective variety, then by {\hyperref[prop_bir-geometry-psi]{Proposition \ref*{prop_bir-geometry-psi}(b)}} we have $\kappa(Y')=0$, which implies, in virtue of \cite[Main Theorem]{KV80}, that $\pi$ is a finite étale cover. Then the theorem of Serre-Lang \cite[\S 18, pp.~167-168]{Mum70} implies that $Y'$ is an abelian variety with $\pi$ an isogeny. By {\hyperref[prop_app-Alb]{Proposition \ref*{prop_app-Alb}(a)}} $\pi$ is a fortiori an isomorphism. 
\end{itemize}

\subsection{Flatness of \texorpdfstring{$\alb_X$}{text}}
\label{ss_Albanese_flat}

In order to apply {\hyperref[prop_num-flat--local-const]{Proposition \ref*{prop_num-flat--local-const}}} to $\psi$ one needs to prove first that it is flat. In this subsection we will settle this by following the argument of \cite{LTZZ10}. Recall that $X$ is a 
normal projective variety of semi-Fano type, i.e. $X$ admits an effective $\QQ$-divisor $\Delta$ such that $(X,\Delta)$ is a klt pair and that the twisted anticanonical divisor $-(K_X+\Delta)$ is nef. The flatness of $\psi$ can be deduced from the following lemma. Let us remark that under the additional assumption that $V$ is smooth and $D=0$, a stronger result is obtained in \cite[Proposition 4.1]{EIM20} (they also prove the semistability of the fibre space).

\begin{lemme}
\label{lemme_anticanonique-nef-plat}
Let $f: V\to W$ a surjective morphism with connected fibres with $V$ a projective Cohen-Macaulay variety and $W$ a smooth projective variety. Suppose that there is an effective $\QQ$-divisor $D$ on $V$ such that $(V,D)$ is a log canonical pair and that the twisted relative anticanonical divisor  $-(K_{V/W}+D)$ is nef on $V$. Then $f$ is flat.  
\end{lemme}

\begin{proof}
By the miracle flatness, it suffices to show that $f$ is equi-dimensional. Suppose by contradiction that $f$ is not so, then there is a (closed) point $w_0\in W$ such that $\dim V_{w_0}>\dim F$ where $F$ denotes the general fibre of $f$. Now take $S$ to be the complete intersection of $\dim V-\dim V_{w_0}+1$ general very ample divisors passing through $w_0$. Then by Bertini $S$ is a smooth projective variety containing $w_0$ of dimension $\dim S= \dim V_{w_0}-\dim F+1$. Set $T=S\underset{W}{\times}V$ with $g:T\to S$ the induced morphism, then $\dim T=\dim V_{w_0}+1$. 

Let us remark that in \cite{LTZZ10} it is claimed that $T$ is smooth in codimension 1; but this cannot be true in general since a priori $V_{w_0}$ can be a non-reduced fibre of $f$ which is a codimension $1$ subvariety contained in $T$.  We will present below a proof avoiding the use of this claim.

By construction $T$ is a complete intersection in $X$, thus $T$ is Cohen-Macaulay by \cite[Theorem 17.3(i\!i), p.~134]{Mat89}. By adjunction formula \cite[\S II.5, Proposition 5.26, pp.~139-140]{CDGPR94} one finds that $\omega_T\simeq\omega_V|_T$. Now take a flattening morphism $p_S:S'\to S$ of $g$ (c.f. \cite[Flatenning Theorem]{Hir75}) and take $T'$ a desginularization of the principal component of $T\underset{S}{\times}S$ with $g':T'\to S'$ and $p_T: T'\to T$ the induced morphisms. Then every $g'$-exceptional divisor must be $p_T$-exceptional.  

\begin{center}
\begin{tikzpicture}[scale=1.6]
\node (A) at (0,0) {$S$};
\node (B) at (0,1.5) {$T$};
\node (BB) at (0,2.5) {$\bar T$};
\node (A1) at (-1.5,0) {$S'$};
\node (B1) at (-1.5,1.5) {$T'$};
\node (C) at (1,0) {$W$};
\node (D) at (1,1.5) {$V$};
\node (E) at (0.5,0.75) {$\square$};
\path[->,font=\scriptsize,>=angle 90]
(B) edge node[left]{$g$} (A)
(D) edge node[right]{$f$} (C)
(B1) edge node[left]{$g'$} (A1)
(BB) edge node[right]{$\nu$} (B)
(B1) edge node[above left]{$p_{\bar T}$} (BB)
(A1) edge node[below]{$p_S$} (A)
(B1) edge node[above]{$p_T$} (B);
\path[right hook->, >=angle 90]
(A) edge (C)
(B) edge (D);
\end{tikzpicture}
\end{center}

Take the normalization $\nu:\bar T\to T$ of $T$, then $p_T$ factors through $\nu$, and denote by $p_{\bar T}$ the induced morphism $T'\to\bar T$. By \cite[(2.3) Proposition and Step 3 of (3.6)]{Reid94} (noting that $T$ is Cohen-Macaulay thus $S_2$) we have
\[
\nu^{[\ast]} \omega_T\simeq \scrO_{\bar T}(K_{\bar T}+\Cond_{\bar T})
\]
where $\nu^{[\ast]}$ signifies the reflexive hull of the pullback and $\Cond_{\bar T}$ is the effective Weil divisor defined by the conductor ideal on $\bar T$. Since $K_V+D$ is $\QQ$-Cartier, we can write 
\begin{align*}
K_{T'}+(p_T)\inv_\ast\!D_T &\sim_{\QQ} p_T^\ast(K_V|_T+D_T)+E_T-G\,, \\
K_{S'} &\sim p_S^\ast K_S+E_S\,,
\end{align*}
where $D_T:=D|_T$, $E_T$ is a (non necessarily effective) $p_T$-exceptional (thus $p_{\bar T}$-exceptional) divisor, $G$ is an effective divisor consisting of the non-exceptional components of the pullback of $\Cond_{\bar T}$, and $E_S$ is an effective $p_S$-exceptional divisor (noting that $S$ is smooth). Hence 
\[
K_{T'/S'}+(p_T)\inv_\ast\!D_T\sim_{\QQ} \left.p_T^\ast(K_{V/W}+D)\right|_T+E_T-G-(g')^{\ast}E_S.
\]
Moreover, let $F$ be the general fibre of $g$, then by construction it is also the general fibre of $f$, by \cite[Lemma 5.7, pp.~158-159]{KM98} $(F,D_F)$ is a lc pair where $D_F:=D|_F$, hence the horizontal part of $E_T-G$ has coefficients $\geqslant -1$. Write
\[
(E_T-G)^{\horizontal}:=\sum_{j\in J} b_j B_j
\]
with the $B_j$'s being prime divisors, and set
\[
\Delta_0:=\sum_{j\in J, b_j<0} (-b_j) B_j\,,
\]
then every coefficient in $\Delta_0$ is $\leqslant 1$. By the construction of $\Delta_0$, we can rewrite $E_T+\Delta_0-G$ as $E'_T-G'$ with $E'_T$ being $p_T$-exceptional and $G'$ being effective whose components come from the conductor divisor of the normalization of $T$. Clearly the support of $E'_T$ (resp. $G'$) is contained in that of $E_T$ (resp. $G$). 

Since $\dim T=\dim V_{w_0}+1$, $p_T\inv(V_{w_0})$ is a  non-$p_T$-exceptional divisor in $T'$, hence is not $g'$-exceptional, consequently $g'(p_T\inv(V_{w_0}))$ contains a codimension $1$ component, which we denote by $E$. Then $p_S(E)=\{w_0\}$ hence $E\subseteq\Supp(E_S)$ (by assumption $g$ is not flat, hence $E_S\neq 0$ and $\Supp(E_S)\neq\varnothing$).    

Take an ample divisor $A$ on $T'$, since $-(K_{V/W}+D)$ is nef then for any $\epsilon\in\QQ_{>0}$ the $\QQ$-divisor $-\left.p_T^\ast (K_{V/W}+D)\right|_T+\epsilon A$ is ample. Choose an ample $\QQ$-divisor $H_\epsilon$ on $S'$ such that $\left.-p_T^\ast (K_{V/W}+D)\right|_T+\epsilon A-(g')^\ast H_\epsilon$ is still ample. Let 
\[
\Delta_\epsilon:=(p_T)\inv_\ast\!D_T+\Delta_0+\frac{1}{k}\cdot\text{general member of
}\left|k\left(-\left.p_T^\ast(K_{V/W}+D)\right|_T+\epsilon A-(g')^\ast H_\epsilon\right)\right|\,,
\]
where $k$ is a positive integer sufficiently large and divisible (so that $\epsilon\cdot k \in\ZZ$ and that $kH_\epsilon$ is an integral divisor). Then the coefficients in $\Delta_\epsilon$ are $\leqslant 1$, thus the pair $(T',\Delta_\epsilon)$ is lc. By \cite[Theorem 1.1]{Fuj17} the direct image sheaf $g'_\ast\scrO_{T'}\left(k(K_{T'/S'}+\Delta_\epsilon)\right)$ is weakly positive; $(E_T-G)^{\vertical}-(g')^\ast E_S$ being $g'$-vertical and $(E_T-G)^{\horizontal}+\Delta_0$ being effective, $K_{T'/S'}+\Delta_\epsilon$ is big on the general fibre of $g'$, in particular we have 
\[
g'_\ast\scrO_{T'}\left(k(K_{T'/S'}+\Delta_\epsilon)\right)\neq 0.
\]
Hence there is $p\in\ZZ_{>0}$ such that 
\[
\RefSym^p\!g'_\ast\scrO_{T'}\left(k(K_{T'/S'}+\Delta_\epsilon)\right)\otimes\scrO_{S'}(kpH_\epsilon)\simeq\RefSym^p\!g'_\ast\scrO_{T'}\left(k(\epsilon A+E_T+\Delta_0-G-(g')^\ast E_S)\right)
\]
is generically globally generated, that is, there is a generically surjective morphism
\[
\scrO_{S'}^{\oplus d}\to\RefSym^p\!g'_\ast\scrO_{T'}\left(k(\epsilon A+E_T+\Delta_0-G-(g')^\ast E_S)\right),
\]
where
\[
d:=\dim\Coh^0(S',\RefSym^p\!g'_\ast\scrO_{T'}\left(k(\epsilon A+E_T+\Delta_0-G-(g')^\ast E_S)\right))\in\ZZ_{>0}.
\]
Pull it back to $T'$ and combined with the natural (non-trivial) morphism
\[
(g')^\ast\RefSym^p\!g'_\ast\scrO_{T'}\left(k(\epsilon A+E_T+\Delta_0-G-(g')^\ast E_S)\right)\to\scrO_{T'}\left(kp(\epsilon A+E_T+\Delta_0-G-(g')^\ast E_S)\right)
\]
one finds that $\epsilon A+E_T+\Delta_0-G-(g')^\ast E_S$ is $\QQ$-linearly equivalent to an effective $\QQ$-divisor. Letting $\epsilon\to 0$, we obtain that $E_T+\Delta_0-G-(g')^\ast E_S=E'_T-G'-(g')^\ast E_S$ is pseudoeffective. If $(V,D)$ is klt, the pseudoeffectivity result can also be obtained by the semipositivity of the curvature current of the relative $m$-Bergman kernel metric on the twisted relative canonical bundle (c.f. \cite[Theorem 1.2]{Cao17}).

Finally, let $L$ be a very ample line bundle on $\bar T$, and let $H_1\,,\cdots,H_{\dim\! V_{w_0}}$ be general members of the linear series $\left|p_{\bar T}^\ast L\right|$. Set 
\[
C:=H_1\cap\cdots\cap H_{\dim\!V_{w_0}}\,,
\]
then $C$ is a movable curve on $T'$, hence $(E'_T-G'-(g')^\ast E_S)\cdot C\geqslant 0$ by \cite[0.2 Theorem]{BDPP13} (c.f. also \cite[vol.I\!I, Theorem 11.4.19,  p.~308]{Laz04}). The divisor $E'_T$ being $p_{\bar T}$-exceptional, we have $E'_T\cdot C=0$ by the projection formula. Thus we get
\[
(g')^\ast E\cdot C\leqslant (g')^\ast E_S\cdot C\leqslant -G'\cdot C\leqslant 0
\]
where the last inequality results from the effectivity of $G'$. On the other hand, $(g')^\ast E$ is not $p_{\bar T}$-exceptional, hence $(p_{\bar T})_\ast(g')^\ast E$ is an effective (Weil) divisor on $\bar T$ (e.g. it contains $\nu\inv(V_{w_0})$), thus again by the projection formula one gets
\[
(g')^\ast E\cdot C=(p_{\bar T})_\ast(g')^\ast E\cdot L^{\dim\!V_{w_0}}>0,
\]
which is a contradiction. Hence $f$ is flat.
\end{proof}

\subsection{Reduction to \texorpdfstring{$\QQ$}{text}-factorial case}
\label{ss_Albanese_Q-fact}
In this subsection we prove that in order to prove {\hyperref[mainthm_Albanese]{Theorem \ref*{mainthm_Albanese}}}, we can assume that $X$ is $\QQ$-factorial. The key ingredient in the proof of this reduction is the following lemma:

\begin{lemme}
\label{lemma_relative-Druel-decomp}
Let $p: S\to B$ and $f:S'\to S$ be projective surjective morphisms between normal complex varieties such that $f_\ast\scrO_{S'}\simeq\scrO_S$. Suppose that $p\circ f$ induces a decomposition of $S'$ into a product $B\times Y'$ with $q(Y')=0$. Then there is a normal projective variety $Y$ along with a projective morphism $g:Y'\to Y$ such that $p$ induces a decomposition of $S$ into a product $B\times Y$ and that under the decompositions $S'\simeq B\times Y'$ and $S\simeq B\times Y$ we have $f=\id_B\!\times\,g$.  
\end{lemme}
\begin{proof}
This is the relative version of \cite[Lemma 4.6]{Druel18a}. In fact, when $B$ is a projective variety, it is just a simple corollary of \cite[Lemma 4.6]{Druel18a}; in order to apply to our situation 
we need to treat the case that $B$ is a (non-necessarily compact) complex manifold. The proof can be divided into four parts. 

\paragraph{(A) Construction of $g$.}
Since $p:S\to B$ is a projective morphism, there is a $p$-very ample line bundle $L$ on $S$; since $q(Y')=0$ by (the analytic version of) \cite[\S I\!I\!I.12, Exercise 12.6, p.~292]{Har77} there are line bundles $L_B\in\Pic(B)$ and $L_{Y'}\in\Pic(Y')$ such that 
\[
f^\ast L\simeq\pr_1^\ast\!L_B\otimes\pr_2^\ast\!L_{Y'}
\]
with $\pr_1:=p\circ f$ and $\pr_2$ being natural projections of $S'\simeq B\times Y'$. Up to replacing $L$ by $L\otimes p^\ast\!L_B\inv$ we can assume that $f^\ast\!L\simeq\pr_2^\ast\!L_{Y'}$ for some line bundle $L_{Y'}$. Since $f^\ast\!L$ is $(p\circ f)$-relatively generated, hence $L_{Y'}$ is globally generated over $Y'$. Then by \cite[\S 2.1.B, Theorem 2.1.27, pp.~129-130, Vol.I]{Laz04} for $m$ sufficiently large, $L_{Y'}\ptensor[m]$ defines a morphism $g: Y'\to Y$ with connected fibres. In addition, by construction there is a very ample divisor $H$ on $Y$ such that $g^\ast\scrO_Y(H)\simeq L_{Y'}\ptensor[m]$. 

\paragraph{(B) Contraction of the fibres of $\id_B\!\times\,g$ by $f$.}
Set (by identifying $S'$ with $B\times Y'$) $g_B=\id_B\!\times\,g:S'\to B\times Y$. Then we have the following commutative diagram:
\begin{center}
\begin{tikzpicture}[scale=2.5]
\node (A) at (0,0) {$B$.};
\node (B) at (0,1) {$S$};
\node (C) at (0,2) {$S'$};
\node (B1) at (-1,1) {$B\times Y$};
\node (C1) at (-1,2) {$B\times Y'$};
\path[->,font=\scriptsize,>=angle 90]
(B) edge node[right]{$p$} (A)
(C) edge node[right]{$f$} (B)
(C1) edge node[left]{$g_B$} (B1)
(B1) edge node[above]{$\exists\,\bar f$} (B)
(C) edge node[above]{$\simeq$} (C1)
(B1) edge node[below left]{$\pr_1$} (A);
\end{tikzpicture}
\end{center}
In this part we will prove that every fibre of $g_B$ is contracted by $f$. Let $g_B\inv(b,z)$ be a positive dimensional fibre of $g_B$ (with $(b,z)\in B\times Y$), since $g_B\inv(b,z)\simeq g\inv(z)=:Y'_z$, it can be regarded as a subvariety of $Y'$ contracted by $g$. Let $C$ any curve contained in $g_B\inv(b,z)$, then $C\subseteq(p\circ f)\inv(b)=:S'_b$ and since $C$ is contracted by $g$ we have 
\[
\left.(f^\ast\!L)\right|_{S'_b}\cdot C=L_{Y'}\cdot C=\frac{1}{m}g^\ast\!H\cdot C=0,
\]
which means that $C$ is contracted by $f$. Hence every fibre of $g_B$ is contracted by $f$.

\paragraph{(C) Factorization of $f$ through $g_B$.}
In this step we prove that $f$ factorizes through $g_B$. This can be deduced from the following rigidity lemma, which is nothing but an analytic version of \cite[Lemma 1.15, pp.~12-13]{Deb01}:
\begin{lemme}
\label{lemma_rigidity}
Let $f_1:S'\to S_1$ and $f_2:S'\to S_2$ be proper surjective morphisms between normal complex varieties such that $f_{1\ast}\scrO_{S'}\simeq\scrO_{S_1}$. If $f_2$ contracts every fibre of $f_1$, then $f_2$ factorizes through $f_1$. 
\end{lemme}
\begin{proof}[Proof of {\hyperref[lemma_rigidity]{Lemma \ref*{lemma_rigidity}}}]
The proof is the same as the one of \cite[Lemma 1.15, pp.~12-13]{Deb01}. For the convenience of the readers we give the details below to illustrate that the argument in \cite[Proof of Lemma 1.15, pp.~12-13]{Deb01} fits into the analytic case. Consider the morphism 
\[
\phi:=(f_1,f_2):S'\to S_1\times S_2.
\]
Let $\Gamma$ be the image of $\phi$ and let $p_1:\Gamma\to S_1$ and $p_2:\Gamma\to S_2$ be the natural projections restricted to $\Gamma$, then $p_i\circ\phi=f_i$ for $i=1,2$. For any $s\in S_1$, $f_2$ contracts $f_1\inv(s)=(\phi\circ p_1)\inv(s)$, hence 
\[
p_1\inv(s)=\phi(\phi\inv(p_1\inv(s)))=\phi(f_1\inv(s))
\]
is a singleton, hence the proper surjective morphism $p_1:\Gamma\to S_1$ is a finite morphism. But $f_1$ has connected fibres, then so is $p_1$, thus by Stein factorization \cite[\S 1, Theorem 1.9, pp.~8-9]{Ueno75} $p_1$ is an isomorphism. Then we have $\phi=p_1\inv\circ f_1$ and 
\[
f_2=p_2\circ\phi=p_2\circ p_1\inv\circ f_1.
\]
\end{proof}

\paragraph{(D) Conclusion.}
By {\bf (C°)} there is a morphism $\bar f: B\times Y\to S$ such that $f=\bar f\circ g_B$. Hence 
\[
g_B^\ast\left(\bar f^\ast\!L\ptensor[m]\right)=f^\ast\!L\ptensor[m]\simeq \pr_2^\ast\!L_{Y'}\ptensor[m]\simeq\pr_2^\ast g^\ast\scrO_Y(H)=g_B^\ast(\pr_2^\ast\!\scrO_Y(H)).
\]
But $g$ has connected fibres, hence so is $g_B$, in consequence $g_B^\ast$ is an injective morphism between Picard groups, thus $\bar f^\ast\!L\ptensor[m]\simeq\pr_2^\ast\!\scrO_Y(H)$. Since $H$ is very ample and $L$ is $p$-relatively very ample, by looking at every fibre of $p$, we see that $\bar f$ is a finite morphism; but 
\[
\bar f_\ast\scrO_{B\times Y}\simeq\bar f_\ast g_{B\ast}\scrO_{S'}\simeq f_\ast\scrO_{S'}\simeq\scrO_S\,,
\]
hence $\bar f$ is an isomorphism.       
\end{proof}

Now let us return to the proof of the reduction of {\hyperref[mainthm_Albanese]{Theorem \ref*{mainthm_Albanese}}}, whose idea comes from the author's personal communications with Stéphane Druel (of course, any mistake is the author's):

\begin{proof}[{Reduction to the $\QQ$-factorial case}]
Suppose that {\hyperref[mainthm_Albanese]{Theorem \ref*{mainthm_Albanese}}} holds for $X$ $\QQ$-factorial, let us prove it for general $X$. Let $g: X^{\qf}\to X$ be a $\QQ$-factorialization of $X$, whose existence is proved in \cite[Corollary 1.37, pp.~29-30]{Kollar13}. By construction, $g$ is a small birational morphism, hence
\[
K_{X^{\qf}}+g\inv_\ast\!\Delta\sim_{\QQ} g^\ast(K_X+\Delta)
\]
then $(X^{\qf},g_\ast\inv\!\Delta)$ is a klt pair with the twisted anticanonical divisor $-(K_{X^{\qf}}+g_\ast\inv\!\Delta)$ nef. In particular $\alb_{X^{\qf}}$ is an everywhere defined morphism; and since the Albanese map is independent of the choice of the birational model, we have $\Alb_{X^{\qf}}=\Alb_X$ and
\[
\alb_{X^{\qf}}=g\circ\alb_X.
\]
Now by our assumption $\alb_{X^{\qf}}$ is a locally constant fibration whose fibre $F^{\qf}$ has vanishing irregularity, then by passing to the universal cover of $\Alb_X$ the pullback of $X^{\qf}$ splits into a product $\CC^q\times F^{\qf}$ where $q=\dimcoh^1(X,\scrO_X)$. By {\hyperref[lemma_relative-Druel-decomp]{Lemma \ref*{lemma_relative-Druel-decomp}}} we see that the pullback of $X$ splits also in to a product $\CC^q\times F$ where $F$ is the general fibre of $\alb_X$ and there is a birational morphism $g_F:F^{\qf}\to F$ that makes the following diagram commutative:
\begin{center}
\begin{tikzpicture}[scale=2.5]
\node (A) at (0,0) {$\Alb_X$.};
\node (A1) at (-1,0) {$\CC^q$};
\node (B) at (0,1) {$X$};
\node (B1) at (-1,1) {$\CC^q\times F$};
\node (C) at (0,2) {$X^{\qf}$};
\node (C1) at (-1,2) {$\CC^q\times F^{\qf}$};
\node (S1) at (-0.5,0.5) {$\square$};
\node (S2) at (-0.5,1.5) {$\square$};
\path[->,font=\scriptsize,>=angle 90]
(B) edge node[right]{$\alb_X$} (A)
(C) edge node[right]{$g$} (B)
(B1) edge node[left]{$\pr_1$} (A1)
(C1) edge node[left]{$\id_{\CC^q}\!\times g_F$} (B1)
(A1) edge (A)
(B1) edge (B)
(C1) edge (C);
\end{tikzpicture}
\end{center}
Since every square in the diagram is Cartesian, the morphism $\CC^q\times F\to X$ is also a Galois cover with Galois group $\pi_1(\Alb_X)$ and the $\pi_1(\Alb_X)$-action on $\CC^q\times F$ is compatible with $\id_{\CC^q}\!\times g_F$. A simple computation shows that the action of $\pi_1(\Alb_X)$ on $\{t\}\times F$ is independent of $t$, hence the $\pi_1(\Alb_X)$-action on $\CC^q\times F$ is diagonal, therefore $\alb_X$ is also a locally constant fibration.
\end{proof}

In the sequel of the section, we always assume that $X$ is $\QQ$-factorial (so that $X$ itself has klt singularities).

\subsection{Local constancy of \texorpdfstring{$\alb_X$}{text} as fibration}
\label{ss_Albanese_local-constant}

In this subsection let us prove that $\alb_X$ is a locally constant fibration (c.f. {\hyperref[defn_local-const-fibration]{Definition \ref*{defn_local-const-fibration}}}). In virtue of {\hyperref[prop_num-flat--local-const]{Proposition \ref*{prop_num-flat--local-const}}}, it suffices to find a $\psi$-very ample divisor $A$ on $X$ such that $\psi_\ast\scrO_X(mA)$ is numerically flat for every $m$ where $\psi=\alb_X$. 

Recall that we set $\psi=\alb_X$, $\pi: M\to X$ a smooth model of $X$ and $\phi=\alb_M$, $Y=\Alb_M=\Alb_X$, as mentioned at the beginning of {\hyperref[sec_Albanese]{\S \ref*{sec_Albanese}}}. By {\hyperref[ss_Albanese_Q-fact]{\S \ref*{ss_Albanese_Q-fact}}} we can assume that $X$ is $\QQ$-factorial, then by {\hyperref[ss_Albanese_flat]{\S \ref*{ss_Albanese_flat}}} $\psi$ is flat, we thus have $Y_0=Y$. Let $A$ be a very ample divisor on $X$. Up to multiplying $A$ we can assume that for general $y\in Y$ the natural morphism
\[
\Sym^k\!\Coh^0(X_y,\scrO_{X_y}(A))\to\Coh^0(X_y,\scrO_{X_y}(kA))
\]
is surjective for every $k$. As $\pi$ is birational, $\pi^\ast\!A$ is big and for every $k\in\ZZ_{>0}$ and for general $y\in Y$ we have a surjection:
\[
\Sym^k\!\Coh^0(M_y,\scrO_{M_y}(\pi^\ast\!A))\twoheadrightarrow\Coh^0(M_y,\scrO_{M_y}(k\pi^\ast\!A));
\]
in addition, for any $m,c\in\ZZ_{>0}$ we have
\[
\pi_\ast\scrO_M(m\pi^\ast\!A+mcE)\simeq \scrO_X(mA),
\]
hence
\[
\phi_\ast\scrO_M(m\pi^\ast\!A+mcE)\simeq\psi_\ast\scrO_X(mA).
\]
For every integer $a$ we set
\[
D_{A,b}:=\frac{1}{r_b}\cdot\text{ the Cartier divisor on }W\text{ associated to the line bundle }\det\!\psi_\ast\scrO_X(bA)
\]
where $r_b:=\rank\psi_\ast\scrO_X(bA)$. Then by \cite[\S 2.3, Proposition 2.3(c), pp.~33-34]{Ful84} we have
\[
\pi_\ast\phi^\ast\!D_{A,m}\sim\psi^\ast\!D_{A,m}
\]

Since $X$ is $\QQ$-factorial, by {\hyperref[prop_more-psef-det]{Proposition \ref*{prop_more-psef-det}}}, 
the ($\QQ$-Cartier) $\QQ$-divisor
\[
A-\psi^\ast\!D_{A,1}=\pi_\ast(\pi^\ast-\phi^\ast\!D_{A,1})
\]
is pseudoeffective. By {\hyperref[prop_det-direct-image-power]{Proposition \ref*{prop_det-direct-image-power}}}, 
up to multiplying $A$ by a integer divisible by $r$, we can assume that $\psi^\ast\!D_{A,1}$ is an integral Cartier divisor (noting that $\Pic^0(X)$ is an Abelian variety, thus divisible). In consequence, by replacing $A$ by $A-\psi^\ast\!D_{A,1}$, we get an integral Cartier divisor $A$ on $X$ such that:
\begin{itemize}
\item $A$ is pseudoeffective on $X$;
\item $A$ is $\psi$-very ample;
\item for general $y\in Y$ and for any $k\in\ZZ_{>0}$ the natural morphism 
\[
\Sym^k\!\Coh^0(X_y,\scrO_{X_y}(A))\to\Coh^0(X_y,\scrO_{X_y}(kA))
\]
is surjective;
\item $D_{A,1}$ is trivial.
\end{itemize}
In the sequel we will show that $\psi_\ast\scrO_X(mA)$ is numerically flat for every $m\in\ZZ_{>0}$. 

First, since $\pi$ is birational, $\pi^\ast\!A$ is $\phi$-big and by \cite[Lemma 7.11]{Deb01} the natural morphism 
\[
\Sym^k\!\Coh^0(M_y\,,\scrO_{M_y}(\pi^\ast\!A))\to\Coh^0(M_y,\scrO_{M_y}(k\pi^\ast\!A))
\]
is surjective for all $k\in\ZZ_{>0}$. 
Hence by {\hyperref[prop_positivity-anti-nef]{Proposition \ref*{prop_positivity-anti-nef}}} $\psi_\ast\scrO_X(mA)\simeq\phi_\ast\scrO_M(m\pi^\ast\!A+pE')$ is weakly semipositively curved for every $m\in\ZZ_{>0}$. Moreover, by {\hyperref[prop_det-direct-image-power]{Proposition \ref*{prop_det-direct-image-power}}} $D_{A,m}\equiv mD_{A,1}=0$, i.e. $\det\!\psi_\ast\scrO_X(mA)$ is numerically trivial. Since $\psi$ is flat, then {\hyperref[prop_criterion-num-flat]{Proposition \ref*{prop_criterion-num-flat}}} implies that $\psi_\ast\scrO_X(mA)$ is a numerically flat vector bundle for every $m\in\ZZ_{>0}$. In virtue of {\hyperref[prop_num-flat--local-const]{Proposition \ref*{prop_num-flat--local-const}}} we see that $\psi$ is a locally constant fibration. The proof of {\hyperref[mainthm_Albanese]{Theorem \ref*{mainthm_Albanese}}} is thus finished.    
\section{MRC fibration for \texorpdfstring{$X$}{text} with simply connected smooth locus}
\label{sec_MRC}

Throughout the section, let $X$ be a projective variety equipped with an effective $\QQ$-divisor $\Delta$ such that the pair $(X,\Delta)$ is klt and that the twisted anticanonical divisor $-(K_X+\Delta)$ is nef, and suppose that $\pi_1(X_{\reg})=\{1\}$. Take the $\psi$ in the {\hyperref[general-setting]{General Setting \ref*{general-setting}}} to be the maximally rationally connected (MRC) fibration of $X$ (c.f. \cite[\S 5.4, Theorem 5.13, pp.~128-129]{Deb01}), we will prove in this section that $\psi$ induces a product structure on $X$.

\subsection{Splitting of the tangent sheaf}
\label{ss_MRC_decomp-tangent}
In this subsection we will prove that following decomposition theorem for the tangent sheaf of $X$:
\begin{thm}
\label{thm_decomp-tangent}
Let $X$ be a normal projective variety of semi-Fano type whose smooth locus $X_{\reg}$ is simply connected. Then the tangent sheaf of  $X$ admits a splitting
\[
T_X\simeq\scrF\oplus\scrG
\]
with $\scrF$ and $\scrG$ being algebraically integrable foliations. Moreover, the closure of the general leaf of $\scrF$ is rationally connected and $\det\!\scrG\simeq\scrO_X$.  
\end{thm}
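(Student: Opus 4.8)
The plan is to specialise the machinery of \S\ref{sec_positivity} to the case where $\psi$ is the maximal rationally connected fibration of $X$, exactly as \S\ref{sec_Albanese} does for the Albanese map, and then to use the hypothesis $\pi_1(X_{\reg})=\{1\}$ to upgrade the resulting local constancy to a genuine (birational) splitting. First I would fix a representative $\psi\colon X\dashrightarrow Y$ of the MRC fibration with $Y$ smooth; since $\psi$ is almost holomorphic with rationally connected general fibre, the base $Y$ is not uniruled, so one is in the situation of the General Setting \ref{general-setting} with $Y$ non-uniruled, and Proposition \ref{prop_bir-geometry-psi}(a) gives $\kappa(Y)=0$. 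As in \S\ref{ss_Albanese_Q-fact}, a $\QQ$-factorialization (which is small, hence compatible with the reflexive sheaf $T_X$) reduces us to the case where $X$ is $\QQ$-factorial, so that the $\QQ$-Cartier hypotheses of Proposition \ref{prop_det-direct-image-power} are met, and one may pass to the flat/good locus $Y_0$ of $\phi$.

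The next step is to show that $\psi$ is a locally constant fibration over $Y_0$. For this I would choose a very ample $A$ on $X$ with the surjectivity property \eqref{eq_condition-surjectivity-sym} (it exists by the semiample-divisor lemma preceding Proposition \ref{prop_det-direct-image-power}), and, as in \S\ref{ss_Albanese_local-constant}, replace $A$ by $A-\psi^\ast D_{A,1}$ so that $A$ becomes pseudoeffective and $\det\psi_\ast\scrO_X(A)$ numerically trivial. Proposition \ref{prop_positivity-anti-nef} then shows that each $\psi_\ast\scrO_X(mA)\simeq\phi_\ast\scrO_M(m\pi^\ast A+pE')$ is weakly semipositively curved over $Y_0$, and Proposition \ref{prop_det-direct-image-power} gives $D_{A,m}\equiv mD_{A,1}\equiv0$, so $\det\psi_\ast\scrO_X(mA)$ is numerically trivial; by Proposition \ref{prop_criterion-num-flat} the sheaves $\psi_\ast\scrO_X(mA)$ are numerically flat over $Y_0$, and Proposition \ref{prop_num-flat--local-const} then yields that $\psi$ is a locally constant fibration over $Y_0$, with fibre $F$ rationally connected and monodromy $\rho\colon\pi_1(Y_0)\to\Aut(F)$. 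Compared with \cite{CH19}, the genuinely new points are that $Y_0\subsetneq Y$ and $Y_0$ is only quasi-projective, and that one must arrange, using Proposition \ref{prop_bir-geometry-psi}(c),(d), that the locus where $\psi$ fails to be a morphism has codimension $\geqslant2$ in $X_{\reg}$, so that the construction is visible intrinsically on $X$.

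Then I would use $\pi_1(X_{\reg})=\{1\}$ to kill $\rho$. Since the general fibre $F$ is rationally connected it is simply connected, and the smooth locus of a general fibre lies in $X_{\reg}$; the homotopy exact sequence of the fibration over a big open subset of $Y_0$ met by $X_{\reg}$ then forces $\pi_1$ of the base to be trivial, so $\rho$ is trivial and $\psi$ restricts to a product $F\times Y_0'$ over a suitable open $Y_0'$. Taking Zariski closures, $X$ is birational to a product $F\times Z$ with $F$ rationally connected and $Z$ birational to $Y$, so $\kappa(Z)=0$; the pseudoeffectivity statements of \S\ref{sec_positivity} (the movable-curve argument behind Proposition \ref{prop_bir-geometry-psi}) force $K_Z\equiv0$, and since the smooth locus of $Z$ inherits trivial fundamental group ($F$ being simply connected) one gets $q(Z)=0$, hence, $Z$ being klt with $K_Z$ numerically trivial and $Z_{\reg}$ having torsion-free Picard group, $K_Z\sim0$. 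The product structure equips a common smooth model with a splitting $\pr_1^\ast T_F\oplus\pr_2^\ast T_Z$ of its tangent bundle; taking saturations of the images of the two summands in the reflexive sheaf $T_X$ produces reflexive subsheaves $\scrF,\scrG\subseteq T_X$ which are algebraically integrable foliations---$\scrF$ the foliation by MRC fibres, with rationally connected leaf closures, and $\scrG$ the one pulled back from $Z$---whose sum is all of $T_X$ because the splitting already holds in codimension $1$ on $X_{\reg}$ and all the sheaves involved are reflexive; and $\det\scrG$, being the reflexive pullback of $-K_Z\sim0$, is $\scrO_X$.

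\textbf{Main obstacle.} The delicate point is the passage, in the last two paragraphs, from a birational product to intrinsic data on the singular $X$: the foliations $\scrF,\scrG$ are a priori singular, so one cannot invoke splitting results for smooth ambient spaces such as \cite[2.11.Corollary]{Hor07} directly. The remedy is to observe that the birational product structure makes $\scrF$ and $\scrG$ \emph{weakly regular} along $X_{\reg}$, and to run the theory of (weakly regular) algebraically integrable foliations of \cite{Druel17,Druel18b}---to control the singular and discriminant loci and, on a $\QQ$-factorial terminal model, to promote the MRC fibration to an everywhere-defined morphism (this extra refinement is what ultimately yields Theorem \ref{mainthm_MRC}). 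A further, purely bookkeeping, difficulty---flagged in Remark \ref{rmk_CCM19}---is keeping track, throughout the applications of Propositions \ref{prop_more-psef-det}--\ref{prop_det-direct-image-power}, of the exceptional divisors $E,E'$ and of the bad locus $Y\setminus Y_0$, since pushforwards of $\QQ$-divisors along $\pi$ need not be $\QQ$-Cartier.
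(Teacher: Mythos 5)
Your overall architecture (specialize the direct-image machinery of \S\ref{sec_positivity} to the MRC fibration, get a product structure over $Y_0$, then extend the resulting splitting of the tangent sheaf reflexively across a codimension-$2$ locus) matches the paper's, and your endgame (saturation/reflexive extension of the two summands, $K_{\scrG}\sim_{\QQ}0$ plus simple connectedness giving $\det\scrG\simeq\scrO_X$) is essentially what the paper does in Step 4. But the central step has a genuine gap. You propose to prove that $\psi$ is a locally constant fibration over $Y_0$ by combining Proposition \ref{prop_criterion-num-flat} (weak semipositivity plus numerically trivial determinant implies numerical flatness) with Proposition \ref{prop_num-flat--local-const}. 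Both of these statements, and the structure theorem \ref{thm_DPS-Simpson_num-flat} on which the second relies, require the base to be a \emph{compact} K\"ahler manifold: nefness, numerical flatness and the Deng--Simpson local-system structure are not defined or available over the quasi-projective open set $Y_0$. In the Albanese case this is harmless because the flatness of $\alb_X$ (\S\ref{ss_Albanese_flat}) forces $Y_0=Y$; for the MRC fibration $Y_0\subsetneq Y$ in general, the direct images do not stabilize outside $Y_0$ (this is the whole content of Proposition \ref{prop_det-stablize} and Corollary \ref{cor_iso-direct-image-Y0}), and no compactification of the argument is offered. You flag "$Y_0$ is only quasi-projective" as a new feature but supply no mechanism to deal with it; as written, the step "Proposition \ref{prop_num-flat--local-const} then yields that $\psi$ is a locally constant fibration over $Y_0$" does not go through.

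The paper circumvents exactly this obstacle by a different use of the hypothesis $\pi_1(X_{\reg})=\{1\}$. Instead of killing the monodromy of a fibration over $Y_0$ (which presupposes the fibration structure you cannot yet produce), it restricts the relevant sheaves to a general complete intersection \emph{surface} $S\subset X$, which is compact, smooth (terminal singularities are smooth in codimension $2$) and simply connected by Lefschetz; there the numerical flatness criterion applies and simple connectedness upgrades numerical flatness to genuine \emph{triviality} of $\scrW_{c,m}|_S$. A Cohen--Macaulay/Serre-duality surjectivity argument then lifts the trivializing sections from a curve in $S$ to $\bar\phi^{-1}(Y_0)\setminus\bar E$ and extends them across the reduced components of $\phi^\ast P$ (Proposition \ref{prop_bir-geometry-psi}(e)) to all of $Y_0$ (Lemma \ref{lemma_triviality-direct-image}); the resulting trivial direct images give directly that $\psi_G^{-1}(Y_0)\simeq Y_0\times F$ (Lemma \ref{lemma_birational-decomp}), with no appeal to local systems over a non-compact base. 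If you want to salvage your route, you would need to supply this surface-restriction-and-extension step (or an equivalent substitute) before any appeal to the local-constancy machinery; the compact-base results are only usable later, in \S\ref{ss_MRC_decomp-thm}, after the MRC fibration has been promoted to an everywhere-defined equidimensional morphism onto a projective base.
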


The proof of this result can be divided into four steps:

\paragraph{Step 1: Reduction to the terminal case.}
To prove the theorem, we can assume that the pair $(X,\Delta)$ is terminal and $\QQ$-factorial. In fact, by \cite[Corollary 1.4.3]{BCHM10} we can take a ($\QQ$-factorial) terminal model $g:X^{\terminal}\to X$ of $X$, with an effective $\QQ$-divisor $\Delta^{\terminal}$ on $X^{\terminal}$ such that
\[
K_{X^{\terminal}}+\Delta^{\terminal}\sim_{\QQ}g^\ast(K_X+\Delta).
\]
Then $-(K_{X^{\terminal}}+\Delta^{\terminal})$ is nef. Suppose that $T_{X^{\terminal}}$ admits a decomposition into algebraically integrable foliations
\[
T_{X^{\terminal}}\simeq\scrF^{\terminal}\oplus\scrG^{\terminal}
\]
with $\det\!\scrG^\terminal\simeq\scrO_{X^\terminal}$ and the closure of the general leaf of $\scrF^\terminal$ is rationally connected. Then we get a decomposition $T_X\simeq\scrF\oplus\scrG$ on $X$ with 
\[
\scrF:=\left(g_\ast\scrF^\terminal\right)^{\ast\ast}\text{  and}\quad\scrG:=\left(g_\ast\scrG^\terminal\right)^{\ast\ast}.
\]
By \cite[Proposition 3.3.(3), p.~200]{Kollar96}, the closure of the general leaf of $\scrF$ is also rationally connected. Since $g$ is an isomorphism out of a codimension $2$ subscheme of $X$, then $\det\!\scrG^\terminal\simeq\scrO_{X^\terminal}$ implies that $\det\!\scrG\simeq\scrO_X$. It remains to prove that $(X^{\terminal})_{\reg}$ is simply connected. Since $X_{\reg}\backslash g(\Exceptional(g))$ can be regarded as an Zariski open in $(X^\terminal)_{\reg}$, by \cite[\S 0.7 (B)]{FL81} it suffices to show that $X_{\reg}\backslash g(\Exceptional(g))$ is simply connected. This can be obtained easily by the following topological result:
\begin{lemme}
\label{lemma_fundamental-group-complement}
Let $W$ be a complex manifold and let $Z$ be an analytic subspace of $V$ of codimension $\geqslant 2$. Then the natural morphism $\pi_1(W\backslash Z)\to \pi_1(W)$ induced by the embedding $W\backslash Z\hookrightarrow W$ is an isomorphism.
\end{lemme}
\begin{proof}
This result is of course well known to experts, we nevertheless give the proof for the convenience of the readers. The argument is taken from \cite{Pol16}. Let us argue by induction on $\dim Z$. If $\dim Z=0$, then $\dim W\geqslant 2$, and the lemma results from \cite[\S X.2, Theorem 2.3, p.~146]{God81}. In general, by the induction hypothesis, $\pi_1(W\backslash Z_{\sing})\to\pi_1(W)$ is an isomorphism; then we apply \cite[\S X.2, Theorem 2.3, p.~146]{God81} to $Z_{\reg}\subset W\backslash Z_{\sing}$ to obtain an isomorphism $\pi_1(W\backslash Z)\to \pi_1(W\backslash Z_{\sing})$, hence we have $\pi_1(W\backslash Z)\xrightarrow{\simeq}\pi_1(W)$.     
\end{proof}

\paragraph{Step 2: Triviality of the direct image sheaves.} 
We will prove in this step the following lemma:
\begin{lemme}
\label{lemma_triviality-direct-image}
Let $X$ be a $\QQ$-factorial projective variety and suppose that there is an effective $\QQ$-divisor $\Delta$ on $X$ such that the pair $(X,\Delta)$ is terminal and $-(K_X+\Delta)$ is nef and let everything as in the {\hyperref[general-setting]{General Setting \ref*{general-setting}}} with $\psi$ being the MRC fibration of $X$. Let $A$ be a sufficiently ample divisor on $X$ such that for every $k\in\ZZ_{>0}$ and for general $y\in Y$ the natural morphism
\[
\Sym^k\!\Coh^0(X_y,\scrO_{X_y}(A))\to\Coh^0(X_y,\scrO_{X_y}(kA))
\]
is surjective. Then the following two torsion free sheaves 
\begin{align*}
\scrU_{c,m} &:=\Sym^m\!\phi_\ast\scrO_M(\pi^\ast\!A+cE)\otimes\det\!\phi_\ast\scrO_M(\pi^\ast\!A+cE)^{\otimes-\frac{m}{r}} \\
\scrV_{c,m} &:=\phi_\ast\scrO_M(m\pi^\ast\!A+mcE)\otimes\det\!\phi_\ast\scrO_M(\pi^\ast\!A+cE)^{\otimes-\frac{m}{r}} 
\end{align*}
are trivial on $Y_0$ for every $m\in\ZZ_{>0}$ divisible by $r$. 
\end{lemme}
\begin{proof}
When $X$ is smooth, the theorem is proved in \cite[Proposition 3.7]{CH19}; for the singular case, the proof is much more subtle but the main idea remains the same: take a general complete intersection surface in $X$ and prove the triviality of $\scrU_{c,m}$ and $\scrV_{c,m}$ on this surface, then try to extend the trivializing sections to $Y_0$. For the convenience of the readers, we give the details below. Furthermore, for sake of clarity we divide the proof into five parts:

\subparagraph{1. General settings:}
If $\dim X=1$ then everything is clear, so in the sequel we assume that $\dim X\geqslant 2$. We will only give the proof of triviality on $Y_0$ for $\scrV_{c,m}$, for $\scrU_{c,m}$ the argument is exactly the same (and simpler since $\det\!\scrU_{c,m}\simeq \scrO_Y$). Since $\phi$ is flat over $Y_0$, $\scrV_{c,m}$ is reflexive on $Y_0$, hence in order to prove the triviality of $\scrV_{c,m}$ on $Y_0$, 
it suffices to show that $\scrV_{c,m}$ is trivial on $Y_0\cap Y_{\scrV_{c,m}}$ where $Y_{\scrV_{c,m}}$ is the locally free locus of $\scrV_{c,m}$. For every $a\in\ZZ_{>0}$ set 
\[
D_{A,c,a}:=\frac{1}{r_a}\cdot\text{the Cariter divisor on }Y\text{ associated to the line bundle }\det\!\phi_\ast\scrO_M(a\pi^\ast\!A+acE)
\]
where $r_a:=\rank\!\phi_\ast\scrO_M(a\pi^\ast\!A+acE)$. Then we have
\[
\det\!\scrV_{c,m}\simeq\scrO_Y(r_mD_{A,c,m}-mr_mD_{A,c,1}). 
\]

Since $X$ is not necessarily smooth, the exceptional divisor $E=\Exceptional(\pi)$ can dominate $Y$, which will render the arguments in \cite{CH19} invalid. In order to overcome this difficulty, we set $\Gamma$ to be the normalization of the graph of the rational mapping $\psi$, up to further blow up $M$ we can assume that $\phi:M\to Y$ and $\pi:M\to X$ both factorize through $\Gamma$ and denote by $\bar\phi:\Gamma\to Y$ and $\bar\pi:\Gamma\to X$ the corresponding morphisms. By construction, $\psi$ is almost holomorphic (c.f. \cite[\S 5.4, Definition 5.12, p.~128]{Deb01} and \cite[Definition 2.3]{BCEKPRSW02}), hence $\Exceptional(\bar\pi)$ does not dominate $Y$.  

\begin{center}
\begin{tikzpicture}[scale=2.5]
\node (A) at (0,0) {$Y$.};
\node (B) at (0,1) {$\Gamma$};
\node (C) at (1,1) {$X$};
\node (D) at (-0.5,1.5) {$M$};
\path[->,font=\scriptsize,>=angle 90]
(B) edge node[left]{$\bar\phi$} (A)
(B) edge node[above]{$\bar\pi$} (C)
(D) edge[bend left] node[above]{$\pi$} (C)
(D) edge[bend right] node[below left]{$\phi$} (A)
(D) edge (B);
\path[dashed, ->,font=\scriptsize,>=angle 90]
(C) edge node[below right]{$\psi$} (A);
\end{tikzpicture}
\end{center}

\subparagraph{2. Simple connectedness of a general complete intersection surface in $X$:}
Let $A$ be a very ample divisor on $X$ and take $H_1\,,\cdots,H_{n-1}$ be general hypersurfaces in $|A|$. Set $n:=\dim X$ and let $S=H_1\cap\cdots\cap H_{n-2}$ be the complete intersection surface cut out by $H_1\,,\cdots,H_{n-2}$ (if $n=2$ then we simply take $S=X$). Since terminal singularities are smooth in codimension $2$ (c.f. \cite[Corollary 5.18, p.~159]{KM98}), $S$ is smooth (see also \cite[Theorem 4.5, p.~113]{KM98}). Since $X$ is normal, by \cite[\S 0.7 (B), p.~33]{FL81} we have a surjection between fundamental groups $\pi_1(X_{\reg})\twoheadrightarrow\pi_1(X)$, then $\pi_1(X_{\reg})=\{1\}$ implies that $\pi_1(X)=\{1\}$. We claim that $S$ is also simply connected: 
\begin{itemize}
\item If $n=2$, then $S=X$ is simply connected. 
\item If $n\geqslant 3$, then by \cite[Theorem 1.1.3]{HL85} $X_{\reg}$ has the same homotopy type of the space obtained from $H_1\cap X_{\reg}$ by attaching cells of dimension $\geqslant \dim X$, but the fundamental group of a CW complex only depends on its $2$-skeleton, so that we get an isomorphism 
\[
\pi_1(H_1\cap X_{\reg})\xrightarrow{\simeq}\pi_1(X_{\reg}),
\]
hence $\pi_1(H_1\cap X_{\reg})=\{1\}$. By iterating the argument, we see that $\pi_1(S\cap X_{\reg})=\{1\}$; but since $X$ is smooth in codimension $2$ we have $S\cap X_{\reg}=S$, hence $S$ is simply connected.
\end{itemize}

\subparagraph{3. Triviality of the pullback of $\scrU_{c,m}$ and $\scrV_{c,m}$ to a general complete intersection surface in $X$:}
Now set $\bar E:=\Exceptional(\bar\pi)$, then $\bar\pi|_{\Gamma\backslash\bar E}:\Gamma\backslash\bar E\to X\backslash\bar\pi(\bar E)$ is an isomorphism and $S\cap\bar\pi(\bar E)$ is of dimension $0$. In particular, $\bar\pi\inv(S\backslash\bar\pi(\bar E))=\bar\pi\inv(S)\backslash\bar E$ is smooth. By {\hyperref[lemma_positivity-Um-Vm]{Lemma \ref*{lemma_positivity-Um-Vm}}} $\scrV_{c,m}$ is weakly positively curved on $Y_0$ , in consequence $\left.\bar\phi^\ast\scrV_{c,m}\right|_{\bar\pi\inv(S)\backslash\bar E}$ is also weakly positively curved by {\hyperref[ss_preliminary_direct-image]{\S \ref*{ss_preliminary_direct-image}}}. By viewing $\Gamma\backslash\bar E$ as a Zariski open of $X$ via the isomorphism $\bar\pi|_{\Gamma\backslash\bar E}:\Gamma\backslash\bar E\to X\backslash\bar\pi(\bar E)$, $\left.\bar\phi^\ast\scrV_{c,m}\right|_{\Gamma\backslash\bar E}$ extends to a reflexive sheaf on $X$, denoted by $\scrW_{c,m}$. By the projection formula we have:
\[
\det\!\scrW_{c,m}\simeq(\bar\pi_\ast\bar\phi^\ast\det\!\scrV_{c,m})^{\ast\ast}\simeq(\pi_\ast\phi^\ast\det\!\scrV_{c,m})^{\ast\ast},
\]
hence the (unique up to linear equivalence) Weil divisor associated to $\det\!\scrW_{c,m}$ is equal to 
\[
\pi_\ast\phi^\ast(r_mD_{A,c,m}-mr_mD_{A,c,1}).
\]
Since $X$ is $\QQ$-factorial, {\hyperref[prop_det-direct-image-power]{Proposition \ref*{prop_det-direct-image-power}}} implies that $\det\!\scrW_{c,m}$ is a numerically trivial $\QQ$-line bundle.
Hence by {\hyperref[prop_criterion-num-flat]{Proposition \ref*{prop_criterion-num-flat}}} $\scrW_{c,m}|_S$ is a numerically flat vector bundle on $S$; but $S$ is simply connected, then $\scrW_{c,m}|_S$ is a trivial vector bundle. 

\subparagraph{4. Surjectivity of the restriction morphism:}
Since $\bar\pi(\bar E)$ is of codimension $\geqslant 2$ in $X$, then we have an isomorphism
\[
\Coh^0(\Gamma\backslash\bar E,\bar\phi^\ast\scrV_{c,m})\xrightarrow{\simeq}\Coh^0(X,\scrW_{c,m}).
\]
Since $(X,\Delta)$ is terminal, $X$ has rational singularities and in particular $X$ is Cohen-Macaulay. For $A$ sufficiently ample we have
\[
\Coh^1(X,\scrW_{c,m}\otimes\scrO_X(-H_1))\simeq\Coh^{n-1}(X,\omega_X\otimes\scrW^\ast_m\otimes\scrO_X(H_1))=0,
\]
where $\omega_X$ denotes the dualizing sheaf of $X$. Then the canonical exact sequence $0\to\scrO_X(-H_1)\to\scrO_X\to\scrO_{H_1}\to 0$ induces a surjection 
\[
\Coh^0(\Gamma\backslash\bar E,\bar\phi^\ast\scrV_{c,m})\simeq\Coh^0(X,\scrW_{c,m})\twoheadrightarrow\Coh^0(H_1\,,\scrW_{c,m}|_{H_1}).
\]
By iterating this argument we see that for $C:=S\cap H_{n-1}$ the restriction morphism (since $C$ is disjoint from $\bar\pi(\bar E)$, we can identify $\bar\pi\inv(C)$ and $C$)
\[
\Coh^0(\Gamma\backslash\bar E,\bar\phi^\ast\scrV_{c,m})
\to\Coh^0(C,\scrW_{c,m}|_C)
\]
is surjective. 

\subparagraph{5. Construction of the trivializing sections and Conclusion:}
But $\scrW_{c,m}|_S$ is a trivial vector bundle of rank $r_m$, we get $r_m$ sections $\sigma_1\,,\cdots,\sigma_{r_m}$ in $\Coh^0(\Gamma\backslash\bar E,\bar\phi^\ast\scrV_{c,m})$ whose restrictions to $C$ are everywhere linearly independent. Then $\sigma_1\wedge\cdots\wedge\sigma_{r_m}$ is a non-zero section in $\Coh^0(\Gamma\backslash\bar E,\bar\phi^\ast\!\det\!\scrV_{c,m})$, which extends, via the isomorphism $\pi|_{\Gamma\backslash\bar E}:\Gamma\backslash\bar E\to X\backslash\bar\pi(\bar E)$, to a non-zero section of $\Coh^0(X,\det\!\scrW_{c,m})$; but $\det\!\scrW_{c,m}$ is a numerically trivial $\QQ$-line bundle, then this section must be constant, which implies that $\sigma_1\wedge\cdots\wedge\sigma_{r_m}$ is a non-zero constant.

We claim that for every $i$ there is a section $\tau_i\in\Coh^0(Y_0,\scrV_{c,m})$ such that $\phi^\ast\tau_i=\sigma_i|_{\bar\phi\inv(Y_0)\backslash\bar E}$. The argument is the same as in \cite[Proof of Proposition 3.7]{CH19}. In fact, since $\psi$ is the MRC fibration of $X$, $\bar E$ does not dominate $Y$, then $\sigma_i$ induces a section $\bar\tau_i\in\Coh^0(Y_0\backslash\bar\phi(\bar E),\scrV_{c,m})$. It remains to show that $\bar\tau_i$ extends to $Y_0$. Since $\scrV_{c,m}$ is reflexive on $Y_0$, it suffices to show that $\bar\tau_i$ extends to a general point of any divisor $P$ in $Y_0$. By {\hyperref[prop_bir-geometry-psi]{Proposition \ref*{prop_bir-geometry-psi}(d)}} $\bar\phi^\ast P$ contains at least a reduced component, hence locally around a general point of $P$, $\bar\phi|_{\Gamma\backslash\bar E}\Gamma\backslash\bar E\to Y$ admits a local section, which implies that $\bar\tau_i$ is locally bounded (with respect to any Hermitian metric) around a general point of $P$. Hence by Riemann extension $\bar\tau_i$ extends to $Y_{\scrV_{c,m}}\cap Y_0$ and thus to $Y_0$ by the reflexivity of $\scrV_{c,m}|_{Y_0}$, in this way for every $i$ we obtain a section $\tau_i\in\Coh^0(Y_0,\scrV_{c,m})$ such that $\sigma_i|_{\bar\phi\inv(Y_0)\backslash\bar E}=\bar\phi^\ast\tau_i$.

Now
\[
\tau_1\wedge\cdots\wedge\tau_{r_m}=\left.\bar\phi^\ast(\sigma_1\wedge\cdots\wedge\sigma_{r_m})\right|_{\bar\phi\inv(Y_0)\backslash\bar E}
\]
is a non-zero constant, this implies that the sections $\tau_1\,,\cdots,\tau_{r_m}$ are everywhere linearly independent on $Y_0$. Hence the $\tau_i$'s give a trivialization of $\scrV_{c,m}|_{Y_0}$. 
\end{proof}

\paragraph{Step 3: Birational version of the decomposition.}
In the sequel of the proof of {\hyperref[thm_decomp-tangent]{Theorem \ref*{thm_decomp-tangent}}}, let us fix a very ample divisor $A$ on $X$, such that
\begin{equation}
\label{eq_cond-sym-surj}
\Sym^k\!\Coh^0(X,\scrO_X(A))\to\Coh^0(X,\scrO_X(kA))
\end{equation}
is surjective for every $k\in\ZZ_{>0}$. In this step we will prove that $\phi\inv(Y_0)$ is birational to a product, which can be seen as a birational version of the decomposition theorem for $X$. Let $c_0$ be as in {\hyperref[prop_det-stablize]{Proposition \ref*{prop_det-stablize}}} and let $c$ be any integer $\geqslant c_0$. Set $G:=\pi^\ast\!A+cE$, and for every $a\in\ZZ_{>0}$ set
\[
D_{A,c,a}:=\frac{1}{r_a}\cdot\text{the Cartier divisor on }Y\text{ associated to the line bundle }\det\!\phi_\ast\scrO_M(aG). 
\]
where $r_a:=\rank\!\phi_\ast\scrO_M(aG)$. Then by {\hyperref[lemma_triviality-direct-image]{Lemma \ref*{lemma_triviality-direct-image}}} for every $m\in\ZZ_{>0}$ divisible by $r:=r_1$ the torsion free sheaves
\begin{align*}
\scrU_{c,m} &:=\Sym^m\!\phi_\ast\scrO_M(G)\otimes\scrO_Y(-mD_{A,c,1}) \\
\scrV_{c,m} &:=\phi_\ast\scrO_M(mG)\otimes\scrO_Y(-mD_{A,c,1})
\end{align*}
are trivial on $Y_0$. Up to blowing-up $M$, we can assume that the $\phi$-relative base locus of $G$, i.e. the subscheme of $M$ defined by the ideal sheaf $\Image(\phi^\ast\phi_\ast\scrO_M(G)\otimes\scrO_M(-G)\to\scrO_M)$, is a divisor. Then we can write
\[
G=G_{\base}+G_{\free}
\]
where $G_{\base}$ is the $\phi$-relative fixed part of the linear series $|G|$ and $G_{\free}:=G-G_{\base}$ is $\phi$-relatively generated. Now the adjunction morphism admits a factorization
\[
\phi^\ast\phi_\ast\scrO_M(G)\twoheadrightarrow\scrO_M(G_{\free})\hookrightarrow\scrO_M(G),
\]
that can be pushed down to $Y$ and give morphisms
\[
\phi_\ast\scrO_M(G)\to\phi_\ast\scrO_M(G_{\free})\hookrightarrow\phi_\ast\scrO_M(G).
\]
By construction the composition morphism is the identity, hence the inclusion $\phi_\ast\scrO_M(G_{\free})\hookrightarrow\phi_\ast\scrO_M(G)$ is an isomorphism. Then the surjection $\phi^\ast\phi_\ast\scrO_M(G_{\free})\twoheadrightarrow\scrO_M(G_{\free})$ induces a morphism $\pi_G:M\to\PP(\phi_\ast\scrO_M(G_{\free}))$ such that $\scrO_M(G_{\free})=\pi_G^\ast\scrO_{\PP(\phi_\ast\scrO_M(G_{\free})}(1)$. Set $X_G$ be the image of $\pi_G$ with induced morphism $\psi_G:X_G\to Y$, then we have the following commutative diagram:

\begin{center}
\begin{tikzpicture}[scale=2.5]
\node (A) at (0,0) {$Y$.};
\node (B) at (0,1) {$M$};
\node (C) at (-1,1) {$X$};
\node (D) at (1,1) {$X_G$};
\node (E) at (2,1) {$\PP(\phi_\ast\scrO_M(G_{\free}))$};
\path[->,font=\scriptsize,>=angle 90]
(B) edge node[left]{$\phi$} (A)
(B) edge node[above]{$\pi$} (C)
(B) edge node[above]{$\pi_G$} (D)
(D) edge node[below right]{$\psi_G$} (A)
(E) edge [bend left=20] node[below right]{$p$} (A);
\path[dashed, ->,font=\scriptsize,>=angle 90]
(C) edge node[below left]{$\psi$} (A);
\path[right hook->,font=\scriptsize,>=angle 90]
(D) edge (E);
\end{tikzpicture}
\end{center}

The main purpose of this step is to prove the following lemma
\begin{lemme}
\label{lemma_birational-decomp}
In the above setting, we have $\psi_G\inv(Y_0)\simeq Y_0\times F$, where $F$ denotes the general fibre of $\psi$ (the MRC fibration $\psi$ is almost holomorphic, hence it makes sense to talk about its general fibre).  
\end{lemme}

Before entering into the proof of the above lemma let us first prove the following auxiliary result: 
\begin{lemme}
\label{lemma_general-fibre}
Let everything be as above. Then for general $y\in Y$ we have $G_{\free}|_{M_y}\sim\pi^\ast\!A|_{M_y}$ and $G_{\base}|_{M_y}\sim cE|_{M_y}$. In particular, the general fibre of $\psi_G$ is isomorphic to $F$. 
\end{lemme}
\begin{proof}
Let us first point out that a major difference between the singular case that we consider in this article and the smooth case treated in \cite{CH19,CCM19} is that if $X$ is singular the exceptional divisor $E$ can dominate $Y$, in particular $E|_{M_y}\nsim 0$. For general $y\in Y$ consider the morphism $\pi|_{M_y}:M_y\to X_y$, it is a birational morphism with the exceptional divisor being $E|_{M_y}$. By the projection formula (c.f. \cite[Lemma 7.11]{Deb01}) we have 
\begin{equation}
\label{eq_fibre-proj-formula}
\Coh^0(M_y,\scrO_{M_y}(G)) \simeq \Coh^0(X_y,\scrO_{X_y}(A))
\end{equation}
but $\pi^\ast\!A|_{M_y}$ is globally generated, hence $\pi^\ast\!A|_{M_y}$ is a fortiori the mobile part of $G|_{M_y}$, that is, $G_{\free}|_{M_y}=\pi^\ast\!A|_{M_y}$; then $G_{\base}|_{M_y}=cE|_{M_y}$. Consequently the morphism $\pi_G|_{M_y}:M_y\to (X_G)_y$ is given by the linear series $|\pi^\ast\!A|_{M_y}|$. But $A$ is very ample on $X$, hence for general $y\in Y$ the morphism $\pi_G|_{M_y}$ factors through $X_y$, and its image is isomorphic to $X_y\simeq F$.   
\end{proof}

Now let us turn to the proof of {\hyperref[lemma_birational-decomp]{Lemma \ref*{lemma_birational-decomp}}}:
\begin{proof}[Proof of {\hyperref[lemma_birational-decomp]{Lemma \ref*{lemma_birational-decomp}}}]
The idea of the proof is the same as that of \cite[\S 3.C. Proof of Theorem 1.2, Step 1]{CH19}, we nevertheless give the proof for the convenience of the readers. By \eqref{eq_cond-sym-surj} and \eqref{eq_fibre-proj-formula} the morphism \begin{equation}
\label{eq_surjection-Um-Vm}
\Sym^m\!\phi_\ast\scrO_M(G)\to\phi_\ast\scrO_M(mG) 
\end{equation}
is generically surjective. Twisting with $\scrO_M(-mD_{A,c,1})$ we get a generically surjective morphism $\scrU_{c,m}\to\scrV_{c,m}$, which gives rise to a global section $s\in\Coh^0(Y,\scrU_{c,m}^\ast\otimes\scrV_{c,m})$. By {\hyperref[lemma_triviality-direct-image]{Lemma \ref*{lemma_triviality-direct-image}}}, $\scrU_{c,m}|_{Y_0}$ and $\scrV_{c,m}|_{Y_0}$ are trivial vector bundles, hence $s|_{Y_0}$ is constant by {\hyperref[prop_bir-geometry-psi]{Proposition \ref*{prop_bir-geometry-psi}(d)}}, in particular the morphism $\scrU_{c,m}\to\scrV_{c,m}$ has constant rank over $Y_0$. Consequently the morphism \eqref{eq_surjection-Um-Vm} is surjective over $Y_0$. Now consider the inclusion $\phi_\ast\scrO_M(mG_{\free})\hookrightarrow\phi_\ast\scrO_M(mG)$ we get the following commutative diagram
\begin{center}
\begin{tikzpicture}[scale=2.0]
\node (A) at (0,0) {$\phi_\ast\scrO_M(mG_{\free})$};
\node (B) at (2,0) {$\phi_\ast\scrO_M(mG)$};
\node (C) at (0,1) {$\Sym^m\!\phi_\ast\scrO_M(G_{\free})$};
\node (D) at (2,1) {$\Sym^m\!\phi_\ast\scrO_M(G)$};
\path[->,font=\scriptsize,>=angle 90]
(C) edge node[above]{$\simeq$} (D)
(C) edge (A)
(D) edge (B);
\path[right hook->,font=\scriptsize,>=angle 90]
(A) edge (B);
\end{tikzpicture}
\end{center}
Since right column is the morphism \eqref{eq_surjection-Um-Vm}, which is shown to be surjective over $Y_0$, hence by the Snake Lemma the left column is also surjective over $Y_0$. Again apply the Snake Lemma but exchange the role of rows and of columns, then we find that the bottom row is an isomorphism over $Y_0$. In particular, $\phi_\ast\scrO_M(mG_{\free})\otimes\scrO_Y(-mD_{A,c,1})$ is trivial over $Y_0$. 

Let $\scrI_{X_G}$ be the ideal sheaf of $X_G$ in $\PP(\phi_\ast\scrO_M(G_{\free}))$. Twisting the exact sequence 
\[
0\to \scrI_{X_G}\to\scrO_{\PP(\phi_\ast\scrO_M(G_{\free}))}\to\scrO_{X_G}\to 0
\]
with $\scrO_{\PP(\phi_\ast\scrO_M(G_{\free}))}(m)$ for $m$ sufficiently large and divisible by $r$ and pushing down to $Y$ we get (by relative Serre vanishing):
\begin{equation}
\label{eq_ses-psiG}
0\to p_\ast\scrI_{X_G}\!(m)\to p_\ast\scrO_{\PP(\phi_\ast\scrO_M(G_{\free}))}\!(m)\simeq\Sym^m\!\phi_\ast\scrO_M(G_{\free})\to\psi_{G\ast}\scrO_{X_G}\!(m)\to 0.
\end{equation}
where we adapt the notation that for any coherent sheaf $\scrF$ on $\PP(\phi_\ast\scrO_M(G_{\free}))$ and for any integer $k$ we set $\scrF\!(k):=\scrF\!\otimes\scrO_{\PP(\phi_\ast\scrO_M(G_{\free}))}\!(k)$. Since $\pi_G:M\to X_G$ is birational (because it birational on the general fibre of $\phi$), the natural morphism $\scrO_{X_G}\to\pi_{G\ast}\scrO_M$ is injective, hence by the projection formula we have an injection
\[
\psi_{G\ast}\scrO_{X_G}\!(m)\hookrightarrow\psi_\ast\scrO_M(mG_{\free}).
\]
Now we consider the composition morphism
\[
\Sym^m\!\phi_\ast\scrO_M(G_{\free})\twoheadrightarrow\psi_{G\ast}\scrO_{X_G}\!(m)\hookrightarrow\phi_\ast\scrO_M(mG_{\free}),
\]
which is shown to be surjective over $Y_0$ (the left column of the diagram above), hence the inclusion $\psi_{G\ast}\scrO_{X_G}\!(m)\hookrightarrow\phi_\ast\scrO_M(mG_{\free})$ is an isomorphism over $Y_0$, and in consequence its twisting
\[
\psi_{G\ast}\scrO_{X_G}\!(m)\otimes\scrO_Y(-mD_{A,c,1})
\]
is trivial over $Y_0$. By the exact sequence \eqref{eq_ses-psiG} we see that $p_\ast\scrI_{X_G}\!(m)\otimes\scrO_Y(-mD_{A,c,1})$ is also trivial over $Y_0$. 
By {\hyperref[prop_bir-geometry-psi]{Proposition \ref*{prop_bir-geometry-psi}(d)}} this means that the defining equations of $\psi_G\inv(Y_0)$ in $\PP(\phi_\ast\scrO_M(G_{\free})|_{Y_0})\simeq Y_0\times \PP^{r-1}$ are constant over $Y_0$, hence $\psi_G\inv(Y_0)$ is isomorphic to the product $Y_0\times F$ by {\hyperref[lemma_general-fibre]{Lemma \ref*{lemma_general-fibre}}}. 
\end{proof}

\paragraph{Step 4: Proof of the splitting theorem.}
In this step we will apply {\hyperref[lemma_birational-decomp]{Lemma \ref*{lemma_birational-decomp}}} to conclude. The proof relies on the following auxiliary result:

\begin{lemme}
\label{lemma_exceptional-psiG}
Let everything be as in Step 3, then  every codimension $1$ component of the exceptional locus of $\psi_G|_{\phi\inv(Y_0)}:\phi\inv(Y_0)\to\psi_G\inv(Y_0)$ is contained in $E$. 
\end{lemme}
\begin{proof}
The proof is similar to \cite[\S 3.C. Proof of Theorem 1.2, Step 2]{CH19}, nevertheless in our case $X$ is possibly singular, then $E$ can dominate $Y$ and this renders the argument a little subtle. For the convenience of the readers, we give the proof below. First notice that we have the following observation:
\begin{itemize}
\item[] Since $\pi^\ast\!A$ is $\phi$-relatively generated, hence $G_{\base}\leqslant cE$. Let $\Gamma$ be a component of any fibre of $\phi$ not contained in $E$, then every component of $E$ restricts to an effective divisor on $\Gamma$, hence
\[
G_{\free}|_{\Gamma}=\pi^\ast\!A|_{\Gamma}+(cE-G_{\base})|_{\Gamma}     
\]
is big, and thus $\Gamma$ is not contracted by $\psi_G$.
\end{itemize}
Now let us turn to the proof of the lemma. Let $D\subset\phi\inv(Y_0)$ be an irreducible Weil divisor contained in the exceptional locus of $\psi_G|_{\phi\inv(Y_0)}$. Consider the two cases separately:
\begin{itemize}
\item If $D$ is $\phi$-horizontal. Then for general $y\in Y_0$, $D|_{M_y}$ is $\psi_G|_{M_y}$-exceptional. But $\psi_G|_{M_y}:M_y\to(X_G)_y\simeq F=X_y$ is induced by the divisor $\pi^\ast\!A|_{M_y}$, hence $D|_{M_y}$ is contained in $E|_{M_y}$ and thus $D$ is contained in $E$.  
\item If $D$ is $\phi$-vertical. Since $\phi$ is flat over $Y_0$\,, $\phi(D)$ is also a divisor. For the general fibre of $\phi|_D:D\to\phi(D)$, it is contracted by $\psi_G$, then by the observation above it is contained in $E$. Therefore $D$ is contained in $E$.  
\end{itemize}
\end{proof}

By {\hyperref[lemma_birational-decomp]{Lemma \ref*{lemma_birational-decomp}}} we have $\psi_G\inv(Y_0)\simeq Y_0\times F$, then 
\begin{equation}
\label{eq_decomp-XG}
T_{\psi_G\inv(Y_0)}\simeq\pr_1^\ast\!T_{Y_0}\oplus\pr_2^\ast\!T_F.
\end{equation}
Set $X_0:=\phi\inv(Y_0)\backslash E$, which can be regarded as a Zariski open of $X$ via the embedding $\pi|_{M\backslash E}:M\backslash E\hookrightarrow X$. By {\hyperref[lemma_exceptional-psiG]{Lemma \ref*{lemma_exceptional-psiG}}}, $\psi_G|_{X_0}:X_0\to \psi_G\inv(Y_0)\simeq Y_0\times F$ is an embedding out of a codimension $\geqslant 2$ subscheme. Hence the decomposition \eqref{eq_decomp-XG} induces a decomposition 
\begin{equation}
\label{eq_decomp-X0}
T_{X_0}\simeq \scrF^\circ\oplus \scrG^\circ, 
\end{equation}
with $\scrF^\circ$ (resp. $\scrG^\circ$) corresponding to $\pr_2^\ast\!T_F$ (resp. $\pr_1^\ast\!T_{Y_0}$). By construction, $\scrF^\circ$ and $\scrG^\circ$ are algebraically integrable foliations over $X_0$, with the closure of a general leaf of $\scrF^\circ$ equal to a Zariski open of $F$ and 
\[
K_{\scrG^\circ}\sim\pr_1^\ast\!K_{Y_0}=\pr_1^\ast\!K_Y|_{Y_0}\sim_{\QQ}0.
\]
since by {\hyperref[prop_bir-geometry-psi]{Proposition \ref*{prop_bir-geometry-psi}(a)}} any effective $\QQ$-divisor $\QQ$-linearly equivalent to $K_Y$ is supported out of $Y_0$. By {\hyperref[prop_bir-geometry-psi]{Proposition \ref*{prop_bir-geometry-psi}(c)}}, $X\backslash X_0$ has codimension $\geqslant 2$, hence \eqref{eq_decomp-X0} gives rise to a decomposition  
\[
T_X\simeq\scrF\oplus\scrG.
\]
with $\scrF$ (resp. $\scrG$) being the reflexive hull of the extension of $\scrF^\circ$ (resp. of $\scrG^\circ$) to $X$. By {\hyperref[lemma_involutive]{Lemma \ref*{lemma_involutive}}} $\scrF$ and $\scrG$ are algebraically integrable foliations; moreover, the Zariski closure of a general leaf of $\scrF$ is rationally connected (in fact equal to $F$) and $K_{\scrG}\sim_{\QQ} 0$. This means that $\det\!\scrG|_{X_{\reg}}$ is a torsion line bundle on $X_{\reg}$, but $\pi_1(X_{\reg})=\{1\}$, then $\det\!\scrG|_{X_{\reg}}$ and thus $\det\!\scrG$ must be trivial. As a byproduct we get additional information on the splitting:
\begin{lemme}
\label{lemma_open-product}
Let everything be as in the {\hyperref[general-setting]{General Setting \ref{general-setting}}} with $\psi$ being the MRC fibration of $X$ and suppose that the smooth locus $X_{\reg}$ of $X$ is simply connected.  
Then there is a Zariski open subset $X_0$ of $X$ such that $X_0$ is embedded into the product space $Y_0\times F$.
\end{lemme}
\begin{proof}
We have proved this for $(X,\Delta)$ terminal. For the klt case, let us take a terminal model $g:(X^{\terminal},\Delta^{\terminal})\to (X,\Delta)$. 
Then there is a Zariski open $(X^{\terminal})_0$ such that $(X^{\terminal})_0$ can be embedded into $Y_0\times F$. Then $(X^{\terminal})_0\backslash\Exceptional(g)$ can be regarded as a Zariski open $X_0$ of $X$, whose complement is of codimension $\geqslant 2$ in $X$. Clearly $X_0$ can be embedded into $Y_0\times F$. 
\end{proof}

\begin{rmq}
To end this subsection let us make a remark about how to show that $\phi_\ast\scrO_M(mG_{\free})\hookrightarrow\phi_\ast\scrO_M(mG)$ is an isomorphism over $Y_0$ in the proof of {\hyperref[lemma_birational-decomp]{Lemma \ref*{lemma_birational-decomp}}} in Step 3 above. By taking 
\begin{align*}
\scrA_1 &:= \left.\Sym^m\!\phi_\ast\scrO_M(G_{\free})\right|_{Y_0}, & \scrB_1 &:= \left.\Sym^m\!\phi_\ast\scrO_M(G)\right|_{Y_0}, \\
\scrA_2 &:= \phi_\ast\scrO_M(mG_{\free})|_{Y_0}, & \scrB_2 &:= \phi_\ast\scrO_M(mG)|_{Y_0},
\end{align*}
we have the following commutative square:
\begin{center}
\begin{tikzpicture}[scale=2.5]
\node (A) at (0,0) {$\scrA_2$};
\node (B) at (1,0) {$\scrB_2$,};
\node (C) at (0,1) {$\scrA_1$};
\node (D) at (1,1) {$\scrB_1$};
\path[->,font=\scriptsize,>=angle 90]
(C) edge node[above]{$c_1$} node[below]{$\simeq$} (D)
(C) edge node[left]{$a$} (A)
(D) edge node[right]{$b$}(B);
\path[right hook->,font=\scriptsize,>=angle 90]
(A) edge node[below]{$c_2$} (B);
\end{tikzpicture}
\end{center}
with $b$ being surjective. By completing the two row into short exact sequences we get
\begin{center}
\begin{tikzpicture}[scale=2.5]
\node (A) at (0,0) {$\scrA_2$};
\node (B) at (1,0) {$\scrB_2$};
\node (C) at (0,1) {$\scrA_1$};
\node (D) at (1,1) {$\scrB_1$};
\node (E) at (-1,0) {$0$};
\node (F) at (2,0) {$\Coker(c_2)$};
\node (G) at (3,0) {$0$.};
\node (E') at (-1,1) {$0$};
\node (F') at (2,1) {$0$};
\node (G') at (3,1) {$0$};
\path[->,font=\scriptsize,>=angle 90]
(C) edge node[above]{$c_1$} (D)
(C) edge node[left]{$a$} (A)
(D) edge node[right]{$b$}(B)
(F') edge (F)
(E) edge (A)
(B) edge (F)
(F) edge (G)
(E') edge (C)
(D) edge (F')
(F') edge (G')
(A) edge node[below]{$c_2$} (B);
\end{tikzpicture}
\end{center}
Since $b$ is surjective, the Snake Lemma implies that $\Coker(a)\simeq\Coker(b)=0$, hence $a$ is surjective. Then exchange the role of rows and of columns we get 
\begin{center}
\begin{tikzpicture}[scale=2.5]
\node (A) at (0,0) {$\scrB_1$};
\node (B) at (1,0) {$\scrB_2$};
\node (C) at (0,1) {$\scrA_1$};
\node (D) at (1,1) {$\scrA_2$};
\node (E) at (-2,0) {$0$};
\node (F) at (-1,0) {$\Ker(b)$};
\node (G) at (2,0) {$0$.};
\node (E') at (-2,1) {$0$};
\node (F') at (-1,1) {$\Ker(a)$};
\node (G') at (2,1) {$0$};
\path[->,font=\scriptsize,>=angle 90]
(C) edge node[above]{$a$} (D)
(C) edge node[left]{$c_1$} node[right]{$\simeq$} (A)
(F') edge (F)
(E) edge (F)
(F) edge (A)
(B) edge (G)
(E') edge (F')
(F') edge (C)
(D) edge (G')
(A) edge node[below]{$b$} (B);
\path[right hook->,font=\scriptsize,>=angle 90]
(D) edge node[right]{$c_2$}(B);
\end{tikzpicture}
\end{center}
Again by the Snake Lemma we have $\Coker(c_2)\simeq\Coker(c_1)=0$, hence $c_2$ is also surjective. Clearly this argument works in any Abelian category.
\end{rmq}

\subsection{Decomposition theorem for  \texorpdfstring{$X$}{text}}
\label{ss_MRC_decomp-thm}
In this subsection, let us prove {\hyperref[mainthm_MRC]{Theorem \ref*{mainthm_MRC}}}. Let $X$ be a projective variety of semi-Fano type with simply connected smooth locus $X_{\reg}$. Then there is an effective $\QQ$-divisor on $X$ such that $(X,\Delta)$ is klt and that the twisted anticanonical divisor $-(K_X+\Delta)$ is nef. By {\hyperref[ss_MRC_decomp-tangent]{\S \ref*{ss_MRC_decomp-tangent}}} we have a direct decomposition of the tangent sheaf into reflexive subsheaves:
\[
T_X\simeq\scrF\oplus\scrG.
\]
with $\scrF$ and $\scrG$ algebraically integrable foliations. Moreover, the Zariski closure of a general leaf of $\scrF$ is rationally connected and $\det\!\scrG\simeq\scrO_X$. Set $F$ (resp. $Z$) the Zariski closure of the general leaf of $\scrF$ (resp. of $\scrG$) and we will prove in the sequel that $X\simeq Z\times F$. In fact, if $\Delta=0$, this can be immediately deduced from the more general result of St\'ephane Druel \cite[Theorem 1.5]{Druel18b} on the foliations with numerically trivial canonical class, as will be discussed in  {\hyperref[sec_foliation-num-trivial]{\S \ref*{sec_foliation-num-trivial}}}. Nevertheless, we will present here a more elementary proof of the decomposition {\hyperref[mainthm_MRC]{Theorem \ref*{mainthm_MRC}}}, since the argument can be also be applied to the more general case without assumption on the fundamental group, and we hope that it can be used to give a proof of {\hyperref[conj_klt-anti-nef]{Conjecture \ref*{conj_klt-anti-nef}}} without proving {\hyperref[conj_pi1-anti-nef]{Conjecture \ref*{conj_pi1-anti-nef}}} or at least reducing it to a much weaker result on the fundamental group than {\hyperref[conj_pi1-anti-nef]{Conjecture \ref*{conj_pi1-anti-nef}}}. The key observation is that the decomposition $T_X\simeq\scrF\oplus\scrG$ implies that $\scrF$ and $\scrG$ are weakly regular foliations by \cite[Lemma 5.8]{Druel18b} (c.f. {\hyperref[defn_sing-foliation]{Definition \ref*{defn_sing-foliation}}} or \cite[Definition 5.4]{Druel18b} for the definition of the weak regularity).

\paragraph{Step 1: Simple connectdeness of the general leaf.}
In this first step, let us prove the following preparatory result on the topology of the general leaves of the foliations $\scrF$ and $\scrG$:
\begin{lemme}
\label{lemma_simply-conn-leaf}
As above let $F$ (resp. $Z$) be the Zariski closure of a general leaf of $\scrF$ (resp. of $\scrG$). Then both $F_{\reg}$ and $Z_{\reg}$ are simply connected.
\end{lemme}
\begin{proof}
This follows easily from {\hyperref[lemma_open-product]{Lemma \ref*{lemma_open-product}}}. In fact, by {\hyperref[lemma_open-product]{Lemma \ref*{lemma_open-product}}}, there is a Zariski open $X_0$ of $X$ which can be embedded into $Y_0\times F$ such that $\codim_X(X\backslash X_0)\geqslant 2$. Up to shrinking $Y$ we can assume that $X_0\subseteq X_{\reg}$, then we have $\codim_{X_{\reg}}(X_{\reg}\backslash X_0)\geqslant 2$. But $\pi_1(X_{\reg})\simeq\{1\}$, then {\hyperref[lemma_fundamental-group-complement]{Lemma \ref*{lemma_fundamental-group-complement}}} implies that $\pi_1(X_0)\simeq\{1\}$. Since $X_0$ is smooth, it can be regarded as a Zariski open in $Y_0\times F_{\reg}$. Then by \cite[\S 0.7 (B), p.~33]{FL81}, we have $\pi_1(Y_0\times F_{\reg})\simeq\{1\}$, which implies that $\pi_1(Y_0)\simeq\pi_1(F_{\reg})\simeq\{1\}$. Again by  {\hyperref[lemma_open-product]{Lemma \ref*{lemma_open-product}}}, we see that $Y_0$ can be regarded as a Zariski open of $Z$ (and thus of $Z_{\reg}$ since $Y_0$ is smooth). Then by \cite[\S 0.7 (B), p.~33]{FL81} $\pi_1(Z_{\reg})\simeq\{1\}$.    
\end{proof}

\paragraph{Step 2: Reduction to the $\QQ$-factorial terminal case.}
As in the {\hyperref[ss_Albanese_Q-fact]{\S \ref*{ss_Albanese_Q-fact}}}, in this step we will reduce the proof of {\hyperref[mainthm_MRC]{Theorem \ref*{mainthm_MRC}}} to the $\QQ$-factorial case. Assume that {\hyperref[mainthm_MRC]{Theorem \ref*{mainthm_MRC}}} for $X$ with terminal $\QQ$-factorial singularities, let us prove that it holds for general $X$. To this end, we take a ($\QQ$-factorial) terminal model $g: X^{\terminal}\to X$ of $X$ (by \cite[Corollary 1.4.3]{BCHM10}). By construction $X^{\terminal}$ is equipped with an effective $\QQ$-divisor $\Delta^{\terminal}$ on $X^{\terminal}$ such that
\[
K_{X^{\terminal}}+\Delta^{\terminal}\sim_{\QQ}g^\ast(K_X+\Delta).
\] 
hence the twisted anticanonical $-(K_{X^{\terminal}}+\Delta^{\terminal})$ is nef. By our assumption, the MRC fibration of $X^{\terminal}$ induces a decomposition $X^{\terminal}\simeq Z^{\terminal}\times F^{\terminal}$ with $K_{Z^{\terminal}}\sim 0$ and $F^{\terminal}$ rationally connected. But by  {\hyperref[lemma_simply-conn-leaf]{Lemma \ref*{lemma_simply-conn-leaf}}} the irregularity of $F^{\terminal}$ is zero, hence by  \cite[Lemma 4.6]{Druel18a} we get a decomposition $X\simeq Z\times F$, and we have $K_Z\sim 0$ and $F$ rationally connected.   

\paragraph{Step 3: Weak Regularity of the foliations and everywhere-definedness of the MRC fibration.}
In the sequel we always assume that $X$ has $\QQ$-factorial terminal singularities. As pointed above, $\scrF$ and $\scrG$ are weakly regular foliations. By construction $\scrF$ is an algebraically integrable foliation, we intend to apply {\hyperref[thm_weak-reg-foliation]{Theorem \ref*{thm_weak-reg-foliation}}} (\cite[Theorem 6.1]{Druel18b}) to prove that $\scrF$ is induced by an equidimensional fibre space. To this end, we need to show:

\begin{lemme}
\label{lemma_F-canonical-sing}
Let everything as above, then the foliation $\scrF$ has canonical singularities (c.f. \cite[Definition 4.1]{Druel18b} or {\hyperref[defn_foliation-singularities]{Definition \ref*{defn_foliation-singularities}}} below).
\end{lemme}
\begin{proof}   
If $K_{\scrF}$ is Cartier, then the lemma follows immediately from {\hyperref[lemma_foliation-weak-reg-canonical]{Lemma \ref*{lemma_foliation-weak-reg-canonical}}} below (\cite[Lemma 5.9]{Druel18b}). In the general case, $K_\scrF\sim K_X$ is only $\QQ$-Cartier, in order to prove the lemma we will make use of the fact that $-(K_X+\Delta)$ is nef and apply \cite[Proposition 5.5]{Druel17}; in fact, we will prove more generally that $(\scrF,\Delta)$ is canonical (c.f. \cite[\S 5.1]{Druel17} or \cite[Definition 2.9]{Spicer20}; by {\hyperref[prop_bir-geometry-psi]{Proposition \ref*{prop_bir-geometry-psi}}}, $\Delta$ is horizontal with respect to the MRC fibration, hence any component of $\Delta$ is not invariant by $\scrF$). Let $f:V\to W$ be the family of leaves of $\scrF$, with the natural morphism $\beta: V\to X$. Then by {\hyperref[prop_canonical-family-leaves]{Proposition \ref*{prop_canonical-family-leaves}}} and {\hyperref[rmk_example-alg-foliation]{Remark \ref*{rmk_example-alg-foliation}}}, there is an effective $\beta$-exceptional divisor $B$ on $V$ such that
\begin{equation}
\label{eq_canonical-family-leaves-F}
K_{\beta\inv\!\scrF}+B\sim K_{V/W}-\Ramification(f)+B\sim_{\QQ}\beta^\ast\!K_{\scrF},
\end{equation}
But since $K_{\scrF}\sim K_X$ and since $(X,\Delta)$ is terminal (thus $X$ is terminal by $\QQ$-factoriality), we must have $f^\ast\!K_W+\Ramification(f)-B\geqslant 0$. In particular, $B$ is $f$-vertical. By \cite[Remark 3.12]{AD14} or \cite[2.13]{Druel17} the general log leaf of $\scrF$ is $(V_w,B|_{V_w})$ for $w\in W$ general; since $B$ is $f$-vertical, $B|_{V_w}=0$. Moreover, by \eqref{eq_canonical-family-leaves-F} $(V,\beta^\ast\!\Delta+B-f^\ast\!K_W-\Ramification(f))$ is terminal (c.f. \cite[3.5 Definition]{Kollar97}), then so is $(V_w,\beta^\ast\!\Delta|_{V_w})$ for general $w\in W$ by \cite[Lemma 5.17, pp.~158-159]{KM98}. Finally, by writing 
\[
-K_{\scrF}\sim -K_X= -(K_X+\Delta)+\Delta
\]
with $-(K_X+\Delta)$ nef and $\Delta$ effective, we see that the foliated pair $(X,\Delta,\scrF)$ satisfies the condition of \cite[Proposition 5.5]{Druel17} and hence $(\scrF,\Delta)$ is canonical. 

\end{proof}

By virtue of {\hyperref[lemma_F-canonical-sing]{Lemma \ref*{lemma_F-canonical-sing}}} above, we can apply {\hyperref[thm_weak-reg-foliation]{Theorem \ref*{thm_weak-reg-foliation}}} (\cite[Theorem 6.1]{Druel18b}) to conclude that $\scrF$ is induced by a surjective equidimensional fibre space $f: X\to W$ onto a normal projective variety $W$. By construction, $W$ is not uniruled. Moreover we have
\begin{lemme}
\label{lemma_pi1-base-RC-foliation}
Let everything be as above, then $W_{\reg}$ is simply connected.
\end{lemme}
\begin{proof}
Since $X$ has terminal singularities, by \cite[Theorem 5.22, pp.~161-162]{KM98} or \cite[Théorème 1]{Elkik81} it has rational singularities and in particular $X$ is Cohen-Macaulay, hence by the miracle flatness \cite[Theorem 23.1, p.~179]{Mat89} the projective morphism $f|_{f\inv\!W_{\reg}}:f\inv\!W_{\reg}\to W_{\reg}$ is flat. By \cite[Theorem 23.7, p.~182]{Mat89} we see that $X_{\reg}\subseteq f\inv\!W_{\reg}$ and $X$ is smooth at $x\in f\inv\!W_{\reg}$ if and only if the fibre $X_{f(x)}$ is smooth at $x$. Hence $\scrF$ is locally free over $X_{\reg}$ and consequently $\scrF|_{X_{\reg}}$ and $\scrG|_{X_{\reg}}$ are both regular foliations on $X_{\reg}$. Then the tangent bundle sequence of the smooth morphism $f|_{X_{\reg}}:X_{\reg}\to W_{\reg}$ gives rise to an isomorphism $\scrG|_{X_{\reg}}\simeq f^\ast T_{W_{\reg}}$; and this means that the restricted morphism $f|_{Z_{\reg}}:Z_{\reg}\to W_{\reg}$ is an \'etale cover, but $f|_{Z_{\reg}}$ is also projective, hence it is a finite \'etale cover. By {\hyperref[lemma_simply-conn-leaf]{Lemma \ref*{lemma_simply-conn-leaf}}} $Z_{\reg}$ is simply connected, hence $f|_{Z_{\reg}}$ is the universal cover of $W_{\reg}$ and thus $\pi_1(W_{\reg})\simeq\pi_1^{\text{\'et}}(W_{\reg})$ is finite. Since $f$ is a fibre space, by \cite[\S X.4, Corollary 1.4, p.~263]{SGA1} we have an exact sequence of \'etale fundamental groups
\[
\pi_1^{\text{\'et}}(F)\to\pi_1^{\text{\'et}}(f\inv W_{\reg})\to\pi_1^{\text{\'et}}(W_{\reg})\to 1
\]
But since $X_{\reg}$ is simply connected, by \cite[\S 0.7 (B), p.~33]{FL81} we have $\pi_1^{\text{\'et}}(f\inv W_{\reg})= \{1\}$ and thus $\pi_1(W_{\reg})\simeq\pi_1^{\text{\'et}}(W_{\reg})\simeq\{1\}$.  
\end{proof}

\paragraph{Step 4: Decomposition of $X$.}
As shown in the preceding step, the MRC fibration is everywhere defined, then the sequel of the proof is quite similar to the argument in {\hyperref[ss_Albanese_local-constant]{\S \ref*{ss_Albanese_local-constant}}}. Take a desingularization $\mu:W'\to W$ of $W$, and let $X':=X\underset{W}{\times}W'$ be fibre product, equipped with the natural morphisms $\mu_X:X'\to X$ and $f':X'\to W'$. Up to further blowing up $M$ and $Y$ in the {\hyperref[general-setting]{General Setting \ref*{general-setting}}}, we can assume that $\pi$ factorizes through $\mu_X$ and $W'=Y$, and let $\pi':M\to X'$ be the induced morphism. Since $W$ is not uniruled, then so is $Y=W'$.
\begin{center}
\begin{tikzpicture}[scale=2.5]
\node (A) at (0,0) {$W$.};
\node (B) at (0,1) {$X$};
\node (A1) at (-1.2,0) {$W'$};
\node (B1) at (-1.2,1) {$X':=X\underset{W}{\times}W'$};
\node (C) at (-2,2) {M};
\path[->,font=\scriptsize,>=angle 90]
(B) edge node[right]{$f$} (A)
(B1) edge node[left]{$f'$} (A1)
(B1) edge node[above]{$\mu_X$} (B)
(A1) edge node[below]{$\mu$} (A)
(C) edge node[above right]{$\pi'$} (B1)
(C) edge[bend left] node[above]{$\pi$} (B)
(C) edge[bend right] node[left]{$\phi$} (A1);
\end{tikzpicture}
\end{center}

By {\hyperref[prop_bir-geometry-psi]{Proposition \ref*{prop_bir-geometry-psi}}} $f$ is semistable in codimension $1$, hence the ramification divisor of $f$ is $0$ (c.f. \cite[Definition 2.16]{CKT16}), then by \cite[Lemma 2.31]{CKT16} we have $K_{X/W}\sim K_{\scrF}\sim K_X$, which implies in particular that $K_W\sim 0$. Since $f$ is equidimensional and since $W'$ is smooth, by \cite[Proposition (9)]{Kle80} we have $K_{X'/W'}\sim \mu_X^\ast\!K_{X/W}\sim\mu_X^\ast\!K_X$.  Since $\Delta$ is horizontal with respect to $f$ by {\hyperref[prop_bir-geometry-psi]{Proposition \ref*{prop_bir-geometry-psi}(b)}}, the pullback $\mu_X^\ast\Delta$ is horizontal with respect to $f'$ by the {\hyperref[prop_horizon-base-change]{Proposition \ref*{prop_horizon-base-change}}}, hence a fortiori we have $\mu_X^\ast\Delta=(\mu_X)\inv_\ast\!\Delta$ (noting that every $\mu_X$-exceptional divisor is $f'$-vertical) and thus we can rewrite \eqref{eq_rel-canonical-pi-exc} as 
\[
-(K_{M/W'}+\Delta_M)+E'\sim_{\QQ}-(\pi')^\ast(K_{X'/W'}+(\mu_X)\inv_\ast\Delta)\sim_{\QQ}-\pi^\ast(K_X+\Delta),
\]
with $E'$ being $\pi'$-exceptional.

Take a very ample divisor $A$ on $X$, such that for general $w\in W$ the natural morphism
\[
\Sym^k\!\Coh^0(X_w,\scrO_{X_w}(A))\to\Coh^0(X_w,\scrO_{X_w}(kA))
\]
is surjective for every $k$. For every integer $b$ set
\begin{align*}
D_{A,b} &:= \frac{1}{r_b}\cdot\text{the Weil divisor on }W\text{ associated to the rank }1\text{ reflexive sheaf }\det\!f_\ast\scrO_X(bA), \\
D'_{A,b} &:= \frac{1}{r_b}\cdot\text{the Cartier divisor on }W'\text{ associated to the line bundle }\det\!f'_\ast\scrO_{X'}(b\mu_X^\ast A),
\end{align*}
where $r_b:=\rank\!f_\ast\scrO_X(bA)$. Then by construction we have $\mu_\ast D'_{A,b}=D_{A,b}$ and 
\[
\pi_\ast\phi^\ast\!D'_{A,b}\sim\mu_{X\ast}(f')^\ast\!D'_{A,b}\sim f^\ast\!\mu_\ast D'_{A,b}=f^\ast D_{A,b}.
\]
Notice that since $f$ is equidimensional and $W$ is normal, the pullback of Weil divisors via $f$ is defined, c.f. \cite[Construction 2.13]{CKT16}. Since $X$ is $\QQ$-factorial and since $\pi^\ast\!A$ is big, by {\hyperref[prop_more-psef-det]{Proposition \ref*{prop_more-psef-det}}} the ($\QQ$-Cartier) $\QQ$-divisor
\[
A-f^\ast\!D_{A,1}\sim\pi_\ast(\pi^\ast\!A-\phi^\ast\!D'_{A,1})
\]
is pseudoeffective.

By {\hyperref[prop_det-direct-image-power]{Proposition \ref*{prop_det-direct-image-power}}}, 
up to multiplying $A$ by a integer divisible by $r$, we can assume that $f^\ast\!D_{A,1}$ is an integral Cartier divisor (noting that $\Pic^0(X)$ is an Abelian variety, thus divisible). In consequence, by replacing $A$ by $A-f^\ast\!D_{A,1}$, we get an integral Cartier divisor $A$ on $X$ such that:
\begin{itemize}
\item $A$ is pseudoeffective on $X$;
\item $A$ is $f$-very ample;
\item for general $w\in W$ and for any $k\in\ZZ_{>0}$ the natural morphism 
\[
\Sym^k\!\Coh^0(X_w,\scrO_{X_w}(A))\to\Coh^0(X_w,\scrO_{X_w}(kA))
\]
is surjective;
\item $D_{A,1}$ is trivial.
\end{itemize}

Since $\pi$ is birational, $\pi^\ast\!A$ is $\phi$-big and by \cite[Lemma 7.11]{Deb01} the natural morphism 
\[
\Sym^k\!\Coh^0(M_y\,,\scrO_{M_y}(\pi^\ast\!A))\to\Coh^0(M_y,\scrO_{M_y}(k\pi^\ast\!A))
\]
is surjective for all $k\in\ZZ_{>0}$. Then by {\hyperref[prop_positivity-anti-nef]{Proposition \ref*{prop_positivity-anti-nef}}} we have that (noting that $E'$ is $\pi'$-exceptional)
\[
\mu^\ast\!f_\ast\scrO_X(mA)\simeq f'_\ast\scrO_{X'}(m\mu_X^\ast\!A)\simeq \phi_\ast\scrO_M(m\pi^\ast A)\simeq\phi_\ast\scrO_M(m\pi^\ast\!A+pE')
\]
is weakly semipositively curved for every $m\in\ZZ_{>0}$. Moreover, by {\hyperref[prop_det-direct-image-power]{Proposition \ref*{prop_det-direct-image-power}}} $f^\ast\!D_{A,m}\equiv mf^\ast\!D_{A,1}=0$, i.e. $\det\!f_\ast\scrO_X(mA)$ is numerically trivial, and so is $\det\!f'_\ast\scrO_{X'}(m\mu_X^\ast\!A)$. Since $f'$ is equidimensional, $f'_\ast\scrO_{X'}(m\mu_X^\ast\!A)$ is reflexive for every $m$, then {\hyperref[prop_criterion-num-flat]{Proposition \ref*{prop_criterion-num-flat}}} implies that $f'_\ast\scrO_{X'}(m\mu_X^\ast\!A)$ is numerically flat for every $m\in\ZZ_{>0}$. By \cite[\S I\!I\!I.9, Proof of Proposition 9.9, pp.~261-262]{Har77} (c.f. also \cite[\S I\!X.2, Proposition (2.5), p.~8]{ACG11}) the local freeness of $f'_\ast\scrO_{X'}(m\mu_X^\ast\!A)$ implies that $f'$ is flat. Then by virtue of {\hyperref[prop_num-flat--local-const]{Proposition \ref*{prop_num-flat--local-const}}} we see that $f'$ is a locally constant fibration. Since $W_{\reg}$ is simply connected by {\hyperref[lemma_pi1-base-RC-foliation]{Lemma \ref*{lemma_pi1-base-RC-foliation}}}, then by \cite[\S 0.7 (B), p.~33]{FL81} so is $Y=W'$. Hence $f'$ induces a decomposition $X'\simeq F\times W'$. The decomposition of $X$ then follows from \cite[Lemma 4.6]{Druel18a}. In addition, the decomposition is induced by $f$, hence a fortiori $W\simeq Z$ and hence $X\simeq F\times Z$. Thus we have just proved {\hyperref[mainthm_MRC]{Theorem \ref*{mainthm_MRC}}}.

\section{Foliations with numerically trivial canonical class}
\label{sec_foliation-num-trivial}

As mentioned at the beginning of {\hyperref[ss_MRC_decomp-thm]{\S \ref*{ss_MRC_decomp-thm}}}, {\hyperref[mainthm_MRC]{Theorem \ref*{mainthm_MRC}}} can be deduced directly by combining {\hyperref[thm_decomp-tangent]{Theorem \ref*{thm_decomp-tangent}}} with the following theorem, which is a variant of \cite[Theorem 1.5]{Druel18b}. The proof presented in this section is suggested by Stéphane Druel to the author through personal communications (any mistake, of course, is the author's).
\begin{thm}
\label{thm_Druel_foliation-num-triv}
Let $X$ be a normal projective variety admitting an effective $\QQ$-divisor $\Delta$ on $X$ such that $(X,\Delta)$ is klt and let $\scrG$ be an algebraically integrable foliation with canonical singularities. Suppose that the canonical class of $\scrG$ is numerically trivial. Then there are projective varieties $Z$ and $F$ and a finite quasi-\'etale cover $f:Z\times F\to X$, such that $f\inv\scrG\simeq\pr_1^\ast T_Z$.    
\end{thm}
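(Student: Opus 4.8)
The plan is to reduce to $X$ being $\QQ$-factorial and terminal, to realize $\scrG$ as the foliation induced by an equidimensional fibre space $f\colon X\to W$, to prove by the direct-image positivity of \S\ref{sec_positivity} that $f$ is a locally constant fibration, and finally to trivialize it by a finite quasi-\'etale base change. For the reduction: passing to a $\QQ$-factorial terminal model (\cite[Corollary 1.4.3]{BCHM10}), the foliation lifts to an algebraically integrable foliation which still has canonical singularities and numerically trivial canonical class; once the theorem is known on the model, \cite[Lemma 4.6]{Druel18a} descends the resulting product to $X$, exactly as in \S\ref{ss_MRC_decomp-thm}, Step 2 (the leaf factor has vanishing irregularity over $X$ by the next paragraph). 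So from now on $X$ is $\QQ$-factorial and terminal.

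\textbf{From $\scrG$ to a fibre space.} Let $f\colon V\to W$ be the family of leaves of $\scrG$ with $\beta\colon V\to X$ birational. By {\hyperref[prop_canonical-family-leaves]{Proposition \ref*{prop_canonical-family-leaves}}} together with {\hyperref[rmk_example-alg-foliation]{Remark \ref*{rmk_example-alg-foliation}}} there is an effective $\beta$-exceptional divisor $B$ on $V$ with $K_{V/W}-\Ramification(f)+B=K_{\beta\inv\scrG}+B\sim_{\QQ}\beta^\ast K_\scrG$, and the right-hand side is numerically trivial. Restricting to a general fibre $V_w$, which maps birationally onto the closure $Z$ of a general leaf, and using that $B$ is $f$-vertical over the generic point of $W$ and that the general log leaf is log canonical, I get that $(Z,B|_Z)$ is log canonical with $K_Z+B|_Z$ numerically trivial; since $Z$ inherits klt --- hence rational --- singularities as a general leaf over a terminal variety, $K_Z+B|_Z$ is torsion, so $q(Z)$ detects only an abelian factor and, after a quasi-\'etale cover, $K_Z\sim 0$. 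The same discrepancy computation, together with the canonical-singularities hypothesis and terminality of $X$, shows that $\scrG$ is weakly regular (cf. \cite[Lemma 5.9]{Druel18b} and \cite{Druel17}); hence {\hyperref[thm_weak-reg-foliation]{Theorem \ref*{thm_weak-reg-foliation}}} gives a surjective equidimensional fibre space $f\colon X\to W$ onto a normal projective, non-uniruled $W$ inducing $\scrG$, with $\Ramification(f)=0$ and so $K_{X/W}=K_\scrG\equiv 0$.

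\textbf{Local constancy.} I would now repeat the argument of \S\ref{ss_MRC_decomp-thm}, Step 4 (which itself parallels \S\ref{ss_Albanese_local-constant}), using the numerically trivial class $K_{X/W}=K_\scrG$ in place of $-\pi^\ast(K_X+\Delta)$. After a resolution $\mu\colon W'\to W$ and base change $X'=X\times_W W'$ (so $W'$ is again non-uniruled), and after the standard normalisation replacing a sufficiently ample $A$ on $X$ by $A-f^\ast D_{A,1}$, {\hyperref[prop_positivity-anti-nef]{Proposition \ref*{prop_positivity-anti-nef}}} gives that each $f'_\ast\scrO_{X'}(m\mu_X^\ast\!A)$ is weakly semipositively curved, {\hyperref[prop_det-direct-image-power]{Proposition \ref*{prop_det-direct-image-power}}} gives that its determinant is numerically trivial, and {\hyperref[prop_criterion-num-flat]{Proposition \ref*{prop_criterion-num-flat}}} then makes it numerically flat. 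In particular $f'$ is flat, and {\hyperref[prop_num-flat--local-const]{Proposition \ref*{prop_num-flat--local-const}}} shows that $f'$ is a locally constant fibration with fibre $Z$ and monodromy representation $\rho\colon\pi_1(W')\to\Aut(Z)$.

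\textbf{Trivialization, and the main obstacle.} It remains to make the monodromy trivial after a finite quasi-\'etale cover. Applying the Beauville--Bogomolov decomposition to a quasi-\'etale cover of $Z$ (legitimate since $K_Z\sim 0$), I write $Z$, up to such a cover, as a product of an abelian variety with simply connected Calabi--Yau and irreducible symplectic factors, and $\rho$ respects this splitting; on the simply connected factors $\Aut$ is a discrete group and a finite-index subgroup of $\pi_1(W')$ acts trivially (the action factoring through a finite group), while the abelian factor is absorbed into the base by an Albanese-type argument parallel to {\hyperref[mainthm_Albanese]{Theorem \ref*{mainthm_Albanese}}} carried out relatively over $W$. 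Combining these reductions and descending once more with \cite[Lemma 4.6]{Druel18a} produces a finite quasi-\'etale cover $Z\times F\to X$ under which $\scrG$ pulls back to $\pr_1^\ast T_Z$. The hard part is precisely this last step: contrary to {\hyperref[mainthm_MRC]{Theorem \ref*{mainthm_MRC}}}, where $\pi_1(X_{\reg})=\{1\}$ makes the locally constant fibration outright trivial, one must here control $\Aut(Z)$ and the abelian factor and, crucially, keep every intervening cover finite and quasi-\'etale so that \cite[Lemma 4.6]{Druel18a} can be invoked.
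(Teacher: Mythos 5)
Your proposal takes a genuinely different route from the paper, and it contains real gaps. The paper does not re-prove local constancy of the fibration induced by $\scrG$; it simply black-boxes \cite[Theorem 1.5]{Druel18b} (which is exactly the case $\Delta=0$), reduces to it by passing to a \emph{$\QQ$-factorialization} (a small morphism, so that the pair stays klt and $K_{\scrG}$ stays numerically trivial), and then handles the abelian factor of the leaf by an induction on dimension: Beauville--Bogomolov \cite[1.5.Theorem]{HP19} splits the leaf as $A^{\qf}\times B^{\qf}$, the trivial subfoliation coming from $A^{\qf}$ is split off by \cite[Proposition 8.2]{Druel18b} after descending everything to the normalization $X_1$ of $X$ in the relevant function field, and \cite[Lemma 4.6]{Druel18a} closes the case of vanishing augmented irregularity. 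Your plan instead tries to re-derive the substance of Druel's theorem via the direct-image machinery of \S\ref{sec_positivity}, which is both harder and, as written, not available.

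Concretely, three steps fail. First, the reduction: you pass to a \emph{terminal model}, which is not small, and you assert that ``the leaf factor has vanishing irregularity'' so that \cite[Lemma 4.6]{Druel18a} descends the product. But the leaf closure of $\scrG$ only has (torsion) trivial canonical class and may well be an abelian variety; its irregularity need not vanish, and this is precisely the difficulty the paper's induction is designed to remove. Moreover a quasi-\'etale cover of $X^{\terminal}$ composed with a divisor-extracting birational morphism does not obviously yield a quasi-\'etale cover of $X$ carrying a product structure. Second, the local-constancy step invokes Propositions \ref{prop_positivity-anti-nef}, \ref{prop_det-direct-image-power} and \ref{prop_criterion-num-flat}, all of which are proved under the {\hyperref[general-setting]{General Setting \ref*{general-setting}}}, i.e.\ under the hypothesis that $-(K_X+\Delta)$ is nef. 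Theorem \ref{thm_Druel_foliation-num-triv} makes no such assumption: the only positivity available is $K_{\scrG}\equiv 0$, and replacing $-\pi^{\ast}(K_X+\Delta)$ by $-K_{X/W}$ throughout \S\ref{sec_positivity} (including {\hyperref[prop_bir-geometry-psi]{Proposition \ref*{prop_bir-geometry-psi}}}, on which everything else rests) is substantial unperformed work, not a substitution one can wave at. Third, and most seriously, the monodromy trivialization that you yourself flag as ``the main obstacle'' is not carried out: there is no reason the representation $\rho\colon\pi_1(W')\to\Aut(Z)$ has finite image on the simply connected Calabi--Yau and symplectic factors ($\Aut$ of an irreducible symplectic variety can be infinite discrete), and the ``Albanese-type argument carried out relatively over $W$'' for the abelian factor is only named, not given. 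Since that step is exactly the content of \cite[Theorem 1.5]{Druel18b} and \cite[Proposition 8.2]{Druel18b}, the proposal as it stands does not constitute a proof.
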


Before entering into the proof of {\hyperref[thm_Druel_foliation-num-triv]{Theorem \ref*{thm_Druel_foliation-num-triv}}}, let us first recall the notion of singularities of foliations:
\begin{defn}[{\cite[Defintion 4.1]{Druel18b}}; see also {\cite[\S I.5]{McQ08},\cite[Section 3]{LPT18},}]
\label{defn_foliation-singularities}
Let $\scrG$ be a $\QQ$-Gorenstein foliation on a normal complex variety $X$. For any projective bimeromorphic morphism $\beta: V\to X$ with $V$ smooth, there are uniquely determined (c.f. \cite[Remark 3.2]{LPT18}) rational numbers $a(E,X,\scrG)$ such that 
\[
\beta^\ast\det\!\scrG\simeq\det\!\beta\inv\!\scrG+\sum_E a(E,X,\scrG)E\,,
\]
as $\QQ$-line bundles. where $E$ runs over all the exceptional prime divisors of $\beta$. The number $a(E,X,\scrG)$ does not depend on $\beta$ but only depends on the valuation defined by $E$ on the function filed of $X$. We say that $\scrG$ has {\it canonical} (resp. {\it terminal}) singularities if for every $E$ exceptional over $X$, $a(E,X,\scrG)\geqslant 0$ (resp. $a(E,X,\scrG)>0$).
\end{defn}

In particular, weakly regular foliations (c.f. {\hyperref[defn_sing-foliation]{Definition \ref*{defn_sing-foliation}}}) on klt varieties have canonical singularities. Indeed we have:
\begin{lemme}[{\cite[Lemma 5.9]{Druel18b}}]
\label{lemma_foliation-weak-reg-canonical}
Let $X$ be a normal complex variety admitting an effective $\QQ$-divisor $\Delta$ such that $(X,\Delta)$ is klt, and let $\scrG$ be a foliation on $X$ such that $\det\!\scrG$ is a line bundle. Suppose that $\scrG$ is weakly regular. Then $\scrG$ has canonical singularities.
\end{lemme}

For foliations with numerically trivial canonical class, the converse of {\hyperref[lemma_foliation-weak-reg-canonical]{Lemma \ref*{lemma_foliation-weak-reg-canonical}}} also holds:
\begin{lemme}[{\cite[Corollary 5.23]{Druel18b}}]
\label{lemma_num-triv-canonical-weak-reg}
Let $X$ be a normal complex variety admitting an effective $\QQ$-divisor $\Delta$ such that $(X,\Delta)$ is klt, and let $\scrG$ be a foliation on $X$ with canonical singularities. Suppose that $\det\!\scrG$ is a line bundle and is numerically trivial, then $\scrG$ is weakly regular and there is a decomposition $T_X\simeq \scrG\oplus \scrE$ of $T_X$ into involutive subsheaves. 
\end{lemme}

\begin{rmq}
\label{rmk_lemma_num-triv-canonical-weak-reg}
Let us remark that {\hyperref[lemma_num-triv-canonical-weak-reg]{Lemma \ref*{lemma_num-triv-canonical-weak-reg}}} is a key ingredient in the proof of \cite[Theorem 1.5]{Druel18b}. In fact, let $X$ be a klt projective variety and let $\scrG$ be an algebraically integrable foliation on $X$ with numerically trivial canonical class, let us briefly explain the strategy of the proof of \cite[Theorem 1.5]{Druel18b}: First by {\hyperref[lemma_num-triv-canonical-weak-reg]{Lemma \ref*{lemma_num-triv-canonical-weak-reg}}} $\scrG$ is weakly regular, hence by {\hyperref[thm_weak-reg-foliation]{Theorem \ref*{thm_weak-reg-foliation}}} (\cite[Theorem 6.1]{Druel18b}), up to replacing $X$ by a $\QQ$-factorialization one can assume that $\scrG$ is induced by an equidimensional fibre space. Then by separating the Abelian variety factor we can reduce the proof to the case that the leaf of $\scrG$ has vanishing irregularity and then \cite[Lemma 4.6]{Druel18a} permits to conclude.
\end{rmq}

\begin{rmq}
\label{rmk_weak-reg-canonical}
In \cite{Druel18b} the above two lemmas are stated for normal variety $X$ with klt singularities. But since the control on the singularities of $X$ is only used to ensure the existence and the universal property of the pullback maps of reflexive differentials (\cite[\S 2.6]{Druel18b}) and since this in fact holds for any "klt space" in the sense of Kebekus (that is, a normal complex variety $X$ admitting an effective $\QQ$-divisor $\Delta$ such that the pair $(X,\Delta)$ is klt) by \cite[Theorem 3.1, Proposition 6.1]{Keb13}, we see immediately that the two lemmas holds for klt spaces.    
\end{rmq}

Now let us recall the following important characterization of having canonical singularities for foliations with numerically trivial canonical class over {\it projective} varieties in terms of uniruledness, which first appears in \cite[Corollary 3.8]{LPT18} for $X$ smooth and is generalized to singular case in \cite[Proposition 4.22]{Druel18b}:
\begin{prop}[{\cite[Proposition 4.22]{Druel18b}}]
\label{prop_foliation-num-triv-canonical}
Let $X$ be a normal projective variety and let $\scrG$ be a $\QQ$-Gorenstein foliation on $X$ such that $K_{\scrG}\equiv 0$. Then $\scrG$ has canonical singularities if and only if $\scrG$ is not uniruled. 
\end{prop}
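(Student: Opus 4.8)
Recall that $\scrG$ is called \emph{uniruled} if through a general point of $X$ there passes a rational curve tangent to $\scrG$, and that this is a birational invariant of the pair $(X,\scrG)$: for any projective birational morphism $\beta\colon\tilde X\to X$ with $\tilde X$ smooth, setting $\tilde\scrG:=\beta\inv\scrG$, the foliation $\scrG$ is uniruled if and only if $\tilde\scrG$ is. The plan is first to rephrase the canonicity of $\scrG$ numerically, then to invoke the positivity theory of foliations (Bogomolov--McQuillan, Campana--P\u{a}un). By {\hyperref[defn_foliation-singularities]{Definition \ref*{defn_foliation-singularities}}} we have $K_{\tilde\scrG}\sim_{\QQ}\beta^\ast K_\scrG+\sum_i a(E_i,X,\scrG)E_i$, the sum running over the $\beta$-exceptional prime divisors $E_i$; since $K_\scrG\equiv 0$ this reads $K_{\tilde\scrG}\equiv\sum_i a(E_i,X,\scrG)E_i$. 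The right-hand side is supported on $\Exceptional(\beta)$, so by the negativity lemma (a pseudoeffective exceptional $\QQ$-divisor over a normal variety is effective) it is pseudoeffective if and only if $a(E_i,X,\scrG)\geqslant 0$ for all $i$. Letting $\beta$ vary over all resolutions of $X$, we obtain the reformulation: \emph{$\scrG$ has canonical singularities if and only if $K_{\tilde\scrG}$ is pseudoeffective for every such $\beta$}. As uniruledness is preserved by $\beta$, it suffices to prove that $\scrG$ is non-uniruled if and only if $K_{\tilde\scrG}$ is pseudoeffective on every $\tilde X$.

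The implication ``$\scrG$ non-canonical $\Rightarrow$ $\scrG$ uniruled'' is the easy half. Non-canonicity provides a resolution $\beta$ with some $a(E_0,X,\scrG)<0$, hence with $K_{\tilde\scrG}$ not pseudoeffective; by the duality of \cite{BDPP13} there is then a movable curve class $\omega$ on $\tilde X$ with $\det\tilde\scrG\cdot\omega>0$, i.e.\ $\tilde\scrG$ has positive $\omega$-slope. The maximal $\omega$-destabilizing subsheaf $\tilde\scrG^{+}\subseteq\tilde\scrG$ is again a foliation and has positive minimal $\omega$-slope (Campana--P\u{a}un), hence by their algebraicity criterion it is algebraically integrable with rationally connected general leaf; in particular $\tilde\scrG^{+}$, and a fortiori $\tilde\scrG$ and $\scrG$, is uniruled.

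Conversely, suppose $\scrG$ is uniruled and fix a resolution $\beta\colon\tilde X\to X$; then $\tilde\scrG$ is uniruled, and by a bend-and-break argument we may choose a dominating family of rational curves tangent to $\tilde\scrG$ whose general member $\tilde\ell$ is free and avoids $\Sing(\tilde\scrG)\cup\Exceptional(\beta)$. Tangency gives an inclusion $T_{\tilde\ell}\cong\scrO_{\PP^1}(2)\hookrightarrow\tilde\scrG|_{\tilde\ell}$, and the crucial point is that for a general member of such a family the restriction $\tilde\scrG|_{\tilde\ell}$ is a nef (indeed globally generated) bundle on $\PP^1$ --- this is exactly where one invokes the deformation-theoretic study of rational curves tangent to a foliation in the style of Kebekus--Sol\'a Conde--Toma and Bogomolov--McQuillan. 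Then $\deg(\tilde\scrG|_{\tilde\ell})\geqslant 2$, so $K_{\tilde\scrG}\cdot\tilde\ell=-\deg(\tilde\scrG|_{\tilde\ell})\leqslant-2<0$; since $\tilde\ell$ moves in a dominating family, $K_{\tilde\scrG}$ is not pseudoeffective, and by the reformulation above $\scrG$ is non-canonical.

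The main obstacle is the positivity of $\tilde\scrG|_{\tilde\ell}$ for a general tangent rational curve, together with, in the singular case, the bookkeeping needed to control the discrepancies $a(E,X,\scrG)$ for \emph{all} $E$ in terms of the foliated discrepancies over a single resolution (equivalently, the stability of the numerical reformulation under composition of resolutions). These points are precisely the technical cores of \cite[Corollary 3.8]{LPT18} (for $X$ smooth) and of its generalization \cite[Proposition 4.22]{Druel18b}; a self-contained treatment would reproduce the relevant portion of the Bogomolov--McQuillan circle of ideas and of McQuillan's local description of foliation singularities, which I only indicate here.
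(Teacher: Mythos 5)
The paper quotes this result from \cite[Proposition 4.22]{Druel18b} (which reduces, for $X$ smooth, to \cite[Corollary 3.8]{LPT18}) without reproving it, so there is no in-text proof to compare against. Your numerical reformulation --- canonicity equals pseudoeffectivity of $K_{\tilde\scrG}$ on every resolution, via the negativity lemma applied to the $\beta$-exceptional class $K_{\tilde\scrG}\equiv\sum_i a_iE_i$ --- and the half ``$\scrG$ non-canonical $\Rightarrow\scrG$ uniruled'' (BDPP duality plus the Campana--P\u aun / \cite{LPT18} algebraicity criterion) are correct and in line with the known route.

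The hard half ``$\scrG$ uniruled $\Rightarrow\scrG$ non-canonical'' is where the gap lies, and it is not a matter of details left to the reader: as written the argument is internally inconsistent. You require the general rational curve $\tilde\ell$ tangent to $\tilde\scrG$ to avoid $\Exceptional(\beta)$, and then conclude $K_{\tilde\scrG}\cdot\tilde\ell\leqslant-2$. But $K_{\tilde\scrG}\sim_{\QQ}\beta^\ast K_\scrG+\sum_i a_iE_i$ with each $E_i$ $\beta$-exceptional; if $\tilde\ell\cap\Exceptional(\beta)=\varnothing$, then $\sum_i a_iE_i\cdot\tilde\ell=0$ and, since $K_\scrG\equiv 0$, also $\beta^\ast K_\scrG\cdot\tilde\ell=K_\scrG\cdot\beta_\ast\tilde\ell=0$, hence $K_{\tilde\scrG}\cdot\tilde\ell=0$. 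This contradicts your bound $K_{\tilde\scrG}\cdot\tilde\ell\leqslant-2$ regardless of whether $\scrG$ is canonical: the constraints you impose (tangency $\scrO_{\PP^1}(2)\hookrightarrow\tilde\scrG|_{\tilde\ell}$, nefness of $\tilde\scrG|_{\tilde\ell}$, and disjointness from $\Exceptional(\beta)$) are jointly impossible under the standing hypothesis $K_\scrG\equiv 0$. The radial foliation on $\CC^2$ (discrepancy $a(E,X,\scrG)=-1$ over the blow-up of the origin) illustrates what really happens: the rational curves tangent to $\scrG$ through general points, once lifted to $\tilde X$, necessarily meet $\Exceptional(\beta)$, and it is precisely the strictly negative contribution $\sum_i a_iE_i\cdot\tilde\ell<0$ that makes $K_{\tilde\scrG}\cdot\tilde\ell<0$. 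Assuming the tangent curves can be chosen away from the exceptional locus is exactly the step that fails, and without it the elementary bend-and-break route does not close; the references you cite handle this direction by a more structural analysis of the algebraic part of $\tilde\scrG$ and its family of leaves, not by a positivity argument along individual tangent rational curves.
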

Recall that a foliation $\scrG$ on the normal variety $X$ is called {\it uniruled} if through a general point of $X$ there is a rational curve which is everywhere tangent to $\scrG$. 

Let us turn to the proof of {\hyperref[thm_Druel_foliation-num-triv]{Theorem \ref*{thm_Druel_foliation-num-triv}}}. The proof is suggested to the author by St\'ephane Druel through personal communications (of course, any mistake is the author's), and is very similar to Step 2 of Proof of \cite[Theorem 8.1]{Druel18b}. The main idea is to take a $\QQ$-factorialization of $X$, which enables us to apply \cite[Theorem 1.5]{Druel18b}. In order to descend the splitting to $X$ we intend to use \cite[Lemma 4.6]{Druel18a}, to this end we need the following:
\begin{lemme}[{\cite[Proposition 8.2]{Druel18b}}]
\label{lemma_foliation-ab-var}
Let $X$ be a normal projective variety and let $\scrE$ be an algebraically integrable foliation with canonical singularities on $X$. Suppose that $\scrE\simeq\scrO_X^{\oplus\rank\scrE}$. Then there exist an Abelian variety $A$, a normal projective variety $V$ and a finite \'etale cover $f:A\times V\to X$ such that $f\inv\scrE\simeq\pr_1^\ast T_A$.  
\end{lemme}

Now we can prove {\hyperref[thm_Druel_foliation-num-triv]{Theorem \ref*{thm_Druel_foliation-num-triv}}}:
\begin{proof}[Proof of {\hyperref[thm_Druel_foliation-num-triv]{Theorem \ref*{thm_Druel_foliation-num-triv}}}]
If $\Delta=0$ this is nothing but \cite[Theorem 1.5]{Druel18b}. For the general case, let $\beta: X^{\qf}\to X$ be a $\QQ$-factorialization of $X$, whose existence is proved by \cite[Corollary 1.37, pp.~29-30]{Kollar13}, and let $\scrG^{\qf}:=\beta\inv\!\scrG$. By construction, $\beta$ is a small birational morphism, then 
\[
K_{X^{\qf}}+\beta\inv_\ast\!\Delta\sim_{\QQ}\beta^\ast(K_X+\Delta),
\] 
so that $(X^{\qf},\beta_\ast\inv\!\Delta)$ remains a klt pair, but $X^{\qf}$ is $\QQ$-factorial hence $X^{\qf}$ itself is klt by \cite[Corollary 2.35(3), pp.~57-58]{KM98}. Moreover, since $\beta$ is small birational, we have 
\[
K_{\scrG^{\qf}}\sim_{\QQ}\beta^\ast K_{\scrG}\equiv 0,
\] 
hence by \cite[Lemma 4.2(2)]{Druel18b} $\scrG^{\qf}$ also has canonical singularities. Then we can apply \cite[Theorem 1.5]{Druel18b} to $(X^{\qf},\scrG^{\qf})$ to obtain projective varieties $Z^{\qf}$ and $F^{\qf}$ with klt singularities and a quasi-\'etale cover $g^{\qf}:Z^{\qf}\times F^{\qf}\to X^{\qf}$ such that $(g^{\qf})\inv\!\scrG^{\qf}\simeq\pr_1^\ast T_{Z^{\qf}}$. And we have
\[
\pr_1^\ast K_{Z^{\qf}}\sim(g^{\qf})^\ast K_{\scrG^{\qf}}\equiv 0,
\]
implying that $K_{Z^{\qf}}\equiv0$ ($\pr_1^\ast$ is an injective morphism between Picard groups). 
By \cite[1.5.Theorem]{HP19}, up to a quasi-\'etale cover, we can assume that $Z^{\qf}\simeq A^{\qf}\times B^{\qf}$ with $A^{\qf}$ being an Abelian variety and $B^{\qf}$ a normal projective variety with trivial canonical class and vanishing augmented irregularity. Now let $X_1$ be the normalization of $X$ in the function field of $Z^{\qf}\times F^{\qf}$, and let $\beta_1: Z^{\qf}\times F^{\qf}\to X_1$ and $g:X_1\to X$ be the induced morphism. Set $\Delta_1:=g^\ast\Delta$ be the pullback of $\Delta$ as Weil divisor (c.f. \cite[Construction 2.13]{CKT16}), then $(X_1,\Delta_1)$ is klt by \cite[Proposition 5.20, p.~160]{KM98}. We have the following commutative diagram
\begin{center}
\begin{tikzpicture}[scale=2.0]
\node (A) at (0,0) {$X$};
\node (A1) at (0,1) {$X_1$};
\node (B) at (-2,0) {$X^{\qf}$};
\node (B1) at (-2,1) {$Z^{\qf}\times F^{\qf}$};
\path[->, font=\scriptsize,>=angle 90]
(A1) edge node[right]{$g$, quasi-\'etale} (A)
(B1) edge node[left]{$g^{\qf}$, quasi-\'etale} (B)
(B) edge node[below]{$\beta$, small birational} (A)
(B1) edge node[above]{$\beta_1$, small birational} (A1);
\end{tikzpicture}
\end{center}
Then $\pr_1^\ast T_{A^{\qf}}$ is a direct summand of $T_{A^{\qf}\times B^{\qf}\times F^{\qf}}\simeq T_{Z^{\qf}\times F^{\qf}}$, and pushes down via $\beta_1$ to an algebraically integrable foliation $\scrE_1$ on $X_1$. Similarly, $\pr_2^\ast T_{B^{\qf}}$ induces an algebraically integrable foliation $\scrG_1$ on $X_1$. By construction $\scrE_1\oplus\scrG_1\simeq g\inv\!\scrG$ and $\scrE_1\simeq\scrO_{X_1}^{\oplus\rank\scrE_1}$. Since $\scrE_1$ is a direct summand of $T_{X_1}$, $\scrE_1$ is weakly regular (c.f. \cite[Lemma 5.8]{Druel18b}), and thus has canonical singularities by {\hyperref[lemma_foliation-weak-reg-canonical]{Lemma \ref*{lemma_foliation-weak-reg-canonical}}}. By applying {\hyperref[lemma_foliation-ab-var]{Lemma \ref*{lemma_foliation-ab-var}}} to $\scrE_1$ we see that there exist an Abelian variety $A_1$, a normal projective variety $X_2$ and a finite \'etale cover $g_1: A_1\times X_2\to X_1$ such that $g_1\inv\!\scrE_1\simeq\pr_1^\ast T_{A_1}$. Since $g_1$ is a finite \'etale cover, $(A_1\times X_2, g_1^\ast\Delta_1)$ is klt, and hence for general $a\in A_1$, the pair $(X_2,(g_1^\ast\Delta_1)|_{\pr_1\inv(a)})$ is klt (by identifying $X_2$ with $\pr_1\inv(a)$) by \cite[Lemma 5.17, pp.~158-159]{KM98}. Since $g_1$ is a finite \'etale cover, we have 
\[
g_1\inv\scrE_1\oplus g_1\inv\scrG_1\simeq g_1\inv\!g\inv\!\scrG,
\]
hence $g_1\inv\!\scrG_1$ is a direct summand of $\pr_2^\ast T_{X_2}$. In consequence, $g_1\inv\!\scrG_1$ descends to a a(n) (algebraically integrable) foliation $\scrG_2$ on $X_2$ via $\pr_2$, i.e. there is a foliation $\scrG_2$ on $X_2$ such that $\pr_2\inv\scrG_2\simeq g_1\inv\!\scrG_1$. Moreover, by construction $\scrG_2$ is a direct summand of $T_{X_2}$, hence $\scrG_2$ is weakly regular. 

By construction we have 
\[
\beta_1^\ast K_{\scrG_1}\sim \beta_1(K_{\scrE_1}+K_{\scrG_1})\sim\beta_1^\ast K_{g\inv\!\scrG}\sim K_{Z^{\qf}}\sim 0,
\]
hence $K_{\scrG_1}\sim 0$, which implies that $K_{\scrG_2}\sim 0$ and in particular $K_{\scrG_2}$ is a Cartier divisor. By {\hyperref[lemma_foliation-weak-reg-canonical]{Lemma \ref*{lemma_foliation-weak-reg-canonical}}}, $\scrG_2$ has canonical singularities. Clearly, in order to prove the theorem for $X$ and $\scrG$, it suffices to prove this for $X_2$ and $\scrG_2$. If $\dim A^{\qf}=0$, then $Z^{\qf}\simeq B^{\qf}$ has vanishing augmented irregularity, in this case \cite[Lemma 4.6]{Druel18a} permits us to conclude; otherwise, we have 
\[
\dim X_2=\dim X-\dim A_1=\dim X-\rank\scrE_1=\dim X-\dim A^{\qf}<\dim X,
\]
then since $X_2$ admits an effective divisor $\Delta_2$ such that $(X_2,\Delta_2)$ is klt, the proof is done by an induction on the dimension. 
\end{proof}

Next let us give an alternative proof of {\hyperref[mainthm_MRC]{Theorem \ref*{mainthm_MRC}}} by using {\hyperref[thm_Druel_foliation-num-triv]{Theorem \ref*{thm_Druel_foliation-num-triv}}}:
\begin{proof}[Alternative proof of {\hyperref[mainthm_MRC]{Theorem \ref*{mainthm_MRC}}}]
Let everything as in the {\hyperref[general-setting]{General Setting \ref*{general-setting}}} with $\psi:X\dashrightarrow Y$ being the MRC fibration of $X$. By {\hyperref[thm_decomp-tangent]{Theorem \ref*{thm_decomp-tangent}}} the tangent sheaf admits a splitting $T_X\simeq\scrF\oplus\scrG$ into algebraically integrable foliations with $K_{\scrG}\sim 0$. Set $F$ (resp. $Z$) to be the Zariski closure of the general leaf of $\scrF$ (resp. $\scrG$), then $F$ is rationally connected. By {\hyperref[lemma_open-product]{Lemma \ref*{lemma_open-product}}}, $Y_0$ can be regarded as a Zariski open of $Z$, hence $Z$ is birational to $Y$; but $\psi$ is the MRC fibration of $X$, $Y$ is not uniruled, then so is $Z$. This means that $\scrG$ is not uniruled, and by {\hyperref[prop_foliation-num-triv-canonical]{Proposition \ref*{prop_foliation-num-triv-canonical}}}, $\scrG$ has canonical singularities. By {\hyperref[thm_Druel_foliation-num-triv]{Theorem \ref*{thm_Druel_foliation-num-triv}}} there are projective varieties $Z_1$ and $F_1$ and a quasi-\'etale cover $f:Z_1\times F_1\to X$ such that $f\inv\!\scrG\simeq\pr_1^\ast T_{Z_1}$. Since $\pi_1(X_{\reg})\simeq\{1\}$, $f$ must be an isomorphism, then we have $\pr_1^\ast T_{Z_1}\simeq\scrG$ and $\pr_1^\ast T_{F_1}\simeq\scrF$. In particular, we have $Z_1\simeq Z$ and $F\simeq F_1$, hence $X\simeq Z\times F$ with $K_Z\sim 0$ and $F$ rationally connected.  
\end{proof}

\section{Fundamental group of \texorpdfstring{$X_{\reg}$}{text}}
\label{sec_fundamental-group}

Let $X$ be a klt projective variety with nef anticanonical divisor $-K_X$. In this section we study the fundamental group of $X_{\reg}$, especially the relation of $\pi_1(X_{\reg})$ to the decomposition theorem and to other folklore conjectures (c.f. {\hyperref[conj_pi1-Fano-CY]{Conjecture \ref*{conj_pi1-Fano-CY}}}).

\subsection{Albanese map of \texorpdfstring{$X_{\reg}$}{text} and nilpotent completion of \texorpdfstring{$\pi_1(X_{\reg})$}{text}}
\label{ss_fundamental-group_q-Albanese}

In this subsection we will study the Albanese map of $X_{\reg}$ and deduce from this the nilpotent completion of $\pi_1(X_{\reg})$ by using the same argument as in \cite[\S 2]{Cam95}. The principal result of this subsection is the following:
\begin{thm}
\label{thm_q-Alb-pi1-anti-nef}
Let $X$ be a normal projective variety of semi-Fano type, i.e. there is an effective $\QQ$-divisor $\Delta$ on $X$ such that $(X,\Delta)$ is klt and that the twisted anticanonical divisor $-(K_X+\Delta)$ is nef. Then 
\begin{itemize}
\item[\rm(a)] The Albanese map $\widetilde{\alb}_{X_{\reg}}:X_{\reg}\to\widetilde{\Alb}_{X_{\reg}}$ of $X_{\reg}$ is dominant. 
\item[\rm(b)] Let $j:X_{\reg}\hookrightarrow X$ be the open immersion. Then the morphism between fundamental groups induced by $\alb_X\circ j$ gives rise to an isomorphism
\[
\pi_1(X_{\reg})^{\nilpotent}\xrightarrow{\simeq}\pi_1(\Alb_X).
\]  
\end{itemize}
\end{thm}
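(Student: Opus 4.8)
The plan is to first establish part (a) and then deduce part (b) from it by a nilpotent-completion argument in the style of Campana. For part (a), I would consider the (Iitaka–Campana) quasi-Albanese map $\widetilde{\alb}_{X_{\reg}}: X_{\reg}\to\widetilde{\Alb}_{X_{\reg}}$, which is built from the space $\Coh^0(X_{\reg},\Omega^1_{X_{\reg}})$ together with the logarithmic $1$-forms coming from a compactification; since $X$ is klt, by extension of reflexive differentials (e.g.\ \cite{Keb13}, already invoked in Remark~\ref{rmk_weak-reg-canonical}) every reflexive $1$-form on $X$ restricts to $X_{\reg}$, so there is a natural factorization $\widetilde{\Alb}_{X_{\reg}}\to \Alb_X$ and the composition $X_{\reg}\hookrightarrow X\xrightarrow{\alb_X}\Alb_X$ coincides with $\widetilde{\alb}_{X_{\reg}}$ followed by this projection. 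To see that $\widetilde{\alb}_{X_{\reg}}$ is dominant it suffices to bound the dimension of its image by the dimension of $\Alb_X$ and conversely; the key point is that the logarithmic part contributes nothing, i.e.\ there are no extra logarithmic $1$-forms. This is precisely where the nefness of $-(K_X+\Delta)$ enters: applying Proposition~\ref{prop_bir-geometry-psi} to any dominant map from $X$ to the (abelian, by Iitaka's theory) Stein factorization of $\widetilde{\alb}_{X_{\reg}}$, or more directly using that $-(K_X+\Delta)$ nef forces the log-canonical bundle of a good compactification to have no sections along the boundary, one concludes the quasi-Albanese variety is in fact compact, hence equals $\Alb_X$ up to isogeny, and $\widetilde{\alb}_{X_{\reg}}$ is dominant. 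The main obstacle in part (a) is making rigorous the vanishing of the boundary (logarithmic) contribution; I expect one argues as in \cite[\S2]{Cam95} combined with the semistability-in-codimension-one statement Proposition~\ref{prop_bir-geometry-psi}(e) applied to a resolution of $X_{\reg}$, which kills the residues of any putative logarithmic form.

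For part (b), recall that the nilpotent completion (tower of maximal nilpotent quotients) $\pi_1(X_{\reg})^{\nilpotent}$ is, by a theorem going back to Malcev and exploited by Campana, controlled by the first cohomology with its mixed Hodge structure; concretely the pro-nilpotent completion of the fundamental group of a smooth quasi-projective variety $U$ maps isomorphically onto $\pi_1$ of its quasi-Albanese torus once $H^1(U,\mathbb{Q})$ carries a pure weight-one Hodge structure of the expected rank. So the strategy is: by part (a) and the factorization $\widetilde{\alb}_{X_{\reg}}\to\Alb_X$, together with the fact (shown in the surjectivity/connectedness discussion of \S\ref{ss_Albanese_surjective} and Theorem~\ref{mainthm_Albanese}) that $\alb_X$ is a locally constant fibration with connected fibres, deduce that $H^1(X_{\reg},\mathbb{Q})\simeq H^1(\Alb_X,\mathbb{Q})$, i.e.\ there is no weight-two (logarithmic) part. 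Granting this, the induced map $\pi_1(X_{\reg})\to\pi_1(\Alb_X)$ is surjective (its image generates $\Alb_X$, by Proposition~\ref{prop_app-Alb}(b) applied to the image of $\widetilde{\alb}_{X_{\reg}}$) and becomes an isomorphism after passing to nilpotent completions, because $\pi_1(\Alb_X)=H_1(\Alb_X,\mathbb{Z})$ is abelian, hence already its own nilpotent completion, and the kernel of $\pi_1(X_{\reg})\to\pi_1(\Alb_X)$ is generated by commutators (again because $H^1(X_{\reg})$ has no extra part, the abelianization of $\pi_1(X_{\reg})$ is, up to torsion, $H_1(\Alb_X,\mathbb{Z})$). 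Then one runs the standard induction on the lower central series: at each step the relevant graded piece of $\pi_1(X_{\reg})^{\nilpotent}$ is computed from cup products in $H^1$, which match those of the torus, so the map is an isomorphism on each graded quotient and therefore on the whole tower.

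The main obstacle I anticipate is bridging from the topology of $X_{\reg}$ (singular ambient space, noncompact smooth locus) to Hodge theory: one needs that $H^1(X_{\reg},\mathbb{Q})$ is of pure weight one. This should follow by combining Theorem~\ref{mainthm_Albanese} (which gives $X=\big(\widetilde{\Alb}_X\times V\big)/\pi_1(\Alb_X)$ diagonally, so $X_{\reg}$ fibres over $\Alb_X$ with fibre $V_{\reg}$) with the vanishing $H^1(V_{\reg},\mathbb{Q})=0$ — and the latter is exactly the content one extracts from $-(K_X+\Delta)$ nef restricted to the fibre, which is of semi-Fano type with, conjecturally, nilpotent-free $\pi_1$; but for the weight-one statement it is enough to know $H^1(V_{\reg})=0$, which in turn follows because the fibre of $\widetilde{\alb}_{X_{\reg}}$ has no $1$-forms (reflexive or logarithmic) by part (a). I would therefore phrase part (b) as a formal consequence of part (a) plus Theorem~\ref{mainthm_Albanese} plus the Malcev/Campana machinery, keeping the genuinely new input confined to part (a); the write-up then largely follows \cite[\S2]{Cam95} verbatim, with Proposition~\ref{prop_bir-geometry-psi} substituting for the smooth-case positivity inputs used there.
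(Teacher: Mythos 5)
There is a genuine gap in your argument for (a). Knowing that $\widetilde{\Alb}_{X_{\reg}}$ is compact (i.e.\ that there are no extra logarithmic $1$-forms) does not imply that $\widetilde{\alb}_{X_{\reg}}$ is dominant: a genus-two curve maps to its perfectly compact Albanese torus without being dominant, so the inference ``the quasi-Albanese variety is compact, hence \dots\ $\widetilde{\alb}_{X_{\reg}}$ is dominant'' is a non sequitur. The content of (a) is a rigidity statement about the \emph{image}: one must show that the Zariski closure $Y^\circ$ of the image inside the semi-abelian variety $\widetilde{\Alb}_{X_{\reg}}$ has logarithmic Kodaira dimension zero, hence by {\hyperref[prop_subvar-q-Ab]{Proposition \ref*{prop_subvar-q-Ab}}} is a translate of a semi-abelian subvariety, and then conclude because the image generates ({\hyperref[prop_q-Albanese-image]{Proposition \ref*{prop_q-Albanese-image}}}). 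The nefness of $-(K_X+\Delta)$ enters exactly here: compactifying $Y^\circ$ to a smooth pair $(Y,D_Y)$, resolving the induced rational map $X\dashrightarrow Y$ by some $\pi:M\to X$ which is an isomorphism over $X_{\reg}$ (so that $\Supp(\phi^\ast D_Y)\subseteq\Exceptional(\pi)$), the movable-curve computation from the proof of {\hyperref[prop_bir-geometry-psi]{Proposition \ref*{prop_bir-geometry-psi}(a)}} gives $(K_Y+D_Y)\cdot\phi_\ast C\leqslant 0$ for $C$ a general complete intersection curve pulled back from $X$; combined with $\bar\kappa(Y^\circ)\geqslant 0$ and the numerical-dimension criterion of \cite{BDPP13} this forces $\bar\kappa(Y^\circ)=0$. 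Your proposal replaces this step with a vanishing statement for boundary contributions that is neither proved nor sufficient.

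For (b) the overall shape of your argument (Stallings/Malcev: an isomorphism on $\Coh_1(\cdot,\QQ)$ and a surjection on $\Coh_2$ of the groups yields an isomorphism of nilpotent completions) agrees with the paper, but your homological input is obtained incorrectly. You deduce $\Coh^1(V_{\reg},\QQ)=0$ for the fibre from ``the fibre has no $1$-forms by part (a)''; for a non-compact variety the absence of holomorphic or logarithmic $1$-forms does not control $\Coh^1(\cdot,\QQ)$ without a purity statement, and part (a) says nothing about the fibres. Invoking {\hyperref[mainthm_Albanese]{Theorem \ref*{mainthm_Albanese}}} is also unnecessary. The paper's route is more direct: by Braun's theorem the local fundamental groups of klt singularities are finite, whence $\RDer^1\!j_\ast\CC=0$, and the Leray spectral sequence for $j:X_{\reg}\hookrightarrow X$ gives $\Coh^1(X,\CC)\simeq\Coh^1(X_{\reg},\CC)$ together with the injectivity of $\Coh^2(X,\CC)\to\Coh^2(X_{\reg},\CC)$; Stallings' criterion ({\hyperref[prop_completion-nilpotent]{Proposition \ref*{prop_completion-nilpotent}}}) then identifies $\pi_1(X_{\reg})^{\nilpotent}$ with $\pi_1(X)^{\nilpotent}$, and Campana's theorem applied to a resolution, whose Albanese map is surjective with connected fibres by {\hyperref[ss_Albanese_surjective]{\S \ref*{ss_Albanese_surjective}}}, identifies the latter with $\pi_1(\Alb_X)$. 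You should substitute this for your fibration argument.
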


Before turning to the proof of the theorem, let us first recall the definition of the nilpotent completion of a group (c.f. \cite[Appendice A]{Cam95}). Let $G$ be a group, define the descending central series of $G$ by $G_1:=G$ and $G_{k+1}=[G,G_k]$ for any $k\in\ZZ_{>0}$ and set 
\[
G_{\infty}:=\bigcap_{k\in\ZZ_{>0}}G_k. 
\]
Put
\[
G'_k=\sqrt{G_k}:=\left\{\,g\in G\,\big|\,g^m\in G_k\text{ for some }m\in\ZZ_{>0}\,\right\}.
\]
for $1\leqslant k\leqslant \infty$. Then the {\it torsion-free nilpotent completion} of $G$ is defined to be 
\[
G^{\nilpotent}:=G/G'_{\infty}\,.
\]
Let $f:G\to H$ be a group morphism, \cite[3.4.Theorem]{Sta65} gives the following criterion for the induced morphism between nilpotent completion to be injective or isomorphism (c.f. also \cite[A.2.Th\'eor\`eme]{Cam95}): 
\begin{prop}[{\cite[3.4.Theorem]{Sta65}}]
\label{prop_completion-nilpotent}
Let $f: G\to H$ be a group morphism, and for $1\leqslant k\leqslant \infty$ let $G'_k$ (resp. $H'_k$) be the radical of the $k$-th member in the descending central series of $G$ (resp. of $H$), as defined above. Suppose that the induced morphism $\Coh_i(f): \Coh_i(G,\QQ)\to\Coh_i(H,\QQ)$ is an isomorphism for $i=1$ and surjective for $i=2$. Then the morphism $f'_k: G'_k\to H'_k$ induced by $f$ is injective for every $1\leqslant k\leqslant\infty$, and is of finite index if $k<\infty$. Moreover, if $f$ is surjective then $f'_k$ is an isomorphism for every $1\leqslant k\leqslant\infty$. 
\end{prop}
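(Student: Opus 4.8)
The plan is to reprove this rational (torsion-free) form of Stallings' theorem by the classical homological induction rather than quote it verbatim. The engine is the naturality of the five-term exact sequence attached, via the Lyndon--Hochschild--Serre spectral sequence, to a group extension $1\to N\to\Gamma\to Q\to 1$:
\[
\Coh_2(\Gamma,\QQ)\longrightarrow\Coh_2(Q,\QQ)\longrightarrow (N/[\Gamma,N])\otimes\QQ\longrightarrow\Coh_1(\Gamma,\QQ)\longrightarrow\Coh_1(Q,\QQ)\longrightarrow 0 .
\]
Taking $\Gamma=G$, $N=G_n$ (the $n$-th term of the lower central series, so that $[G,G_n]=G_{n+1}$) and $Q=G/G_n$, and using that $\Coh_1(G,\QQ)\to\Coh_1(G/G_n,\QQ)$ is an isomorphism for $n\geqslant 2$ (both sides equal $(G/[G,G])\otimes\QQ$), one identifies $(G_n/G_{n+1})\otimes\QQ$ with $\Coker\big(\Coh_2(G,\QQ)\to\Coh_2(G/G_n,\QQ)\big)$, and likewise for $H$.

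The main step is an induction on $n$ showing that $f$ induces, for every finite $n$, a homomorphism $G/G_n\to H/H_n$ which becomes an isomorphism after $\otimes\QQ$ on each graded piece; after passing to the isolator (radical) quotients $G/G'_n\to H/H'_n$ this becomes an injection with finite cokernel, and an honest isomorphism once $f$ is surjective. The case $n=1$ is trivial and $n=2$ is exactly the hypothesis on $\Coh_1(f,\QQ)$. For the step $n\mapsto n+1$ I would, assuming an isomorphism $\bar f\colon G/G_n\xrightarrow{\ \sim\ }H/H_n$ (rationally), compare the two five-term sequences through $f$ and $\bar f$: in the resulting commutative ladder the arrow $\Coh_2(G/G_n,\QQ)\to\Coh_2(H/H_n,\QQ)$ is an isomorphism while $\Coh_2(G,\QQ)\to\Coh_2(H,\QQ)$ is surjective, so a short diagram chase on cokernels yields $(G_n/G_{n+1})\otimes\QQ\xrightarrow{\ \sim\ }(H_n/H_{n+1})\otimes\QQ$; the five lemma applied to the central extensions $1\to G_n/G_{n+1}\to G/G_{n+1}\to G/G_n\to 1$ and its $H$-analogue then upgrades $\bar f$ to level $n+1$. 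Surjectivity of $f$, when assumed, is carried along the ladder, producing the isomorphism statement.

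For $k=\infty$: injectivity of $G/G'_\infty\to H/H'_\infty$ is immediate, since an element of $G$ mapping into $H'_\infty=\bigcap_k H'_k$ lies in every $G'_k$ by the finite-level injectivity, hence in $G'_\infty$. When $f$ is surjective the isomorphisms $G/G'_k\xrightarrow{\ \sim\ }H/H'_k$ are compatible with the tower projections, so they induce an isomorphism on inverse limits, and since $G/G'_\infty\hookrightarrow\varprojlim_k G/G'_k$ (similarly for $H$) identifies $G^{\nilpotent}$ with the relevant sub-tower, one concludes $G^{\nilpotent}=G/G'_\infty\xrightarrow{\ \sim\ }H/H'_\infty=H^{\nilpotent}$.

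I expect the genuinely delicate point to be not the spectral sequence — whose low-degree exact sequence and naturality are entirely standard — but the bookkeeping with the isolators $G'_k=\sqrt{G_k}$: one must verify that $\{G'_k\}$ is again a central series, that $(G'_k/G'_{k+1})\otimes\QQ\simeq(G_k/G_{k+1})\otimes\QQ$, and that a "rational isomorphism" between the finitely generated nilpotent quotients is precisely an injection with finite cokernel. These I would dispatch as a handful of elementary lemmas on torsion-free nilpotent groups (or simply invoke \cite{Sta65} together with \cite[Appendice A]{Cam95}), keeping the homological induction above as the skeleton.
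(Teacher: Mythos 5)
The paper does not actually prove this proposition: it is imported wholesale as \cite[3.4.Theorem]{Sta65} (with \cite[A.2.Th\'eor\`eme]{Cam95} as a secondary reference), so there is no in-paper argument to compare against. Your sketch is essentially a faithful reconstruction of Stallings' original proof — the naturality of the five-term exact sequence of the Lyndon--Hochschild--Serre spectral sequence applied to $1\to G_n\to G\to G/G_n\to 1$, the identification of $(G_n/G_{n+1})\otimes\QQ$ with the cokernel of $\Coh_2(G,\QQ)\to\Coh_2(G/G_n,\QQ)$, and the induction on $n$ — and the diagram chase you describe on the cokernels is correct. The one step you assert rather than justify is that the inductive hypothesis (a rational equivalence $G/G_n\to H/H_n$ of nilpotent groups) forces $\Coh_2(G/G_n,\QQ)\to\Coh_2(H/H_n,\QQ)$ to be an isomorphism; this is not part of the five-term formalism but requires its own subsidiary induction, comparing the LHS spectral sequences of the central extensions $1\to G_{n-1}/G_n\to G/G_n\to G/G_{n-1}\to 1$ and their $H$-analogues to see that a map of nilpotent groups inducing isomorphisms on all rational lower-central-series quotients induces an isomorphism on all rational homology. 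Together with the isolator bookkeeping you already flag (that $\{G'_k\}$ is a central series, that $G/G_k\to G/G'_k$ is a rational homology isomorphism because its kernel is torsion, and that "injective with finite cokernel" needs finite generation of the nilpotent quotients), this would make the argument complete; since the paper itself treats the result as a black box, spelling these out or simply retaining the citation are both acceptable.
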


Now let us turn to the proof of {\hyperref[thm_q-Alb-pi1-anti-nef]{Theorem \ref*{thm_q-Alb-pi1-anti-nef}}}: 
\begin{proof}[Proof of {\hyperref[thm_q-Alb-pi1-anti-nef]{Theorem \ref*{thm_q-Alb-pi1-anti-nef}}}]
Let us first prove (a), i.e. the Albanese map of $X_{\reg}$ is dominant. Let $Y^\circ$ be the Zariski closure in $\widetilde{\Alb}_{X_{\reg}}$ of the image of $\widetilde{\alb}_{X_{\reg}}$ and let $Y$ be a smooth compactification of $Y^\circ$ such that $D_Y:=Y\backslash Y^\circ$ is a SNC divisor, then we get a dominant rational map $\psi: X\dashrightarrow Y$. Take $M$ to be a strong desingularization of the graph of $\psi$, then the induced morphism $\pi: M\to X$ is a birational morphism which is an isomorphism over $X_{\reg}$. Let $E=\Exceptional(\pi)$ be the exceptional divisor of $\pi$ and let $\phi:M\to Y$ be the natural morphism, then by construction $M\backslash E\simeq X_{\reg}$ and thus $\Supp(\phi^\ast D_Y)\subseteq E$. Now we are in the same situation as in {\hyperref[general-setting]{General Setting \ref*{general-setting}}}, hence by the proof of {\hyperref[prop_bir-geometry-psi]{Proposition \ref*{prop_bir-geometry-psi}(a)}}, for a very ample line bundle $L$ on $X$ and for general members $H_1\,,\cdots,H_{\dim\!X-1}$ in the linear series $\left|\pi^\ast L\right|$, we have 
\[
\phi^\ast K_Y\cdot C\leqslant 0
\]
where $C:=H_1\cap\cdots\cap H_{\dim\!X-1}$. Since $\phi^\ast D_Y$ is $\pi$-exceptional, we have $\phi^\ast D_Y\cdot C=0$ hence by the projection formula we get
\[
(K_Y+D_Y)\cdot C_Y\leqslant 0
\]
where $C_Y:=\phi_\ast C$. By {\hyperref[prop_subvar-q-Ab]{Proposition \ref*{prop_subvar-q-Ab}}} we know that $\bar\kappa(Y^\circ):=\kappa(Y,K_Y+D_Y)\geqslant 0$ (c.f. {\hyperref[defn_log-Kod-dim]{Definition \ref*{defn_log-Kod-dim}}} for the definition of logarithmic Kodaira dimension), hence we must have
\[
(K_Y+D_Y)\cdot C_Y=0,
\]
but by construction $C_Y$ is moves in a strong connecting family of curves (c.f. \cite[\S 0]{BDPP13}) on $Y$, hence by \cite[0.5.Theorem]{BDPP13} the numerical dimension $\nu(Y,K_Y+D_Y)=0$, this implies that $\kappa(Y,K_Y+D_Y)\leqslant\nu(Y,K_Y+D_Y)=0$. Therefore we must have $\bar\kappa(Y^\circ)=\kappa(Y,K_Y+D_Y)=0$ . Again by {\hyperref[prop_subvar-q-Ab]{Proposition \ref*{prop_subvar-q-Ab}}} we have that $Y^\circ$ is a semi-Abelian subvariety of $\widetilde{\Alb}_{X_{\reg}}$; but by {\hyperref[prop_q-Albanese-image]{Propositionn \ref*{prop_q-Albanese-image}}} $Y^\circ$ generates $\widetilde{\Alb}_{X_{\reg}}$, hence we must have $Y^\circ=\widetilde{\Alb}_{X_{\reg}}$, and this proves (a).

Now let us prove (b). It can be deduced by \cite[2.2.Th\'eor\`eme]{Cam95} and by the more general {\hyperref[thm_klt-H2-pi1]{Theorem \ref*{thm_klt-H2-pi1}}} below. This theorem, as well as its proof,
is pointed out to the author by Benoît Claudon (any mistake, is of course, the author's).
\end{proof}

\begin{thm}
\label{thm_klt-H2-pi1}
Let $X$ be a normal projective variety which admits an effective $\QQ$-divisor $\Delta$ such that the pair $(X,\Delta)$ is klt and let $j:X_{\reg}\hookrightarrow X$ be the open immersion. Then
\[
\Coh^1(j,\CC):\Coh^1(X,\CC)\to\Coh^1(X_{\reg},\CC)
\]
is an isomorphism and 
\[
\Coh^2(j,\CC):\Coh^2(X,\CC)\to\Coh^2(X_{\reg},\CC)
\]
is injective. In particular, $j$ induces an isomorphism between the nilpotent completion of fundamental groups
\[
\pi_1(X_{\reg})^{\nilpotent}\xrightarrow{\simeq}\pi_1(X)^{\nilpotent}.
\]
\end{thm}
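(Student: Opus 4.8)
The plan is to deduce Theorem~\ref{thm_klt-H2-pi1} from the standard relationship between low-degree cohomology and the torsion-free nilpotent completion of the fundamental group, combined with a careful analysis of the cohomology of $X_{\reg}$ relative to $X$. The crucial input will be the fact that for a klt projective variety $X$, the complement $X\setminus X_{\reg}$ has (complex) codimension $\geqslant 2$; indeed klt singularities are smooth in codimension $1$, so $\Sing X$ has codimension $\geqslant 2$ (actually $\geqslant 3$ for terminal, but $\geqslant 2$ suffices here). This is not quite enough on its own in the singular ambient setting, so I would instead invoke a Hodge-theoretic statement: by the results of Greb--Kebekus--Kovács--Peternell and the extension theorems for reflexive differentials on klt spaces (or directly the statement that klt varieties have ``rational homology manifold-like'' behaviour in low degrees), the restriction map $\Coh^k(X,\CC)\to\Coh^k(X_{\reg},\CC)$ is an isomorphism for $k\leqslant 1$ and injective for $k=2$. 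Concretely, I would argue using the mixed Hodge structure: $\Coh^1(X_{\reg},\CC)\cong\Coh^1(X,\CC)$ because $X$ has rational singularities (so $\Coh^1(X,\scrO_X)$ computes the irregularity and the weight filtration on $\Coh^1(X_{\reg})$ is pure, matching $\Coh^1(X)$), and the injectivity in degree $2$ follows from the exact sequence of the pair together with the fact that the local cohomology $\Coh^i_{\Sing X}(X,\CC)$ vanishes for $i\leqslant 2$ by the codimension estimate and the rational-singularities hypothesis.

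First I would set up the long exact sequence of the pair $(X,X_{\reg})$, or equivalently use local cohomology along $Z:=X\setminus X_{\reg}=\Sing X$:
\begin{equation}
\label{eq_local-coh-seq}
\cdots\to\Coh^k_Z(X,\CC)\to\Coh^k(X,\CC)\xrightarrow{\Coh^k(j,\CC)}\Coh^k(X_{\reg},\CC)\to\Coh^{k+1}_Z(X,\CC)\to\cdots
\end{equation}
Since $Z$ has real codimension $\geqslant 4$ in $X$ and $X$ is a complex analytic space with rational singularities, one has $\Coh^k_Z(X,\CC)=0$ for $k\leqslant 3$; the cleanest way to see this is to pass to a resolution $\pi:\widetilde X\to X$ which is an isomorphism over $X_{\reg}$, use that $R\pi_*\CC\to\CC_X$ is an isomorphism in low degrees (a consequence of rational singularities plus properness, via the decomposition theorem or a direct Leray argument), and reduce to the same vanishing on the smooth variety $\widetilde X$ where it is classical (complex codimension $\geqslant 2$ for the preimage of $Z$ after suitable blow-up control, handled exactly as in Lemma~\ref{lemma_fundamental-group-complement} but one degree higher). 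Feeding these vanishings into \eqref{eq_local-coh-seq} gives the isomorphism in degree $1$ and the injectivity in degree $2$.

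Then I would conclude the statement about fundamental groups by applying Stallings' theorem, exactly as recorded in Proposition~\ref{prop_completion-nilpotent}: taking $f:\pi_1(X_{\reg})\to\pi_1(X)$ the map induced by $j$ (which is visibly surjective, since $X_{\reg}$ is dense open in the normal variety $X$, by \cite[\S 0.7 (B), p.~33]{FL81}), the hypotheses of Proposition~\ref{prop_completion-nilpotent} are met once we know $\Coh^1(f,\QQ)$ is an isomorphism and $\Coh^2(f,\QQ)$ is surjective. Now $\Coh^i(\pi_1(V),\QQ)$ is a retract of $\Coh^i(V,\QQ)$ and for $i=1$ it equals $\Coh^1(V,\QQ)$ for any reasonable space, while the map on $\Coh^1$ of groups is identified with the map on $\Coh^1$ of spaces dualized, which we have just shown is an isomorphism; for $i=2$ one uses that $\Coh^2(\pi_1(V),\QQ)\hookrightarrow\Coh^2(V,\QQ)$ (five-term exact sequence of the fibration $\widetilde V\to V\to K(\pi_1,1)$), so surjectivity of $\Coh^2(f,\QQ)$ follows from a small diagram chase out of the injectivity of $\Coh^2(j,\CC)$ together with the degree-$1$ isomorphism. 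Proposition~\ref{prop_completion-nilpotent} with $k=\infty$ then yields $\pi_1(X_{\reg})^{\nilpotent}\xrightarrow{\simeq}\pi_1(X)^{\nilpotent}$.

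The main obstacle I anticipate is the vanishing $\Coh^k_Z(X,\CC)=0$ for $k\leqslant 3$ on the \emph{singular} space $X$: one must be careful that codimension bounds for analytic subsets give cohomological-dimension bounds only after knowing that $X$ behaves like a manifold in low degrees, which is precisely where the klt (hence rational singularities, hence $\CC_X\xrightarrow{\sim}R\pi_*\CC_{\widetilde X}$ in degrees $\leqslant 2$) hypothesis enters. An alternative route, which I would use as a fallback, is to bypass local cohomology entirely and argue via mixed Hodge theory on $X_{\reg}$: the weight filtration on $\Coh^1(X_{\reg})$ has only weights $\leqslant 2$, and the sub-Hodge-structure of weight $\leqslant 1$ coincides with $\Coh^1(X)$; one then checks there are no weight-$2$ classes because any such would be detected by a logarithmic form along an exceptional divisor of a resolution, contradicting the klt discrepancy bound $a_i>-1$ exactly as in the reflexive-differentials extension theorem of \cite{Keb13}. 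Either way the heart of the matter is a codimension-$2$ / discrepancy estimate, and the group-theoretic part via Proposition~\ref{prop_completion-nilpotent} is then formal.
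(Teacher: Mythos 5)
Your group-theoretic endgame (surjectivity of $\pi_1(j)$ via \cite[\S 0.7 (B)]{FL81}, then Stallings via {\hyperref[prop_completion-nilpotent]{Proposition \ref*{prop_completion-nilpotent}}} once the statements on $\Coh^1$ and $\Coh^2$ are in hand) matches the paper. The gap is entirely in how you obtain those cohomological statements. The paper runs the Leray spectral sequence for $j$ and reduces everything to the single vanishing $\RDer^1\!j_\ast\CC=0$, which is a \emph{local} statement about klt singularities: every point has a neighbourhood $U$ with $\Coh^1(U_{\reg},\CC)=0$. This is supplied by Braun's theorem \cite[Theorem 1]{Braun20} on the finiteness of the local (regional) fundamental groups of klt singularities --- a deep recent input that your proposal never invokes and that cannot be replaced by ``codimension $\geqslant 2$ plus rational singularities''. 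Your main route asserts $\Coh^k_Z(X,\CC)=0$ for $k\leqslant 3$ from the real-codimension-$4$ bound; this is false: for the three-fold ordinary double point (cone over a smooth quadric surface, a terminal Gorenstein hence klt singularity) the link is $S^2\times S^3$, so $\Coh^3_Z(U,\CC)\simeq\Coh^2(U\backslash Z,\CC)\neq 0$. Only the vanishing for $k\leqslant 2$ is true and needed, and its nontrivial part ($k=2$, i.e.\ $\Coh^1(U_{\reg},\CC)=0$ locally) is exactly the content of Braun's theorem. Your proposed reduction ``pass to a resolution $\pi:\widetilde X\to X$ and use codimension $\geqslant 2$ there'' also fails structurally: $\pi\inv(Z)$ is a divisor in $\widetilde X$, so the codimension hypothesis is destroyed and $\Coh^2_{\pi\inv(Z)}(\widetilde X,\CC)$ is already nonzero (it contains the exceptional classes). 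Likewise, rational singularities control $\RDer\pi_\ast\scrO_{\widetilde X}$, not $\RDer\pi_\ast\CC_{\widetilde X}$, and klt varieties are not rational homology manifolds, so the claimed isomorphism $\CC_X\to\RDer\pi_\ast\CC_{\widetilde X}$ in low degrees is unjustified.

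Your mixed-Hodge-theoretic fallback is closer to something workable for the degree-$1$ isomorphism (weight $\leqslant 1$ part of $\Coh^1(X_{\reg})$ versus $\Coh^1(X)$ for rational singularities), but it does not deliver the injectivity of $\Coh^2(X,\CC)\to\Coh^2(X_{\reg},\CC)$ on the \emph{singular} space $X$, whose kernel is again controlled by the local groups $\Coh^1(U_{\reg},\CC)$; and the heuristic ``a weight-$2$ class would contradict the discrepancy bound'' is not an argument --- discrepancies constrain reflexive differential forms, not topological $(1,1)$-classes supported on the exceptional locus. In short: the theorem is not a codimension statement, it is a consequence of the finiteness of local fundamental groups of klt singularities, and that input must appear explicitly.
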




\begin{proof}
The klt condition is used to guarantee the vanishing of $\RDer^1\!j_\ast\CC$. In fact, by \cite[Theorem 1]{Braun20}, for any point $x\in X$, there is an open neighbourhood $U$ of $x$ such that $\pi_1(U_{\reg})$ is finite, in particular $\Coh^1(U_{\reg},\CC)=0$, hence we get $\RDer^1\!j_\ast\CC=0$. Then consider the Leray spectral sequence associated to $j$ which gives the exact sequence 
\[
0\to \Coh^1(X,\CC)\to\Coh^1(X_{\reg},\CC)\to\Coh^0(X,\RDer^1\!j_\ast\CC)\to\Coh^2(X,\CC)\to\Coh^2(X_{\reg},\CC),
\]
by the vanishing of $\RDer^1\!j_\ast\CC$ we 
get the isomorphism $\Coh^1(X,\CC)\simeq\Coh^1(X_{\reg},\CC)$ and the injectivity of $\Coh^2(X,\CC)\to\Coh^2(X_{\reg},\CC)$. 

It remains to show that $\pi_1(X_{\reg})^{\nilpotent}\simeq\pi_1(X)^{\nilpotent}$. By \cite[\S 0.7 (B), p.~33]{FL81} the fundamental group morphism $\pi_1(j):\pi_1(X_{\reg})\to\pi_1(X)$ is surjective, hence by {\hyperref[prop_completion-nilpotent]{Proposition \ref*{prop_completion-nilpotent}}} it suffices to show that 
\[
\Coh_1(\pi_1(j),\QQ):\Coh_1(\pi_1(X_{\reg}),\QQ)\to\Coh_1(\pi_1(X),\QQ)
\]
is an isomorphism and
\[
\Coh_2(\pi_1(j),\QQ):\Coh_2(\pi_1(X_{\reg}),\QQ)\to\Coh_2(\pi_1(X),\QQ)
\]
is surjective. We have shown that $\Coh^1(X,\CC)\simeq\Coh^1(X_{\reg},\CC)$, hence $\Coh_1(\pi_1(j),\QQ)$ is an isomorphism by \cite[\S 5]{Sta65}. On the other hand, the surjectivity $\Coh_2(\pi_1(j),\QQ)$ can be deduced from   \cite[2.3.Lemma]{Cam95} and from the injectivity of $\Coh^2(X,\CC)\to\Coh^2(X_{\reg},\CC)$. 
\end{proof}

\subsection{From fundamental group to decomposition theorem}
\label{ss_fundamental-group_decomposition}

In this subsection, we show that with the help of {\hyperref[mainthm_Albanese]{Theorem \ref*{mainthm_Albanese}}} and {\hyperref[mainthm_MRC]{Theorem \ref*{mainthm_MRC}}} the proof of {\hyperref[conj_klt-anti-nef]{Conjecture \ref*{conj_klt-anti-nef}}} can be reduced to the study of the fundamental group of $X$. Precisely speaking, we will prove that {\hyperref[conj_pi1-anti-nef]{Conjecture \ref*{conj_pi1-anti-nef}}} implies {\hyperref[conj_klt-anti-nef]{Conjecture \ref*{conj_klt-anti-nef}}}. Let us remark that when $X$ is smooth, the {\hyperref[conj_pi1-anti-nef]{Conjecture \ref*{conj_pi1-anti-nef}}} is proved by M.P\u aun in \cite{Paun97} by improving the arguments in the previous work of \cite[\S 1]{DPS93} and by applying the famous theorem of Cheeger-Colding \cite[Theorem 8.7]{CC96}. 

\begin{thm}
\label{thm_conj-pi1-klt-anti-nef}
Let $X$ be a normal projective variety of semi-Fano type. Suppose that {\hyperref[conj_pi1-anti-nef]{Conjecture \ref*{conj_pi1-anti-nef}}} holds for $X$, i.e. $\pi_1(X_{\reg})$ is of polynomial growth, then (the log version of) {\hyperref[conj_klt-anti-nef]{Conjecture \ref*{conj_klt-anti-nef}}} holds for $X$, i.e. up to replacing $X$ by a finite quasi-étale cover, the universal cover $\tilde X$ of $X$ can be decomposed into a product 
\begin{equation}
\label{eq_decomp-univ-cover}
\tilde X\simeq \CC^q\times Z\times F,
\end{equation}
with $q$ being the augmented irregularity of $X$, $Z$ being a klt projective variety with trivial canonical divisor and $F$ being rationally connected. 
\end{thm}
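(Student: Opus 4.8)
The plan is to combine the two Main Theorems with the structural results on $\pi_1$ from the previous subsection and a growth-theoretic dichotomy. First I would reduce to the case where $X$ is $\QQ$-factorial and $(X,\Delta)$ is klt (using a $\QQ$-factorialization, which is small birational, so it does not change $\Alb_X$, $\pi_1(X_{\reg})$, the augmented irregularity, nor the MRC fibration). Then I would apply {\hyperref[mainthm_Albanese]{Theorem \ref*{mainthm_Albanese}}}: the Albanese map $\alb_X\colon X\to \Alb_X$ is a locally constant fibration, so after passing to the universal cover of $\Alb_X$ (which is $\CC^{q_0}$ where $q_0=h^1(X,\scrO_X)$) the pullback of $X$ splits as $\CC^{q_0}\times V$, where $V$ is the fibre of $\alb_X$, and $X$ is the quotient by a diagonal $\pi_1(\Alb_X)$-action. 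Crucially, by construction of the Albanese, $V$ has $q(V)=0$; moreover $V$ is again of semi-Fano type (the twisted anticanonical restricts to a nef divisor on the klt fibre, using Bertini-type/adjunction arguments). To obtain the \emph{augmented} irregularity $q$ rather than $q_0$, I would iterate: replace $X$ by a suitable finite quasi-\'etale cover so that $h^1$ of the Albanese fibre stabilizes; this uses that augmented irregularity is finite (a consequence of e.g. the boundedness results cited in the Beauville--Bogomolov circle) and terminates after finitely many steps, at which point the Albanese fibre $F_0$ has vanishing augmented irregularity.

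Next I would handle the fibre $F_0$, which is still of semi-Fano type with vanishing augmented irregularity. Here the hypothesis that $\pi_1(X_{\reg})$ is of polynomial growth enters: by Gromov's theorem ({\hyperref[conj_pi1-anti-nef]{Conjecture \ref*{conj_pi1-anti-nef}}} as stated is equivalent to $\pi_1(X_{\reg})$ being virtually nilpotent), and since quasi-\'etale covers and the passage to Albanese fibres only change $\pi_1(X_{\reg})$ by commensurability and quotients, $\pi_1((F_0)_{\reg})$ is also virtually nilpotent. By {\hyperref[thm_q-Alb-pi1-anti-nef]{Theorem \ref*{thm_q-Alb-pi1-anti-nef}}}, the nilpotent completion $\pi_1((F_0)_{\reg})^{\nilpotent}\simeq \pi_1(\Alb_{F_0})$, but $F_0$ has vanishing augmented irregularity, so after a further finite quasi-\'etale cover $\Alb_{F_0}=0$ and hence $\pi_1((F_0)_{\reg})^{\nilpotent}=\{1\}$. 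A virtually nilpotent group whose torsion-free nilpotent completion is trivial is finite; therefore, after replacing $F_0$ by a finite quasi-\'etale cover, $\pi_1((F_0)_{\reg})=\{1\}$. Now {\hyperref[mainthm_MRC]{Theorem \ref*{mainthm_MRC}}} applies to $F_0$: its MRC fibration induces a product decomposition $F_0\simeq F\times Z$ with $F$ rationally connected and $K_Z\sim 0$, where $Z$ is klt (as a direct factor of the klt $F_0$).

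Finally I would assemble the pieces. On the universal cover level we obtain $\widetilde X\simeq \CC^q\times Z'\times F'$ after the cumulative finite quasi-\'etale cover, where $q$ is the augmented irregularity, $Z'$ is klt with $K_{Z'}\sim 0$ (the universal cover of $Z$, which exists as a projective variety by Koll\'ar's results or is handled by a further finite cover making $Z$ simply connected in codimension one), and $F'=F$ is rationally connected (rationally connected varieties are simply connected, so the universal cover does not affect this factor). One has to be slightly careful that the various finite quasi-\'etale covers can be amalgamated into a single one and that the product decomposition is preserved under passing between $X$ and its universal cover; this is exactly the sort of bookkeeping done via {\hyperref[lemma_relative-Druel-decomp]{Lemma \ref*{lemma_relative-Druel-decomp}}} and {\cite[Lemma 4.6]{Druel18a}}, which lets one descend and ascend product structures along (quasi-)\'etale and birational maps.

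I expect the main obstacle to be the \emph{finiteness} step: extracting, from the mere polynomial-growth hypothesis on $\pi_1(X_{\reg})$, the conclusion that after a finite quasi-\'etale cover the smooth locus of the Albanese fibre is genuinely simply connected. This requires (i) controlling how $\pi_1(X_{\reg})$ behaves under restriction to the Albanese fibre --- i.e. a version of the homotopy exact sequence for the locally constant fibration $\alb_X$ at the level of smooth loci, for which one uses that $\alb_X$ is a locally trivial bundle and {\cite[\S 0.7]{FL81}}; (ii) the group-theoretic input that a finitely generated virtually nilpotent group with trivial torsion-free nilpotent completion is finite; and (iii) ensuring the iteration (stabilizing the augmented irregularity, then killing the residual finite $\pi_1$) actually terminates, which relies on finiteness of the augmented irregularity and on the fact that each quasi-\'etale cover strictly decreases a suitable invariant or else stabilizes. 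Once this is in place, the geometric decomposition follows mechanically from {\hyperref[mainthm_Albanese]{Theorem \ref*{mainthm_Albanese}}}, {\hyperref[mainthm_MRC]{Theorem \ref*{mainthm_MRC}}} and the klt Beauville--Bogomolov theorem applied to the $K_Z\sim 0$ factor.
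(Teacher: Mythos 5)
Your overall architecture --- {\hyperref[mainthm_Albanese]{Theorem \ref*{mainthm_Albanese}}} to split off the torus factor, then {\hyperref[mainthm_MRC]{Theorem \ref*{mainthm_MRC}}} applied to the Albanese fibre, with {\hyperref[thm_q-Alb-pi1-anti-nef]{Theorem \ref*{thm_q-Alb-pi1-anti-nef}}} supplying the fundamental-group input --- is the same as the paper's. The genuine gap is the group-theoretic claim you yourself isolate as ingredient (ii): it is \emph{false} that a finitely generated virtually nilpotent group with trivial torsion-free nilpotent completion is finite. Take the infinite dihedral group $G=\ZZ/2\ast\ZZ/2$: it is virtually cyclic, but $G/G_k$ is a finite dihedral group for every $k$, so $G'_k=\sqrt{G_k}=G$ for all $k$ and $G^{\nilpotent}=\{1\}$, while $G$ is infinite. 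Consequently your final step --- deducing from $\pi_1((F_0)_{\reg})^{\nilpotent}=\{1\}$ and virtual nilpotency that $\pi_1((F_0)_{\reg})$ becomes trivial after a finite quasi-\'etale cover --- does not go through as stated, and this is exactly the point where one must produce simple connectedness of the fibre's smooth locus in order to invoke {\hyperref[mainthm_MRC]{Theorem \ref*{mainthm_MRC}}}.

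The paper avoids this by reordering the operations: \emph{before} any geometry, Gromov's theorem together with the structure of finitely generated virtually nilpotent groups is used to replace $X$ by a finite quasi-\'etale cover on which $\pi_1(X_{\reg})$ is torsion-free nilpotent. For such a group the torsion-free nilpotent completion is the group itself, so {\hyperref[thm_q-Alb-pi1-anti-nef]{Theorem \ref*{thm_q-Alb-pi1-anti-nef}}}(b) yields an isomorphism of honest groups $\pi_1(X_{\reg})\xrightarrow{\simeq}\pi_1(\Alb_X)$. Feeding this into the homotopy exact sequence of the locally trivial bundle $\alb_X|_{X_{\reg}}:X_{\reg}\to\Alb_X$ with fibre $V_{\reg}$,
\[
\pi_2(\Alb_X)\to\pi_1(V_{\reg})\to\pi_1(X_{\reg})\to\pi_1(\Alb_X)\to 1,
\]
and using $\pi_2(\Alb_X)=0$, one gets $\pi_1(V_{\reg})=\{1\}$ outright: there is no residual finite group to kill, no iteration on the augmented irregularity, and no need to apply {\hyperref[thm_q-Alb-pi1-anti-nef]{Theorem \ref*{thm_q-Alb-pi1-anti-nef}}} to the fibre. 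Your argument can be repaired along these lines (after the initial cover, $\pi_1(V_{\reg})$ is the kernel of $\pi_1(X_{\reg})\to\pi_1(\Alb_X)$, hence torsion-free nilpotent, and a torsion-free nilpotent group with trivial completion is trivial), but that repair is precisely the paper's proof. The auxiliary assertions you lean on --- that the Albanese fibre has vanishing irregularity, and that the iteration stabilizing $h^1$ of the fibre terminates --- would also need justification, but they become unnecessary once the above is in place.
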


\begin{proof}
By \cite[Proposition 5.20, pp.~160-161]{KM98}, any quasi-étale cover of $X$ is still of semi-Fano type. By hypothesis $\pi_1(X_{\reg})$ is of polynomial growth (by \cite[\S 0.7 (B), p.~33]{FL81} so is $\pi_1(X)$), hence by \cite[Main Theorem]{Gro81} $\pi_1(X_{\reg})$ is virtually nilpotent, therefore, up to replacing $X$ by a finite \'etale cover we can assume that $\pi_1(X_{\reg})$ is torsion free nilpotent.  

By {\hyperref[mainthm_Albanese]{Theorem \ref*{mainthm_Albanese}}}, the Albanese map $\alb_X: X\to\Alb_X$ is a locally constant fibration. Let $V$ denotes the fibre of $\alb_X$, then $\alb_X|_{X_{\reg}}:X_{\reg}\to\Alb_X$ is a locally trivial fibration whose fibre is isomorphic to $V_{\reg}$. Apply \cite[\S 17, p.~209]{BT82} to $\alb_X|_{X_{\reg}}$ (viewed as a topological fibre bundle) we get a homotopy sequence
\[
\cdots\to\pi_2(\Alb_X)\to\pi_1(V_{\reg})\to\pi_1(X_{\reg})\to\pi_1(\Alb_X)\to 1.
\]
But $\pi_1(X_{\reg})$ is torsion-free nilpotent, by {\hyperref[thm_q-Alb-pi1-anti-nef]{Theorem \ref*{thm_q-Alb-pi1-anti-nef}}} the morphism $\pi_1(X_{\reg})\to\pi_1(\Alb_X)$ is an isomorphism. Moreover, since $\Alb_X$ is an Abelian variety, we have $\pi_2(\Alb_X)\simeq\{0\}$, hence $\pi_1(V_{\reg})\simeq\{1\}$. Then the conclusion follows from {\hyperref[mainthm_MRC]{Theorem \ref*{mainthm_MRC}}}. 
\end{proof}

\subsection{From \texorpdfstring{{\hyperref[conj_pi1-anti-nef]{Conjecture \ref*{conj_pi1-anti-nef}}}}{text} to \texorpdfstring{{\hyperref[conj_pi1-Fano-CY]{Conjecture \ref*{conj_pi1-Fano-CY}}}}{text}}
\label{ss_fundamental-group_CY-Fano}

In this subsection we will show that {\hyperref[conj_pi1-anti-nef]{Conjecture \ref*{conj_pi1-anti-nef}}} implies the Gurjar-Zhang conjecture on the finiteness of the fundamental group of the smooth locus of varieties of Fano type and {\hyperref[conj_pi1-Fano-CY]{Conjecture \ref*{conj_pi1-Fano-CY}}}. In fact, we can prove the following more general result:
\begin{prop}
\label{prop_conj-pi1-anti-nef-Fano-CY}
Let $X$ be a normal projective variety of semi-Fano type with vanishing augmented irregularity. Suppose that $\pi_1(X_{\reg})$ is of polynomial growth, then $\pi_1(X_{\reg})$ is finite.
\end{prop}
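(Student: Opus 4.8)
The strategy is to combine the structural result {\hyperref[thm_conj-pi1-klt-anti-nef]{Theorem \ref*{thm_conj-pi1-klt-anti-nef}}} with the folklore conjectures already reduced in the previous subsections, so that the vanishing of the augmented irregularity kills the abelian and torus factors and forces the remaining factors to have finite fundamental group. More precisely, I would first invoke {\hyperref[conj_pi1-anti-nef]{Conjecture \ref*{conj_pi1-anti-nef}}} (which is the hypothesis) together with Gromov's theorem \cite{Gro81} to conclude that $\pi_1(X_{\reg})$ is virtually nilpotent; up to passing to a finite \'etale cover (which by \cite[Proposition 5.20]{KM98} preserves being of semi-Fano type, and preserves the vanishing of the augmented irregularity since $\tilde q$ is defined as a supremum over all quasi-\'etale covers) we may assume $\pi_1(X_{\reg})$ is torsion-free nilpotent. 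Then {\hyperref[thm_q-Alb-pi1-anti-nef]{Theorem \ref*{thm_q-Alb-pi1-anti-nef}(b)}} gives $\pi_1(X_{\reg})^{\nilpotent}\simeq\pi_1(\Alb_X)$; but the augmented irregularity being zero means $q(X)=\dim\Alb_X=0$, so $\pi_1(\Alb_X)=\{1\}$, hence $\pi_1(X_{\reg})^{\nilpotent}=\{1\}$. Since a torsion-free nilpotent group equals its own torsion-free nilpotent completion (the radical of the intersection of the descending central series is trivial when the group has no torsion and is residually nilpotent — and finitely generated torsion-free nilpotent groups are residually nilpotent), this forces $\pi_1(X_{\reg})=\{1\}$ already after the \'etale cover, i.e. the original $\pi_1(X_{\reg})$ is finite.

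\textbf{Alternative route through the decomposition.} If one prefers not to rely on the residual nilpotence fact directly, one can instead run {\hyperref[thm_conj-pi1-klt-anti-nef]{Theorem \ref*{thm_conj-pi1-klt-anti-nef}}}: up to a finite quasi-\'etale cover, $\tilde X\simeq\CC^q\times Z\times F$ with $q$ the augmented irregularity, $Z$ klt with $K_Z\sim 0$, and $F$ rationally connected. By hypothesis $q=0$, so $\tilde X\simeq Z\times F$. Now $F$ rationally connected of klt type (it is the fibre of the MRC fibration, which inherits log terminal singularities) has finite fundamental group of its smooth locus by Braun's theorem \cite{Braun20} on the Gurjar–Zhang conjecture — indeed $F$ is of Fano type being rationally connected with $-K_F$ relatively nef after the product decomposition, or one argues via the simply-connectedness already built into the statement of {\hyperref[mainthm_MRC]{Theorem \ref*{mainthm_MRC}}}. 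For the factor $Z$: it is klt with trivial canonical divisor and, since the augmented irregularity of $X$ vanishes and $Z$ is a factor of a quasi-\'etale cover, the augmented irregularity of $Z$ also vanishes; thus $Z$ falls under {\hyperref[conj_pi1-Fano-CY]{Conjecture \ref*{conj_pi1-Fano-CY}}}, which (as noted in the excerpt) is implied by {\hyperref[conj_pi1-anti-nef]{Conjecture \ref*{conj_pi1-anti-nef}}}, so $\pi_1(Z_{\reg})$ is finite. Then $\pi_1(\tilde X_{\reg})$ is a quotient of $\pi_1(Z_{\reg})\times\pi_1(F_{\reg})$ (using that the smooth locus of a product contains the product of the smooth loci with complement of codimension $\geq 2$, hence by {\hyperref[lemma_fundamental-group-complement]{Lemma \ref*{lemma_fundamental-group-complement}}} the fundamental groups agree), so it is finite; since $\tilde X\to X$ is finite, $\pi_1(X_{\reg})$ is finite as well.

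\textbf{Main obstacle.} The delicate point in the first route is the passage from $\pi_1(X_{\reg})^{\nilpotent}=\{1\}$ back to $\pi_1(X_{\reg})=\{1\}$: this requires that the (already virtually-nilpotent, after the \'etale cover torsion-free-nilpotent) group $\pi_1(X_{\reg})$ be finitely generated and residually nilpotent, so that it embeds into its torsion-free nilpotent completion. Finite generation follows from $X_{\reg}$ being a quasi-projective variety (hence of the homotopy type of a finite CW complex); residual nilpotence of a finitely generated torsion-free nilpotent group is classical (it is even linear over $\ZZ$ by Mal'cev). So this route is actually clean. The second route instead leans on the full strength of {\hyperref[thm_conj-pi1-klt-anti-nef]{Theorem \ref*{thm_conj-pi1-klt-anti-nef}}} and of Braun's finiteness theorem, and the only care needed is the bookkeeping that the augmented irregularity of the Calabi–Yau factor $Z$ vanishes — which is immediate because any quasi-\'etale cover of $Z$ pulls back, via the product, to a quasi-\'etale cover of $X$ and hence contributes nothing to $\tilde q(X)=0$. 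Either way the proof is short once the machinery of the earlier sections is in place; I would present the first route as the main argument and perhaps remark on the second.
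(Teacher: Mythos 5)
Your first route is exactly the paper's proof: pass to a finite quasi-étale cover so that $\pi_1(X_{\reg})$ becomes torsion-free nilpotent (Gromov), apply {\hyperref[thm_q-Alb-pi1-anti-nef]{Theorem \ref*{thm_q-Alb-pi1-anti-nef}(b)}} to identify $\pi_1(X_{\reg})=\pi_1(X_{\reg})^{\nilpotent}$ with $\pi_1(\Alb_X)$, and conclude from the vanishing of the augmented irregularity that this group is trivial. The only cosmetic difference is that your appeal to residual nilpotence is unnecessary: for a torsion-free nilpotent group the lower central series terminates, so $G_\infty=\{1\}$ and hence $G'_\infty=\{1\}$ directly from the definition of the completion.
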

\begin{proof}
First note that, as in the proof of {\hyperref[thm_conj-pi1-klt-anti-nef]{Theorem \ref*{thm_conj-pi1-klt-anti-nef}}}, in order to prove the finiteness of $\pi_1(X_{\reg})$ we can replace $X$ by any finite quasi-\'etale cover; in particular we can assume that $\pi_1(X_{\reg})$ is a torsion-free nilpotent group (by \cite[Main Theorem]{Gro81}), so that we have $\pi_1(X_{\reg})\simeq\pi_1(\Alb_X)$. But the augmented irregularity of $X$ is zero, its Albanese variety $\Alb_X$ is trivial, then a fortiori $\pi_1(X_{\reg})\simeq\{1\}$, in particular $\pi_1(X_{\reg})$ is finite. Thus we prove the proposition.
\end{proof}

By the proposition above, we see that {\hyperref[conj_pi1-anti-nef]{Conjecture \ref*{conj_pi1-anti-nef}}} implies {\hyperref[conj_pi1-Fano-CY]{Conjectures \ref*{conj_pi1-Fano-CY}}}; moreover, since varieties of Fano type have vanishing augmented irregularity (every quasi-\'etale cover of a projective variety of Fano type remains Fano type; by \cite[Corollary 1.1]{Zhang06} and \cite{Tak00} varieties of Fano type are simply connected), {\hyperref[conj_pi1-anti-nef]{Conjecture \ref*{conj_pi1-anti-nef}}} implies the Gurjar-Zhang conjecture which states that for any projective variety of Fano type $X$ the fundamental group of $X_{\reg}$ is finite and which has recently been confirmed in \cite{Braun20}. 

Finally, let us make some remarks on the history of the Gurjar-Zhang conjecture and {\hyperref[conj_pi1-Fano-CY]{Conjecture \ref*{conj_pi1-Fano-CY}}}:
\begin{rmq}
\label{rmk_conj_pi1-Fano-CY}
The Gurjar-Zhang conjecture is first proved for del Pezzo surfaces in \cite{GZ94,GZ95} (c.f. \cite[last Remark]{GZ95} for weak Fano surfaces) and the question is explicitly raised in \cite[Introduction]{Zhang95} for log Fano varieties (c.f. also \cite[Question 0.11]{Sch07}) and in \cite{Zhang95} the conjecture is proved for canonical (klt) Fano threefolds under some additional assumption that $X$ has isolated singularities (\cite[Theorem 1]{Zhang95}) or that the index of $X$ is $\geqslant\dim\!X-2$ (\cite[Theorem 2]{Zhang95}). The three-dimensional Fano case is fully confirmed by \cite[Theorem 1.6]{TX17}. Then it is proved in \cite[Theorem 2]{Xu14} and \cite[Theorem 1.13]{GKP16a} that the profinite completion of $\pi_1(X_{\reg})$ (which is, isomorphic to the \'etale fundamental group of $X_{\reg}$) is finite for $X$ weak log Fano. Recently this conjecture has been settled in \cite{Braun20}.

As for {\hyperref[conj_pi1-Fano-CY]{Conjecture \ref*{conj_pi1-Fano-CY}}}, the question is raised in \cite{GGK19} and it is proved therein that for $X$ klt projective with trivial canonical divisor and vanishing augmented irregularity the fundamental group of $X_{\reg}$ has only finitely many $k$-dimensional complex representations for every $k\in\ZZ_{>0}$, and that the image of each finite dimensional representation of $\pi_1(X_{\reg})$ is finite. It is also proved that the \'etale fundamental group of $X_{\reg}$ is finite for $X$ an irreducible holomorphic symplectic variety or an even-dimensional Calabi-Yau varieties, c.f. \cite[\S 13.1]{GGK19}.
\end{rmq}

\begin{appendices}
\section{Albanese map of quasi-projective varieties}
\label{sec_Q-Albanese}
In this section, we recall some general results about the Albanese maps of smooth quasi-projective varieties. Our main reference is \cite[\S 3]{Fuj15}, c.f. also \cite[\S 5]{Kaw81}. First recall the definition of semi-Abelian varieties. Let us remark that they are called "quasi-Abelian varieties" by Iitaka and  \cite{Kaw81,Fuj15} (which is different from the notion of "quasi-Abelian varieties" in \cite{AK01}); we choose to use the name "semi-Abelian variety", which seems to be more commonly used in algebraic geometry.

\begin{defn}[{\cite[\S 5, Definition, p.~271]{Kaw81}; \cite[Defniition 2.8]{Fuj15}}]
\label{defn_q-ab-var}
Let $G$ be a connected algebraic group and let 
\[
1\to H\to G\to A\to 1
\]
be the Chevalley decomposition (c.f. \cite[Theorem 1.1]{Con02}) of $G$, where $H$ is a linear algebraic group and $A$ is an Abelian variety. $G$ is called a {\it semi-Abelian variety} 
if $H\simeq\GG_m^{\dim\!H}$ where $\GG_m$ denotes the multiplicative group $\CC^{\ast}$. 
\end{defn}

We collect some elementary properties of semi-Abelian varieties as following:
\begin{prop}[{\cite[Lemma 2.11, Lemma 2.13]{Fuj15}}]
\label{prop_property-q-Ab-var}
Let $G$ be a semi-Abelian variety and let 
\[
1\to \GG_m^d\to G\to A\to 1
\]
be its Chevalley decomposition with $A$ an Abelian variety. Then 
\begin{itemize}
\item[\rm(a)] $G$ is a principal $\GG_m^d$-bundle over $A$\,;
\item[\rm(b)] $G$ is a commutative group\,;
\item[\rm(c)] the universal cover of $G$ is $\CC^{\dim\!G}$ and $G\simeq\CC^{\dim\!G}/\pi_1(G)$ with $\pi_1(G)$ viewed as a lattice in $\CC^{\dim\!G}$.
\end{itemize}
\end{prop}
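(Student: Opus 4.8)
The plan is to reduce everything to the structure theory of the Chevalley decomposition and of Abelian varieties, working one point at a time. Let $1\to\GG_m^d\to G\xrightarrow{p} A\to 1$ be the Chevalley decomposition, with $A$ an Abelian variety. For \textbf{(a)}, I would observe that the surjection $p:G\to A$ is a homomorphism of algebraic groups whose kernel is the linear torus $\GG_m^d$; such a homomorphism is faithfully flat with geometrically reduced fibres, and the fibre over the identity is exactly $\GG_m^d$. A group scheme which is an extension of $A$ by $\GG_m^d$ is, Zariski-locally on $A$ (or at least \'etale-locally, but for a torus kernel the obstruction to Zariski-local triviality vanishes because $\GG_m^d$ is a split torus and $\operatorname{H}^1_{\mathrm{Zar}}(A,\GG_m^d)=\operatorname{Pic}(A)^{\oplus d}$ classifies such extensions up to the principal-bundle structure), a $\GG_m^d$-torsor over $A$; in particular $G$ acquires a free transitive fibrewise $\GG_m^d$-action, which is the assertion that $G$ is a principal $\GG_m^d$-bundle over $A$. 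Concretely I would cite the fact that any connected commutative extension of an Abelian variety by a torus is classified by $\operatorname{Ext}^1(A,\GG_m^d)\cong\widehat{A}{}^{\,d}$ (the dual abelian variety), and each such extension is automatically a principal bundle for the kernel torus.

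For \textbf{(b)}, commutativity, I would argue as follows: $\GG_m^d$ is central in $G$. Indeed, conjugation gives a homomorphism $G\to\operatorname{Aut}_{\mathrm{gp}}(\GG_m^d)\cong\operatorname{GL}_d(\ZZ)$, which is a discrete group; since $G$ is connected this homomorphism is trivial, so $\GG_m^d$ is central. Hence the commutator map $G\times G\to G$ factors through $A\times A$, i.e. it descends to a morphism $A\times A\to G$. But any morphism from a complete variety (here $A\times A$, product of Abelian varieties) to the affine group $\GG_m^d$ is constant, and the commutator lands in $\GG_m^d=\ker p$ by construction (since $A$ is already commutative, so $p$ of any commutator is trivial); therefore the commutator is constant, equal to its value at $(e,e)$, namely the identity. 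This forces $G$ to be commutative. This rigidity step — a morphism from a proper variety to an affine group is constant — is the technical heart of (b) and is completely standard.

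For \textbf{(c)}, the description of the universal cover, I would use that $G$ is now known to be a connected commutative complex Lie group, so its universal cover is $\CC^{\dim G}$ (the exponential map $\operatorname{Lie}(G)=\CC^{\dim G}\to G$ is a surjective Lie group homomorphism with discrete kernel, and the kernel is the lattice $\pi_1(G)$), giving $G\cong\CC^{\dim G}/\pi_1(G)$. The only thing to check is that $\pi_1(G)$ really is a lattice, i.e. a full-rank discrete subgroup; this follows from the fibration $\GG_m^d\to G\to A$ and the long exact sequence of homotopy groups: $\pi_1(\GG_m^d)=\ZZ^d$, $\pi_1(A)=\ZZ^{2\dim A}$, $\pi_2(A)=0$, giving $0\to\ZZ^d\to\pi_1(G)\to\ZZ^{2\dim A}\to 0$, so $\pi_1(G)$ is free of rank $d+2\dim A=2\dim G$, hence a lattice in $\CC^{\dim G}$. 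I do not expect any genuine obstacle here; the main subtlety across the whole proposition is bookkeeping the difference between the algebraic-group statements and the complex-analytic ones, and making sure the Zariski-local (as opposed to \'etale-local) triviality in part (a) is justified by the vanishing of the relevant cohomology for a split torus — but since all three points are classical, I would keep the proof short and largely by citation to \cite{Fuj15} and \cite{Con02}.
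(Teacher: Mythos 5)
The paper gives no proof of this proposition at all: it is quoted from \cite[Lemmas 2.11 and 2.13]{Fuj15}, so your write-up is being measured against the standard arguments in that reference rather than against anything in the text. Your treatments of (a) and (b) are exactly those standard arguments and are correct. For (a), the quotient $G\to A$ is an fppf $\GG_m^d$-torsor, and for a split torus Hilbert 90 (i.e.\ $H^1_{\text{\'et}}(A,\GG_m)=\Pic(A)=H^1_{\mathrm{Zar}}(A,\scrO_A^\ast)$) upgrades \'etale-local to Zariski-local triviality; note that this argument needs neither commutativity of $G$ nor the $\mathrm{Ext}^1(A,\GG_m^d)\simeq\widehat{A}^{\,d}$ classification (which presupposes commutativity), so it is cleaner to drop the extension-group language from (a) entirely and avoid any appearance of circularity with (b). For (b), centrality of the torus (a connected group maps trivially to the discrete group $\mathrm{GL}_d(\ZZ)$) plus rigidity of morphisms from the complete variety $A\times A$ to the affine group $\GG_m^d$ is precisely the classical proof that such extensions are automatically commutative; this part is complete.

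In (c) there is an arithmetic error. From the homotopy sequence of $\GG_m^d\to G\to A$ you correctly obtain $0\to\ZZ^d\to\pi_1(G)\to\ZZ^{2\dim A}\to 0$, but the resulting rank is $d+2\dim A=2\dim G-d$, not $2\dim G$ (recall $\dim G=d+\dim A$). For $d>0$ the subgroup $\pi_1(G)\subset\CC^{\dim G}$ is therefore not cocompact --- it could not be, since $G$ is non-compact --- so ``lattice'' in the statement must be read as ``discrete subgroup'' (of rank $2\dim G-d$), which is how \cite{Fuj15} uses the word. The rest of your argument for (c) --- surjectivity of the exponential of a connected commutative complex Lie group, with discrete kernel identified with $\pi_1(G)$ --- is correct and yields $G\simeq\CC^{\dim G}/\pi_1(G)$ as claimed; only the rank bookkeeping needs to be fixed.
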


Analogous to the case of Abelian varieties (or even complex tori, c.f. \cite[Lemma 10.1, Theorem 10.3, pp.~116-119]{Ueno75}), the closed subvarieties of semi-Abelian varieties have the following rigidity property:
\begin{prop}[{\cite[Theorem 4.4]{Fuj15}}]
\label{prop_subvar-q-Ab}
Let $G$ be a semi-Abelian variety and let $W$ be a closed subvariety of $G$. Then the logarithmic Kodaira dimension $\bar\kappa(W)\geqslant 0$ and $\bar\kappa(W)=0$ if and only if it is a translate of a semi-Abelian subvariety of $G$.  
\end{prop}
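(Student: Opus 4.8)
The first thing to note is that the logarithmic Kodaira dimension $\bar\kappa(W)$ depends only on $W$ as an abstract quasi-projective variety — it is a birational invariant of the pair $(\bar W,\partial\bar W)$ for any smooth compactification with SNC boundary — and in particular it is unchanged under translating $W$ inside $G$; so we may assume $0\in W$. Let $G'\subseteq G$ be the smallest algebraic subgroup containing $W$; since $W$ is irreducible and contains the identity, $G'$ is a connected closed subgroup, and comparing its Chevalley decomposition with that of $G$ (the connected linear part of $G'$ is a subtorus of $\GG_m^d$ and the quotient is an abelian subvariety of $A$) shows that $G'$ is again a semi-Abelian variety; replacing $G$ by $G'$ we may assume $W$ generates $G$. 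I also record the fact, used repeatedly, that every semi-Abelian variety $H$ is \emph{logarithmic Calabi--Yau}: by \hyperref[prop_property-q-Ab-var]{Proposition \ref*{prop_property-q-Ab-var}} one builds a smooth equivariant compactification $\bar H$ — obtained by an iterated toric/$\PP^1$-bundle compactification of the fibres of $H\to A_H$ — whose boundary $\partial\bar H=\bar H\backslash H$ is SNC and for which $\Omega^1_{\bar H}(\log\partial\bar H)$ is trivialized by the invariant holomorphic $1$-forms pulled back from $A_H$ together with the invariant logarithmic forms $dt_j/t_j$ along the torus directions; hence $\scrO_{\bar H}(K_{\bar H}+\partial\bar H)=\det\Omega^1_{\bar H}(\log\partial\bar H)\simeq\scrO_{\bar H}$, so $\bar\kappa(H)=\kappa(\bar H,0)=0$. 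This already gives the implication ``$\Leftarrow$'' of the equivalence, since a translate of a semi-Abelian subvariety of $G$ is, as a quasi-projective variety, isomorphic to that subvariety.

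For $\bar\kappa(W)\geqslant 0$ I would induct on $\dim G$, the case $\dim G\leqslant 1$ being clear ($W$ is a point or all of $G$). Pick a $1$-dimensional connected subgroup $N\subseteq G$, let $p\colon G\to G/N$ be the quotient, and set $W'':=p(W)$; then $W\to W''$ is a fibration whose fibres are closed subvarieties of $N$-cosets, i.e. points or full cosets. By the logarithmic subadditivity of $\bar\kappa$ for fibrations over a semi-Abelian base — a theorem of Kawamata \cite{Kaw81} (a known case of the logarithmic Iitaka conjecture) — together with the inductive hypothesis for $W''\subseteq G/N$ and the $\dim\leqslant 1$ case for the general fibre, one gets $\bar\kappa(W)\geqslant\bar\kappa(W'')+\bar\kappa(\text{general fibre})\geqslant 0$. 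Alternatively, $\bar\kappa(W)\geqslant 0$ can be extracted directly from the logarithmic Calabi--Yau property of $G$ by applying logarithmic adjunction for the lc pair $(\bar G,\partial\bar G)$ along the closure $\bar W$ and passing to a log resolution, which shows $K_{\widetilde W}+\widetilde D$ to be $\QQ$-linearly equivalent to an effective divisor.

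It remains to treat the case $\bar\kappa(W)=0$, which is the substantial point; here we keep $W$ generating $G$ and must show $W=G$. Let $B\subseteq G$ be the identity component of the stabilizer $\{g\in G:g+W=W\}$ — a semi-Abelian subvariety — so that $W$ is $B$-invariant, hence $W=\pi^{-1}(W'')$ for $\pi\colon G\to G/B$ and $W'':=\pi(W)$, and $W'' \subseteq G/B$ is a closed subvariety with $0$-dimensional stabilizer; since $W\to W''$ is a $B$-bundle, additivity of $\bar\kappa$ along this bundle (again Kawamata's theorem in the relevant case) gives $\bar\kappa(W'')=\bar\kappa(W)=0$ because $\bar\kappa(B)=0$. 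The key input is now the logarithmic analogue of Ueno's theorem: a positive-dimensional closed subvariety of a semi-Abelian variety with $0$-dimensional stabilizer has $\bar\kappa>0$. Granting this, $\bar\kappa(W'')=0$ forces $\dim W''=0$, so $W$ is a single $B$-coset, and since $W$ generates $G$ and $0\in W$ we conclude $W=B=G$; going back through the translation and the reduction to $G'$, the original $W$ is a translate of a semi-Abelian subvariety. The main obstacle is precisely this last ingredient — the sharp ``$0$-dimensional stabilizer $\Rightarrow\bar\kappa>0$'' statement — which, like the additivity results quoted above, is a genuinely nontrivial theorem about the birational geometry of logarithmic pairs over semi-Abelian varieties (due to Kawamata, and in the full logarithmic generality needed here to Fujino \cite{Fuj15}); the remaining technical points are the construction of the equivariant logarithmic Calabi--Yau compactification and making the adjunction step precise along an arbitrary, possibly very singular and non-transverse, subvariety $\bar W\subseteq\bar G$.
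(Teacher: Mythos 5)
The paper does not actually prove this proposition: it is quoted verbatim from Fujino (\cite[Theorem 4.4]{Fuj15}), and no argument is given in the text, so there is no ``paper proof'' to compare against. Judged as a self-contained proof, your proposal has one genuine gap, and you have in fact located it yourself: the ``key input'' you grant --- that a positive-dimensional closed subvariety of a semi-Abelian variety with finite stabilizer has $\bar\kappa>0$ --- is not an auxiliary lemma but is essentially the theorem being proved. Kawamata's and Fujino's statements are usually phrased as $\bar\kappa(W)=\dim W-\dim B$ for $B$ the identity component of the stabilizer, from which both halves of the proposition are immediate; your reduction (translate so that $0\in W$, replace $G$ by the subgroup generated by $W$, quotient by $B$, invoke the finite-stabilizer case) is precisely the standard first page of that proof, with the substance deferred to the citation. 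So the proposal is a faithful outline of how the literature proves the result, not an independent proof of it.

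Two smaller points you should tighten if you want to upgrade the sketch. First, the subadditivity you invoke in the induction for $\bar\kappa(W)\geqslant 0$ is applied to the fibration $W\to W''=p(W)$ whose \emph{base} is a subvariety of the semi-Abelian variety $G/N$, not a semi-Abelian variety itself; the correct known case to cite is $\bar C_{n,n-1}$ (fibres of dimension $\leqslant 1$), which Kawamata does prove, rather than ``subadditivity over a semi-Abelian base.'' Note also that the induction then needs $\bar\kappa(W'')\geqslant 0$, which is the inductive hypothesis for $G/N$, so the logic closes --- but only once the subadditivity statement is correctly identified. Second, the step $\bar\kappa(W)=\bar\kappa(W'')$ for the $B$-torsor $W\to W''$ is again an additivity statement for fibre bundles with log-Calabi--Yau fibre that needs a reference, not just the remark $\bar\kappa(B)=0$. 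Your construction of the equivariant compactification of a semi-Abelian variety with $K_{\bar H}+\partial\bar H\sim 0$ is correct and does give the easy implication; the rest should simply be attributed to \cite{Kaw81} and \cite{Fuj15} rather than presented as a proof.
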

Let us recall the notion of logarithmic Kodaira dimension as mentioned in the proposition above (c.f. \cite[\S 5, Definition, p.~271]{Kaw81} and \cite[Definition 2.2]{Fuj15}):
\begin{defn}
\label{defn_log-Kod-dim}
Let $V^\circ$ be a smooth quasi-projective variety (or more generally an algebraic variety), and take $V$ to be a smooth compactification of $V^\circ$ such that $D_V:=V\backslash V^\circ$ is a (reduced) SNC divisor (the existence of such $V$ is ensured by Nagata's compactification theorem, Chow's lemma and Hironaka's resolution of singularities). Then the {\it logarithmic Kodaira dimension} of $V^\circ$, denoted by $\bar\kappa(V^\circ)$ is defined to be the Iitaka-Kodaira dimension of $K_V+D_V$, that is
\[
\bar\kappa(V^\circ):=\kappa(V,K_V+D_V).
\]
\end{defn}

Now let us turn to the Albanese maps of smooth quasi-projective varieties:
\begin{prop-defn}[{\cite[Theorem 3.16]{Fuj15}}]
\label{defn-prop_q-Albanese}
Let $U$ be a smooth quasi-projective variety and let $u$ be a fixed point of $U$. Then there is a semi-Abelian variety $\widetilde{\Alb}_U$ and an algebraic morphism $\widetilde{\alb}_U: U\to\widetilde{\Alb}_U$ such that $\widetilde{\alb}_U(u)=0$ and that for any algebraic morphism $\alpha: U\to G$ to a semi-Abelian variety $G$ satisfying $\alpha(u)=0_G$, there is a unique morphism of algebraic groups $f:\widetilde{\Alb}_U\to G$ such that $\alpha=f\circ\widetilde{\alb}_U$\,; and $\widetilde{\alb}_U$ is uniquely determined by this universal property. $\widetilde{\alb}_U$ is called the {\it Albanese map} of $U$ and $\widetilde{\Alb}_U$ is called the {\it Albanese variety} of $U$. Moreover, if $U$ is compact, then $\widetilde{\alb}_U$ coincide with the Albanese map of $U$. 
\end{prop-defn}
See \cite[\S 3]{Fuj15} for the construction of $\widetilde{\Alb}_U$ and $\widetilde{\alb}_U$ and be careful that in \cite{Kaw81,Fuj15} this is called the "quasi-Albanese map". Nevertheless, we call it simply the Albanese map, because this is the only reasonable one (there is no other way to define it and hence "quasi-" is a little bit redundant). Now let us recall some basic properties of the Albanese map.

\begin{prop}[{\cite[Lemma 3.11]{Fuj15}}]
\label{prop_q-Albanese-H1}
Let $U$ be a smooth quasi-projective variety and let $\widetilde{\alb}_U: U\to\widetilde{\Alb}_U$ be its Albanese map. Then the induced morphism
\[
(\widetilde{\alb}_U)_\ast: \Coh_1(U,\ZZ)\to\Coh_1(\widetilde{\Alb}_U,\ZZ)
\]
is surjective and the kernel of $(\widetilde{\alb}_U)_\ast$ is equal to the torsion part of $\Coh_1(U\,,\ZZ)$.
\end{prop}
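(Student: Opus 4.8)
The plan is to read off both assertions from the explicit construction of the quasi-Albanese variety recalled in \cite[\S 3]{Fuj15}. First I would fix a smooth projective compactification $V$ of $U$ with $D:=V\setminus U$ a reduced SNC divisor. Then $\widetilde{\Alb}_U$ is the quotient of the dual space $\Coh^0(V,\Omega^1_V(\log D))^\vee$ by the image $\Lambda$ of the period map
\[
\Coh_1(U,\ZZ)\longrightarrow\Coh^0(V,\Omega^1_V(\log D))^\vee,\qquad \gamma\longmapsto\Big(\omega\mapsto\textstyle\int_\gamma\omega\Big),
\]
and $\widetilde{\alb}_U$ is induced by lifting $x\mapsto(\omega\mapsto\int_{x_0}^x\omega)$ to the universal covers. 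By {\hyperref[prop_property-q-Ab-var]{Proposition \ref*{prop_property-q-Ab-var}(c)}}, $\widetilde{\Alb}_U\simeq\CC^{\dim\widetilde{\Alb}_U}/\Lambda$ with $\Lambda$ a lattice, so that $\Coh_1(\widetilde{\Alb}_U,\ZZ)=\pi_1(\widetilde{\Alb}_U)=\Lambda$, and under these identifications $(\widetilde{\alb}_U)_\ast$ is precisely the period map above (deck transformation by a loop $\gamma$ shifts the integration by $\int_\gamma$). It is therefore surjective onto $\Lambda$ by the very definition of $\Lambda$, which settles the first assertion.

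For the kernel, the inclusion $\mathrm{torsion}\subseteq\ker(\widetilde{\alb}_U)_\ast$ is immediate since $\Lambda$ is torsion free. For the reverse inclusion I would use Hodge theory. By the universal coefficient theorem $\Coh^1(U,\CC)\cong\mathrm{Hom}(\Coh_1(U,\ZZ),\CC)$, so a class $\gamma\in\Coh_1(U,\ZZ)$, viewed inside $\Coh_1(U,\CC)=\Coh^1(U,\CC)^\vee$, lies in $\ker(\widetilde{\alb}_U)_\ast$ exactly when it annihilates $\Coh^0(V,\Omega^1_V(\log D))$. Now Deligne's mixed Hodge structure on $\Coh^1(U)$ identifies $\Coh^0(V,\Omega^1_V(\log D))$ with $F^1\Coh^1(U,\CC)$, the first step of the Hodge filtration (this is part of the construction of $\widetilde{\Alb}_U$ in \cite[\S 3]{Fuj15}). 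Since $\gamma$ is an integral, hence real, class, if it annihilates $F^1\Coh^1(U,\CC)$ it also annihilates the conjugate subspace $\overline{F^1\Coh^1(U,\CC)}$.

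The crux is then the linear-algebra identity
\[
F^1\Coh^1(U,\CC)+\overline{F^1\Coh^1(U,\CC)}=\Coh^1(U,\CC),
\]
which I would verify along the weight filtration $W_\bullet$ (defined over $\QQ$, hence conjugation-stable): $\mathrm{Gr}^W_1\Coh^1(U)\cong\Coh^1(V)$ is a pure Hodge structure of weight $1$, so there $F^1+\overline{F^1}$ recovers $\Coh^{1,0}(V)\oplus\Coh^{0,1}(V)$; and $\mathrm{Gr}^W_2\Coh^1(U)$ is pure of weight $2$ and of Hodge type $(1,1)$, so $F^1$ already equals all of it. A short chase through $0\subseteq W_1\subseteq W_2=\Coh^1(U,\CC)$ (using $F^1\cap W_1=\Coh^{1,0}(V)$ and that $F^1$ surjects onto $\mathrm{Gr}^W_2$) then gives the displayed equality. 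Hence $\gamma$ annihilates $\Coh^1(U,\CC)$, i.e. $\gamma=0$ in $\Coh_1(U,\CC)$, so $\gamma$ is a torsion class, and the kernel is exactly the torsion subgroup. The only non-formal inputs are the identification $\Coh^0(V,\Omega^1_V(\log D))=F^1\Coh^1(U,\CC)$ and the shape of the mixed Hodge structure on $\Coh^1(U)$ (weights $1$ and $2$, the weight-$2$ part of type $(1,1)$); both belong to Deligne's theory and are recalled in \cite[\S 3]{Fuj15}, so I would simply cite them. Everything else is bookkeeping with lattices and universal coefficients.
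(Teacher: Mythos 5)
The paper does not actually prove this proposition: it is quoted verbatim from \cite[Lemma 3.11]{Fuj15} with no argument supplied, so there is nothing internal to compare against. Your proof is correct and is essentially the standard argument behind Fujino's lemma: surjectivity is indeed definitional once $\widetilde{\Alb}_U$ is presented as $\Coh^0(V,\Omega^1_V(\log D))^\vee/\Lambda$ with $\Lambda$ the period lattice, and your identification of the kernel via $F^1\Coh^1(U,\CC)+\overline{F^1\Coh^1(U,\CC)}=\Coh^1(U,\CC)$, checked on $\mathrm{Gr}^W_1\simeq\Coh^1(V)$ (pure weight $1$) and $\mathrm{Gr}^W_2$ (pure of type $(1,1)$), together with universal coefficients, is sound.
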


\begin{prop}
\label{prop_q-Albanese-image}
Let $U$ be a smooth quasi-projective variety and let $\widetilde{\alb}_U:U\to\widetilde{\Alb}_U$ be its Albanese map. Take $V$ be a compactification of $U$ such that $V\backslash U$ is SNC divisor. Let
\[
1\to\GG_m^d\to\widetilde{\Alb}_U\xrightarrow{p} A_U\to 1
\]
be the Chevalley decomposition of $\widetilde{\Alb}_U$. Set $Z$ be the closure of $\Image(\widetilde{\alb}_U)$. 
Then
\begin{itemize}
\item[\rm(a)] $A_U$ is isomorphic to the Albanese variety $\Alb_V$ of $V$ such that $\alb_V|_U=p\circ\widetilde{\alb}_U$.
\item[\rm(b)] $Z$ generates $\widetilde{\Alb}_U$.
\end{itemize}
\end{prop}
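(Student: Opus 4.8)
The plan is to deduce both statements from the universal property of the Albanese map (Proposition-Definition \ref{defn-prop_q-Albanese}) together with the corresponding property of the compact Albanese variety recalled in Proposition \ref{prop_app-Alb}. For part (a), I would first observe that since $V$ is a smooth compactification of $U$, the restriction $\alb_V|_U : U \to \Alb_V$ is an algebraic morphism from $U$ to an Abelian variety, hence in particular to a semi-Abelian variety; by the universal property of $\widetilde{\alb}_U$ there is a unique morphism of algebraic groups $g : \widetilde{\Alb}_U \to \Alb_V$ with $\alb_V|_U = g \circ \widetilde{\alb}_U$. Conversely, composing $\widetilde{\alb}_U$ with the projection $p : \widetilde{\Alb}_U \to A_U$ gives a morphism $U \to A_U$ to an Abelian variety; I would argue (using that any morphism from $U$ to an Abelian variety extends to a morphism from $V$, since a morphism from a smooth variety to an Abelian variety extends over divisors with no base-point issues, cf. \cite[Lemma 3.11]{Fuj15} and the standard extension theorem) that $p \circ \widetilde{\alb}_U$ extends to $V \to A_U$, and then the universal property of $\alb_V$ produces a morphism of algebraic groups $h : \Alb_V \to A_U$ with $h \circ \alb_V|_U = p \circ \widetilde{\alb}_U$. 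One then checks $g$ and a lift of $h$ are mutually inverse on the relevant quotients; more precisely, $h \circ g = p$ and, after noting that $g$ is surjective (its image is a subgroup whose induced map on $\Coh_1$ is surjective by Proposition \ref{prop_q-Albanese-H1}) and that its kernel maps isomorphically to $\GG_m^d$ under $p$, one concludes $h$ is an isomorphism $A_U \xrightarrow{\sim} \Alb_V$ with $\alb_V|_U = p \circ \widetilde{\alb}_U$ as required.

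For part (b), let $G' \subseteq \widetilde{\Alb}_U$ be the smallest semi-Abelian subvariety such that a translate of $G'$ contains $Z$; since $\widetilde{\alb}_U(u) = 0$, we may arrange $0 \in Z$ and then the group generated by $Z$ is a semi-Abelian subvariety $G'' \subseteq \widetilde{\Alb}_U$ containing $Z$. The composite $U \xrightarrow{\widetilde{\alb}_U} G'' \hookrightarrow \widetilde{\Alb}_U$ then exhibits $\widetilde{\alb}_U$ as factoring through $G''$; by the uniqueness clause in the universal property of the Albanese map (applied to the morphism $U \to G''$, noting $G''$ is itself a semi-Abelian variety), the inclusion $G'' \hookrightarrow \widetilde{\Alb}_U$ must be the identity, i.e. $G'' = \widetilde{\Alb}_U$. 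Hence $Z$ generates $\widetilde{\Alb}_U$.

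I expect the main obstacle to be part (a): verifying cleanly that $p \circ \widetilde{\alb}_U$ extends to the compactification $V$ (so that the universal property of $\alb_V$ can be invoked) and that the two morphisms $g, h$ so obtained are genuinely inverse to one another, rather than merely inducing isomorphisms on $\Coh_1$. The subtlety is that $\widetilde{\Alb}_U$ has a $\GG_m^d$-factor which the compact Albanese cannot see, so one must argue that $g$ restricted to that factor is trivial and that the exact sequence $1 \to \GG_m^d \to \widetilde{\Alb}_U \to A_U \to 1$ is precisely the Chevalley decomposition matched with $\alb_V$ — this is where Proposition \ref{prop_q-Albanese-H1} and the extension property are doing the real work. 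Part (b), by contrast, is a formal consequence of uniqueness in the universal property and should be short.
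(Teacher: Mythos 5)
Your argument for (b) is in spirit the same as the paper's, but it silently skips the one point the paper actually has to work for: you assert that the algebraic subgroup $G''$ generated by $Z$ "is a semi-Abelian subvariety" and then invoke the universal property of {\hyperref[defn-prop_q-Albanese]{Proposition-Definition \ref*{defn-prop_q-Albanese}}}, which only applies to morphisms into semi-Abelian targets. That assertion is true but not free. The paper's proof establishes it as follows: by (a) and {\hyperref[prop_app-Alb]{Proposition \ref*{prop_app-Alb}(b)}} the image of $Z$ in $A_U=\Alb_V$ generates $A_U$, so the composite $G''\hookrightarrow\widetilde{\Alb}_U\to A_U$ is surjective; hence the Chevalley decomposition of $G''$ reads $1\to H\to G''\to A_U\to 1$ with $H\subseteq\GG_m^d$ diagonalizable, and $H$ is connected because $G''$ is (being generated by the irreducible set $Z\ni 0$), so $H$ is a torus and $G''$ is semi-Abelian. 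You should supply this (or the equivalent general fact that a connected algebraic subgroup of a semi-Abelian variety is semi-Abelian); once you do, your uniqueness argument forcing $G''=\widetilde{\Alb}_U$ is exactly the paper's conclusion. Also, the subgroup $G'$ you introduce at the start of (b) is never used and can be deleted.

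For (a) you take a genuinely different route: the paper simply refers to the construction of $\widetilde{\Alb}_U$ in Fujino's paper, where $A_U=\Alb_V$ is built in, whereas you reconstruct the isomorphism from the two universal properties. Your outline is viable, and the cleanest way to close it is: any morphism of algebraic groups $\GG_m\to\Alb_V$ is trivial, so $g$ kills $\GG_m^d$ and factors as $g=\bar g\circ p$ for some $\bar g:A_U\to\Alb_V$; the identities $h\circ g\circ\widetilde{\alb}_U=p\circ\widetilde{\alb}_U$ and $\bar g\circ h\circ\alb_V|_U=\alb_V|_U$, together with uniqueness in the two universal properties (and density of $U$ in $V$), give $h\circ\bar g=\id_{A_U}$ and $\bar g\circ h=\id_{\Alb_V}$. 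This is more direct than your detour through surjectivity of $g$ via $\Coh_1$ and {\hyperref[prop_q-Albanese-H1]{Proposition \ref*{prop_q-Albanese-H1}}}, and it avoids having to compare kernels. The extension of $p\circ\widetilde{\alb}_U$ to $V$ is indeed standard (abelian varieties contain no rational curves), so that step is fine.
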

\begin{proof}
(a) simply results from the construction in \cite[Lemma 3.7-3.8]{Fuj15}. As for (b), let $G$ be the algebraic subgroup of $\tilde{\Alb}_U$ generated by $Z$, and set $W$ be the image of $Z$ in $A_U$; then $W$ is the image of $\alb_V$, and by {\hyperref[prop_app-Alb]{Proposition \ref*{prop_app-Alb}(b)}} (\cite[Lemma 9.14, pp.~108-110]{Ueno75}) $W$ generates $A_U=\Alb_V$, hence the morphism $G\hookrightarrow \widetilde{\Alb}_U\to A_U$ is surjective, therefore we can write the Chevalley decomposition of $G$ as
\[
1\to H\to G\to A_U\to 1. 
\]
with $H\subseteq \GG_m^d$. Since $H$ is diagonalizable, by \cite[3.2.7.Corollary, p.~45]{Spr98} $H$ is a direct product of a finite Abelian group with an algebraic torus; but $G$ is connected then so is $H$, hence $H$ is an algebraic torus and thus by definition $G$ is a semi-Abelian variety. Then the morphism $U\to G$ satisfies the universal property of the Albanese map, hence a fortiori $G=\widetilde{\Alb}_U$.   
\end{proof}

\section{General account on singular foliations on normal varieties}
\label{sec_foliation}
In this appendix, we attempt to recollect some results coming from different literatures in order to give a somewhat general account on singular foliations on normal varieties, with some emphasis on the algebro-geometric aspect, for the convenience of the readers. Some results will be used in {\hyperref[sec_MRC]{\S \ref*{sec_MRC}}}.  

\subsection{General results on holomorphic foliations}
\label{ss_foliation_general}
First recall some definitions:
\begin{defn}
\label{defn_foliation}
Let $X$ be a normal complex variety and let $T_X:=(\Omega_X^1)^\ast$ denotes the tangent sheaf of $X$ (then it is a coherent reflexive sheaf on $X$). A (singular) foliation on $X$ is a subsheaf $\scrF$ of $T_X$ satisfying the following two conditions:
\begin{itemize}
\item[(i)] $[\scrF,\scrF]\subseteq\scrF$, i.e. $\scrF$ is stable under the Lie bracket (we call such $\scrF$ {\it involutive});
\item[(i\!i)] $\scrF$ is saturated in $T_X$, i.e. the quotient $T_X/\scrF$ is torsion free (which implies that $\scrF$ is reflexive\footnote{The reciprocal is not true, e.g. consider the natural inclusion $\scrO_{\AFF^1}(-D)\hookrightarrow\scrO_{\AFF^1}\simeq T_{\AFF^1}$ for $D$ effective divisor, $\scrO_{\AFF^1}(-D)$ is reflexive (locally free) but it is not saturated in $\scrO_{\AFF^1}$.}).
\end{itemize}
The {\it codimension} of $\scrF$ is defined to be $n-\rank\scrF$. The {\it normal sheaf} of $\scrF$ is defined to be $N_{\scrF}:=(T_X/\scrF)^{\ast\ast}$. A {\it leaf} $L$ of $\scrF$ is a maximal connected and immersed holomorphic submanifold of $X^\circ$ such that $T_L=\scrF|_L$ where $X^\circ$ denotes the Zariski open subset of $X_{\reg}$ on which $\scrF|_{X_{\reg}}$ is a subbundle of $T_{X_{\reg}}$ (by \cite[Corollary 5.5.15, p.~147]{Kob87}, $\codim(X\backslash X^\circ)\geqslant 2$). If $X$ is projective, the {\it canonical divisor} $K_{\scrF}$ of $\scrF$ is defined to be a Weil divisor on $X$ satisfying $\det\!\scrF\simeq\scrO_X(-K_{\scrF})$ (defined up to linear equivalence). 
\end{defn}

The following lemma says that the involutivity of a saturated subsheaf of $T_X$ can be checked over any Zariski open of $X$:
\begin{lemme}
\label{lemma_involutive}
Let $X$ be a normal complex variety and $\scrF$ a saturated subsheaf of $T_X$. Then $\scrF$ is involutive if and only if $\scrF|_{X_0}$ is involutive for some Zariski open $X_0\subseteq X$.
\end{lemme}
\begin{proof}
The "only if" part is obvious, we will prove the "if" part as following: First notice that the problem is local, hence we can assume $X$ is a Stein variety, so that every coherent sheaf on $X$ is globally generated (so is $\scrF$, and $\scrH$, $\scrH_1$ below). 
Consider the hom sheaves 
\[
\scrH:=\SheafHom[\scrO_X](\bigwedge^2\!\scrF,\scrF)\subseteq\scrH_1:=\SheafHom[\scrO_X](\bigwedge^2\!\scrF,T_X),
\]
since $\scrF$ and $T_X$ are reflexive, by the tensor-hom adjunction and \cite[Corollary 1.2]{Har80}, so are $\scrH$ and $\scrH_1$. Moreover, $\scrH_1/\scrH$ is contained in $\SheafHom[\scrO_X](\bigwedge^2\!\scrF,T_X/\scrF)$, which is torsion free since $T_X/\scrF$ is torsion free (again by the tensor-hom adjunction and \cite[Corollary 1.2]{Har80}), hence $\scrH_1/\scrH$ is also torsion free ($\scrH$ is saturated in $\scrH_1$). Now consider $\sigma=[\;\cdot\;,\,\cdot\;]\big|_{\bigwedge^2\!\scrF}$, regarded as a (global) section of  $\scrH_1$. Then the involutivity of $\scrF$ is equivalent to $\sigma\in\scrH$. This amounts to show that the image $\bar\sigma$ of $\sigma$ in $\scrH_1/\scrH$ is zero. By our assumption, $\bar\sigma|_{X_0}$ is zero; but $\scrH_1/\scrH$ is torsion free, a fortiori $\bar\sigma=0$, this completes the proof.  
\end{proof}

An important observation is that giving a foliation on $X$ is equivalent to giving a meromorphic differential form. In fact we have:
\begin{prop-defn}[c.f.{\cite[\S 3.1]{AD14}}]
\label{defn_foliation-diff-form}
Let $X$ be a normal complex variety of dimension $n$. Then we have the following two reciprocal constructions:
\begin{itemize}
\item Let $\scrF$ be a codimension $q$ foliation on $X$, then the surjection $T_X\to T_X/\scrF$ induces an inclusion $N_{\scrF}^\ast\hookrightarrow \Omega_X^{[1]}:=\left(\Omega_X^1\right)^{\ast\ast}$, which gives rise to a $\det\!N_{\scrF}$-valued reflexive $q$-form $\omega\in\Coh^0(X,\Omega_X^q[\otimes]\det\!N_{\scrF})$ (where `$[\otimes]$' denotes the reflexive hull of the tensor product), which satisfies the following three properties:
\begin{itemize}
\item[(a)] The vanishing locus of $\omega$ is of codimension $2$;
\item[(b)] $\omega$ is locally decomposable (around a general point of $X$), that is, in a neighbourhood of a general point of $X$, we can write $\omega=\omega_1\wedge\cdots\wedge\omega_q$ with $\omega_i$'s local $1$-forms;  
\item[(c)] $\omega$ is integrable, that is, for the local decomposition $\omega=\omega_1\wedge\cdots\wedge\omega_q$ as in (b), one has $d\omega_i\wedge\omega=0$ for every $i=1,\cdots,q$.
\end{itemize}
\item Let $\scrL$ be a reflexive sheaf of rank $1$ on $X$, and let $\omega\in\Coh^0(X,\Omega_X^q[\otimes]\scrL)$ satisfying the above three conditions (a)(b)(c)\footnote{The condition (a) is not essential in the construction. In fact, $\omega$ can also be regarded as a $\scrL(D)$-valued $q$-form for any effective (Weil) divisor $D$ on $X$, as section of $\Omega_X^q[\otimes]\scrL(D)$ $\omega$ vanishes along $D$; but this does not change the kernel of the contraction morphism, in fact $T_X\to \Omega_X^{q-1}[\otimes]\scrL(D)$ is nothing other then composition of $T_X\to \Omega_X^{q-1}[\otimes]\scrL$ and the inclusion $\scrL\hookrightarrow\scrL(D)$. Nevertheless, the condition (a) guarantees that the construction is reciprocal to the first one.}, consider the morphism $T_X\to\Omega_X^{q-1}[\otimes]\scrL$ given by the contraction with $\omega$, then the kernel of this morphism is a codimension $q$ foliation on $X$.
\end{itemize}
\end{prop-defn}
\begin{proof}
This is surely well known to experts and is formulated in another way in the literatures when $\scrF$ is regular and $X$ is smooth (see e.g. \cite[\S 1.2, pp.~10-11]{MM03}). However, due to lack of references treating the singular case, we will give a proof here for the convenience of the readers. The idea of the proof is borrowed from \cite[\S 1.2, pp.~10-11]{MM03}. 
\begin{itemize}
\item Let $\scrF$ be a foliation on $X$, then there is a Zariski open subset $X^\circ\subseteq X_{\reg}$ such that $\codim(X\backslash X^\circ)\geqslant 2$ and that $\scrF|_{X^\circ}\subseteq T_{X^\circ}$ is a subbundle. Then $\omega$ is nowhere vanishing on $X^\circ$, hence the vanishing locus of $\omega$ is of codimension $\geqslant 2$. Locally in $X^\circ$, we can take $v_1\,,\cdots,v_n$ trivializing sections (local vector fields) of $T_{X^\circ}$, among which $v_1\,,\cdots,v_{n-q}$ generate $\scrF|_{X^\circ}$. Let $\alpha_1\,,\cdots,\alpha_n$ be the dual sections of the $v_i$'s. Then locally $N_{\scrF}^\ast=(T_X/\scrF)^\ast$ is generated by $\alpha_{n-q+1}\,,\cdots,\alpha_n$\,, and hence $\det\!N_{\scrF}^\ast$ is generated by $\alpha_{n-q+1}\wedge\cdots\wedge\alpha_n$. Since $\omega$ is induced by the inclusion $\det\!N_{\scrF}\hookrightarrow\Omega_X^{[q]}$, hence under the local trivialisation of $\scrL|_{X^\circ}$ given by $\alpha_{n-q+1}\wedge\cdots\wedge\alpha_n$ the differential $q$-form $\omega$ is equal to the tautological section $\alpha_{n-q+1}\wedge\cdots\wedge\alpha_n$\,, hence $\omega$ is locally decomposable. And the kernel of the morphism $T_X\to \Omega_X^{q-1}[\otimes]\det\!N_{\scrF}$ can be locally expressed as 
\[
\left\{\,v\text{ local holomorphic vector field}\,\big|\,\alpha_i(v)=0,\forall i=n-q+1\,,\cdots,n\,\right\},
\]
which is then equal to $\scrF$ on $X^\circ$; both of them are reflexive, then $\scrF=\Ker(T_X\to\Omega_X^{q-1}[\otimes]\det\!N_{\scrF})$. Finally let us check that $\omega$ is integrable. To this end, take any two local sections $v$ and $w$ of $\scrF$, then by the formula (definition) of exterior derivative we get
\begin{equation}
\label{eq_formula-ext-derivative}
d\alpha_i(v,w)=\frac{1}{2}\left[v(\alpha_i(w))-w(\alpha_i(v))-\alpha_i([v,w])\right]=0,\;\forall i=n-q+1\,,\cdots,n
\end{equation}
since $[v,w]$ is still a local section of $\scrF$ as a result of the involutivity of $\scrF$. Therefore, for every $i=n-q+1,\cdots,n$\,, we can write
\[
d\alpha_i=\sum_{j=n-q+1}^n\eta_{ij}\wedge\alpha_j\,,
\]
for some local differential $1$-forms $\eta_{ij}$. Hence $d\alpha_i\wedge\alpha_{n-q+1}\wedge\cdots\wedge\alpha_n=0$.
\item Reciprocally, let $\scrL$ be a reflexive sheaf of rank $1$ on $X$ and $\omega\in\Coh^0(X,\Omega^q_X[\otimes]\scrL)$ satisfying the conditions (a)(b)(c) as above. Now consider $\scrF=\Ker(T_X\to\Omega_X^{q-1}[\otimes]\scrL)$. Since $\Omega_X^{q-1}[\otimes]\scrL$ is torsion free (being reflexive), $\scrF$ is saturated in $T_X$ (thus reflexive). We check that $\scrF$ is involutive. In virtue of {\hyperref[lemma_involutive]{Lemma \ref*{lemma_involutive}}}, up to replacing $X$ by a Zariski open whose complement is of codimension $\geqslant 2$, we can assume that $X$ is smooth, that $\omega$ is nowhere vanishing on $X$ and that $\omega$ is locally decomposable around every point of $X$. Locally we can write $\omega=\omega_1\wedge\cdots\wedge\omega_q$, since $\omega$ is nowhere vanishing, the $\omega_i$'s are everywhere linearly independent. Then we can complete $\{\omega_i\}_{i=1,\cdots,q}$ into a family of trivializing local sections $\omega_1,\cdots,\omega_n$ of $\Omega_X^1$. Then locally $\scrF$ is equal to
\begin{equation}
\label{eq_local-expression}
\left\{\,v\text{ local holomorphic vector field}\,\big|\,\omega_i(v)=0,\;\forall i=1,\cdots,q\,\right\}.
\end{equation}
Since $\omega$ is integrable, thus $d\omega_i\wedge\omega_1\wedge\cdots\wedge\omega_q=0$ for every $i=1,\cdots,q$. Write 
\[
d\omega_i=\sum_{1\leqslant j<k\leqslant n}a_{ijk}\omega_j\wedge\omega_k\,,
\]
since $\omega_j\wedge\omega_k\wedge\omega_1\wedge\cdots\wedge\omega_q=0$ for $j\leqslant q$, we get from the integrability condition:
\[
\sum_{q+1\leqslant j<k\leqslant n}a_{ijk}\omega_j\wedge\omega_k\wedge\omega_1\wedge\cdots\wedge\omega_q=0,
\]
which implies that $a_{ijk}=0$ if $j\leqslant q+1$. Hence we can write 
\[
d\omega_i=\sum_{j=1}^q\eta_{ij}\wedge\omega_j
\]
for some local $1$-forms $\eta_{ij}$; in particular, for every $i=1,\cdots,q$, $d\omega_i$ annihilates $\scrF$ . Then by the formula  \eqref{eq_formula-ext-derivative}, we see that $[\scrF,\scrF]$ is annihilated by every $\omega_i$, $i=1,\cdots,q$, which in turn implies, by the local characterization \eqref{eq_local-expression} of $\scrF$ above, that $[\scrF,\scrF]\subseteq\scrF$. Hence $\scrF$ is a foliation on $X$. Moreover, by the the local expression \eqref{eq_local-expression} we see that $(\omega_i)_{i=1,\cdots,q}$ is a family of local trivializing sections of $(T_X/\scrF)^\ast$. In consequence, $\omega\overset{\text{loc}}{=}\omega_1\wedge\cdots\wedge\omega_q$ is equal to the the rational $q$-form induced by $\scrF$.  
\end{itemize}
\end{proof}

With the help of the construction above, we can define the pullback of a foliation:
\begin{prop-defn}[c.f.{\cite[\S 3.4]{Druel18a}}]
\label{defn_pullback-foliation}
Let $\mu:X\dashrightarrow Y$ a dominant meromorphic mapping between normal complex varieties, which restricts to a surjective morphism $\mu^\circ:X^\circ\to Y^\circ$ with $X^\circ$ and $Y^\circ$ smooth Zariski open subsets of $X$ and of $Y$ respectively. Let $\scrG$ be a foliation on $Y$, then it induces a foliation $\mu\inv\!\scrG$ on $X$ as following: as in the {\hyperref[defn_foliation-diff-form]{Proposition-Definition \ref*{defn_foliation-diff-form}}}, $\scrG$ gives rise to a meromorphic differential $q$-form $\omega\in\Coh^0(Y,\Omega_Y^q[\otimes]\det\!N_{\scrG})$, then $(\mu^\circ)^\ast(\omega|_{Y^\circ})$ extends to a meromorphic $q$-form $\tau$ on $X_{\reg}$. By well choosing a rank $1$ reflexive sheaf $\scrL$ on $X$, $\tau$ can be regarded as a section in $\Coh^0(X,\Omega^q_X[\otimes]\scrL)$ whose vanishing locus has codimension $2$ in $X_{\reg}$ (thus in $X$). And by construction it is clear that $\tau$ is locally decomposable and integrable, then by {\hyperref[defn_foliation-diff-form]{Proposition-Definition \ref*{defn_foliation-diff-form}}} $\tau$ induces a foliation on $X$, which we denote by $\mu\inv\!\scrG$. By construction $\mu\inv\!\scrG$ is the unique foliation on $X$ whose associated differential $q$-form coincides with $(\mu^\circ)^\ast(\omega|_{Y^\circ})$ on $X^\circ$.     
\end{prop-defn}

In the proof of our main theorem, we will treat the situation where the tangent sheaf admits a direct sum decomposition into foliations, and we expect that under certain condition this decomposition can be retained via pullback. When the morphism is bimeromorphic, the following lemma provides a criterion to ensure this.

\begin{lemme}
\label{lemma_pullback-decomposition}
Let $\mu:X\to Y$ be a bimeromorphic morphism between normal complex varieties and let $\scrG_1$ and $\scrG_2$ be foliations on $Y$. Suppose that we have a direct sum decomposition $T_Y\simeq\scrG_1\oplus\scrG_2$\,, and suppose that the natural morphism $\det(\mu\inv\!\scrG_1)\rightarrow\det\left(T_X/\mu\inv\!\scrG_2\right)$ is an isomorphism. Then the decomposition of $T_Y$ pulls back to $X$:
\[
T_X\simeq\mu\inv\!\scrG_1\oplus\mu\inv\!\scrG_2
\]
\end{lemme}
\begin{rmq}
The lemma does not holds in general without the assumption on the natural morphism between determinant line bundles even for regular foliations on smooth varieties. For example, consider $Y=\PP^1\times\PP^1$ and $\mu: X\to Y$ be the blow-up of a general point on $\PP^1\times\PP^1$. Then $T_Y$ admits a natural decomposition
\[
T_Y\simeq \pr_1^\ast T_{\PP^1}\oplus\pr_2^\ast T_{\PP^1}
\]
into regular (algebraically integrable) foliations. This decomposition cannot pullback via $\mu$ to $X$. Otherwise, if it were the case then we have a decomposition 
\[
T_X\simeq \scrF_1\oplus \scrF_2
\]
with $\scrF_i$ the pullback foliation of $\pr_i^\ast T_{\PP^1}$, (the Zariski closure of) whose general leaf is rationally connected. By semicontinuity $\scrF_1$ and $\scrF_2$ are locally free, hence are regular foliations. Therefore, by \cite[2.11.Corollary]{Hor07}, $\scrF_1$ induces a smooth holomorphic submersion, whose fibres are transverse to the leaves of $\scrF_2$; then by the {\hyperref[thm_classical-Ehresmann]{classical Ehresmann Theorem \ref*{thm_classical-Ehresmann}}} (c.f. \cite[\S V.4, Theorem 1 and Theorem 3, pp.~95-99]{CLN85}), $X$ splits into a product of two curves. 
But $X$ is rationally connected, hence simply connected, then it must be isomorphic to $\PP^1\times\PP^1$, which is absurd. 
\end{rmq}

{\hyperref[lemma_pullback-decomposition]{Lemma \ref*{lemma_pullback-decomposition}}} follows immediately from the following general fact:

\begin{prop}
\label{prop_extend-decomposition}
Let $X$ be a normal complex variety and let $E$ be a reflexive sheaf on $X$ and $E_1\,,E_2$ saturated subsheaves of $E$. Suppose that there is a Zariski open $X_0$ of $X$ such that 
\begin{equation}
\label{eq_direct-decomp-X0}
E_1|_{X_0}\oplus E_2|_{X_0}\simeq E,
\end{equation}
and suppose that the natural morphism $\det\! E_1\rightarrow\det(E/E_2)$ is an isomorphism. Then the direct sum decomposition extends globally:
\[
E\simeq E_1\oplus E_2.
\]
\end{prop}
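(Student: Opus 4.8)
\textbf{Proof proposal for Proposition \ref*{prop_extend-decomposition}.}

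The plan is to produce the splitting by first constructing a candidate inverse to the inclusion $E_1\oplus E_2\hookrightarrow E$ over a large open set, then using reflexivity to extend it everywhere, and finally checking the determinant hypothesis forces it to be a genuine isomorphism. Concretely, since $E_1$ and $E_2$ are saturated in $E$, they are reflexive; moreover their sum $E_1+E_2\subseteq E$ agrees with $E$ on $X_0$ by \eqref{eq_direct-decomp-X0}, so the inclusion $\iota\colon E_1\oplus E_2\to E$ is an isomorphism over $X_0$ (the map $E_1\oplus E_2\to E_1+E_2$ being an isomorphism over $X_0$ because the sum is direct there). Shrinking if necessary I may assume $X_0$ contains $X_{\reg}$ minus a codimension $\geqslant 2$ set and in particular that $\codim_X(X\setminus X_0)\geqslant 1$; I will want it to be $\geqslant 2$, which is the first point to address (see below). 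The strategy is: on $X_0$ we have the projection $\mathrm{pr}_i\colon E|_{X_0}\to E_i|_{X_0}$ coming from \eqref{eq_direct-decomp-X0}; I want to extend $\mathrm{pr}_1\oplus\mathrm{pr}_2\colon E|_{X_0}\to (E_1\oplus E_2)|_{X_0}$ to a morphism on all of $X$, which will be a two-sided inverse to $\iota$ by uniqueness of extensions into reflexive sheaves.

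First I would reduce to the case $\codim_X(X\setminus X_0)\geqslant 2$. The only thing that can go wrong in codimension $1$ is along a prime divisor $D$ where $E_1+E_2$ fails to fill up $E$, or where the sum fails to be direct. But $\det E_1\to\det(E/E_2)$ being an isomorphism controls exactly this: at the generic point of any prime divisor $D$, both $E_1$ and $E/E_2$ are free of the same rank (torsion-freeness), and the composite $E_1\hookrightarrow E\twoheadrightarrow E/E_2$ has determinant the given isomorphism, hence is itself an isomorphism at the generic point of $D$; this means $E_1\oplus E_2\to E$ is an isomorphism at every codimension-$1$ point, so we may enlarge $X_0$ to include all such points, achieving $\codim_X(X\setminus X_0)\geqslant 2$. (This is the step where the determinant hypothesis does its essential work, so I expect it to be the main obstacle — not because the argument is deep, but because one must be careful that "isomorphism of determinant line bundles" is really the natural map induced by $E_1\hookrightarrow E\twoheadrightarrow E/E_2$, and that at a codimension-$1$ point an injective endomorphism-up-to-rearrangement of a free module with invertible determinant is invertible, which is just the fact that a matrix over a DVR with unit determinant is invertible.)

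With $\codim_X(X\setminus X_0)\geqslant 2$ in hand, the extension is formal. The sheaf $\SheafHom[\scrO_X](E,E_1\oplus E_2)$ is reflexive, because $E_1\oplus E_2$ is reflexive and $\SheafHom$ into a reflexive sheaf from any coherent sheaf is reflexive (by the tensor-hom adjunction and \cite[Corollary 1.2]{Har80}, exactly as used in the proof of {\hyperref[lemma_involutive]{Lemma \ref*{lemma_involutive}}}). Hence the section $\mathrm{pr}_1\oplus\mathrm{pr}_2$ of this sheaf over $X_0$ extends uniquely to a global section $p\colon E\to E_1\oplus E_2$. Likewise $\iota\colon E_1\oplus E_2\to E$ is a global morphism. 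The two composites $p\circ\iota$ and $\iota\circ p$ agree with the respective identities over $X_0$; since $\End(E_1\oplus E_2)$ and $\End(E)$ are reflexive and $X\setminus X_0$ has codimension $\geqslant 2$, the identities are the unique extensions, so $p\circ\iota=\mathrm{id}$ and $\iota\circ p=\mathrm{id}$ on all of $X$. Therefore $\iota$ is an isomorphism and $E\simeq E_1\oplus E_2$, as claimed. Finally, {\hyperref[lemma_pullback-decomposition]{Lemma \ref*{lemma_pullback-decomposition}}} follows by taking $X_0$ to be the locus over which $\mu$ is an isomorphism and $Y$ (hence $X_0$) is smooth: there $\mu\inv\!\scrG_i$ restricts to $(\mu|_{X_0})^\ast\scrG_i$, so \eqref{eq_direct-decomp-X0} holds, and the hypothesis on $\det(\mu\inv\!\scrG_1)\to\det(T_X/\mu\inv\!\scrG_2)$ is exactly the hypothesis of Proposition \ref*{prop_extend-decomposition} with $E=T_X$, $E_i=\mu\inv\!\scrG_i$.
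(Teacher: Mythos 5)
Your proof is correct and follows essentially the same route as the paper's: both arguments turn on the observation that the composite $E_1\hookrightarrow E\twoheadrightarrow E/E_2$ induces the given isomorphism on determinants and hence is itself an isomorphism away from a codimension-$2$ set, after which reflexivity extends the splitting to all of $X$. The only cosmetic differences are that you verify this at codimension-one points via the matrix-over-a-DVR computation where the paper invokes \cite[Lemma 1.20]{DPS94} on the locally free locus, and that you conclude by extending the projections rather than by splitting the exact sequence $0\to E_2\to E\to E/E_2\to 0$.
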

\begin{proof}
Since $X$ is normal, $E$, $E_1$ and $E_2$ are reflexive, up to replace $X$ by a Zariski open whose complementary is of codimension $\geqslant 2$, we can assume that $X$ is smooth, $E$ is a vector bundle and $E_1$ and $E_2$ are subbundles of $E$. 
Now consider the natural morphism 
\[
\sigma: E_1\hookrightarrow E\twoheadrightarrow E/E_2,
\]
By \eqref{eq_direct-decomp-X0} $\sigma$ is an isomorphism over $X_0$, then it must be injective ($\Ker(\sigma)$ is torsion free and generically $0$ hence must be $0$). Hence $E_1$ is a locally free (thus reflexive) subsheaf of the vector bundle $E/E_2$. In addition, the morphism $\det\sigma:\det\!E_1\to\det(E/E_2)$ is an isomorphism by the hypothesis. Then by \cite[Lemma 1.20]{DPS94} $E_1$ is a subbundle of $E/E_2$, hence they must be isomorphic. In particular this means that the short exact sequence 
\[
0\to E_2\to E\to E/E_2\to 0
\]
splits, thus we get the desired direct decomposition.
\end{proof}

\begin{rmq}
The proposition does not hold in general without the assumption even on the natural morphism between determinant bundles. For example (pointed out by Junyan Cao), consider $X=\AFF^2$ and $E=T_{\AFF^2}\simeq\scrO^{\oplus 2}_{\AFF^2}$ with $E_1$ the foliation generated by the global vector field
\[
v_1=z_1\cdot\frac{\partial}{\partial z_1}+z_2\cdot\frac{\partial}{\partial z_2},
\]
and $E_2$ the foliation generated by the global vector field
\[
v_2=z_2\cdot\frac{\partial}{\partial z_1}+z_1\cdot\frac{\partial}{\partial z_2}.
\]
Then $E_1$ and $E_2$ are locally free subsheaves of $E=T_{\AFF^2}$, and generically (out of the line $(z_1=z_2)$) $E\simeq E_1\oplus E_2$. But the decomposition cannot extend globally. In fact, the natural morphism $\det\!E_1\to\det\left(E/E_2\right)$ is zero along the line $(z_1=z_2)$. 
\end{rmq}

\subsection{Pfaff fields and invariant subvarieties}
\label{ss_foliation_Pfaff}

\begin{defn}
\label{defn_Pfaff-field}
Let $X$ be a normal complex variety. A Pfaff vector field of rank $r$ on $X$ is a non-trivial morphism $\eta:\Omega_X^r\to\scrL$ where $\scrL$ is a reflexive sheaf of rank $1$.
The {\it singular locus} $\Sing(\eta)$ of $\eta$ is the closed analytic subspace of $X$ defined by the ideal sheaf $\Image(\Omega_X^r[\otimes]\scrL^\ast\xrightarrow{\eta[\otimes]\scrL^\ast}\scrO_X)$. If $\scrL$ is invertible, then set-theoretically $\Sing(\eta)$ consists of the points at which $\eta$ is not surjective.
\end{defn}

\begin{defn}[{\cite[Definition 5.4]{Druel18b}}]
\label{defn_sing-foliation}
Let $\scrF$ be a foliation on $X$,
then $\scrF$ induces a Pfaff field of rank $r=\rank\scrF$ on $X$
\[
\eta_{\scrF}:\Omega_X^r=\bigwedge^r\!\Omega_X^1\to\bigwedge^r\!\scrF^\ast\to\det\!\scrF^\ast.
\]
The singular locus $\Sing(\scrF)$ of the foliation $\scrF$ is defined to be the singular locus of the Pfaff field $\eta_{\scrF}$. And $\scrF$ is called {\it weakly regular} if $\Sing(\scrF)=\varnothing$. 
\end{defn}
\begin{rmq}
\label{rmk_sing-foliation}
 If $X$ is smooth, then one deduces easily from \cite[Lemma 1.20]{DPS94} that (set-theoretically)
\begin{align*}
\Sing(\scrF)_{\red} &= \left\{\,x\in X\,\big|\,\scrF\to T_X\text{ is a injective bundle map at }x\,\right\} \\
&= \left\{\,x\in X\,\big|\,\scrF\text{ is a subbundle of }T_X\text{ at }x\,\right\}.
\end{align*}
\end{rmq}

\begin{defn}
\label{defn_inv-subvar}
Let $X$ be a normal complex variety and let $\eta:\Omega_X^r\to\scrL$ be a Pfaff field of rank $r$ on $X$. Suppose that some reflexive power of $\scrL$ is invertible. A closed analytic subspace $Y$ of $X$ is called invariant under $\eta$ if
\begin{itemize}
\item none of the irreducible components of $Y$ is contained in $\Sing(\eta)$;
\item for some $m\in\ZZ_{>0}$ such that $\scrL^{[m]}$ is invertible, the restriction $\eta\ptensor[m]:\left.(\Omega_X^r)\ptensor[m]\right|_Y\to\left.\scrL^{[m]}\right|_Y$ factors through the natural map $\left.(\Omega_X^r)\ptensor[m]\right|_Y\to(\Omega_Y^r)\ptensor[m]$.
$Y$ is said invariant under a $\QQ$-Gorenstein foliation $\scrF$ on $X$ if $Y$ is invariant under its associated Pfaff field $\eta_{\scrF}$.
\end{itemize}
\end{defn}

\begin{rmq}
\label{rmk_defn_inv-subvar}
Suppose that $Y$ is a reduced analytic subspace of $X$ and that none of its irreducible components is contained in $\Sing(\eta)$. Then $Y$ is invariant under $\eta$ if and only if the restriction $\eta|_{Y_{\reg}}:\left.\Omega_X^r\right|_{Y_{\reg}}\to\scrL|_{Y_{\reg}}$ factors through $\left.\Omega_X^r\right|_{Y_{\reg}}\to\Omega_{Y_{\reg}}^r$. More generally, one can replace $Y_{\reg}$ above by any Zariski dense subset of $Y_{\reg}$. This results from the following useful lemma (by taking $Y_0=Y_{\reg}$ or any Zariski dense subset of $Y_{\reg}$\,, $\scrM=\left.(\Omega_X^r)\ptensor[m]\right|_Y$ and $\scrN=(\Omega_Y^r)\ptensor[m]$ and noting that by \cite[\S I\!I.8, Proposition 8.12, p.~]{Har77} the natural morphism $(\Omega_X^r)^{\otimes m}|_Y\to(\Omega_Y^r)^{\otimes m}$ is surjective):
\end{rmq}

\begin{lemme}[c.f.{\cite[Proof of Proposition 3.2, p.~10]{EK03}}]
\label{lemma_factorization-vanishing}
Let $Y$ be a reduced complex analytic space, and let $\scrL$, $\scrM$ and $\scrN$ be coherent sheaves on $Y$ with a surjective morphism $\alpha:\scrM\to\scrN$. Then a morphism $\beta:\scrM\to\scrL$ factors through $\alpha$ if and only if $\beta$ annihilates $\Ker(\alpha)$. Suppose that $\scrL$ is torsion free, then $\beta$ factors through $\alpha$ if and only if there is a Zariski dense subset $Y_0$ of $Y$ such that $\beta|_{Y_0}$ factors through $\alpha|_{Y_0}$. 
\end{lemme}
\begin{proof}
By arguing components by components we can assume that $Y$ is irreducible, so that $Y$ is a complex variety. Since $\alpha$ is surjective, $\scrN=\Image(\alpha)=\Coker(\Ker(\alpha)\to\scrM)$, then the first statement results from the universal property of cokernels. Now turn to the second statement: since $\beta|_{Y_0}$ factors through $\alpha|_{Y_0}$, then by the first statement 
\[
\beta|_{Y_0}(\Ker(\alpha|_{Y_0}))=\beta(\Ker(\alpha))|_{Y_0}=0;
\]
but $\beta(\Ker(\alpha))\subseteq\scrL$ is a subsheaf of a torsion free sheaf, hence also torsion free, thus a fortiori $\beta(\Ker(\alpha))=0$, which implies, by the first statement, that $\beta$ factors through $\alpha$.
\end{proof}

The following lemma gives a characterization of invariant subvarieties which are not contained in the singular locus (other examples of invariant subvarieties can be found in \cite[Lemma 3.5]{Druel18b}):
\begin{lemme}[c.f.{\cite[Lemma 2.7]{AD13}}]
\label{lemma_inv-subvar=leaf}
Let $X$ be a complex manifold and $\scrF$ a rank $r$ foliation on $X$ with associated Pfaff field $\eta=\eta_{\scrF}:\Omega_X^r\to\det\!\scrF^\ast$. Set $S:=\Sing(\scrF)_{\red}$. Let $Y$ be a closed subvariety of $X$ of dimension $r$ such that $Y$ is not contained in $S$. Then $Y$ is invariant under $\eta$ if and only if $Y\backslash S$ is a leaf of $\scrF$


\end{lemme}

\begin{proof}
First note that since $X$ is smooth, $S$ is characterized by {\hyperref[rmk_sing-foliation]{Remark \ref*{rmk_sing-foliation}}}; it is of codimension $\geqslant 2$ by \cite[Corollary 5.5.15, p.~147]{Kob87}. Up to replacing $X$ by $X\backslash S$ we can assume $\scrF$ is a subbundle of $T_X$ so that $S=\varnothing$ (i.e. $\scrF$ is a regular foliation). Now take $x\in Y_{\reg}$ and take $v_1,\,\cdots,v_r$ local holomorphic vector fields around $x$ that generate (locally trivialize) $\scrF$. By construction $\eta$ is the dual morphism of the inclusion map $\det\!\scrF\hookrightarrow \bigwedge^r\!T_X$, hence locally it is given by 
\begin{equation}
\label{eq_local-Pfaff-field}
\alpha\longmapsto \alpha(v_1\,,\cdots,v_r)\cdot\alpha_0
\end{equation}
where $\alpha_0$ is a a section of $\det\!\scrF^\ast$ such that $\alpha_0(v_1,\cdots,v_r)=1$. Since $\Omega_X^r|_Y\to\Omega_Y^r$ is surjective, by {\hyperref[lemma_factorization-vanishing]{Lemma \ref*{lemma_factorization-vanishing}}} $Y$ is invariant under $\eta$ if and only if $\Ker(\left.\Omega_X^r\right|_Y\to\Omega_Y^r)$ is annihilated by $\eta|_Y$. Locally around $x$ ($Y$ is smooth around $x$) $\Ker(\left.\Omega_X^r\right|_Y\to\Omega_Y^r)$ consists of the $r$-forms of the form $d\!f\wedge\beta$ with $f$ a local holomorphic function vanishing along $Y$ and $\beta$ any local differential $(r-1)$-form. Combined with \eqref{eq_local-Pfaff-field} we see easily that locally around $x$, $Y$ is invariant under $\eta$ if and only if 
\[
d\!f\wedge\beta(v_1\,,\cdots,v_r)\big|_Y=0.
\]
Since $d\!f\wedge\beta|_Y=d(f\beta)|_Y-fd\beta|_Y=d(f\beta)|_Y$ since $f|_Y=0$, hence by the formula of exterior derivative we get
\begin{align*}
df\wedge\beta(v_1\,,\cdots,v_r)\big|_Y &= d(f\beta)(v_1\,,\cdots,v_r)\big|_Y \\
&= \frac{1}{r}\sum_{i=1}^r (-1)^iv_i(f\beta(v_1\,\cdots,\hat{v}_i\,,\cdots,v_r))\big|_Y \\
&\quad +\frac{1}{r}\sum_{1\leqslant i<j\leqslant r}(-1)^{i+j}f\beta([v_i,v_j],v_1\,\cdots,\hat{v}_i\,,\cdots,\hat{v}_j\,,\cdots,v_r)\big|_Y \\
&= \frac{1}{r}\sum_{i=1}^r (-1)^iv_i(f\beta(v_1\,\cdots,\hat{v}_i\,,\cdots,v_r))\big|_Y \\
&= \frac{1}{r}\sum_{i=1}^r(-1)^i \left(v_i(f)\big|_Y\cdot\beta(v_1\,,\cdots,\hat{v}_i\,,\cdots,v_r)\big|_Y+f\cdot v_i(\beta(v_1\,,\cdots,\hat{v}_i\,,\cdots,v_r))\big|_Y\right) \\
&= \frac{1}{r}\sum_{i=1}^r(-1)^i v_i(f)\big|_Y\cdot\beta(v_1\,,\cdots,\hat{v}_i\,,\cdots,v_r)\big|_Y
\end{align*}
Hence $Y$ is invariant under $\eta$ around $x$ if and only if $v_i(f)\big|_Y=0$ for every local holomorphic function $f$ vanishing along $Y$ and for every $i=1,\cdots,r$. Since $Y$ is a $r$-dimensional holomorphic submanifold of $X$ at $x$, this condition is equivalent to saying that $T_{Y_{\reg}}=\scrF|_Y$ around $x$. In consequence, $Y$ is invariant under $\eta$ $\Leftrightarrow$ $Y_{\reg}$ is contained in a leaf of $\scrF$ $\Leftrightarrow$ $Y=Y_{\reg}$ is a leaf of $\scrF$ (noting that $\scrF$ is a regular foliation by our assumption).  
\end{proof}

To end this subsection, let us recall the following lemma concerning the extension of Pfaff fields to the normalization:
\begin{lemme}[{\cite[\S 4, Theorem C, Corollary, p.~170]{Sei66}},{\cite[Proposition 4.5]{ADK08}},{\cite[Lemma 3.7]{AD14}}]
\label{lemma_Pfaff-normalization}
Let $X$ be a normal complex variety and let $\eta:\Omega_X^r\to\scrL$ be a Pfaff field of rank $r$ on $X$ where $\scrL$ is reflexive sheaf of rank $1$ such that $\scrL^{[m]}$ is invertible for some $m\in\ZZ_{>0}$. Let $Y$ be a subvariety of $X$ invariant under $\eta$, whose normalization is denoted by $\nu=\nu_Y:\bar Y\to Y$. Then the morphism $(\Omega_Y^r)\ptensor[m]\to\left.\scrL^{[m]}\right|_Y$ extends (uniquely) to a generically surjective morphism $(\Omega_{\bar Y}^r)\ptensor[m]\to\left.\nu^\ast\left(\scrL^{[m]}\right|_Y\right)$.     
\end{lemme}


\subsection{Algebraically integrable foliations}
\label{ss_foliation_algebraic}
\begin{defn}
\label{defn_alg-foliation}
Let $X$ be a normal algebraic variety and let $\scrF$ be a foliation on $X$. A leaf of $\scrF$ is called {\it algebraic} if it is (Zariski) open in its Zariski closure. $\scrF$ is called {\it algebraically integrable} if every leaf of $\scrF$ is algebraic.
\end{defn}

\begin{rmq}
\label{rmk_example-alg-foliation}
A typical example of algebraically integrable foliation is one induced by a equidimensional fibre space, i.e. $\scrF=T_{X/Y}:=(\Omega_{X/Y}^1)^\ast$ with $\pi:X\to Y$ a proper equidimensional morphism between normal algebraic varieties with connected fibres. In fact, $\scrF$ is clearly reflexive, and by virtue of {\hyperref[lemma_involutive]{Lemma \ref*{lemma_involutive}}} one can easily prove that $\scrF$ is involutive by showing that $\scrF$ involutive over the smooth locus of $\pi$, hence $\scrF$ is a foliation on $X$. In addition, by \cite[Lemma 2.31]{CKT16} the canonical divisor of $\scrF$ is described by the following equality: 
\[
\scrO_X(K_{\scrF})=\det(\Omega_{X/Y}^1) \simeq \scrO_X( K_{X/Y}-\Ramification(\pi))\,,
\]
where the ramification divisor $\Ramification(\pi)$ is defined by:
\[
\Ramification(\pi)=\sum_{D\text{ prime divisor on }X}\max(0,\multiplicity_D(f^\ast f_\ast D)-1)\cdot D.
\]
Notice that $\pi$ is equidimensional, then $\pi\inv(Y\backslash Y_{\reg})$ is still of codimension $2$ in $X$, hence pullbacks of Weil divisors are well-defined (c.f. \cite[Construction 2.13]{CKT16}). 
\end{rmq}

The following proposition, due to \cite[Lemma 3.2]{AD13}, says that every algebraically integrable foliation on a normal projective variety is of the form as in {\hyperref[rmk_example-alg-foliation]{Remark \ref*{rmk_example-alg-foliation}}} up to pullback by a birational morphism. In particular, one can construct a family whose general fibre parametrizes the closure of a general leaf of $\scrF$.

\begin{prop}[c.f.{\cite[Lemma 3.2]{AD13}}]
\label{prop_family-leaves}
Let $X$ be a normal projective variety and let $\scrF$ a algebraically integrable foliation. Then there is a unique closed subvariety $T$ of $\Chow(X)$ whose general point parametrize the Zariski closure of a general leaf of $\scrF$. That is, let $U\subseteq T\times X$ be the universal cycle along with morphisms $\pi:U\to T$ and $\beta:U\to X$, then $\beta$ is birational and for a general point $t\in T$, $\beta(\pi\inv(t))\subseteq X$ is the Zariski closure of a leaf of $\scrF$.
\end{prop}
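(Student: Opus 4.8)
The plan is to invoke the classical theory of the Chow scheme together with the existence, for a general point, of a well-behaved ``family of leaves''. First I would recall that an algebraically integrable foliation $\scrF$ on $X$ has, through a general point $x\in X$, a unique leaf $L_x$ whose Zariski closure $\overline{L_x}$ is an irreducible subvariety of $X$ of dimension $r=\rank\scrF$; these closures form an irreducible family of cycles on $X$, so one obtains a well-defined point of $\Chow(X)$ for general $x$. Taking the Zariski closure of the locus of such points gives a closed irreducible subvariety $T\subseteq\Chow(X)$, and the universal cycle over $T$ gives $U\subseteq T\times X$ with projections $\pi\colon U\to T$ and $\beta\colon U\to X$. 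By construction, for general $t\in T$ the fibre $\beta(\pi^{-1}(t))$ is the closure of a leaf of $\scrF$, so $\beta$ is dominant; since through a general point of $X$ there passes exactly one leaf, $\beta$ is generically injective, hence birational (both $U$ and $X$ being varieties).

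The main point to establish carefully is the uniqueness of $T$ and the fact that it is precisely the closures of the general leaves that are parametrized — not, say, some larger subvarieties tangent to $\scrF$. For this I would argue as follows: if $T'\subseteq\Chow(X)$ is another closed subvariety with the stated property, then by definition the general members of $T$ and of $T'$ are both equal to $\overline{L_x}$ for $x$ in a dense subset of $X$ (since $\beta$ and the analogous $\beta'$ are birational, a general $x\in X$ lies on a unique member of each family, and that member must be $\overline{L_x}$ by the tangency condition and dimension count $\dim\overline{L_x}=r$). Hence $T$ and $T'$ share a dense set of points, and both being closed and irreducible (as the closure of the image of $X$ under $x\mapsto[\overline{L_x}]$), they coincide. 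The dimension count is what rules out parametrizing something larger: a subvariety $V$ with $\scrF|_V$ tangent to $T_V$ and $\dim V>r$ cannot have $\scrF|_V=T_V$, and if it properly contains a leaf then a general point of $V$ lies on a smaller leaf, contradicting generic uniqueness.

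I would organize the write-up in the order: (1) reduce to a dense open $X^\circ\subseteq X_{\reg}$ where $\scrF$ is a subbundle and the leaves are honest submanifolds, invoking the local Frobenius theorem and the algebraicity hypothesis to get that each leaf through a point of $X^\circ$ is Zariski-open in its closure of dimension exactly $r$; (2) use the universal property of $\Chow(X)$ (and, if one prefers a smooth parameter space for intermediate constructions, of the Hilbert scheme followed by a cycle map) to produce the morphism $X^\circ\to\Chow(X)$, $x\mapsto[\overline{L_x}]$, and let $T$ be the closure of its image; (3) form the universal cycle $U\subseteq T\times X$ and verify $\beta$ is birational via the generic-uniqueness-of-leaves argument; (4) prove uniqueness of $T$ as above. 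The step I expect to be the main obstacle is making precise, in the singular/non-proper setting of the leaves, that the map $x\mapsto[\overline{L_x}]$ is a genuine morphism on a dense open (so that its image closure $T$ is well-defined and the universal cycle behaves as claimed) — this is where one genuinely needs the algebraic integrability hypothesis and a careful appeal to the representability of $\Chow(X)$; everything after that is formal. Since this statement is cited from \cite[Lemma 3.2]{AD13}, in the paper itself I would likely just sketch (1)--(4) and refer to \emph{loc.\ cit.} for the technical core.
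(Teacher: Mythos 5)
Your outline identifies the right crux but does not close it, so there is a genuine gap. Everything in your proposal hinges on the claim that $x\mapsto[\overline{L_x}]$ is a morphism on a dense Zariski open subset of $X$ (so that its image is constructible and $T$, its closure, has a dense \emph{open} subset parametrizing leaf closures, making ``general $t$'' automatic by Chevalley). You explicitly flag this as the main obstacle and then defer it to \emph{loc.\ cit.}; but that is precisely the non-formal content of the statement. Without it, the assertion ``by construction, for general $t\in T$ the fibre $\beta(\pi^{-1}(t))$ is the closure of a leaf'' does not follow: a priori you only know that a \emph{dense subset} of $T$ parametrizes leaf closures, and being a leaf closure is not obviously preserved when you pass to the general point of the closure of that subset.

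The paper's proof avoids the morphism question altogether and supplies the missing ingredient by a different route. It takes $T_1$ to be the Zariski closure in $\Chow(X)$ of the \emph{set} of points parametrizing leaf closures, uses the fact that $\Chow(X)$ has only countably many components to extract the unique component $U$ of the universal cycle dominating $X$ (this also gives birationality of $\beta$, via uniqueness of the leaf through a general point), and then proves the ``general $t$'' statement by a tangency argument: the Pfaff field $\eta_{\scrF}$ pulls back to $T\times X$, its restriction to $U$ is shown to factor through $\Omega^r_{U/T}$ using {\hyperref[lemma_factorization-vanishing]{Lemma \ref*{lemma_factorization-vanishing}}} (torsion-freeness lets one check the factorization on the dense locus where the fibres are known to be leaves), so \emph{every} fibre $U_t$ is invariant under $\eta_{\scrF}$; generic flatness and \cite[Th\'eor\`eme 12.2.1]{EGA4-3} give that the general $U_t$ is irreducible and reduced, and {\hyperref[lemma_inv-subvar=leaf]{Lemma \ref*{lemma_inv-subvar=leaf}}} then identifies it as a leaf closure. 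If you want to keep your construction via $x\mapsto[\overline{L_x}]$, you would need to actually establish that this is a morphism (e.g.\ via a graph/flattening argument), or else import the invariance-under-the-Pfaff-field argument to justify the passage from a dense parametrizing subset to the general point of $T$.
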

\begin{proof}
First note that the Zariski closure of any leaf of $\scrF$ is irreducible and reduced, hence a subvariety of $X$. Let $T_1$ be the Zariski closure of the points of $\Chow(X)$ that parametrize leaves of $\scrF$, then $T_1$ is a reduced subscheme of $\Chow(X)$; since $\Chow(X)$ has only countably many components (c.f. \cite[\S I.3, 3.20 Definition, 3.21 Theorem, pp.~51-52]{Kollar96}), then so is $T_1$. Consider the universal cycle $U_1$ over $T_1$. Since the leaves are integral, the universal cycle over each component of $T_1$ is irreducible, hence the irreducible components of $U_1$ are in one-to-one correspondence with that of $T_1$, in particular $U_1$ also has only countably many irreducible components; now the natural map $U_1\to X$ is surjective, there is a unique component $U$ of $U_1$ which is dominant over $X$, and denote by $T$ the component of $T_1$ corresponding to $U$. Let $\pi:U\to T$ and $\beta:U\to X$ be the natural morphisms. 

\begin{center}
\begin{tikzpicture}[scale=1.2]
\node (AB) at (0,0) {$T\times X$};
\node (A) at (-2,-1.5) {$T$};
\node (B) at (2,-1.5) {$X$};
\node (C) at (0,1.5) {$U$};
\path[->,font=\scriptsize,>=angle 90]
(AB) edge node[below right]{$\pr_1$} (A)
(AB) edge node[below left]{$\pr_2$} (B)
(C) edge[bend right] node[above left]{$\pi$} (A)
(C) edge[bend left] node[above right]{$\beta$} (B);
\path[right hook->, >=angle 90]
(C) edge (AB);
\end{tikzpicture}
\end{center}

Now it remains to show that for $t\in T$ general, $\beta(\pi\inv(t))\subseteq X$ is the Zariski closure of a general leaf of $\scrF$. To this end, first note that: up to replace $X$ by $X^\circ$ the Zariski open of $X_{\reg}$ where $\scrF$ is a subbundle of $T_{X_{\reg}}$, $T$ by $T^\circ$ where $T^\circ$ is a Zariski open of $T_{\reg}$ whose points correspond to the cycles that are not contained in $X\backslash X^\circ$\,, and $U$ by $U\cap\pr_1\inv(T^\circ)\cap\pr_2\inv(X^\circ)$ we can assume that $X$ and $T$ are smooth and $\scrF$ regular (by definition, a leaf is always contained in $X^\circ$, c.f. {\hyperref[defn_foliation]{Definition \ref*{defn_foliation}}}). In particular $K_{\scrF}$ is a Cartier divisor, and $\scrF$ induces a Pfaff field $\eta=\eta_{\scrF}:\Omega_X^r\to \scrO_X(K_{\scrF})$ where $r=\rank\scrF$. In the sequel we will use  {\hyperref[lemma_inv-subvar=leaf]{Lemma \ref*{lemma_inv-subvar=leaf}}} to conclude; to this end, we will show that $\eta$ induces a Pfaff field on $T\times X$ whose restriction to $U$ factors through $\Omega_{U/T}^r$. In fact, $\eta$ induces a Pfaff field on $T\times X$
\[
\pr_2\inv\!\eta:\Omega_{T\times X}^r\simeq\bigwedge^r(\pr_1^\ast\Omega_T^1\oplus\pr_2^\ast\Omega_X^1)\xrightarrow{\text{projection}}\bigwedge^r\pr_2^\ast\Omega_X^1\simeq \pr_2^\ast\Omega_X^r\xrightarrow{\pr_2^\ast\eta}\scrO_{T\times X}(\pr_2^\ast K_{\scrF}).
\]
Then we will show that the restriction morphism $\left.\pr_2\inv\!\eta\right|_U:\left.\Omega_{T\times X}^r\right|_U\to \scrO_U(\beta^\ast K_{\scrF})$ factors through the composition map $\left.\Omega_{T\times X}^r\right|_U\to\Omega_U^r\twoheadrightarrow\Omega_{U/T}^r$ (c.f. \cite[\S II.8, Proposition 8.11, p.~176]{Har77}). By construction there is a Zariski dense subset of $T$ whose points parametrize the leaves of $\scrF$; then by the proof of {\hyperref[lemma_inv-subvar=leaf]{Lemma \ref*{lemma_inv-subvar=leaf}}} $\Ker(\left.\Omega_{T\times X}^r\right|_U\to\Omega_{U/T}^r)$ is annihilated by $\left.\pr_2\inv\!\eta\right|_U$ over a Zariski dense of $U$, thus is annihilated by $\left.\pr_2\inv\!\eta\right|_U$ everywhere on $U$ since $\scrO_U(\beta^\ast K_{\scrF})$ is torsion free. By {\hyperref[lemma_factorization-vanishing]{Lemma \ref*{lemma_factorization-vanishing}}} we see that $\left.\pr_2\inv\!\eta\right|_U$ factors through $\left.\Omega_{T\times X}\right|_U\to\Omega_{U/T}^r$. By the base change for Kähler differentials (\cite[\S II.8, Proposition 8.10, p.~175]{Har77}) for every $t\in T$ we have $\left.\Omega_{U/T}^r\right|_{U_t}\simeq\Omega_{U_t}^r$ and thus every $U_t$ is invariant under $\pr_2\inv\eta$, which amounts to say that every $\beta(U_t)$ is invariant under $\eta$. By the generic flatness (c.f. \cite[\S 22, pp.~156-159]{Mat70}) and \cite[Th\'eor\`eme 12.2.1(x), pp.~179-180]{EGA4-3}, for general $t\in T$, $U_t$ is irreducible and reduced (then so is $\beta(U_t)$), hence by {\hyperref[lemma_inv-subvar=leaf]{Lemma \ref*{lemma_inv-subvar=leaf}}}, $\beta(U_t)$ is the closure of a (general) leaf of $\scrF$.

\end{proof}
 
The morphism $\pi:U\to T$ constructed above is called the {\it family of leaves} of $\scrF$. In the following proposition we study the relation between the canonical divisor of $\scrF$ and that of the pullback of $\scrF$ to the family of leaves. 
\begin{prop}[{\cite[Remark 3.12]{AD14}}, {\cite{AD14err}}, {\cite[\S 3.10]{AD16}}]
\label{prop_canonical-family-leaves}
Let $X$ be a projective normal variety and $\scrF$ be a algebraically integrable foliation on $X$. Let $\pi:U\to T$ the family of leaves of $\scrF$ as constructed in {\hyperref[prop_family-leaves]{Proposition \ref*{prop_family-leaves}}}. Let $T'\to T$ be any surjective morphism with $T'$ normal and let $U'=U_{T'}$ be the pullback of the universal family $U$, whose normalization is denoted by $\nu=\nu_{U'}:\bar U'\to U'$. Let $\beta_{T'}$ (resp. $\pi_{T'}$, resp. $\bar\beta_{T'}$, resp. $\bar\pi_{T'}$) be the induced morphism $U'\to X$ (resp. $U'\to T'$, resp. $\bar U'\to X$, resp. $\bar U'\to T'$). Then 
\begin{itemize}
\item[\rm(a)] The pullback foliation $\bar\beta_{T'}\inv\!\scrF$ is equal to $T_{\bar U'/T'}:=(\Omega_{\bar U'/T'}^1)^\ast$\,;
\item[\rm(b)] Assume that $\scrF$ is $\QQ$-Gorenstein, then there is a canonical effective Weil $\QQ$-divisor $\Delta_{T'}$ on $\bar U'$ such that $K_{\bar\beta_{T'}\inv\!\scrF}+\Delta_{T'}\sim_{\QQ}\bar\beta_{T'}^\ast K_{\scrF}$. If $T'\to T$ is birational then $\Delta_{T'}$ is $\bar\beta_{T'}$-exceptional. 
\end{itemize}
\end{prop}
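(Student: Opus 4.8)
The plan is to establish (a) first and then to deduce (b) by pulling back the Pfaff field of $\scrF$ along $\bar\beta_{T'}$. For (a), note that both $\bar\beta_{T'}\inv\!\scrF$ and $T_{\bar U'/T'}:=(\Omega^1_{\bar U'/T'})^\ast$ are saturated subsheaves of the reflexive sheaf $T_{\bar U'}$: the former by {\hyperref[defn_pullback-foliation]{Proposition-Definition \ref*{defn_pullback-foliation}}}, the latter because $T_{\bar U'}/T_{\bar U'/T'}$ embeds into the torsion free sheaf $\SheafHom[\scrO_{\bar U'}](\bar\pi_{T'}^\ast\Omega^1_{T'},\scrO_{\bar U'})$. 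Two saturated subsheaves of a torsion free sheaf which agree at the generic point are equal: if $A,B$ are such, then $(A+B)/A\hookrightarrow T_{\bar U'}/A$ is torsion free and generically zero, hence zero, so $B\subseteq A$, and symmetrically; thus it suffices to exhibit a dense Zariski open $\bar U'_0\subseteq\bar U'$ over which the two sheaves coincide. I would take $\bar U'_0:=\bar\beta_{T'}\inv(X^\circ)\cap\bar\pi_{T'}\inv(T'_0)\cap(\bar U')_{\reg}$, where $X^\circ\subseteq X_{\reg}$ is the locus over which $\scrF$ is a subbundle of $T_X$ and $\bar\beta_{T'}$ is an isomorphism (its complement has codimension $\geqslant 2$, c.f. {\hyperref[defn_foliation]{Definition \ref*{defn_foliation}}}), and $T'_0\subseteq T'$ is a dense open over which $\bar\pi_{T'}$ is smooth with fibres mapping to closures of leaves of $\scrF$; such a $T'_0$ exists because by {\hyperref[prop_family-leaves]{Proposition \ref*{prop_family-leaves}}} the general fibre of $\pi:U\to T$ is a leaf closure and $\beta$ (hence $\bar\beta_{T'}$) is an isomorphism near a general leaf closure, and these properties persist after the surjective base change $T'\to T$ and the normalization over a dense open. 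Over $\bar U'_0$ the pullback foliation $\bar\beta_{T'}\inv\!\scrF$ is, by construction, $\scrF|_{X^\circ}$ transported through $\bar\beta_{T'}$, and since the leaves of $\scrF$ there correspond to the fibres of $\bar\pi_{T'}$, its tangent sheaf is $T_{\bar U'_0/T'_0}=T_{\bar U'/T'}|_{\bar U'_0}$; this proves (a).

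For (b), after rescaling I may assume $\scrF$ is $\QQ$-Gorenstein with $\det\!\scrF^{[m]}$ invertible for some $m>0$, and I will prove the statement for $mK_\scrF$ and divide by $m$. The $m$-th power of the Pfaff field of $\scrF$ (c.f. {\hyperref[defn_sing-foliation]{Definition \ref*{defn_sing-foliation}}} and {\hyperref[defn_inv-subvar]{Definition \ref*{defn_inv-subvar}}}) is a morphism $\eta:(\Omega_X^r)\ptensor[m]\to\scrO_X(mK_\scrF)$ with $r=\rank\!\scrF$, which over $X^\circ$ is the canonical surjection onto $(\wedge^r\!\scrF^\ast)\ptensor[m]$. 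Under the isomorphism $\bar\beta_{T'}|_{\bar U'_0}$ of (a) one has $\scrF=T_{\bar U'_0/T'_0}$, so $\wedge^r\!\scrF^\ast$ becomes $\Omega^r_{\bar U'_0/T'_0}$ and $\eta$ becomes the canonical map $\Omega^r_{\bar U'_0}\to\Omega^r_{\bar U'_0/T'_0}$; in particular $\bar\beta_{T'}^\ast\eta$ agrees over $\bar U'_0$ with the natural morphism $\bar\beta_{T'}^\ast(\Omega_X^r)\ptensor[m]\to(\Omega^r_{\bar U'/T'})\ptensor[m]$ obtained by wedging and taking the $m$-th power of $\bar\beta_{T'}^\ast\Omega^1_X\to\Omega^1_{\bar U'}\to\Omega^1_{\bar U'/T'}$. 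By {\hyperref[lemma_factorization-vanishing]{Lemma \ref*{lemma_factorization-vanishing}}}, the target of $\bar\beta_{T'}^\ast\eta$ being invertible, this forces $\bar\beta_{T'}^\ast\eta$ to factor through that natural morphism; passing to images and reflexive hulls and invoking (a) (which identifies the reflexive hull of $(\Omega^r_{\bar U'/T'})\ptensor[m]$ with $\scrO_{\bar U'}(mK_{\bar\beta_{T'}\inv\!\scrF})$) one obtains a morphism
\[
\theta:\scrO_{\bar U'}(mK_{\bar\beta_{T'}\inv\!\scrF})\longrightarrow\bar\beta_{T'}^\ast\scrO_X(mK_\scrF)
\]
that restricts to an isomorphism over $\bar U'_0$. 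Being a nonzero morphism of rank-one reflexive sheaves, its vanishing is an effective Weil divisor, and dividing by $m$ gives an effective Weil $\QQ$-divisor $\Delta_{T'}$ with $K_{\bar\beta_{T'}\inv\!\scrF}+\Delta_{T'}\sim_\QQ\bar\beta_{T'}^\ast K_\scrF$, canonical since $\theta$ is; alternatively $\theta$ may be built fibrewise from {\hyperref[lemma_inv-subvar=leaf]{Lemma \ref*{lemma_inv-subvar=leaf}}} and {\hyperref[lemma_Pfaff-normalization]{Lemma \ref*{lemma_Pfaff-normalization}}} applied to the invariant subvarieties $\bar\beta_{T'}(\bar U'_{t'})$. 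Finally, if $T'\to T$ is birational then $\bar\beta_{T'}$ is birational (by {\hyperref[prop_family-leaves]{Proposition \ref*{prop_family-leaves}}}), and $\theta$ is an isomorphism wherever $\bar\beta_{T'}$ is an isomorphism and $\scrF$ is regular, so $\Supp\Delta_{T'}\subseteq\Exceptional(\bar\beta_{T'})\cup\bar\beta_{T'}\inv(\Sing\scrF)$, whose image in $X$ has codimension $\geqslant 2$; hence $\Delta_{T'}$ is $\bar\beta_{T'}$-exceptional.

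The part I expect to be delicate is the bookkeeping with the normalization $\nu:\bar U'\to U'$ and with reflexive hulls in the construction of $\theta$: the natural map $\bar\beta_{T'}^\ast\Omega^r_X\to\Omega^r_{\bar U'/T'}$ need not be surjective where $\bar\beta_{T'}$ ramifies, so one must work with its image and with reflexive hulls and check these do not affect the resulting linear equivalence; one must also verify that $\theta$ is globally defined on $\bar U'$ and coincides with the fibrewise maps from {\hyperref[lemma_Pfaff-normalization]{Lemma \ref*{lemma_Pfaff-normalization}}}, which calls for base change for K\"ahler differentials together with the codimension estimates for the loci discarded above, both in $\bar U'$ and in $X$. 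By contrast, the identification in (a) and the verification that $\theta$ is a generic isomorphism are comparatively routine.
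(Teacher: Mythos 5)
Your proposal is correct and follows essentially the same strategy as the paper: part (a) by generic agreement of two saturated subsheaves of $T_{\bar U'}$ (the paper phrases this as the uniqueness clause in {\hyperref[defn_pullback-foliation]{Proposition-Definition \ref*{defn_pullback-foliation}}}), and part (b) by factoring the ($m$-th power of the) Pfaff field of $\scrF$ through the relative differentials to produce an injection of rank-one reflexive sheaves whose cokernel defines $\Delta_{T'}$. The one organizational difference concerns exactly the point you flag as delicate: rather than pulling $\eta$ back directly along $\bar\beta_{T'}$ and juggling images and reflexive hulls where $\bar\beta_{T'}^\ast\Omega_X^r\to\Omega^r_{\bar U'/T'}$ fails to be surjective, the paper first induces the Pfaff field $\pr_2\inv\eta$ on $T'\times X$, checks via {\hyperref[lemma_factorization-vanishing]{Lemma \ref*{lemma_factorization-vanishing}}} that $U'$ is invariant under it (here the restriction maps $\Omega^r_{T'\times X}|_{U'}\to\Omega^r_{U'}\to\Omega^r_{U'/T'}$ \emph{are} surjective, being induced by a closed immersion), and then invokes {\hyperref[lemma_Pfaff-normalization]{Lemma \ref*{lemma_Pfaff-normalization}}} to extend across the normalization $\nu:\bar U'\to U'$ --- precisely the ``alternative'' route you mention at the end. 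That detour is the cleaner way to discharge your bookkeeping worry; the rest of your argument, including the exceptionality of $\Delta_{T'}$ via $\Supp\Delta_{T'}\subseteq\Exceptional(\bar\beta_{T'})\cup\bar\beta_{T'}\inv(X\backslash X^\circ)$, matches the paper's (terser) conclusion.
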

\begin{center}
\begin{tikzpicture}[scale=2.5]
\node (A) at (0,0) {$T$};
\node (A1) at (0,1) {$T\times X$};
\node (A2) at (0,1.6) {$U$};
\node (B) at (-1,0) {$T'$};
\node (B1) at (-1,1) {$T'\times X$};
\node (B2) at (-1,1.6) {$U'=U_{T'}$};
\node (B3) at (-1,2.2) {$\bar U'$};
\node (C) at (1,1) {$X$};
\path[->,font=\scriptsize,>=angle 90]
(A1) edge node[right]{$\pr_1$} (A)
(B1) edge node[left]{$\pr_1$} (B)
(A2) edge node[above right]{$\beta$} (C)
(A1) edge node[below]{$\pr_2$} (C)
(B3) edge node[right]{$\nu=\nu_{U'}$} (B2)
(B) edge (A)
(B1) edge (A1)
(B2) edge (A2)
(B3) edge[bend right=60] node[left]{$\bar\pi_{T'}$} (B)
(B3) edge[bend left] node[above right]{$\bar\beta_{T'}$} (C);
\path[right hook->, >=angle 90]
(A2) edge (A1)
(B2) edge (B1);
\end{tikzpicture}
\end{center}

\begin{proof}
First notice that since $\nu$ is a finite morphism, $\bar\pi_{T'}:\bar U'\to T'$ is still equidimensional and hence $T_{\bar U'/T'}$ is a foliation on $\bar U'$. Then (a) is clear : in fact, since $T'\to T$ is surjective, by {\hyperref[prop_family-leaves]{Proposition \ref*{prop_family-leaves}}} there is a Zariski open of $T'$ over which the fibres of $\pi_{T'}:U'\to T'$ are leaves of $\scrF$, hence $\bar\beta_{T'}\inv\!\scrF$ and $T_{\bar U'/T'}$ coincide over a Zariski open of $\bar U'$, then by the uniqueness in {\hyperref[defn_pullback-foliation]{Proposition-Definition \ref*{defn_pullback-foliation}}} a fortiori $\bar\beta_{T'}\inv\!\scrF=T_{\bar U'/T'}$. Now turn to the proof of (b). Consider the Pfaff field associated to $\scrF$
\[
\eta:=\eta_\scrF:\Omega_X^r\to\scrO_X(K_{\scrF}),
\]
as in the proof of {\hyperref[prop_family-leaves]{Proposition \ref*{prop_family-leaves}}} $\eta$ induces a Pfaff field on $T'\times X$ ($T'\times X$ is normal)
\[
\pr_2\inv\!\eta:\Omega_{T'\times X}^r\to \scrO_{T'\times X}(\pr_2^\ast K_{\scrF}),
\]
where $\pr_2^\ast$ above denotes the pullback of Weil divisors (or algebraic cycles) by equidimensional (or flat) morphisms (c.f.  \cite[Construction 2.13]{CKT16} or \cite[\S 1.7, pp.~18-21]{Ful84}); moreover, the restriction morphism
\[
\left.(\pr_2\inv\!\eta)\ptensor[m]\right|_{U'}:\left.(\Omega_{T'\times X}^r)\ptensor[m]\right|_{U'}\to \scrO_{U'}(m\beta_{T'}^\ast K_{\scrF})
\]
factors through $\left.(\Omega_{T'\times X}^r)\ptensor[m]\right|_{U'}\to(\Omega_{U'/T'}^r)\ptensor[m]$ where $m$ is a positive integer such that $mK_{\scrF}$ is Cartier, in particular $U'$ is invariant under $\pr_2\inv\!\eta$. Now by {\hyperref[lemma_Pfaff-normalization]{Lemma \ref*{lemma_Pfaff-normalization}}} we get a generically surjective morphism 
\[
(\Omega_{\bar U'}^r)\ptensor[m]\to\scrO_{\bar U'}(m\bar\beta_{T'}^\ast K_{\scrF})
\]
which factors through the natural surjection $\Omega_{\bar U'}\twoheadrightarrow\Omega_{\bar U'/T'}$. Then we get an injection of rank $1$ reflexive sheaves
\[
\det(\Omega_{\bar U'/T'}^1)\ptensor[m]\hookrightarrow \scrO_{\bar U'}(m\bar\beta_{T'}^\ast K_{\scrF}),
\]
hence there is a unique effective Weil $\QQ$-divisor $\Delta_{T'}$ ($m\Delta_{T'}$ is the Weil divisor defined by this injection) such that
\[
K_{\bar\beta_{T'}\inv\!\scrF}+\Delta_{T'}\sim_{\QQ} \bar\beta_{T'}^\ast K_{\scrF}.
\]
Moreover, combining this with {\hyperref[rmk_example-alg-foliation]{Remark \ref*{rmk_example-alg-foliation}}} we get
\[
K_{\bar U'/T'}-\Ramification(\bar\pi_{T'})+\Delta_{T'}\sim_{\QQ}\bar\beta_{T'}^\ast K_{\scrF}.
\]
If $T'\to T$ is birational, then by {\hyperref[prop_family-leaves]{Proposition \ref*{prop_family-leaves}}} $\Delta_{T'}$ is $\bar\beta_{T'}$-exceptional.
\end{proof}

We close this subsection by considering algebraically integrable foliations that are weakly regular (c.f. {\hyperref[defn_sing-foliation]{Definition \ref*{defn_sing-foliation}}}). It is clear that a foliation induced by a equidimensional fibre space (c.f. {\hyperref[rmk_example-alg-foliation]{Remark \ref{rmk_example-alg-foliation}}}) is weakly regular, the following result says that the converse is true for (weakly regular) foliations with canonical singularities over $\QQ$-factorial klt projective varieties.
\begin{thm}[{\cite[Theorem 6.1]{Druel18b}}]
\label{thm_weak-reg-foliation}
Let $X$ be a (normal) $\QQ$-factorial projective variety with klt singularities, and let $\scrG$ be a
weakly regular algebraically integrable foliation on $X$. Suppose in addition that $\scrG$ has canonical singularities.
Then $\scrG$ is induced by an equidimensional fibre space $\psi:X\to Y$ onto a normal projective variety $Y$. Moreover, there exists an open subset $Y^\circ$ with complement of codimension $\geqslant 2$ in Y such that $\psi\inv(y)$ is irreducible for any $y\in Y^\circ$.
\end{thm}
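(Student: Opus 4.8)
The plan is to construct the fibration $\psi$ from the family of leaves of $\scrG$ and then to use the two hypotheses---weak regularity and canonicity---to promote it to an everywhere-defined, equidimensional morphism onto a normal variety. First I would apply Proposition~\ref{prop_family-leaves} to obtain the family of leaves $\pi\colon U\to T$ together with the birational morphism $\beta\colon U\to X$ whose general fibres are the Zariski closures of general leaves of $\scrG$. Applying Proposition~\ref{prop_canonical-family-leaves} with $T'$ the normalization of $T$ (and, if necessary, the base of a Stein factorization of the induced map, so that the fibration has connected fibres), and passing to the normalization $\bar U$ of the universal family pulled back to $T'$, gives a normal projective variety $Y:=T'$, an equidimensional fibre space $\bar\pi\colon\bar U\to Y$, a birational morphism $\bar\beta\colon\bar U\to X$ with $\bar\beta^{-1}\scrG\simeq T_{\bar U/Y}$, and---using that $\scrG$ is $\QQ$-Gorenstein, as is implicit in its having canonical singularities---an effective $\bar\beta$-exceptional $\QQ$-divisor $\Delta$ with $K_{T_{\bar U/Y}}+\Delta\sim_{\QQ}\bar\beta^\ast K_{\scrG}$. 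Then $\psi\colon X\dashrightarrow Y$ is the rational map $\bar\pi\circ\bar\beta^{-1}$; it induces $\scrG$ by construction, and it remains to show that it is a morphism, that it is equidimensional, and that its fibres are irreducible over a big open subset of $Y$.

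The core of the argument is the promotion of $\psi$ to an equidimensional morphism, and this is exactly where $\Sing(\scrG)=\varnothing$ must be combined with the canonicity of $\scrG$. The heuristic is that along any would-be indeterminacy locus of $\psi$, or along the image of any $\bar\beta$-contracted divisor, the leaves of $\scrG$ must degenerate---they either collapse or branch---and translating this degeneration through the Pfaff field $\eta_{\scrG}$, using its pullback to the normalized family (Lemma~\ref{lemma_Pfaff-normalization}) together with the exceptionality of $\Delta$, would force $\eta_{\scrG}$ to drop rank there, contradicting weak regularity. Concretely, I would first show that $\bar\beta$ is small by this mechanism; then use that a weakly regular algebraically integrable foliation admits, near every point of $X$, a local fibration structure compatible with $\scrG$ (a Reeb-stability--type statement; in the singular setting it relies on the characterization of invariant subvarieties as leaf closures, Lemma~\ref{lemma_inv-subvar=leaf}, and on adjunction-type results showing the leaf closures are klt), and glue these local fibrations, using the properness of the leaf closures, into a morphism $\psi\colon X\to Y$. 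Equidimensionality then follows because a positive-dimensional jump in a fibre of $\psi$ would reappear as an extra $\bar\pi$-vertical divisor in $\bar U$, which one excludes by a movable-curve intersection argument using the relation $K_{T_{\bar U/Y}}+\Delta\sim_{\QQ}\bar\beta^\ast K_{\scrG}$ and the canonicity of $\scrG$, in the spirit of the proof of Lemma~\ref{lemme_anticanonique-nef-plat}.

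Finally, the general fibre of $\psi$ is the closure of a single leaf, hence irreducible and reduced, so the set $B\subset Y$ over which a fibre is reducible or non-reduced is a proper closed subset; I would bound $\codim_Y B\geqslant 2$ by noting that a codimension-one component of $B$ would pull back to a $\psi$-vertical divisor of $X$ along which $\eta_{\scrG}$ is not surjective---again impossible since $\Sing(\scrG)=\varnothing$---and by purity of the branch locus of the equidimensional, generically smooth map $\psi$. The step I expect to be the main obstacle is the one carried out in the second paragraph: showing that weak regularity alone forces $\bar\beta$ to be small and $\psi$ to extend to a morphism. This is delicate because $X$ is only klt, so $\scrG$ may fail to be a subbundle of $T_X$ as a sheaf along $\Sing(X)$ even though it is weakly regular in the Pfaff-field sense, and one must interleave the reflexive-sheaf and Pfaff-field formalisms with the full toolbox of \S\ref{sec_foliation} (invariance of subvarieties, extension of Pfaff fields to normalizations, and the structure theory of algebraically integrable foliations), as carried out in the reference cited in the statement.
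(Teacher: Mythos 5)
You should first be aware that the paper does not prove this statement at all: it is quoted verbatim from Druel, \cite[Theorem 6.1]{Druel18b}, and used as a black box in \S\ref{sec_MRC}. So there is no in-paper proof to match; what can be assessed is whether your outline would actually close into a proof. Your setup is the right one and matches the beginning of Druel's argument: the family of leaves $\pi\colon U\to T$ from Proposition~\ref{prop_family-leaves}, the passage to the normalization $\bar U'\to T'$, the identification $\bar\beta^{-1}\scrG\simeq T_{\bar U'/T'}$ and the relation $K_{T_{\bar U'/T'}}+\Delta\sim_{\QQ}\bar\beta^\ast K_{\scrG}$ with $\Delta$ effective and $\bar\beta$-exceptional from Proposition~\ref{prop_canonical-family-leaves}.

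The genuine gap is the step you yourself flag as the main obstacle, and it is not a technical nuisance but the entire content of the theorem. You assert that a degeneration of leaf closures along the indeterminacy locus ``would force $\eta_{\scrG}$ to drop rank there, contradicting weak regularity,'' but no mechanism is given for why a collapsing or branching family of leaf closures produces a point where $\Omega_X^{[r]}\to\det\scrG^\ast$ fails to be surjective; the Pfaff field is a sheaf-theoretic object on $X$ and does not see the fibres of $\bar\pi$ directly, so this implication has to be manufactured (in Druel's proof it comes from a careful study of the symmetric form / the sheaf $\scrO_X(-K_{\scrG})\hookrightarrow\Omega_X^{[r]}$ and its restriction to the exceptional locus, combined with $\QQ$-factoriality and canonicity — neither hypothesis appears in your heuristic at this point). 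Worse, the ``Reeb-stability--type statement'' you invoke — that a weakly regular algebraically integrable foliation admits local fibration structures near every point which can then be glued — is essentially equivalent to the conclusion being proved, so as written the core of the argument is circular. Two smaller but real issues: equidimensionality of $\psi$ does not reduce cleanly to ``an extra $\bar\pi$-vertical divisor in $\bar U'$'' (a jumping fibre of $\psi$ need not be divisorial in $\bar U'$ before one already knows $\bar\beta$ is small), and the irreducibility of $\psi^{-1}(y)$ in codimension one does not follow from purity of the branch locus — one must also exclude multiple fibres in codimension one, which requires the canonicity of $\scrG$ via a semistability argument in the spirit of Proposition~\ref{prop_bir-geometry-psi}(e), not weak regularity alone.
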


In the study of algebraically integrable foliations, the family of leaves is a quite useful tool which permits the enter of the algebro-geometric methods; nonetheless, by passing to the family of leaves, one loses the control of the singularities. The above {\hyperref[thm_weak-reg-foliation]{Theorem \ref*{thm_weak-reg-foliation}}} says that weakly regular foliations with canonical singularities on a projective variety $X$ with mild singularities have the advantage that there is no need to pass to the family of leaves (since it is isomorphic to $X$ itself) and hence there is no loss of the control of the singularities of $X$. C.f. also {\hyperref[rmk_lemma_num-triv-canonical-weak-reg]{Remark \ref*{rmk_lemma_num-triv-canonical-weak-reg}}}.

\subsection{Foliations transverse to holomorphic submersions}
In this subsection we consider regular foliations which are transverse to a smooth fibration and we recall the important (analytic version of) classical Ehresmann theorem. Let $f: V\to W$ be a smooth morphism (holomorphic submersion) between complex manifolds and let $\scrF$ be a regular foliation on $V$. Then $\scrF$ is said to be {\it transverse to} $f$ if the following two conditions are verified: 
\begin{itemize}
\item[(i)] The tangent bundle sequence of $f$ gives rise to a direct decomposition $T_V\simeq T_{V/W}\oplus\scrF$.
\item[(i\!i)] The restriction of $f$ to any leaf of $\scrF$ is an \'etale (not necessarily finite) cover.
\end{itemize}
By \cite[\S V.2, Proposition 1, pp.~91-92]{CLN85} (by \cite[\S 9.1, Proposition 9.5, pp.~209-210]{Voi02} $f$ can be viewed as a $\mathscr{C}^\infty$ fibre bundle), if $f$ is proper then the condition (i) implies (i\!i). The most important result for these foliations is the following analytic version of the classical Ehresmann theorem :
\begin{thm}[{\cite[3.17.Theorem]{Hor07}}]
\label{thm_classical-Ehresmann}
Let $f:V\to W$ be a holomorphic submersion between complex manifolds and let $\scrF$ be a regular foliation on $V$ transverse to $f$. Suppose that $W$ and the general fibre $F$ of $f$ are connected. Then $f$ is an analytic fibre bundle. Moreover, there is a representation $\rho:\pi_1(W)\to\Aut(F)$ such that $V$ is biholomorphic to $(\widetilde W\times F)/\pi_1(W)$ where $\pi_1(W)$ acts on $\widetilde W\times F$ via $\alpha:(y,s)\to(\alpha(y),\rho(\alpha)(s))$ where $\widetilde W\to W$ denotes the universal cover of $W$. In other words, $f$ is a locally constant fibration.   
\end{thm}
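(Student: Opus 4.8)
The statement to prove is the analytic Ehresmann theorem (\hyperref[thm_classical-Ehresmann]{Theorem \ref*{thm_classical-Ehresmann}}): given a holomorphic submersion $f\colon V\to W$ and a regular foliation $\scrF$ on $V$ transverse to $f$, with $W$ and the general fibre $F$ connected, one wants to conclude that $f$ is a locally constant fibration. The plan is to mimic the proof of the classical $\mathscr{C}^\infty$ Ehresmann theorem, using the transversality of $\scrF$ to lift paths from $W$ to $V$ (this is the holomorphic analogue of using a connection on the fibre bundle). The first step is to set up the path-lifting: given a smooth path $\gamma\colon[0,1]\to W$ and a point $v_0\in f^{-1}(\gamma(0))$, I would use the local structure of a foliation transverse to a submersion — namely that near any point of $V$ one has holomorphic coordinates in which $f$ is a linear projection and the leaves of $\scrF$ are the ``horizontal'' slices — to lift $\gamma$ uniquely to a path $\tilde\gamma$ in $V$ with $\tilde\gamma(0)=v_0$ which is everywhere tangent to $\scrF$ and satisfies $f\circ\tilde\gamma=\gamma$. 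Uniqueness is immediate from the ODE nature of the lifting; existence on all of $[0,1]$ requires knowing the lifted path does not ``escape to infinity'', which is where properness (or, more precisely, the completeness guaranteed by compactness of the fibre) enters.

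The second step is to show that this lifting depends only on the homotopy class of $\gamma$ rel endpoints, by lifting homotopies: a homotopy of paths in $W$ lifts to a homotopy in $V$ tangent to $\scrF$ in the first variable, again by the local product structure, and so the endpoint of the lifted path is unchanged under homotopy. This produces, fixing a base point $w_0\in W$ and setting $F:=f^{-1}(w_0)$, a monodromy representation $\rho\colon\pi_1(W,w_0)\to\operatorname{Aut}(F)$, where $\rho(\alpha)$ sends $v_0\in F$ to the endpoint of the lift of a loop representing $\alpha$ starting at $v_0$; one checks $\rho(\alpha)$ is biholomorphic (its inverse is the monodromy of $\alpha^{-1}$) by the holomorphic dependence of solutions of the lifting ODE on initial conditions. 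The third step is to construct the isomorphism $V\simeq(\widetilde W\times F)/\pi_1(W)$: sending a pair $(y,s)$ with $y\in\widetilde W$ and $s\in F$ to the endpoint of the lift, starting at $s$, of the projection of any path in $\widetilde W$ from the base point to $y$ — this is well-defined by the homotopy invariance from Step 2, is $\pi_1(W)$-equivariant for the diagonal action $\alpha\cdot(y,s)=(\alpha(y),\rho(\alpha)(s))$, descends to a holomorphic bijection from $(\widetilde W\times F)/\pi_1(W)$ to $V$, and is biholomorphic because locally (over a simply connected chart of $W$) it is just the trivialization produced by the local product structure of $\scrF$. In particular $f$ is a locally trivial analytic fibre bundle with fibre $F$, hence $F$ — being all fibres — is connected, and $f$ is a locally constant fibration in the sense of \hyperref[defn_local-const-fibration]{Definition \ref*{defn_local-const-fibration}}.

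The main obstacle is the global existence of the path lifting, i.e.\ ensuring the lifted path $\tilde\gamma$ is defined on the whole interval $[0,1]$ rather than only on a maximal subinterval. Locally the lift always exists by the foliation chart, but a priori it could leave every compact set in finite parameter time. This is exactly where the hypothesis that $f$ is proper (equivalently, that the fibre is compact — note the remark in the excerpt, via \cite[\S V.2, Proposition 1]{CLN85} and \cite[\S 9.1, Proposition 9.5]{Voi02}, that properness makes $f$ a $\mathscr{C}^\infty$ fibre bundle and makes condition (i) imply (ii)) is used: properness of $f$ guarantees that $f^{-1}(\gamma([0,1]))$ is compact, so the lifted path stays in a compact set and the usual ODE continuation argument extends it to all of $[0,1]$. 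In the application in this paper, $f$ will be a projective (hence proper) morphism, so this causes no difficulty; once global lifting is in hand, the remaining homotopy-invariance and trivialization arguments are formal and identical to the classical case, carried out in the holomorphic category using that the foliation charts are holomorphic and solutions of holomorphic ODEs depend holomorphically on parameters and initial data.
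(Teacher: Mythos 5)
The paper does not actually prove this statement: it is quoted from H\"oring and the proof is deferred to Camacho--Lins Neto \cite[\S V.3, Theorems 1 and 3]{CLN85}. Your proposal reconstructs precisely that standard argument (path lifting tangent to $\scrF$, homotopy invariance, monodromy representation, descent of $\widetilde W\times F$), so the approach is the right one and matches the cited proof.

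One point needs correcting. You identify global existence of the lifted path as the main obstacle and resolve it by invoking properness of $f$; but properness is not a hypothesis of the theorem as stated. The hypothesis that actually does this work is condition (i\!i) in the paper's definition of ``transverse to $f$'': the restriction of $f$ to every leaf of $\scrF$ is an \'etale (covering) map onto $W$. Given $v_0$ over $\gamma(0)$, the lift of $\gamma$ tangent to $\scrF$ is exactly the lift of $\gamma$ through the covering map $f|_L\colon L\to W$, where $L$ is the leaf through $v_0$; unique path lifting and homotopy lifting for covering maps give Steps 1 and 2 with no compactness needed. Properness enters only in the preliminary remark that for proper $f$ condition (i) already implies (i\!i); if you assume properness instead of (i\!i) you prove the theorem only in that special case (sufficient for the application in this paper, but not the statement as written). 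With that substitution your argument is complete and is the same as the one in the reference the paper cites.
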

See \cite[\S V.3, Theorem 1 and Theorem 3, pp.~91-95]{CLN85} for the proof. The above statement is taken from \cite[3.17.Theorem]{Hor07}.
\section{Horizontal divisors and base changes}
\label{sec_horizon}
Let $V$ be a complex variety which is fibred over another complex variety $W$ (c.f. {\hyperref[defn_local-const-fibration]{Definition \ref*{defn_local-const-fibration}}}). By looking at the dimension of the image of its components in $W$, a (Weil) divisor on $V$ can be divided into a sum of the horizontal part plus the vertical part (c.f. \cite[\S 2.1.C, Proof of Corollary 2.1.38, p.~138]{Laz04}). The aim of this appendix is to show that the notion of "horizontality" for divisors on an equidimensional fibre space is stable under base change. This result is of course well known to experts, we nevertheless provide a detailed account here for the convenience of the readers. The main result is the following:
\begin{prop}
\label{prop_horizon-base-change}
Let $f:V\to W$ be an equidimensional fibre space between complex varieties with $W$ quasi-projective and let $D$ be a Cartier divisor on $V$ which is horizontal with respect to $V$, then for any morphism $g: W'\to W$, the pullback divisor $g_V^\ast\!D$ is horizontal with respect to the base change morphism $f':V'\to W'$ where $V':=V\underset{W}{\times}W'$ and $g_V:V'\to V$ is the natural morphism.  
\end{prop}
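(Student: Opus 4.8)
\textbf{Proof proposal for Proposition \ref{prop_horizon-base-change}.}

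The plan is to reduce the statement to the following purely set-theoretic fact about supports: a prime divisor $E'$ on $V'$ is $f'$-horizontal if and only if $f'(E')$ is dense in (a component of) $W'$, equivalently $\dim f'(E')=\dim W'-1$ is impossible, i.e. $f'(E')=W'$ up to passing to the component. Since $g_V^\ast D$ is a Cartier divisor whose support is contained in $g_V^{-1}(\Supp D)$, it suffices to prove that $g_V^{-1}(\Supp D)$ contains no $f'$-vertical prime divisor, and then to observe that the horizontal/vertical decomposition of $g_V^\ast D$ is governed by the images of the components of its support. So the first step is to write $D=D^{\mathrm{hor}}$ (by hypothesis $D^{\mathrm{vert}}=0$), let $D_1,\dots,D_k$ be the prime components of $\Supp D$, each dominating $W$, and analyze $g_V^{-1}(D_i)$.

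The heart of the argument is a dimension count on the fibre product. First I would reduce to $W$, $W'$ irreducible (work component by component on $W'$, and note $g$ maps each component of $W'$ into a component of $W$; if the image is not dense the relevant base change is over a proper closed subset and one argues there, but in fact the cleanest route is: the statement is local on $W$, and $f$ being equidimensional means every component of every fibre has dimension exactly $d:=\dim V-\dim W$). Fix a prime component $D_i$ of $\Supp D$; it dominates $W$, and since $\dim D_i=\dim V-1$, the generic fibre of $f|_{D_i}\colon D_i\to W$ has dimension $d-1$. Now $g_V^{-1}(D_i)=D_i\times_W W'$. By the theorem on the dimension of fibres of the base-changed morphism (each fibre of $f'|_{g_V^{-1}(D_i)}\colon g_V^{-1}(D_i)\to W'$ over a point $w'$ equals the fibre of $f|_{D_i}$ over $g(w')$, hence has dimension $\le d-1$), every component of $g_V^{-1}(D_i)$ that dominates a component of $W'$ has dimension $\le \dim W'+d-1$, while components of $g_V^{-1}(D_i)$ \emph{not} dominating $W'$ would have to sit inside $g_V^{-1}(f^{-1}(Z))$ for some proper closed $Z\subsetneq W$; but $g_V^{-1}(f^{-1}(Z))=(f')^{-1}(g^{-1}(Z))$ and $g^{-1}(Z)$ is a proper closed subset of $W'$, so by equidimensionality of $f'$ (which follows from equidimensionality of $f$ by the fibrewise identification above, or from miracle-flatness-type arguments once one knows $f'$ is again a fibre space — here I only need the fibre-dimension bound $\le d$) such a set has dimension $\le \dim W'-1+d<\dim V'-1$. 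Hence no component of $g_V^{-1}(D_i)$ is an $f'$-vertical divisor: every component of $\Supp(g_V^\ast D)$ either dominates $W'$ (horizontal) or has codimension $\ge 2$ in $V'$, so does not contribute a divisorial component at all. Therefore $g_V^\ast D$, as a divisor, has zero vertical part, i.e. it is horizontal with respect to $f'$.

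One technical point that must be handled carefully is that $V'=V\times_W W'$ need \emph{not} be irreducible or normal, so "$g_V^\ast D$" should be understood via pulling back the local defining equations of the Cartier divisor $D$ (this is legitimate since $D$ is Cartier, so no normality hypothesis on $V'$ is needed), and "horizontal" then means: every codimension-one component of the support of the effective Cartier divisor cut out by these equations dominates a component of $W'$. The dimension bounds above apply component by component on $V'$ and on $W'$, so this causes no real trouble; I would just phrase the fibre-dimension statement as: for every $w'\in W'$, $\dim (g_V^{-1}(D_i))_{w'}=\dim (D_i)_{g(w')}\le d-1$, using that the scheme-theoretic fibre of a base change is the pullback of the fibre. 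The main obstacle, such as it is, is purely bookkeeping: making sure the equidimensionality of $f$ transfers to the bound "every fibre of $f'$ has pure dimension $d$" (needed to say that $(f')^{-1}$ of a codimension-$\ge 1$ subset of $W'$ has codimension $\ge 1$ in $V'$, hence cannot contain a divisor dominating $W'$ — wait, it automatically cannot, since it doesn't dominate; the real use is to bound $\dim$ of the non-dominating components) — and this bound is immediate from the fibrewise identification $(V')_{w'}\cong V_{g(w')}$ as schemes. No deep input is required; the proposition is essentially a formal consequence of "fibres of a base change are pulled-back fibres" together with the definition of horizontality, and I would present it in about one page.
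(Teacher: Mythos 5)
The central step of your argument does not hold as stated, and it is precisely the point where the paper has to work. You assert that, because $D_i$ dominates $W$ and has dimension $\dim V-1$, every fibre of $f|_{D_i}\colon D_i\to W$ (hence every fibre of the base-changed map $g_V^{-1}(D_i)\to W'$) has dimension $\leqslant d-1$. Domination only controls the \emph{generic} fibre; by upper semicontinuity the special fibres of $D_i\to W$ can jump to dimension $d$, i.e. a horizontal prime divisor may contain an entire $d$-dimensional component of a special fibre of $f$. Concretely, take $V=\AFF^2\times\PP^1\to W=\AFF^2$ (proper, flat, connected fibres, equidimensional of relative dimension $d=1$) and $D=\{xZ_1=yZ_0\}$: this is a prime, Cartier, horizontal divisor whose fibre over the origin is the whole $\PP^1$; pulling back along the line $u\mapsto(u,0)$ produces the vertical component $\{u=0\}$. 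Excluding exactly this kind of jumping is the content of the paper's auxiliary Lemma \ref{lemma_horizon-equidim} (a horizontal divisor on an equidimensional fibre space over a quasi-projective base is itself equidimensional of relative dimension $d-1$), which the paper establishes by induction on $\dim W$ via generic flatness; this is also the only place where the quasi-projectivity of $W$ and the properness built into the definition of fibre space enter. Your proposal never invokes either hypothesis, which is the symptom of the gap: the statement you need is not a formal consequence of ``fibres of a base change are pulled-back fibres'' but a genuine (and delicate) assertion requiring its own proof.

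A secondary issue: in your treatment of components of $g_V^{-1}(D_i)$ not dominating $W'$ you write that a set of the form $(f')^{-1}(Z')$ with $Z'\subsetneq W'$ closed of codimension $1$ has dimension $\leqslant\dim W'-1+d<\dim V'-1$; but $\dim W'-1+d=\dim V'-1$, so containment in such a preimage only forces codimension $\geqslant 1$, not $\geqslant 2$, and vertical divisorial components are not excluded by this count alone. To rule them out you again need the fibre-dimension bound $\leqslant d-1$ on $g_V^{-1}(D_i)\to W'$, i.e. the missing lemma. Once that lemma is granted, your dimension count does close up and is essentially the paper's argument (the paper phrases it as: a vertical divisorial component would force $D$ to contain a $d$-dimensional component of a fibre, contradicting Lemma \ref{lemma_horizon-equidim}); so the fix is to supply a proof of the equidimensionality statement rather than to change the overall strategy.
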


The key point in the proof of the proposition above consists in proving the following auxiliary:
\begin{lemme}
\label{lemma_horizon-equidim}
Let $f:V\to W$ be an equidimensional fibre space between complex varieties of relative dimension $d$ and let $D$ be an effective Weil divisor on $V$ which is horizontal with respect to $f$. Suppose that $W$ is quasi-projective. Then $D$ is equidimensional over $W$ of relative dimension $d-1$. 
\end{lemme}
\begin{proof} 
By induction on the dimension of $W$. If $\dim W=0$ there is nothing to prove. For arbitrary $W$, the result follows if $\scrO_D$ is flat over $W$. In general, we apply the generic flatness \cite[(22.A) Lemma 1, pp.~156-158]{Mat70} (c.f. also \cite[Lemma 2.1.6, p.~36]{HL10} and \cite[Proposition (3.9), p.~18]{ACG11}) to $f$ and $\scrO_D$ to find an effective divisor $H$ on $W$ (by using quasi-projectivity of $W$) such that $\scrO_D$ is flat over $W\backslash H$. Since $D$ is horizontal and $f$ is surjective, $D$ is mapped surjectively onto $W$, hence by \cite[(13.B) Theorem 19(2), p.~79]{Mat70} $D$ is equidimensional over $W\backslash H$ of relative dimension $d-1$. Again by horizontality of $D$, $f\inv(H)$ cannot be contained in $\Supp(D)$, hence $D|_{f\inv(H)_{\red}}$ is still an effective Weil divisor and is horizontal over $H$, then by applying the induction hypothesis to $f|_{f\inv(H)_{\red}}: f\inv(H)_{\red}\to H_{\red}$ which is still a equidimensional fibre space of relative dimension $d$  we see that $D$ is equidimensional over $W$ of relative dimension $d-1$ ($H_{\red}$ may be reducible, yet by considering component by component we can conclude).  
\end{proof}

Now let us turn to the proof of the {\hyperref[prop_horizon-base-change]{Proposition \ref*{prop_horizon-base-change}}}.

\begin{proof}[Proof of the {\hyperref[prop_horizon-base-change]{Proposition \ref*{prop_horizon-base-change}}}]
By definition, it suffices to treat the case that $D$ is a prime divisor. Suppose by contradiction that $g_V^\ast\!D$ contains a component $E$ which is vertical with respect to $f'$. Since $f$ is equidimensional, then so is $f'$, hence $f'(E)$ is of codimension $1$ in $W$. By \cite[(13.B) Theorem 19(1), p.~79]{Mat70} the restriction of $E$ to any fibre of $f'$ is either empty or of dimension $d$, where $d$ denotes the relative dimension of $f$ (hence also that of $f'$). In consequence $D$ must contain a $d$-dimensional component of a fibre of $f'$. But this is impossible by the {\hyperref[lemma_horizon-equidim]{Lemma \ref*{lemma_horizon-equidim}}}.  
\end{proof}
\end{appendices}

\bibliographystyle{alpha}
\bibliography{anti-nef(bibtex)}

\end{document}